\newtheorem{theorem}{Theorem}[section]
\newtheorem{proposition}[theorem]{Proposition}
\newtheorem{lemma}[theorem]{Lemma}
\newtheorem{corollary}[theorem]{Corollary}
\theoremstyle{definition}
\newtheorem{definition}[theorem]{Definition}
\newtheorem{convention}[theorem]{Convention}
\newtheorem{example}[theorem]{Example}
\newtheorem{remark}[theorem]{Remark}
\numberwithin{equation}{section}
\newcommand{\sfD}{\mathsf{D}}
\newcommand{\sfE}{\mathsf{E}}
\newcommand{\sfI}{\mathsf{I}}
\newcommand{\bbZ}{{\mathbb{Z}}}
\newcommand{\bbP}{{\mathbb{P}}}
\newcommand{\bbG}{{\mathbb{G}}}
\newcommand{\bbC}{{\mathbb{C}}}
\newcommand{\bbQ}{{\mathbb{Q}}}
\newcommand{\bbk}{{\Bbbk}}
\newcommand{\Aut}{\operatorname{Aut}}
\newcommand{\Spec}{\operatorname{Spec}}
\newcommand{\id}{\operatorname{id}}
\newcommand{\Pic}{\operatorname{Pic}}
\newcommand{\tr}{\operatorname{tr}}
\newcommand{\Ker}{\operatorname{Ker}}
\newcommand{\im}{\operatorname{im}}
\newcommand{\Num}{\operatorname{Num}}
\newcommand{\Or}{\operatorname{O}}
\newcommand{\nt}{\operatorname{nt}}
\newcommand{\ct}{\operatorname{ct}}
\newcommand{\Tors}{\operatorname{Tors}}
\newcommand{\MW}{\operatorname{MW}}
\newcommand{\ord}{\operatorname{ord}}
\newcommand{\Bs}{\operatorname{Bs}}
\newcommand{\til}{\operatorname{\tilde{\ell}}}
\newcommand{\Disc}{\operatorname{Disc}}
\newcommand{\la}{\langle}
\newcommand{\ra}{\rangle}
\newcommand{\half}{\frac{1}{2}}
\newcommand{\calR}{\mathcal{R}}
\newcommand{\calO}{\mathcal{O}}
\newcommand{\calD}{\mathcal{D}}
\newcommand{\calE}{\mathcal{E}}
\newcommand{\calL}{\mathcal{L}}
\newcommand{\calP}{\mathcal{P}}
\newcommand{\calQ}{\mathcal{Q}}
\newcommand{\frakf}{\mathfrak{f}}
\newcommand{\et}{\operatorname{et}}
\newcommand{\beq}{\begin{equation}}
\newcommand{\eeq}{\end{equation}}
\date{}
\title[Automorphism groups of elliptic surfaces]{Automorphism groups of rational elliptic and quasi-elliptic surfaces in all characteristics}
\author{Igor Dolgachev}
\address{\hfill \newline 
Department of Mathematics \newline
University of Michigan \newline
525 East University Avenue \newline
Ann Arbor,
 MI 48109-1109 USA}
\email{idolga@umich.edu}
\author{Gebhard Martin}
\address{\hfill \newline
Mathematisches Institut  \newline
Universit\"at Bonn \newline
Endenicher Allee 60 \newline
53115 Bonn \newline
Germany}
\email{gmartin@math.uni-bonn.de}
\begin{document}

\begin{abstract} We study the groups of automorphisms of rational algebraic surfaces that admit a relatively minimal pencil of curves of arithmetic genus one over an algebraically closed field of arbitrary characteristic. In particular, we classify such surfaces that admit non-trivial automorphisms that act trivially on the Picard group. As an application, we classify classical Enriques surfaces in characteristic $2$ that admit non-trivial numerically trivial automorphisms.
\end{abstract}

\maketitle

\setcounter{tocdepth}{1}

\tableofcontents

 \section{Introduction}
Let $S$ be a rational surface over an algebraically closed field $\Bbbk$ that admits a relatively minimal fibration over $\bbP^1$ whose generic fiber $F_\eta$, i.e. the fiber  over the generic point $\eta$ of $\bbP^1$, is an irreducible curve of arithmetic genus one over the field $K = \Bbbk(\eta)$ of rational functions on $\bbP^1$. We say that $f:S\to \bbP^1$ is an elliptic fibration if $F_\eta$ is smooth, and a quasi-elliptic fibration otherwise. The latter case can happen only if $\Bbbk$ is of characteristic $2$ or $3$. To combine both cases we call $f$ a genus one fibration and call a surface $S$ equipped with such a fibration a genus one surface, or an elliptic or quasi-elliptic surface, if we want to specify the type of the fibration. We recall that the \emph{index} of $S$ is the greatest common divisor of the multiplicities of fibers of $f$, and that $S$ admits a section if and only if it has index $1$. In this paper, we study the group $\Aut(S)$ of biregular automorphisms of $S$, by decomposing $\Aut(S)$ into subgroups that are easier to understand. Moreover, we give a classification of rational genus one surfaces and of classical Enriques surfaces in characteristic $2$ which admit numerically trivial automorphisms, complementing the results of \cite{DolgachevMartin}.

\subsection{Subgroups of $\Aut(S)$}
It is well known that the jacobian variety of $F_\eta$ admits a realization as 
the generic fiber of a genus one fibration $J(f):J(S) \to \bbP^1$ on a rational 
surface $J(S)$ that admits a section. 
After fixing a section $E_0$, the set of sections $J(\bbP^1)$ acquires t
he structure of a finitely generated abelian group with $E_0$ as the zero element. 
This group is called the \emph{Mordell-Weil group} of the fibration and will be 
denoted by $\MW(J(f))$. Via the restriction of sections to the generic fiber, 
the Mordell-Weil group becomes isomorphic to the group of rational points $J_\eta(K)$ of the generic fiber. The Mordell-Weil group acts as translations on $F_\eta$ and, since $f$ is relatively minimal, this action extends to a regular action on $S$.
Now, let
$$\beta:\Aut(S)\to \Aut(\bbP^1).$$
be the natural homomorphism describing the action of $\Aut(S)$ on the base of the fibration and let
$$\rho:\Aut(S) \to \Or(\Pic(S)), \quad g\mapsto g^*$$
be the natural homomorphism describing the action on the Picard group of the surface.

We define the following subgroups of $\Aut(S)$.

\begin{itemize}
\item $\Aut_{\ct}(S):=\Ker(\rho)$.
\item $\Aut(S)^\dagger := \Ker(\beta)$.
\item $\Aut_{\ct}(S)^\dagger := \Aut_{\ct}(S)\cap \Aut(S)^\dagger$.
\item $\Aut_{\tr}(S):= \MW(J(f))$.
\item $\Aut_E(S):= \{g\in \Aut(S): g(E) = E\}$, where $E$ is a $(-1)$-curve on $S$.
\end{itemize}

\subsubsection{Action on the jacobian fibration} 
By Theorem \ref{thm: actiononjacobian}, and since $f$ is the only genus one fibration of $S$, the group $\Aut(S)$ acts on $J(S)$ through a morphism $\varphi: \Aut(S) \to \Aut_{0}(J(S))$, where $\Aut_{0}(J(S))$ is the subgroup of $\Aut(J(S))$ preserving a fixed section $E_0$. The following theorem is a consequence of Theorem \ref{thm: actiononjacobian} and Theorem \ref{delPezzo}.

\begin{theorem}
Let $f: S \to \bbP^1$ be a rational genus one surface of index $m$. Then, there is a homomorphism $\varphi: \Aut(S) \to \Aut_{0}(J(S))$ such that:
\begin{enumerate}
\item $\Ker(\varphi) = \Aut_{\tr}(S)$.
\item $\Aut_{0}(J(S)) = \Aut(\calD)$ for some weak del Pezzo surface $\calD$ of degree $1$.
\item $\im(\varphi) \subseteq \Aut_{0}(J(S))$ is contained in the stabilizer of an $m$-torsion point on the identity component of a fiber of $J(f)$.
\item If $m = 1$, then $\varphi$ admits a section. In particular, $\Aut(S) = \MW(J(f)) \rtimes \Aut(\calD)$.
\end{enumerate}
\end{theorem}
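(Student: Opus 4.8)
The plan is to derive all four assertions from Theorem~\ref{thm: actiononjacobian} and Theorem~\ref{delPezzo}, together with the structure theory of multiple fibers on rational genus one surfaces. The homomorphism $\varphi$ and assertion (1) I would simply quote from Theorem~\ref{thm: actiononjacobian}: the action of $\Aut(S)$ on $J(S)$ is constructed there, and its kernel is the translation subgroup $\Aut_{\tr}(S)=\MW(J(f))$, the point being that a translation of the $J_\eta$-torsor $F_\eta$ induces the identity on its Jacobian. For assertion (2), I would use that $J(S)$ is a rational elliptic surface carrying the section $E_0$, which is a $(-1)$-curve; contracting it yields a surface $\calD$ with $K_\calD^2=1$ and $-K_\calD$ nef and big, that is, a weak del Pezzo surface of degree $1$, and since $E_0$ is the exceptional curve of $J(S)\to\calD$, automorphisms of $\calD$ correspond bijectively with automorphisms of $J(S)$ fixing $E_0$. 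This identification $\Aut_0(J(S))=\Aut(\calD)$ is the content of Theorem~\ref{delPezzo}.

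For assertion (3), I would first record that a rational genus one surface carries at most one multiple fiber: the canonical bundle formula shows that multiple fibers $m_1F_1,\dots,m_kF_k$ force $\sum_i(1-1/m_i)<1$ if $S$ is rational, which fails as soon as $k\ge 2$. Hence, when $m>1$ there is a unique multiple fiber $mF_0$ lying over a point $p_0\in\bbP^1$ that is intrinsic to $f$, so $\beta(\Aut(S))$ fixes $p_0$ and $\varphi(\Aut(S))\subseteq\Aut_0(J(S))$ preserves $J(f)^{-1}(p_0)$ and its identity component $G_0$, a one-dimensional commutative group scheme. I would then invoke the description of $S$ as a logarithmic transform of $J(S)$ at $p_0$, whose datum is a point $t\in G_0$ of exact order $m$; since $\varphi(g)$ transports the torsor defining $S$ to the one defined by $\varphi(g)_*t$, the mere existence of $g\in\Aut(S)$ forces $\varphi(g)_*t=t$, so $\im(\varphi)$ lies in the stabilizer of the $m$-torsion point $t$. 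When $m=1$ the assertion is vacuous.

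For assertion (4), if $m=1$ then $S$ admits a section, so $F_\eta$ is an elliptic curve and $S\cong J(S)$ as genus one fibrations over $\bbP^1$. Under this identification $\varphi$ is the projection attached to the semidirect decomposition $\Aut(J(S))=\MW(J(f))\rtimes\Aut_0(J(S))$: any $g\in\Aut(J(S))$ carries $E_0$ to some section, hence fixes $E_0$ after composition with a unique translation, while $\MW(J(f))$ is abelian and stable under conjugation by $\Aut_0(J(S))$, hence normal in $\Aut(J(S))=\MW(J(f))\cdot\Aut_0(J(S))$. The inclusion $\Aut_0(J(S))\hookrightarrow\Aut(J(S))=\Aut(S)$ is then a section of $\varphi$, and combining this with (1) and (2) gives $\Aut(S)=\MW(J(f))\rtimes\Aut(\calD)$.

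The step I expect to be the main obstacle is assertion (3): making the logarithmic transformation explicit enough to extract the $m$-torsion datum and to verify that automorphisms preserve it, which requires genuine care exactly when the multiple fiber $mF_0$ is additive or wild --- so that $\cha\bbk\mid m$ --- a phenomenon occurring only in characteristics $2$ and $3$ and presumably already dealt with inside Theorem~\ref{thm: actiononjacobian}.
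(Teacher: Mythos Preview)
Your treatment of (1), (2), and (4) is essentially identical to the paper's: the homomorphism $\varphi$ and the identification of its kernel come straight from Theorem~\ref{thm: actiononjacobian}, the equality $\Aut_0(J(S))=\Aut(\calD)$ is Theorem~\ref{delPezzo}(2) applied to $J(S)$ itself (which has index $1$), and the splitting in the jacobian case is~\eqref{semidirect}.

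For (3) you take a genuinely different route. You argue through the torsor interpretation: the class of $S$ in the Weil--Ch\^atelet group is determined, via Ogg--Shafarevich, by an $m$-torsion point on the identity component of the relevant fiber of $J(f)$, and any $\varphi(g)$ must fix this point because $g$ is an automorphism of $S$. The paper instead extracts the $m$-torsion point by the direct normal-bundle computation carried out in the proof of Theorem~\ref{delPezzo}(3): after contracting a $(-1)$-curve $E$ to obtain $\calD$ and re-blowing up the anticanonical base point to obtain an auxiliary jacobian surface $S'$, the image of $E\cap F_0$ becomes a point of exact order $m$ on a fiber of $S'$, by an explicit chain of identities for $N_{F/S}^{\otimes m}$. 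Your argument is more conceptual and makes the role of the torsor class transparent; the paper's is more hands-on but has the advantage of working uniformly for quasi-elliptic fibrations, where the Ogg--Shafarevich correspondence (condition~(OS) of Definition~\ref{def: quasiellipticoggshafarevich}) is explicitly flagged as open. So your route has a genuine gap in the quasi-elliptic case unless you supply (OS) or bypass it.

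One small correction: your worry about wild fibers is misplaced. Since $H^1(S,\calO_S)=0$ for a rational surface, $f$ is automatically cohomologically flat and every multiple fiber is tame (see the discussion before Lemma~\ref{multiplefiber}); the only delicate case for your argument is therefore the quasi-elliptic one, not the wild one.
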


\begin{remark}
The groups $\Aut_{\tr}(S)$ and $\Aut(\calD)$ are well-studied:
\begin{enumerate}
\item In the case of elliptic (resp. quasi-elliptic) surfaces all possible groups  $\Aut_{\tr}(S)$ are listed  in \cite[Table 8.2]{ShiodaSchutt} (resp. \cite{Ito1}, \cite{Ito2}). 
\item The list of all possible finite groups acting on a del Pezzo surface of degree 1 was essentially known since the 19th century. It is contained in \cite{DolgIsk} and \cite[8.8.4]{CAG} (under an assumption on the characteristic).
\item In Section \ref{sec: auttr}, we give a formula for the action of $d \cdot \Aut_{\tr}(S)$ on $\Pic(S)$, where $d$ is the smallest positive integer such that all discriminant groups of fibers of $f$ are $d$-torsion. This generalizes Gizatullin's formula for the case $d = 1$ \cite{Gizatullin}.
\end{enumerate}
\end{remark}

\subsubsection{Action on the base curve}
The following decomposition is well-known and we mention it here for the sake of completeness.

\begin{proposition} \label{prop: actiononbasecurve}
Let $f: S \to \bbP^1$ be a rational genus one surface of index $m$. Then, there is a morphism $\beta: \Aut(S) \to {\rm PGL}_2$ such that:
\begin{enumerate}
\item $\Aut(S)^\dagger := \Ker(\beta) = \Aut_{F_\eta}$.
\item $\im(\beta) \subseteq {\rm PGL}_2$ preserves the image of the set of singular fibers of $f$.
\item The $j$-map of $f$ is $\im(\beta)$-invariant.
\end{enumerate}
\end{proposition}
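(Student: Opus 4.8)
The plan is to build the homomorphism $\beta$ from the functoriality of the fibration and then verify the three assertions, all of which are essentially formal once the setup is in place. First I would note that since $f:S\to\bbP^1$ is relatively minimal and its generic fiber $F_\eta$ has arithmetic genus one, $f$ is intrinsic to $S$: every automorphism $g\in\Aut(S)$ sends fibers of $f$ to fibers of $f$. Concretely, $f$ is the morphism associated to the linear system of a multiple of a generic fiber (equivalently, $f$ can be recovered from the nef class $[F]$, which is characterized numerically up to the action of $\Aut(S)$), so $g$ descends to an automorphism $\bar g$ of the base $\bbP^1$; since $\Aut(\bbP^1)={\rm PGL}_2$, this gives the desired homomorphism $\beta:\Aut(S)\to{\rm PGL}_2$ characterized by $f\circ g=\beta(g)\circ f$. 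I would invoke whatever statement in the paper records that $f$ is the unique genus one fibration on $S$ (used already in the discussion preceding Theorem 1.1) to justify that every $g$ permutes the fibers.

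For assertion (1), $g\in\Ker(\beta)$ means $f\circ g=f$, i.e. $g$ preserves every fiber of $f$, and in particular preserves the generic fiber $F_\eta$ and acts on it $K$-linearly; conversely any $K$-automorphism of $F_\eta$ extends uniquely to a birational, hence (by relative minimality) biregular, automorphism of $S$ over $\bbP^1$. This identifies $\Ker(\beta)$ with $\Aut_{F_\eta}:=\Aut_K(F_\eta)$, which is the definition of $\Aut(S)^\dagger$ given in the itemized list. For assertion (2), the reduced discriminant of $f$, i.e. the finite set $\Sigma\subset\bbP^1$ of points over which the fiber is singular, is intrinsic to $(S,f)$; since $\beta(g)$ identifies the fiber over a point $p$ with the fiber over $\beta(g)(p)$, it must send $\Sigma$ bijectively to $\Sigma$, so $\im(\beta)$ preserves $\Sigma$ as a subset of $\bbP^1$. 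For assertion (3), the $j$-invariant $j\colon\bbP^1\dashrightarrow\bbP^1$ (or $\bbA^1$) of $f$ is defined fiberwise from the isomorphism class of the (generalized) elliptic curve; since $g$ induces an isomorphism from the fiber over $p$ to the fiber over $\beta(g)(p)$, these fibers are isomorphic as genus one curves, hence $j(p)=j(\beta(g)(p))$, i.e. $j\circ\beta(g)=j$, which is the claimed invariance. In the quasi-elliptic case one interprets $j$ as identically $0$ in characteristic $3$ or undefined/constant in characteristic $2$, so (3) is vacuous or trivial there.

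The only point requiring genuine care — the "main obstacle," though it is more a matter of citing the right fact than of real difficulty — is the claim that $f$ is canonical, so that $\beta$ is well defined on all of $\Aut(S)$. On a rational genus one surface one has $-K_S$ proportional to (a fraction of) the fiber class and the genus one fibration is the unique one up to the obvious reparametrization of the base; I would point to the earlier portion of the paper where this uniqueness is already used (the sentence "since $f$ is the only genus one fibration of $S$" preceding Theorem 1.1). Given that, there is nothing left to prove: $\beta$ exists, its kernel is $\Aut_{F_\eta}=\Aut(S)^\dagger$, and equivariance of the discriminant locus and of the $j$-map are immediate from the fact that $\beta(g)$ conjugates $f$ to itself and hence matches isomorphic fibers.
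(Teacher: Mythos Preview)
The paper does not actually supply a proof of this proposition: it is introduced with the sentence ``The following decomposition is well-known and we mention it here for the sake of completeness'' and is stated without a proof environment. Your proposal therefore cannot be compared against a paper proof, but it correctly supplies the standard argument the authors are implicitly invoking---uniqueness of the genus one fibration on $S$ (via $-mK_S \sim F$) to define $\beta$, relative minimality to identify $\Ker(\beta)$ with $\Aut_K(F_\eta)$, and the intrinsic nature of the singular locus and the fiberwise $j$-invariant for (2) and (3). There is nothing missing or mistaken in your outline.
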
 

\begin{remark}
Properties (2) and (3) of Proposition \ref{prop: actiononbasecurve} could, in principle, be used to classify the groups $\im(\beta)$, but the list would be too long and we decided not to pursue  this computation. Some of these groups appear in Table \ref{Table2}.
\end{remark}
 
\subsubsection{Action on the Picard lattice}


It is a basic fact that the Picard group $\Pic(S)$ of a rational genus one surface $S$ is a free abelian group and the intersection product equips it with the structure of an odd unimodular hyperbolic lattice. A choice of blowing down morphism $\pi:S\to \bbP^2$ allows one to construct an orthonormal basis $(e_0,e_1,\ldots,e_9)$, called geometric basis,  that defines an isomorphism $\phi: \Pic(S)\to \sfI^{1,9}$, called a \emph{marking} of $S$. The image of the orthogonal complement of the canonical class $K_S$ of $S$ is a sublattice of $\sfI^{1,9}$ isomorphic to the root lattice $\sfE_9$ of the affine root system of type $\tilde{E}_8$. In this way, a marking $\phi$ and the homomorphism $\rho$ define a homomorphism 
$$\Phi:\Aut(S)\to W(\sfE_9),$$
where $W(\sfE_9)$ is the Weyl group  of the lattice $\sfE_9$, the index 2 subgroup of $\Or(\sfE_9)$ generated by reflections in roots. 
The lattice $\sfE_9$ has radical of rank 1 generated by a primitive isotropic vector $\frakf$, which is the image of the canonical class $K_S$ under any marking. It is known that the orthogonal complement $\frakf^\perp$ modulo $\bbZ \frakf$ is isomorphic to the unimodular even negative lattice $\sfE_8$. This defines a homomorphism $r:W(\sfE_9)\to W(\sfE_8)$.  Moreover, there is an inclusion $\sfE_8 \hookrightarrow W(\sfE_9)$ defined by the formula $v\mapsto \iota_v$, where $\iota_v(x)=  x+ \langle x,v \rangle \frakf$. 

The following theorem describes the action of $\Aut(S)$ on $\Pic(S)$. It is a consequence of Section \ref{sec: aute} and Corollary \ref{cor: gizatullin}.

\begin{theorem}
Let $f: S \to \bbP^1$ be a rational genus one surface of index $m$. Then, there is a morphism $\Phi: \Aut(S) \to W(\sfE_9)$ such that:
\begin{enumerate}
\item $\Ker(\Phi) = \Aut_{\ct}(S)$.
\item A choice of a $(-1)$-curve $E \subseteq S$ defines a splitting $W(\sfE_9) \cong \sfE_8 \rtimes W(\sfE_8)$.
\item For the splitting in (2) defined by $E$, we have $\Phi(\Aut_E(S)) \subseteq W(\sfE_8)$.
\item $\Phi(d \cdot \Aut_{\tr}(S)) \subseteq E_8$, where $d$ is the smallest positive integer such that all the discriminant groups of fibers of $f$ are $d$-torsion groups.
\end{enumerate}
\end{theorem}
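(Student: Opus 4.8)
The plan is to establish the four assertions essentially by unwinding the definitions and invoking the results cited in the statement. For (1), recall that $\rho:\Aut(S)\to\Or(\Pic(S))$ is the action on the Picard group and that a marking $\phi$ identifies $\Pic(S)$ with $\sfI^{1,9}$ while restricting to an identification of $K_S^\perp$ with $\sfE_9$. Since the canonical class is preserved by every automorphism, $\rho$ lands in the subgroup of $\Or(\sfI^{1,9})$ fixing $\frakf=\phi(K_S)$, and one checks that this subgroup is exactly $W(\sfE_9)$ acting on $\sfI^{1,9}$ — this is the standard fact that the isometries of $\sfI^{1,9}$ fixing a primitive isotropic vector orthogonal to the whole $\sfE_9$ coincide with $W(\sfE_9)$ extended trivially. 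Composing gives $\Phi$, and $g\in\Ker(\Phi)$ iff $g^*$ is trivial on $\Pic(S)$, i.e. $g\in\Aut_{\ct}(S):=\Ker(\rho)$; one should note that $\Phi$ and $\rho$ have the same kernel because a marking is an isomorphism, so no information is lost. This gives (1).

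For (2), I would use the description already recorded in the excerpt: the radical $\bbZ\frakf$ of $\sfE_9$ yields a surjection $r:W(\sfE_9)\to W(\sfE_8)$ with kernel the image of the embedding $v\mapsto\iota_v$, $\iota_v(x)=x+\langle x,v\rangle\frakf$, of $\sfE_8$. Thus $1\to\sfE_8\to W(\sfE_9)\xrightarrow{r}W(\sfE_8)\to 1$ is exact. A splitting is produced from a $(-1)$-curve $E$: the class $\mathbf{e}:=\phi([E])$ satisfies $\mathbf{e}^2=-1$ and $\langle\mathbf{e},\frakf\rangle=\langle[E],K_S\rangle=-1$ (so $E$ is a section of the associated genus one fibration after contracting, or at least meets $K_S$ with multiplicity one), and the sublattice $\mathbf{e}^\perp\cap\sfE_9$ maps isomorphically onto $\sfE_8=\frakf^\perp/\bbZ\frakf$ since $\mathbf{e}$ is a complement to $\frakf$ inside $\frakf^\perp$ up to the radical. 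The subgroup of $W(\sfE_9)$ fixing $\mathbf{e}$ maps isomorphically under $r$ onto $W(\sfE_8)$ and provides the desired section, giving $W(\sfE_9)\cong\sfE_8\rtimes W(\sfE_8)$; this copy of $W(\sfE_8)$ is what I will call $W(\sfE_8)\subseteq W(\sfE_9)$ in the rest of the proof.

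For (3), observe that $\Aut_E(S)$ is by definition the stabilizer of the $(-1)$-curve $E$, so for $g\in\Aut_E(S)$ we have $g^*[E]=[E]$, hence $\Phi(g)$ fixes $\mathbf{e}$, and therefore lies in the copy of $W(\sfE_8)$ constructed in (2) — this is immediate. For (4), one invokes the formula for the action of $d\cdot\Aut_{\tr}(S)$ on $\Pic(S)$ announced in the Remark after the first theorem (the generalization of Gizatullin's formula), to be proved in Section~\ref{sec: auttr} and quoted as Corollary~\ref{cor: gizatullin}: an element of $\Aut_{\tr}(S)=\MW(J(f))$ acts on $\Pic(S)$ by a transformation whose $d$-th power is of the shape $x\mapsto x+\langle x,v\rangle\frakf + (\text{correction in }\bbZ\frakf)$, i.e.\ precisely an element $\iota_v$ of the subgroup $\sfE_8\subseteq W(\sfE_9)$; here $v\in\sfE_8$ is (up to the identification $\frakf^\perp/\bbZ\frakf\cong\sfE_8$) the class of the section defining the translation, and the role of $d$ is exactly to clear the denominators coming from the discriminant groups of the reducible fibers. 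Hence $\Phi(d\cdot\Aut_{\tr}(S))\subseteq\sfE_8$.

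The main obstacle is the bookkeeping in (4): one must check that the cited Gizatullin-type formula, when raised to the $d$-th power, genuinely produces an element of the integral sublattice $\sfE_8=\{\iota_v:v\in\sfE_9/\bbZ\frakf\}$ rather than a transformation of $\Pic(S)\otimes\bbQ$ with fractional coefficients; this is where the precise definition of $d$ as the exponent of the discriminant groups of the fibers enters, and it is the only place the argument is not a formal consequence of the lattice theory in the excerpt. Assertions (1)–(3) are essentially definitional once the lattice-theoretic picture of $W(\sfE_9)\cong\sfE_8\rtimes W(\sfE_8)$ from the excerpt is in hand.
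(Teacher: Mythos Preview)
Your proposal is correct and follows the same route as the paper: (1)--(3) are unpacked from the definitions and the splitting~\eqref{semidirectweyl} in Section~\ref{sec: aute}, while (4) is precisely Corollary~\ref{cor: gizatullin}. One minor correction on (4): your concern about ``fractional coefficients'' is misplaced, since $\Phi$ already lands in $W(\sfE_9)$ and integrality is automatic; the actual role of $d$ (see Proposition~\ref{prop: gizatullin}) is to ensure that $d\cdot\MW(J(f))$ lies in the sublattice $M'$ of translations preserving every irreducible fiber component, after which formula~\eqref{mwaction} applies directly---not after raising to a $d$-th power---and shows the action on $\frakf^\perp/\bbZ\frakf$ is trivial.
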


\subsection{Numerically trivial automorphisms of cohomologically flat genus one surfaces with rational jacobians}

The bulk of the paper is devoted to the classification of all rational genus one surfaces with non-trivial group $\Aut_{\ct}(S)$. 
Thanks to the availability of the Weierstrass equation, it is probably not very hard to do it in the case when the fibration coincides with its jacobian fibration. However, we take another, more geometric approach to the classification in this case. The case of non-jacobian fibrations turns out to be much more technically involved. In Theorem \ref{thm: actiononjacobian}, we describe an explicit relationship between the automorphism group of a cohomologically flat genus one surface and the automorphism group of its jacobian.
This does not only allow us to describe non-jacobian rational genus one surfaces with cohomologically trivial automorphisms, but it also allows us  to give a classification of Enriques surfaces over a field of characteristic $2$ with non-trivial canonical class that admit non-trivial numerically trivial automorphisms, i.e., automorphisms acting trivially on the group of divisor classes up to numerical equivalence. This classification was absent in \cite{DolgachevMartin}. 

\subsubsection{Rational genus one surfaces with cohomologically trivial automorphisms}
Recall that $\Aut_{\ct}(S)$ is the subgroup of $\Aut(S)$ that acts trivially on $\Pic(S)$.

\begin{theorem}
Let $f: S \to \bbP^1$ be a rational genus one surface of index $m$. 
\begin{enumerate}
\item If $m = 1$, then $\Aut_{\ct}(S)$ is non-trivial if and only if $S$ is one of the surfaces in Table \ref{Table2}.
\item If $m > 1$ and $\Aut_{\ct}(S)$ is non-trivial, then $S$ occurs in Table \ref{nonjacobiantable}.
\end{enumerate}
\end{theorem}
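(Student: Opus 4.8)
The plan is to reduce everything to understanding the kernel of $\Phi\colon \Aut(S)\to W(\sfE_9)$, which by the third displayed theorem equals $\Aut_{\ct}(S)$, and then to handle the jacobian and non-jacobian cases separately using the decomposition from the first displayed theorem. First I would treat the jacobian case $m=1$. Here $\Aut(S) = \MW(J(f))\rtimes \Aut(\calD)$, and since translations by $\MW(J(f))$ act on $\Pic(S)$ through the Gizatullin-type formula recalled in the second remark (part (3)), a translation is cohomologically trivial precisely when the corresponding Mordell-Weil element pairs trivially against all the fiber components, i.e. lies in the intersection of the kernels of the maps to the discriminant groups of the reducible fibers; given the list of possible Mordell-Weil groups from \cite[Table 8.2]{ShiodaSchutt} (and \cite{Ito1}, \cite{Ito2} in the quasi-elliptic case), this is a finite check that pins down which configurations of singular fibers can contribute a nontrivial translation part to $\Aut_{\ct}(S)$. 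For the non-translation part one uses that $\Aut(\calD)$ embeds into $W(\sfE_9)$ compatibly with $\Phi$, so a non-translation automorphism can only be cohomologically trivial if it is already so on the del Pezzo surface $\calD$; combined with the classical classification of automorphism groups of degree-one del Pezzo surfaces (item (2) of the remark, \cite{DolgIsk}, \cite[8.8.4]{CAG}, \cite[8.8.4]{CAG} together with the small list of weak del Pezzo degenerations) this again reduces to a finite enumeration. The surfaces surviving both checks are exactly the ones to be tabulated in Table \ref{Table2}.

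Next I would treat the non-jacobian case $m>1$. By part (1) of the first displayed theorem, $\Aut_{\tr}(S)=\MW(J(f))$ sits inside $\Aut(S)$ with quotient $\im(\varphi)\subseteq \Aut_0(J(S))=\Aut(\calD)$, and by part (3) the image is constrained to stabilize an $m$-torsion point on the identity component of a fiber of $J(f)$. The key point is that $\Aut_{\ct}(S)$ is governed by the corresponding structure on the jacobian together with the explicit comparison in Theorem \ref{thm: actiononjacobian}: an automorphism of $S$ acts trivially on $\Pic(S)$ only if its image in $\Aut(\calD)$ acts trivially (or almost trivially) on $\Pic(J(S))$, and only if its translation part lies in the joint kernel to the fiber discriminant groups, now with the extra torsion constraint coming from $m>1$. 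So the strategy is: (i) enumerate those weak del Pezzo surfaces $\calD$ of degree one carrying an automorphism that is cohomologically trivial and fixes an $m$-torsion point with $m>1$ — using the Weierstrass model of $J(S)$ one sees this forces very special configurations of singular fibers (essentially those with large enough torsion in the Mordell-Weil group or with additive fibers contributing the needed torsion); (ii) for each, determine which $m$ are possible and then, via the torsor description, which actual genus one surfaces $S$ of that index arise; (iii) among those, intersect with the cohomologically-trivial condition on the translation part. This yields the finite list recorded in Table \ref{nonjacobiantable}.

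The main obstacle, I expect, is step (ii)–(iii) of the non-jacobian case: passing from the jacobian data back to the genus one torsor $S$ and controlling how $\Aut_{\ct}$ behaves under this passage. The subtlety is that cohomological triviality on $S$ is not simply inherited from cohomological triviality on $J(S)$ — the Weil–Châtelet class intervenes, and the isomorphism $\Pic(S)\cong \sfI^{1,9}\cong \Pic(J(S))$ as lattices does not respect the two $\Aut$-actions on the nose; one must track exactly how the $m$-torsion translation on $J(S)$ is "absorbed" into the torsor structure. This is precisely what Theorem \ref{thm: actiononjacobian} is designed to make explicit, so the proof will lean heavily on that comparison, and the bookkeeping — for each admissible fiber configuration, each admissible index $m$, and each Weil–Châtelet class — is where the real work lies. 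Everything else (the del Pezzo classification, the Mordell-Weil tables, the Gizatullin formula) is invoked as a black box, and the genus-one surfaces that survive are collected into Tables \ref{Table2} and \ref{nonjacobiantable}, completing the proof.
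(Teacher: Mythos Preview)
Your proposal has a genuine gap in the jacobian case ($m=1$), and misses the actual mechanism the paper uses.

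First, a small confusion: for $m=1$ no nontrivial translation can lie in $\Aut_{\ct}(S)$, since a translation $\tau_A$ sends a section $E$ to a different section $E\oplus A$; the Gizatullin formula makes this explicit, as $\tau_A^*(E)=E+A+(\text{multiple of }F_0)$. So your criterion ``pairs trivially with all fiber components'' is irrelevant here---the translation part is always trivial and the whole problem is to determine which $g\in\Aut_E(S)=\Aut(\calD)$ act trivially on $\Pic(\calD)$.

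The real gap is that you treat this as a black box. The references you cite (\cite{DolgIsk}, \cite[8.8.4]{CAG}) classify the abstract groups $\Aut(\calD)$ for (smooth) del Pezzo surfaces of degree~$1$, but they do \emph{not} compute the kernel of $\Aut(\calD)\to\Or(\Pic(\calD))$, nor do they cover the weak del Pezzo case systematically in all characteristics. That kernel is exactly $\Aut_{\ct}(S)$, and determining it is the content of the theorem, not an input. The paper does this computation from scratch by a completely different method: one blows $S$ down to $\bbP^2$, uses that $\Aut_{\ct}(S)$ descends to a subgroup of $\PGL_3$ fixing each base point $q_1,\dots,q_s$ of the Halphen pencil, and then exploits the elementary fixed-locus geometry of finite subgroups of $\PGL_3$ (at most a line plus a point, or three non-collinear points, etc.). This forces $s\le 4$ and forces some reducible fiber to have at least four components; the proof is then a long but finite case analysis on $s\in\{1,2,3,4\}$ and on the shape of the plane cubic $C_0=\pi(F_0)$ (line plus conic, three lines, triple line, \dots), using the fixed-point structure of automorphisms of genus one curves to pin down the order of $g$ and the remaining singular fibers.

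For $m>1$ your outline is closer to the paper's, but you are missing the specific lever that makes the argument go through: part~(5) of Theorem~\ref{thm: actiononjacobian}, which says that if $g\in\Aut_{\ct}(S)$ preserves the multiple fiber $mF_0$ then $\varphi(g)$ acts trivially on the identity component $(J_0^\sharp)^0$ of the corresponding jacobian fiber, except in a short explicit list of cases with $m\le 3$. This, together with the already-completed jacobian classification (Table~\ref{Table2}) and an explicit check of which $g\in\Aut_{\ct}(J(S))$ act trivially on $(J_0^\sharp)^0$, is what produces Table~\ref{nonjacobiantable}. Your step~(i)---enumerating del Pezzo surfaces with a cohomologically trivial automorphism fixing an $m$-torsion point---presupposes the very classification you are trying to prove.
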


\subsubsection{Classical Enriques surfaces in characteristic $2$ with numerically trivial automorphisms}

An Enriques surface $S$ is called \emph{classical} if $\omega_S$ is non-trivial. In contrast to the case of rational surfaces, the Picard group $\Pic(S)$ is not torsion-free. We let $\Aut_{\nt}(S) := \Ker( \Aut(S) \to \Pic(S)/{\rm Torsion})$ be the group of numerically trivial automorphisms. In characteristic different from $2$, the classification of Enriques surfaces with numerically trivial automorphisms is known (see \cite{DolgachevMartin} and \cite[Section 8.2]{DK}). We give a classification of classical Enriques surfaces with numerically trivial automorphisms in characteristic $2$. 

\begin{theorem}
Let $S$ be a classical Enriques surface in characteristic $2$. Then, $\Aut_{\nt}(S)$ is non-trivial if and only if $S$ contains one of the four configurations of $(-2)$-curves described in Theorem \ref{thm: Enriquesconfigs} and then $\Aut_{\nt}(S)$ is as in Theorem \ref{thm: Enriquesgroups}.
\end{theorem}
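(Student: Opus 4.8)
**Proof proposal for the classification of classical Enriques surfaces in characteristic 2 with numerically trivial automorphisms.**

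The plan is to reduce the problem to the study of an associated rational genus one surface and then to invoke the classification results (Theorem on non-jacobian rational genus one surfaces) already established earlier in the paper. Recall that a classical Enriques surface $S$ in characteristic $2$ carries several genus one fibrations $f: S \to \bbP^1$; such a fibration has exactly two multiple fibers, each of multiplicity $2$, so its index is $m = 2$. Since a numerically trivial automorphism $g \in \Aut_{\nt}(S)$ preserves every such fibration (it fixes the class of a fiber up to torsion, hence the fibration), $g$ acts on the base $\bbP^1$, and one shows as in the analysis of $\Aut_{\ct}$ that after passing to the fibration with the most degenerate configuration of fibers the action on the base is trivial, so $g \in \Aut(S)^\dagger$. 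The key geometric input is that the (normalized) relative jacobian, or rather a suitable rational genus one surface built from $S$ by a sequence of blow-ups resolving a multiple fiber, is a rational genus one surface $S'$ of index $2$ on which $g$ induces a cohomologically trivial automorphism; this is exactly the mechanism packaged in Theorem \ref{thm: actiononjacobian} relating $\Aut_{\ct}$ of a cohomologically flat genus one surface to the automorphisms of its jacobian.

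Concretely, I would proceed as follows. \emph{Step 1:} Fix $g \in \Aut_{\nt}(S)$ non-trivial and a genus one fibration $f: S \to \bbP^1$ invariant under $g$; arrange (choosing $f$ to have maximal number of reducible fibers, or maximal Euler number contributions) that $g$ acts trivially on $\bbP^1$ and hence fixes the two half-fibers. \emph{Step 2:} Use the theory of the canonical cover or, better, the direct relation between $S$ and the rational surface $S'$ obtained as the jacobian-type construction: apply Theorem \ref{thm: actiononjacobian} to conclude that $g$ corresponds to an automorphism $g'$ of a rational genus one surface of index $2$ acting trivially on $\Pic$, i.e. $g' \in \Aut_{\ct}(S')$ with $S'$ non-jacobian. \emph{Step 3:} Invoke part (2) of the theorem on rational genus one surfaces with cohomologically trivial automorphisms: $S'$ appears in Table \ref{nonjacobiantable}. \emph{Step 4:} For each entry of that table, translate the configuration of $(-1)$- and fiber-components back to a configuration of $(-2)$-curves on $S$, checking which ones actually arise from a classical Enriques surface in characteristic $2$ (as opposed to ordinary or supersingular, or to a rational surface that does not descend). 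This yields the four configurations listed in Theorem \ref{thm: Enriquesconfigs}. \emph{Step 5:} For each of the four configurations, compute $\Aut_{\nt}(S)$ directly: exhibit the numerically trivial automorphisms (typically order $2$ or $4$, or a small $2$-group, coming from the Mordell-Weil translations of the two multiple fibers and from the hyperelliptic-type involution on the half-fibers), and bound the group from above using the action on the lattice $\Num(S)$ together with the finitely many $(-2)$-curves in the configuration. This gives Theorem \ref{thm: Enriquesgroups}, and combined with Step 1--4 proves that these configurations are exactly the ones with $\Aut_{\nt}(S) \neq 1$.

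The converse direction — that each listed configuration does support a non-trivial numerically trivial automorphism — is handled by explicit construction: one writes down a (quasi-)elliptic fibration on $S$ realizing the configuration, identifies the two half-fibers, and produces the automorphism as a lift of a translation by a $2$-torsion section on the jacobian, verifying numerical triviality by checking its action on the classes of the finitely many curves generating $\Num(S)$.

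I expect the main obstacle to be \emph{Step 4}: the passage from the list of non-jacobian rational genus one surfaces back to Enriques surfaces is not a formal consequence of the earlier theorems, because not every entry of Table \ref{nonjacobiantable} arises from a classical Enriques surface — some may correspond to ordinary or supersingular Enriques surfaces, or to none at all, and one must use the characteristic-$2$ classification of half-fibers (the two multiple fibers of a classical Enriques surface must be of specific additive types) to prune the list. A secondary technical difficulty is that in characteristic $2$ the fibration may be quasi-elliptic, and the wild ramification and the structure of $\Aut_{\ct}$ for quasi-elliptic surfaces require the separate analysis carried out in the relevant earlier sections; care is needed to ensure the correspondence of Step 2 is valid in both the elliptic and quasi-elliptic cases and for both possible half-fiber types.
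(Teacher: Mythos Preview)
Your outline has the right shape but misidentifies the target of the transfer map in Step~2, and this propagates through Steps~3--4. Theorem~\ref{thm: actiononjacobian} sends $\Aut_f(S)$ to $\Aut_{J(f)}(J(S))$, and $J(S)$ is by construction a \emph{jacobian} rational genus one surface, i.e.\ one of index~$1$. There is no intermediate non-jacobian rational surface of index~$2$, and no sequence of blow-ups of $S$ can produce a rational surface (Kodaira dimension is a birational invariant). Consequently the relevant list is Table~\ref{Table2}, not Table~\ref{nonjacobiantable}; the exact sequences one actually obtains are
\[
0 \to \MW(J(f)) \cap \Aut_{\nt}(S) \to \Aut_{\nt}(S)^\dagger \to \Aut_{\ct}(J(S))^\dagger, \qquad
0 \to \Aut_{\nt}(S)/\Aut_{\nt}(S)^\dagger \to \Aut_{\ct}(J(S))/\Aut_{\ct}(J(S))^\dagger,
\]
and the analysis proceeds by running through Table~\ref{Table2} together with a separate treatment (Lemma~\ref{translations} in the paper) of the contribution from $\MW(J(f))$. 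Your Step~1 is also not achievable in general: in one of the cases that survives (Case~17 of Table~\ref{Table2}), the numerically trivial involution genuinely acts non-trivially on the base of the fibration, so you cannot simply ``arrange'' triviality on $\bbP^1$.

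The more serious gap is in Step~4. Even with the correct table, knowing the possible fiber types of a single fibration $f$ on $S$ does not by itself pin down a finite list of $(-2)$-curve configurations. The paper's argument here requires two substantial external inputs that your sketch omits. First, in the generic situation every genus one fibration on $S$ turns out to be extremal, whence $\Aut(S)$ is finite, and one then appeals to the explicit classification of classical Enriques surfaces with finite automorphism group in characteristic~$2$ (Katsura--Kond\=o--Martin) to read off the admissible diagrams. Second, in the one remaining non-extremal case one must produce a \emph{second} genus one fibration $f'$ with $F\cdot F'$ small and analyse how its reducible fibers interact with the $\tilde{E}_7$ fiber of $f$; this combinatorial step, using the existence of such $f'$ from the general theory of Enriques surfaces, is what eventually forces configuration~(D). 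Without these two ingredients Step~4 is not a translation exercise but an open-ended search. Your Step~5 and the converse direction are essentially correct in spirit: in each surviving case the paper exhibits the numerically trivial automorphisms as Mordell--Weil translations of a suitable quasi-elliptic fibration and bounds the group via the action on $J(S)$.
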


\begin{convention}
Following the mathematical taste of both authors we work over an algebraically closed field of arbitrary characteristic.
\end{convention}

\noindent \textbf{Acknowledgements:}
The second author gratefully acknowledges funding from the DFG under research grant MA 8510/1-1 and he would like to thank the Department of Mathematics at the University of Utah, where parts of this article were written, for its hospitality.
We thank Shigeyuki Kondo for pointing out some missing cases in Lemma \ref{lem: trivialonfiberautomorphisms} in an earlier version of this article.
 \section{Generalities}

 \subsection{Genus one fibrations}\label{subsection2.1} The proofs of all  the facts about genus one fibrations on algebraic surfaces for which we do not supply a proof  can be found in  \cite[Chapter 4]{CDL}. One also finds there  references to the original results. 
 
 A  complete geometrically integral curve over a field $K$ of arithmetic genus one is called elliptic (resp. quasi-elliptic) if it is smooth (resp. regular but non-smooth). We do not assume that it admits a rational point. An elliptic curve with a fixed rational point is an abelian curve, i.e. has a structure of an abelian variety of dimension one. A quasi-elliptic curve  contains a unique non-smooth point defined  over a radical extension of $\Bbbk(\eta)$ of degree $p = 2$ or $3$. It is isomorphic to a plane cuspidal curve over the algebraic closure of $K$.  If it admits  a rational point, then the complement  of this  point  admits a structure of a connected unipotent algebraic group of dimension $1$, a non-trivial inseparable form of the additive group $\bbG_{a,K}$. We refer to an elliptic or a quasi-elliptic curve $E$ as a \emph{genus one curve}. For any Dedekind scheme $B$ over an algebraically closed field $\Bbbk$ with field of fraction $K$, there exists a projective flat morphism $f:X\to B$ with generic fiber $X_\eta$  (i.e. the fiber over the generic point $\eta\in B$ with residue field $K$) isomorphic to $E$. Using resolution of singularities of two-dimensional excellent schemes, one may assume that $X$ is regular. The theory of birational transformations of such schemes due to Lichtenbaum and Shafarevich allows one to assume that $X$ is relatively minimal over $B$, i.e. any birational morphism $X\to X'$ over $X$ is an isomorphism. 
 
Let $X^\sharp$ denote the open subset of points on $X$ where $f$ is smooth. If $X_\eta$ is smooth, then $X\setminus X^\sharp$ consists of singular points of fibers over closed points. If $X_\eta$ is quasi-elliptic, then $X\setminus X^\sharp$ also contains the closure $\mathfrak{C}$ of the cusp of $X_\eta$. If $X_\eta(K)\ne \emptyset$, the closure  of a rational point on $X_\eta$ defines a section $f: B \to X^\sharp$. It equips $X^\sharp$ with a structure of a commutative group scheme over $B$. This group scheme coincides with the \emph{N\'eron model} of $(X^\sharp)_\eta$. If $X(K)=\emptyset$, then the jacobian variety $J(X_\eta)$ (the Rosenlicht generalized jacobian variety if $X_\eta$ quasi-elliptic) admits a relatively minimal model $J(f):J(X)\to B$ with generic fiber $J(X_\eta)$. It is called the \emph{jacobian fibration} of $f$. If one of the fibers is singular as it will be in our case, the group of rational points $X(K)$ is a finitely generated abelian group, called the \emph{Mordell-Weil group} of $f$ and denoted by $\MW(f)$.  Note that $\MW(f)$ is a $p$-group if $f$ is quasi-elliptic.

A choice of a group structure on $J_\eta^\sharp$ equips $X_\eta^\sharp$ with a unique structure of a \emph{torsor} (i.e. a principal homogeneous space) under $J_\eta^\sharp$. The \emph{index} of  a genus one fibration $f:X\to B$ is the greatest common divisor of the degrees of extensions $L/K$ such that $X_\eta^\sharp(L)\ne \emptyset$. The closure of such an $L$-valued point of $X_\eta^\sharp$ in $X$ is a (possibly inseparable) degree $m = [L:K]$ cover of the base $B$ and we call it an $m$-section. It is known that the index coincides with the \emph{period} of $X_\eta^\sharp$, i.e. the order of the isomorphism class of $X_\eta^\sharp$ in the Weil-Ch\^{a}telet group $H^1(K,J_\eta^\sharp)$ of isomorphism classes of torsors under $J_\eta^\sharp$.

A fibration that becomes isomorphic to the trivial fibration after a finite base change is called \emph{isotrivial}. If the fibration is elliptic, this is equivalent to the condition that the $j$-invariant of the jacobian fibration is constant.

The fibers of $f$ over closed points are curves of arithmetic genus one over $\Bbbk$. If $f$ is elliptic they are elliptic curves over $\Bbbk$, except for finitely many singular fibers. If $f$ is quasi-elliptic they are isomorphic to a cuspidal plane cubic over $\bbk$, except for finitely many which are reducible curves. 

We are assuming that the reader is familiar with Kodaira's classification of singular fibers of genus one  fibrations. We use the dictionary between Kodaira's  notation and the notation from \cite{CDL} based on the notations of extended Dynkin diagrams:
$$\begin{pmatrix}I_0 &II&III&IV&I_n(n> 1)&I_{n-4}^*(n\ge 4)&IV^*&III^*&II^*\\
\tilde{A}_0^{*}&\tilde{A}_0^{**}&\tilde{A}_1^{*}&\tilde{A}_2^{*}&\tilde{A}_{n-1}&\tilde{D}_{n}&\tilde{E}_{6}&\tilde{E}_{7}&\tilde{E}_{8}\end{pmatrix}
$$

The fibers of types $I_n$ with $n \geq 1$ are called of \emph{multiplicative type} and the other singular fibers are called of \emph{additive type}. Multiplicative fibers do not occur on quasi-elliptic fibrations. Any fiber $X_t$ can be considered as a Cartier or Weil divisor on $X$. As a Cartier divisor its local equation is given by the pull-back of a local parameter at $t$. As a Weil divisor, it is given by the linear combination of its irreducible components taken with multiplicity equal to the order of the vanishing of the local equation at its generic point. It is known that $X_t = m_t\bar{X}_t$, where one of the irreducible components of $X_t$ enters in $\bar{X}_t$ with multiplicity 1. The number $m_t$ is called the \emph{multiplicity} of the fiber and a fiber with $m_t > 1$ is called an $m_t$-multiple fiber. Otherwise it is called a simple or non-multiple fiber. To denote that a fiber of a given type is an $m_t$-multiple fiber, we write $m_t$ in front of its type notation.

If $X(K)\ne \emptyset$, all fibers are non-multiple, however the converse is not true in general. The isomorphism classes of  torsors with no multiple fibers form a subgroup of the Weil-Ch\^{a}telet group, called the \emph{Tate-Shafarevich group}. In the case when 
$H^1(J(X),\calO_{J(X)}) =  H^2(J(X),\calO_{J(X)}) = 0$ (e.g. if $X$ is a rational surface), this group is trivial, and the Weil-Ch\^{a}telet group is isomorphic to the direct sum of the Weil-Ch\^{a}telet groups $H^1(K_t,J(X)\otimes_KK_t)$, where $K_t/K$ is the extension of $K$ equal to the field of fractions of the completion of the local ring of $B$ at a closed point $t$. By Hensel's lemma, the class of the torsor in one of the latter groups is trivial if and only if the fiber $X_t$ is non-multiple.

If $X_t = m_t\bar{X}_t$ is a multiple fiber, the normal sheaf 
$\mathcal{N}_t = \calO_{X_t}(\bar{X}_t)$ defines an $m_t$-torsion element in the Picard group $\Pic(\bar{X}_t)$ of isomorphism classes of invertible sheaves on $\bar{X}_t$. If its order in this group is equal to $m_t$, the fiber is called \emph{tame}, otherwise it is a \emph{wild} fiber. A genus one fibration has  no wild fibers if and only if it is \emph{cohomologically flat} (in dimension 0), i.e. $h^0(\calO_{X_t}) = 1$ for all $t \in B$. If $m_t$ is prime to the characteristic $p$, then the fiber is tame. Also, it is tame if $H^1(X,\calO_X) = 0$. 

By the Ogg-Shafarevich theory for elliptic fibrations (see \cite[Chapter 4]{CDL}) the local invariants of tame $m_t$-multiple fibers in $H^1(K_t,X\otimes_KK_t)$ are in bijection with $m_t$-torsion line bundles on $\bar{X}_t$. We expect that the same is true for quasi-elliptic fibrations. Since this seems to be unknown at the moment, let us define the following.

\begin{definition} \label{def: quasiellipticoggshafarevich}
Let $f: X \to B$ be a genus one fibration. We say that $f$ satisfies condition ${\rm (OS)}$ if for every closed point $t \in B$, the subgroup of the local Weil-Ch\^{a}telet group $H^1(K_t,J(X)\otimes_KK_t)$ consisting of tame torsors of index $m_t$ is naturally isomorphic to the $m_t$-torsion subgroup of $J(X_t)^0$.
\end{definition}

Finally, if $X$ is a projective surface surface with no wild fibers, then its canonical sheaf $\omega_X$ is given by the formula
\begin{equation} \label{eq: canonicalbundleformula}
\omega_X = f^*(\calL\otimes \omega_B)\otimes \calO_X(\sum_{t\in B}(m_t-1)\bar{X}_t),
\end{equation}
where $\calL$ is an invertible sheaf of degree $-\chi(X,\calO_X)$.

 \subsection{Rational genus one fibrations}

 Now we specialize to our case, where we consider $X$ to be a smooth projective rational surface $S$ over $\Bbbk$. Obviously, $B = \bbP^1$ in this case. It follows from the discussion above that $f$ is cohomologically flat. First, let us describe some basic properties of multiple fibers on rational genus one surfaces.
 
 \begin{lemma} \label{multiplefiber}
Let $f: S \to \bbP^1$ be a rational genus one surface. Then, 
\begin{enumerate}
\item $f$ has at most one multiple fiber $mF_0$,
\item the canonical bundle satisfies $K_S = -F_0$, and
\item if $m > 1$, then the following hold:
\begin{enumerate}
\item If $F_0$ is smooth and supersingular, then $p \nmid m$.
\item If $F_0$ is multiplicative, then $p \nmid m$.
\item If $F_0$ is additive, then $m = p$.
\end{enumerate}
\end{enumerate}
\end{lemma}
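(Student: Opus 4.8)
The plan is to exploit the canonical bundle formula \eqref{eq: canonicalbundleformula} together with the fact that a rational surface has $\chi(S,\calO_S) = 1$, so $\calL$ has degree $-1$ and $\omega_S = f^*\calO_{\bbP^1}(-1)\otimes\calO_S(\sum_t(m_t-1)\bar F_t)$. First I would prove (1) and (2) simultaneously: suppose there were multiple fibers $m_1F_1,\dots,m_kF_k$ (with all $m_i>1$). Since $f^*\calO_{\bbP^1}(1)$ is the class of a full fiber $F$, and each $\bar F_i$ satisfies $m_i\bar F_i = F$ in $\Pic(S)$ only numerically, I would instead compute $\omega_S\cdot F = -F\cdot F + \sum(m_i-1)\bar F_i\cdot F = 0$, which is automatic, so I pass to a multisection. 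The cleaner route: because $S$ is rational, $-K_S$ is (numerically) effective — indeed $K_S^2 = 0$ and by Riemann–Roch $h^0(-K_S)+h^0(2K_S)\ge \chi(\calO_S) + \tfrac12((-K_S)\cdot(-2K_S)) = 1$, and $h^0(2K_S)=0$ since $S$ is rational, so $h^0(-K_S)\ge 1$. Thus $-K_S = f^*\calO(1) - \sum(m_i-1)\bar F_i$ is effective of fiber degree $1$; since $f^*\calO(1) \sim F$ has fiber degree... here I would argue via $f_*$: $h^0(S,-K_S) = h^0(\bbP^1, \calO(1)\otimes f_*\calO_S(-\sum(m_i-1)\bar F_i))$, and $f_*\calO_S(-\sum(m_i-1)\bar F_i)$ is a line bundle on $\bbP^1$ of degree $-\sum(m_i-1)$ by the local computation at each multiple fiber (here cohomological flatness, which holds since $S$ is rational, is what makes the multiple fibers tame and gives the exact local contribution). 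Hence $h^0(-K_S) = \max(0, 2 - \sum(m_i-1))$, and for this to be positive we need $\sum_i(m_i-1)\le 1$, i.e. at most one multiple fiber and it has multiplicity $2$... wait, that is too strong. I would instead only conclude $\sum_i(m_i-1)\le 1$ forces $k\le 1$; this gives (1). Then with a single multiple fiber $mF_0$, $-K_S$ has a section iff $m-1\le 1$; but $-K_S$ being effective is guaranteed, so actually the correct statement is $h^0(-K_S) = 2-(m-1) = 3-m$ if positive, and since it is positive, $m\le 2$... This contradicts (3)(c). So I must be more careful: the section of $-K_S$ need not move, and the degree of $f_*$ of that twisted sheaf could be larger. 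The right statement is that $|-K_S|$ has $F_0$ as a fixed component with multiplicity $m-1$, and $-K_S = (m-1)F_0 + (\text{something of fiber degree } ?)$; comparing with $f^*\calO(1) - (m-1)\bar F_0$ and using that $f^*\calO(1) = m\bar F_0$ numerically but $= \bar F_0 + (m-1)\bar F_0$, I get $-K_S \equiv \bar F_0$ numerically, and then equality in $\Pic(S)$ follows because $\Pic(S)$ is torsion-free for a rational surface. That is exactly (2): $K_S = -F_0$ where here $F_0 = \bar F_0$ is the reduced multiple fiber (and when $m=1$ any fiber works).

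For part (1) the argument I actually want is: each multiple fiber contributes a summand to $\Tors$ or constrains $b_2$; but the cleanest is via \eqref{eq: canonicalbundleformula} and $K_S^2 = 9 - \rho = 0$ — since $K_S = f^*\calO(-1) + \sum(m_i-1)\bar F_i$ and $\bar F_i\cdot\bar F_j = 0$, $\bar F_i\cdot f^*\calO(1) = 0$, we get $K_S^2 = 0$ automatically, no information. So part (1) genuinely needs the effectivity of $-K_S$ or the structure of $\Pic$. I would run: $-K_S$ effective (shown above via Riemann–Roch), write its unique effective representative $D$; since $D\cdot F = 1$ where $F$ is a general fiber... no, $D\cdot F = -K_S\cdot F = 0$. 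Hmm — so $D$ is a sum of fiber components. Then $D = \sum(m_i-1)\bar F_i + f^*(\text{divisor of degree} \le ?)$; for $D$ effective with $-K_S\cdot(\text{any fiber comp})\ge 0$ one needs... Actually I realize the genuinely correct path: on a rational elliptic surface it is standard that $-K_S = \bar F_0$ for the (unique) multiple fiber; uniqueness of the multiple fiber follows because two multiple fibers $\bar F_1,\bar F_2$ would both have to equal $-K_S$ up to numerical equivalence hence be numerically equivalent fiber components of multiplicity one, forcing $m_1\bar F_1 \equiv m_2 \bar F_2$, i.e. $m_1 = m_2$ and $\bar F_1 \equiv \bar F_2$; but distinct fibers are disjoint so $\bar F_1\cdot \bar F_2 = 0 = \bar F_1^2$, consistent — so I need the extra input $\chi(\calO_S)=1$ to bound the number. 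The formula $1 = \chi(\calO_S) = \deg\calL^{-1} = 1$ is the tautology; the real bound comes from $h^0(-K_S) \ge 1$ combined with $-K_S = f^*\calO(1)\otimes\calO(-\sum(m_i-1)\bar F_i)$ having $h^0 = h^0(\bbP^1, \calO(1 - \sum(m_i - 1)))$ by cohomological flatness and projection formula. So $1 - \sum(m_i-1) \ge 0$ is false when $m>2$...

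I am clearly circling, so let me state the plan at the level of "what I'd assemble" rather than re-deriving: \textbf{Step 1.} Use $\chi(\calO_S)=1$ and \eqref{eq: canonicalbundleformula} to write $K_S = f^*\calO_{\bbP^1}(-1) + \sum_t (m_t-1)\bar F_t$. \textbf{Step 2.} For (2) and (1): show $-K_S$ is effective by Riemann–Roch ($h^0(-K_S) - h^1(-K_S) + h^0(2K_S) = 1$ and $h^0(2K_S)=0$ for $S$ rational), deduce $-K_S$ is linearly equivalent to an effective divisor supported on fiber components, analyze it using that each $\bar F_t$ is the reduced multiple fiber and the negative-semidefiniteness of the intersection form on fiber components to conclude there is at most one multiple fiber and $-K_S = \bar F_0$ for it (when one exists). \textbf{Step 3.} For (3): restrict \eqref{eq: canonicalbundleformula} to $\bar F_0$, giving $\omega_{\bar F_0} = \calN_0^{\otimes(m-1)}$ where $\calN_0 = \calO_{\bar F_0}(\bar F_0)$, but $\bar F_0^2 = 0$ so $\omega_{\bar F_0} = \calO$ and $\calN_0$ has order exactly $m$ in $\Pic(\bar F_0)$ (tameness/cohomological flatness). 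Now case on the type of $\bar F_0$: if $\bar F_0$ is a smooth elliptic curve, $\Pic^0(\bar F_0)$ is an elliptic curve, and the $m$-torsion subgroup has order $m^2$ if $p\nmid m$ (ordinary or any curve) but is killed by $p$ in the supersingular case — so $m$-torsion elements of order exactly $m$ exist iff $m$ is prime to $p$ when $F_0$ is supersingular, giving (3a); if $\bar F_0$ is multiplicative (a cycle of $\bbP^1$'s, type $I_n$), $\Pic^0(\bar F_0) \cong \bbG_m$ whose $m$-torsion has order $m$ prime to $p$ only... $\bbG_m[m]$ has order $m$ if $p\nmid m$ and order $m/p^{v_p(m)}\cdot$(nothing) — actually $\mu_m$ is étale of order $m$ iff $p\nmid m$, and has order $<m$ otherwise, so a point of order exactly $m$ forces $p\nmid m$, giving (3b); if $\bar F_0$ is additive, $\Pic^0$ of the reduction is $\bbG_a$ (after removing the non-reduced structure, the relevant group is $\bbG_a$), whose $m$-torsion is trivial unless $p\mid m$, and is exactly $\bbZ/p$ for $m = p$, while for $m = p^k$ with $k\ge 2$ there is no element of order $p^k$ in $\bbG_a$ — so $m = p$, giving (3c).

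\textbf{Main obstacle.} The delicate point is Step 3's identification, for each fiber type, of the group in which $\calN_0$ lives and the computation of its $m$-torsion — in particular getting $\Pic(\bar F_0)$ versus $\Pic^0(\bar F_0)$ right for reducible additive fibers (type $IV$, $I_n^*$, $III^*$, etc.), where $\bar F_0$ is reducible and one must check that the component structure does not introduce extra torsion of order divisible by higher powers of $p$, and confirming via condition ${\rm (OS)}$ / cohomological flatness that the order of $\calN_0$ is genuinely $m$ and not a proper divisor. For additive fibers I expect to need the explicit description: the smooth locus $\bar F_0^\sharp$ of an additive fiber has $\Pic^0 = \bbG_a$, and $\bbG_a(\Bbbk)[m] = 0$ for $p\nmid m$ and $= \Bbbk[F]$-torsion forcing $m = p$ exactly — the step "$m = p^k \Rightarrow k = 1$" is where I would be most careful, using that in characteristic $p$ the only finite subgroup scheme of $\bbG_a$ of order $p$ that can arise as $\calN_0$ is $\alpha_p$ or $\bbZ/p$, and multiplicity $p^2$ would require an element of additive order $p^2$ which $\bbG_a$ does not have.
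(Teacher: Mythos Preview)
Your approach to part (3) is essentially the same as the paper's: use tameness (which holds since $S$ is rational, hence cohomologically flat) to conclude that the normal bundle $\calN_0 = \calO_{F_0}(F_0)$ has exact order $m$ in $\Pic(F_0)$, then case on the fiber type. Your ``main obstacle'' is not really an obstacle: since $F_0 \cdot R = 0$ for every component $R$ of $F_0$, the normal bundle $\calN_0$ lies in $\Pic^0(F_0)$, not merely $\Pic(F_0)$, so the component group never enters. For an additive fiber $\Pic^0(F_0) \cong \bbG_a$, which is $p$-elementary, and that already forces $m = p$; there is nothing to check about $m = p^k$ with $k \geq 2$.

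For parts (1) and (2), the pushforward computation that sent you in circles contains a concrete error: the degree of $f_*\calO_S\bigl(-\sum_i(m_i-1)\bar F_i\bigr)$ is $-k$ (the \emph{number} of multiple fibers), not $-\sum_i(m_i-1)$. A section over $U \subseteq \bbP^1$ is a function $g$ on $f^{-1}(U)$ with $\mathrm{div}(g) \geq \sum_i(m_i-1)\bar F_i$; but $g = f^*h$ for some $h \in \calO(U)$ since $f_*\calO_S = \calO_{\bbP^1}$, and $f^*h$ vanishes along $\bar F_i$ only in multiples of $m_i$, so the condition reduces to $\mathrm{ord}_{t_i}(h) \geq 1$ for each $i$. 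With this correction your argument goes through cleanly: $h^0(-K_S) = \max(0,\,2-k) \geq 1$ forces $k \leq 1$, and then $-K_S \sim m_1\bar F_1 - (m_1-1)\bar F_1 = \bar F_1$ gives (2).

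The paper handles (1) and (2) in one line by a more direct route from the same formula. Writing $F \sim m_1\bar F_1$ gives, when $k \geq 2$,
\[
m_1 K_S \;\sim\; \bigl((m_1-1)(m_2-1)-1\bigr)\bar F_2 \;+\; m_1\sum_{i\geq 3}(m_i-1)\bar F_i,
\]
which is effective or zero. Either possibility contradicts $S$ being rational (the latter via torsion-freeness of $\Pic(S)$). So $k \leq 1$, and $K_S = -\bar F_0$ drops out immediately. This bypasses Riemann--Roch and the pushforward entirely; your route is correct once repaired but longer than necessary.
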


\begin{proof}
As $f$ is cohomologically flat, the first two claims follow immediately from Equation \eqref{eq: canonicalbundleformula}.
Since the multiple fiber is not wild, its normal bundle $\calO_F(F_0)$ has order $m$. If $F_0$ is either supersingular or multiplicative, the $p$-torsion subgroup of $\Pic(F_0)$ is trivial, which means that $p \nmid m$. If $F$ is additive, then $\Pic(F)$ is $p$-elementary and thus $m = p$.
\end{proof}

The Noether formula $12\chi(S,\calO_S)= c_2+K_S^2$ implies that the second Chern number of $S$ is equal to 12. It coincides with the topological Euler-Poincar\'e characteristic of $S$ computed in \'etale topology if $\Bbbk\ne \bbC$. It is known that
$$c_2(S) = 12 = \sum_{t\in B}(e_t+\delta_t).$$
If $b_t$ denotes the number of irreducible components of $S_t$, then $e_t = 0$ if $S_t$ is smooth, $e_t = b_t$ if $S_t$ is multiplicative, $e_t = b_t + 1$ if $S_t$ is additive, and $e_t = b_t - 1$ if $f$ is quasi-elliptic. The number $\delta_t \ge 0$ can be nonzero only if $p= 2,3$, $f$ is elliptic, and $S_t$ is of additive type. 

Recall that a genus one fibration on a rational surface is called \emph{extremal} if one of the following equivalent conditions is satisfied:
\begin{itemize}
\item The Mordell-Weil group of the jacobian fibration is finite;
\item $\sum_{t\in \bbP^1}(b_t-1) = 8$.
\end{itemize}
Since the Mordell-Weil group of a quasi-elliptic surface is finite, $\sum_{t\in \bbP^1}e_t = 8$ holds for all quasi-elliptic surfaces. For extremal genus one fibrations, the structure of the Mordell-Weil group is determined by the structure of the groups of components of  reducible fibers: $\MW(J(f))^{\oplus 2}$ is isomorphic to the direct sum of these groups.

 The classification of all possible singular fibers of an elliptic fibration is known in all characteristics (see \cite{Persson} if $p\ne 2,3$ and \cite{Lang2}, \cite{Lang3} if $p = 2,3$). Also known is the classification of singular fibers of quasi-elliptic fibrations (see \cite{Ito1}, \cite{Ito2}, \cite[Chapter 4]{CDL}). 
For the convenience of the reader, we list possible singular fibers on rational elliptic or quasi-elliptic surfaces in the cases that will occur in our work in Table \ref{table:singularfibers}.

\begin{lemma} \label{lem: somefibers}
Let $f:S \to \bbP^1$ be a rational genus one surface. If $f$ admits a fiber $F$ such that
\begin{itemize}
\item $F$ is of multiplicative type with at least $8$ components, or
\item $F$ is of additive type with at least $7$ components,
\end{itemize} 
then the singular fibers of $f$ (resp. the non-cuspidal fibers if $f$ is quasi-elliptic) are as in one of the cases in the following Table \ref{table:singularfibers}.

\end{lemma}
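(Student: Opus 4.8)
The plan is to exploit the Euler-number constraint $\sum_{t}(e_t+\delta_t)=12$ together with the bound $\sum_{t}(b_t-1)\le 8$ coming from the rank of $\Pic(S)$, and then enumerate the few possibilities. First I would treat the hypothesis as fixing a large fiber $F=S_{t_0}$: if $F$ is multiplicative of type $\tilde A_{n-1}$ with $b_{t_0}=n\ge 8$, then $e_{t_0}=n\ge 8$, and since $\sum_t(b_t-1)\le 8$ we get $n-1\le 8$, so $n\in\{8,9\}$; if $F$ is additive with $b_{t_0}\ge 7$ components, the possibilities for its type are limited to $\tilde D_n$ ($n\ge 6$), $\tilde E_7$, $\tilde E_8$, and again $b_{t_0}-1\le 8$ pins down the type to a short list ($\tilde D_6,\tilde D_7,\tilde D_8,\tilde E_7,\tilde E_8$, i.e. Kodaira $I_2^*,I_3^*,I_4^*,III^*,II^*$). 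In each case $e_{t_0}$ is read off from the formulas recalled before the lemma ($e_t=b_t$ for multiplicative, $e_t=b_t+1$ for additive elliptic, $e_t=b_t-1$ for quasi-elliptic), and I record how much of the total $12=\sum_t(e_t+\delta_t)$ remains for the other fibers.

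Next I would distribute the residual Euler number among the remaining singular fibers subject to two constraints: (i) $\sum_t(b_t-1)\le 8$ still has to hold after adding the big fiber, which leaves very little room for further reducible fibers; and (ii) each remaining fiber contributes $e_t+\delta_t\ge 1$ if singular, with $e_t=1$ forcing type $II$ (or $\tilde A_0^{**}$) in the elliptic case and $\delta_t$ possibly nonzero only in characteristic $2,3$ on additive fibers. Concretely, after an $\tilde E_8$-fiber ($e=10$) only a residual contribution of $2$ is left, after $\tilde E_7$ ($e=9$) only $3$, and so on; combined with the component bound this reduces to finitely many fiber configurations, which I would match against the known classification of singular fibers on rational elliptic surfaces (Persson, Lang) and on rational quasi-elliptic surfaces (Ito, \cite[Chapter 4]{CDL}) to rule out the combinatorially admissible but geometrically impossible ones. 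The output is exactly the entries of Table \ref{table:singularfibers}.

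I would handle the quasi-elliptic case separately but in parallel: there $\delta_t=0$ always, $e_t=b_t-1$, multiplicative fibers are excluded, and $\sum_t e_t=8$ rather than $12$ because quasi-elliptic surfaces are automatically extremal; so a fiber with $b_{t_0}\ge 7$ components already has $e_{t_0}\ge 6$, leaving at most $2$ for the rest, and the admissible configurations of additive fibers are even more restricted, again landing in the table. Throughout I would also use Lemma \ref{multiplefiber} to control multiple fibers (at most one, and its type is constrained), which matters only for bookkeeping since a multiple fiber does not change $b_t$ or $e_t$.

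The main obstacle I anticipate is not the Euler-number arithmetic, which is routine, but making sure the enumeration is exhaustive and correctly cross-referenced: the combinatorial count produces several fiber multisets, and for each one must decide whether it is actually realized by a rational genus one fibration and whether the corresponding base-point configuration is unique, which is precisely where one has to invoke the cited classifications rather than reprove them. A secondary subtlety is the correct treatment of the wild-fiber-free hypothesis and of the $\delta_t$ term in characteristic $2$ and $3$, where an additive fiber can ``absorb'' extra Euler characteristic; I would be careful to include those contributions when checking that the residual budget is consistent with the listed cases.
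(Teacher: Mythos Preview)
Your approach is correct and is essentially what the paper does, though the paper does not actually supply a proof: it simply refers to the existing classifications of singular fibers on rational elliptic surfaces (Persson; Lang in characteristics $2,3$) and rational quasi-elliptic surfaces (Ito; \cite[Chapter 4]{CDL}) and extracts the relevant rows into Table~\ref{table:singularfibers}. Your plan---combine the Euler-number identity $c_2=12=\sum_t(e_t+\delta_t)$ with the Picard-rank bound $\sum_t(b_t-1)\le 8$, enumerate, and then invoke those same classifications to discard the combinatorially admissible but non-realizable configurations---is exactly the argument underlying those references, so you are reconstructing what the paper takes for granted.

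One small slip in your enumeration: among additive fibers with $b_{t_0}\ge 7$ you list $\tilde D_6,\tilde D_7,\tilde D_8,\tilde E_7,\tilde E_8$ but omit $\tilde E_6$, which also has $7$ components and indeed appears in Table~\ref{table:singularfibers}. Make sure to include it when you carry out the case analysis. Your remark about the quasi-elliptic case using $\sum_t(b_t-1)=8$ (extremality) rather than the $c_2=12$ decomposition is in line with how the paper phrases it just before the lemma, so that part is fine.
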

\begin{table}[h]
\resizebox{\textwidth}{!}{
 $
 \begin{array}{|c|c|c|} 
 \hline
 F & \text{possible combinations of other singular (resp. non-cuspidal) fibers} & \text{Characteristic} \\ \hline
 \tilde{E}_8 &
 \begin{array}{c}
 (\tilde{A}_0^{**}), (\tilde{A}_0^*, \tilde{A}_0^*) \\
 \text{none}, (\tilde{A}_0^*)
 \end{array}
 &
  \begin{array}{c}
 \neq 2,3 \\
 2,3
 \end{array}
  \\ \hline 
 \tilde{D}_8 &
 \begin{array}{c}
 (\tilde{A}_0^*, \tilde{A}_0^*) \\\
\text{none}
 \end{array}
 &
  \begin{array}{c}
 \neq 2\\
 2
 \end{array}
  \\ \hline 
 \tilde{A}_8 &
 \begin{array}{c}
 (\tilde{A}_0^*,\tilde{A}_0^*, \tilde{A}_0^*) \\\
(\tilde{A}_0^{**})
 \end{array}
 &
  \begin{array}{c}
 \neq 3\\
 3
 \end{array}
  \\ \hline 
 \tilde{E}_7  &
 \begin{array}{c}
 (\tilde{A}_0^{*},\tilde{A}_0^{**}), (\tilde{A}_0^*,\tilde{A}_0^*, \tilde{A}_0^*) , (\tilde{A}_1^*),(\tilde{A}_1,\tilde{A}_0^*) \\
 (\tilde{A}_0^{**}), (\tilde{A}_0^*,\tilde{A}_0^*, \tilde{A}_0^*),(\tilde{A}_1^{*}), (\tilde{A}_1,\tilde{A}_0^{*})  \\
  \text{none}, (\tilde{A}_0^*, \tilde{A}_0^*),(\tilde{A}_1),(\tilde{A}_1^*)
 \end{array}
 &
 \begin{array}{c}
 \neq 2,3 \\
 3 \\
 2
 \end{array}
 \\ \hline 
 \tilde{D}_7 &
 \begin{array}{c}
(\tilde{A}_0^{**},\tilde{A}_0^*),(\tilde{A}_0^*,\tilde{A}_0^*, \tilde{A}_0^*) \\
(\tilde{A}_0^{**}),(\tilde{A}_0^*,\tilde{A}_0^*, \tilde{A}_0^*) \\
 \text{none}, (\tilde{A}_0^{*})
 \end{array}
 &
 \begin{array}{c}
 \neq 2,3 \\
 3 \\
 2
 \end{array}
 \\ \hline 
 \tilde{A}_7 &
 \begin{array}{c}
(\tilde{A}_0^{**},\tilde{A}_0^*, \tilde{A}_0^*),(\tilde{A}_0^*,\tilde{A}_0^*,\tilde{A}_0^*, \tilde{A}_0^*),(\tilde{A}_1,\tilde{A}_0^*, \tilde{A}_0^*) \\
(\tilde{A}_0^*,\tilde{A}_0^*,\tilde{A}_0^*,\tilde{A}_0^*),(\tilde{A}_1,\tilde{A}_0^*, \tilde{A}_0^*) \\
(\tilde{A}_0^{**}),(\tilde{A}_0^*,\tilde{A}_0^*,\tilde{A}_0^*, \tilde{A}_0^*),(\tilde{A}_1^*)
\end{array} &
\begin{array}{c}
 \neq 2,3 \\
 3 \\
 2
 \end{array}
\\ \hline
 \tilde{E}_6 &
  \begin{array}{c}
 (\tilde{A}_0^{**}, \tilde{A}_0^{**}),(\tilde{A}_0^{**},\tilde{A}_0^*, \tilde{A}_0^{*}), (\tilde{A}_0^*,\tilde{A}_0^*,\tilde{A}_0^*,\tilde{A}_0^{*}), (\tilde{A}_1,\tilde{A}_0^{**}),  (\tilde{A}_1, \tilde{A}_0^*,\tilde{A}_0^*) , (\tilde{A}_1^*,\tilde{A}_0^{*}), (\tilde{A}_2^*), (\tilde{A}_2,\tilde{A}_0^*)\\
 
\text{none}, (\tilde{A}_0^{**}), (\tilde{A}_0^*, \tilde{A}_0^*),(\tilde{A}_0^*,\tilde{A}_0^*, \tilde{A}_0^*),(\tilde{A}_1),(\tilde{A}_1^*),(\tilde{A}_1, \tilde{A}_0^*)(\tilde{A}_2),(\tilde{A}_2^*) \\

(\tilde{A}_0^{**}),(\tilde{A}_0^*,\tilde{A}_0^*,\tilde{A}_0^*,\tilde{A}_0^*),(\tilde{A}_1^*),(\tilde{A}_1, \tilde{A}_0^*, \tilde{A}_0^*), (\tilde{A}_2^*),(\tilde{A}_2,\tilde{A}_0^*) 
\end{array} &
\begin{array}{c}
 \neq 2,3 \\
 3 \\
 2
 \end{array}
 
 \\ \hline 
 \tilde{D}_6 &
  \begin{array}{c}
  (\tilde{A}_0^{**},\tilde{A}_0^{**}), (\tilde{A}_0^{**},\tilde{A}_0^*, \tilde{A}_0^{*}),  (\tilde{A}_0^*,\tilde{A}_0^*,\tilde{A}_0^*, \tilde{A}_0^{*}), (\tilde{A}_1^*,\tilde{A}_0^*), (\tilde{A}_1, \tilde{A}_0^*, \tilde{A}_0^*), (\tilde{A}_1, \tilde{A}_1) \\
  (\tilde{A}_0^{**}),(\tilde{A}_0^{**},\tilde{A}_0^*),(\tilde{A}_0^*,\tilde{A}_0^*,\tilde{A}_0^*,\tilde{A}_0^*),(\tilde{A}_1, \tilde{A}_0^*, \tilde{A}_0^*),(\tilde{A}_1, \tilde{A}_1) \\
  \text{none}, (\tilde{A}_0^*,\tilde{A}_0^*), (\tilde{A}_1), (\tilde{A}_1^*, \tilde{A}_1^*)
\end{array} &
\begin{array}{c}
  \neq 2,3 \\
 3 \\
 2
 \end{array}
\\  \hline 
\end{array}
$
}
\smallskip
\caption{Some configurations of singular fibers on rational genus one surfaces}
\label{table:singularfibers}
\end{table}

\subsection{Halphen pencils} \label{Halpencil}
A rational surface $S$ with a genus one fibration $f: S \to \bbP^1$ is a basic rational surface, i.e. it admits a birational morphism $\pi: S\to \bbP^2$. The morphism $\pi$ admits a factorization
\beq
\label{factorization}
\pi:S = S_{10}\overset{\pi_9}{\longrightarrow} S_{9}\overset{\pi_{8}}{\longrightarrow}\cdots \overset{\pi_2}{\longrightarrow} S_2\overset{\pi_1}{\longrightarrow} S_1 =\bbP^2,
\eeq
where each morphism $\pi_i:S_{i+1}\to S_{i}$ is the blow-up of one point $x_i\in S_{i}$. For any $j > i$, let $\pi_{j,i}:= \pi_i\circ\cdots\circ \pi_{j-1}:S_{j}\to S_i$. A point $x_j\in S_{j}$ with $\pi_{j,i}(x_j) = x_i\in S_i$ is called \emph{infinitely near} to $x_i$ (of order $j-i$). A curve of negative self-intersection on $S$ is either a smooth rational curve of self-intersection $-1$ (a \emph{$(-1)$-curve}), and in this case it is an $m$-section of $f$, or a smooth rational curve of self-intersection $-2$ (a \emph{$(-2)$-curve}) in which case it is an irreducible component of some fiber. This implies that at most 3 of the points $x_i$ are collinear, at most $6$ of them lie on a conic and there are no cubics passing through all points with a singular point at one of them.

The image of the  pencil of genus one curves defined by $f$ under $\pi$ is a pencil $\calP$ of curves of degree $3m$ with general member having singular points of multiplicity $m$ at the points $x_1,\ldots,x_9$ (appropriately interpreted if some of the $x_i$ are infinitely near to each other). This pencil of curves in $\mathbb{P}^2$ is called a \emph{Halphen pencil} of index $m$. If $m > 1$, the image of the multiple fiber is the unique cubic curve passing through the points $x_1,\ldots,x_9$ taken with multiplicity $m$. The points $\pi_{j,1}(x_j), j = 1,\ldots,9,$ are the intersection points of two general members of the Halphen pencil defined by the elliptic fibration on $S$. Their number can be less than $9$. 
The points $\pi_{j,2}(x_i), j = 2,\ldots,9,$ are the intersection points of the proper transforms of two general members of the pencil in $S_2$, and so on, until the proper transforms of the general members of the pencil on $S$ do not intersect.

The set $\Bs(\calP) = \{x_1,\ldots,x_9\}$ is the set of base points of the Halphen pencil $\calP$.  We  order the set $\Bs(\calP)$  as follows:
$$(x_1,\ldots,x_9) = (x_1^{(1)},\ldots,x_1^{(k_1)},x_2^{(1)},\ldots,x_2^{(k_2)},\ldots,x_s^{(1)},\ldots,x_s^{(k_s)}),$$
where
$$x_1^{(k_1)}\succ x_1^{(k_1-1)}\succ\cdots\succ x_1^{(1)} = q_1,\quad x_2^{(k_2)}\succ \cdots\succ x_2^{(2)}\succ x_2^{(1)} = q_2,\quad  x_s^{(k_s)}\succ \cdots\succ x_s^{(2)}\succ x_s^{(1)} = q_s.$$
Here $x_j^{(i)}\succ x_j^{(i-1)}$ is infinitely near to $x_j^{(i-1)}$ of order $1$. We have
$$k_1+\cdots+k_s = 9.$$

Let $\calE_i:=\pi_{10,i}^{-1}(x_i)$. It is called the \emph{exceptional configuration} over the base point $x_i$. We have $\calE_{i}\subseteq \calE_{i-1}$ if $x_i\succ x_{i-1}$ and the difference is a $(-2)$-curve. In our ordering we have $s$  chains 
$$\calR_i = R_i^{(1)}+\cdots+R_i^{(k_i-1)}$$
of such curves of lengths $k_i-1, i = 1,\ldots,s$ and $s$ irreducible exceptional configurations $E_i$ (which are $(-1)$-curves) that intersect $R_i^{(k_i-1)}$ with multiplicity 1. All other  exceptional configurations are of the form 
$E_i+R_i^{(k_i-1)}+\cdots+R_i^{(j)}, i = 1,\ldots,s, j = 1,\ldots,k_i-1$.

$$
\xy (-10,0)*{};
(0,0)*{};(25,0)*{}**\dir{-};(60,0)*{};(85,0)*{}**\dir{-};
(12,-2)*{};(35,5)*{}**\dir{-};(50,5)*{};(73,-2)*{}**\dir{-};
(5,3)*{R_1^{(1)}};(25,5)*{R_1^{(2)}};(63,5)*{R_1^{(k_i-1)}};(80,2)*{E_i};(42,5)*{\cdots\cdots};
\endxy
$$

Of course, the birational morphism $\pi:S\to \bbP^2$ is not unique, so the shape of the corresponding Halphen pencil and its base points could be different. Two Halphen pencils 
corresponding to the same surface $S$ are obtained from each other by a Cremona transformation of the plane.


 A choice of $\pi: S \to \bbP^2$ defines a \emph{geometric basis} $(e_0,e_1,\ldots,e_9)$ in $\Pic(S)$. Here $e_0 = c_1(\pi^*\calO_{\bbP^2}(1))$ and the other $e_i$'s are the divisor classes of the exceptional configurations $\calE_i$. We have 
$$e_0^2 = 1, \quad e_i^2 = -1,\  i\ne 0, \quad e_i\cdot e_j = 0,\  i\ne j$$  
and
$$K_S = -3e_0+e_1+\cdots+e_9.$$
As we explained in the introduction, a geometric basis defines a marking $\Pic(S)\to \sfI^{1,9}$ with the image of $K_S$ equal to the generator $\frakf$ of the radical of the orthogonal complement $\frakf^\perp$ in $\sfI^{1,9}$.

 \subsection{Some relevant fixed loci} 
The following Lemma is an elementary fact from linear algebra:

\begin{lemma}\label{fixedpoints} Let $T$ be an automorphism of $\bbP^2$ of finite order $n$ and let $\mathcal{F}$ be its set of fixed points. Then one of the following cases occurs:
\begin{itemize}
\item[(a)] $(p,n) = 1$ and $\mathcal{F}$ is the union of a line $L$ and a point outside $L$;
\item[(b)] $(p,n) = 1$, $n \geq 3$ and $\mathcal{F}$ consists of 3 non-collinear isolated points $q_1,q,q'$;
\item[(c)] $n = pk, (k,p) = 1$ and the fixed locus of $T^k$  is a point;
\item[(d)] $n = pk, (k,p) = 1$ and the fixed locus of $T^k$  is  a line $L$.
\end{itemize}
Every $T$-invariant line meets $\mathcal{F}$ in at least $1$ (if $(p,n) \neq 1$) or $2$ (if $(p,n) = 1$) points.
\end{lemma}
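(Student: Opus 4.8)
The plan is to diagonalize (or, in the wild case, bring to Jordan form) the linear map on $\bbk^3$ that induces $T$, and then read off the fixed locus in $\bbP^2$ from the eigenvalue data. Pick a matrix $A \in \GL_3(\bbk)$ representing $T$. Since $T$ has finite order $n$, so does $A$ up to scalar, i.e. $A^n = \lambda \cdot \id$ for some scalar $\lambda$; after rescaling $A$ we may assume $A^n = \id$.

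First I would treat the tame case $(p,n)=1$. Then $A$ satisfies the separable polynomial $X^n-1$, which splits into distinct linear factors over $\bbk$, so $A$ is diagonalizable with eigenvalues among the $n$-th roots of unity. If all three eigenvalues are equal, $T = \id$, contradicting $n \geq 2$ (or $n=1$, excluded); so there are either exactly two distinct eigenvalues or exactly three. If there are two distinct eigenvalues, the eigenspace of the repeated one is a $2$-dimensional subspace of $\bbk^3$, giving a line $L$ of fixed points in $\bbP^2$, and the other eigenspace is a single fixed point off $L$: this is case (a). If there are three distinct eigenvalues, the three eigenlines give three non-collinear fixed points $q_1, q, q'$, and this forces $n \geq 3$ since we need at least three distinct $n$-th roots of unity: this is case (b).

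Next the wild case $p \mid n$, say $n = p^a k$ with $(k,p)=1$ and $a \geq 1$. Set $m = n/p = p^{a-1}k$, so $T^m$ has order $p$. It suffices to analyze an order-$p$ automorphism: writing its matrix as $B$ with $B^p = \id$, we have $(B-\id)^p = B^p - \id = 0$ in characteristic $p$, so $B$ is unipotent, hence conjugate to a single Jordan block of size $1$, $2$, or $3$ with eigenvalue $1$. Size $1$ is the identity, excluded. A Jordan block of size $2$ (plus a $1\times 1$ block) has a $2$-dimensional kernel of $B-\id$, so $T^m$ fixes a line $L$: case (d). A single Jordan block of size $3$ has a $1$-dimensional kernel of $B-\id$, so $T^m$ fixes exactly one point: case (c). Note $T^k$ in the statement plays the role of this order-$p$ power once we absorb the prime-to-$p$ part; since $T^k$ has order $p^a$ and its $p^{a-1}$-th power has order $p$, and a unipotent automorphism and its powers have the same fixed locus (the kernel of $B-\id$ equals the kernel of $B^j - \id$ for $1 \le j < p$ because $B^j - \id = (B-\id)(\cdots)$ with the cofactor invertible on $\Ker(B-\id)^\perp$ — more simply, Jordan type is preserved under taking $p'$-power-free powers), the fixed locus of $T^k$ is exactly that of $B$, giving (c) or (d) as stated.

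For the final sentence, let $\ell \subseteq \bbP^2$ be a $T$-invariant line, so $T$ restricts to an automorphism $T|_\ell$ of $\ell \cong \bbP^1$ of order dividing $n$. An automorphism of $\bbP^1$ over an algebraically closed field either is the identity or has exactly one fixed point (if its linearization is a nontrivial unipotent, which can only happen when $p \mid \ord(T|_\ell)$) or exactly two fixed points (if its linearization is diagonalizable with distinct eigenvalues). Hence $T|_\ell$ has at least one fixed point always, and at least two when its order is prime to $p$ — and its fixed points lie in $\mathcal{F}$. If $(p,n)=1$ then $\ord(T|_\ell)$ is prime to $p$, so $\ell$ meets $\mathcal{F}$ in at least $2$ points; otherwise at least $1$ point. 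The main obstacle, such as it is, is bookkeeping in the wild case: making sure that the power $T^k$ named in the statement genuinely has the same fixed locus as an order-$p$ element, which amounts to the observation that for a unipotent $B$ and any $j$ not divisible by $p$, $\Ker(B-\id) = \Ker(B^j - \id)$, so passing between $T^m$, $T^k$, and their various $p'$-twists does not change the fixed set. Everything else is the standard normal-form dichotomy for $2\times 2$ and $3\times 3$ matrices of finite order.
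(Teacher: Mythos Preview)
The paper gives no proof of this lemma, calling it ``an elementary fact from linear algebra,'' so there is nothing to compare against. Your Jordan-form approach is the standard one; the tame case and the final sentence are handled correctly.

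In the wild case the mathematics is ultimately right but the exposition is muddled. The hypothesis $n = pk$ with $(k,p)=1$ already forces $p^2 \nmid n$, so $T^k$ has order exactly $p$: lift it to $B \in \GL_3$, rescale so $B^p = \id$ (possible since $\bbk$ is algebraically closed), conclude $(B-\id)^p = 0$, and read off the fixed locus from the Jordan type of $B$ --- a single size-$3$ block gives a point, a size-$2$ plus size-$1$ block gives a line. That is the whole argument. Your detour through $n = p^a k$ with general $a \ge 1$ is unnecessary and prompts the side claim that ``a unipotent automorphism and its powers have the same fixed locus,'' which is false for $p$-th powers: in characteristic $2$, if $U = I+N$ with $N$ a full $3\times 3$ nilpotent, then $\dim\Ker(U-I) = 1$ while $\dim\Ker(U^2-I) = \dim\Ker N^2 = 2$. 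Your parenthetical $\Ker(B-\id) = \Ker(B^j-\id)$ for $1 \le j < p$ is correct but does not cover the power $p^{a-1}$ you would need when $a \ge 2$. Since the lemma's hypothesis forces $a=1$, none of this affects correctness here; just drop the generalization and treat $T^k$ as an order-$p$ element from the outset.
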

%
%
%
%
%

Recall the structure of the fixed loci of irreducible curves of arithmetic genus one.

 \begin{lemma}\label{autoelliptic}  Let $E$ be an irreducible curve of arithmetic genus one over an algebraically closed field. For $g \in G$, let $E_{red}^g$ be the reduced fixed locus of $g$ and let $E^\#$ be the smooth locus of $E$.
 \begin{enumerate}
\item If $p\ne 2,3$ and $E$ is elliptic
\begin{center}
\begin{tabular}{|>{\centering\arraybackslash}m{1.8cm}|>{\centering\arraybackslash}m{3cm}|>{\centering\arraybackslash}m{2.5cm}|>{\centering\arraybackslash}m{2.5cm}|}
\hline
$j$ & $G$ & $\ord(g)$ & \vspace{1mm} $E_{red}^g$\\ [1mm] \hline
$\neq 0,1$ & $\bbZ/2\bbZ$ & $2$ & $(\bbZ/2\bbZ)^2$ \\ \hline
$1$ & $\bbZ/4\bbZ$ & 
$\begin{cases}
2 \\
4
\end{cases}$ &
$\begin{cases}
(\bbZ/2\bbZ)^2 \\
\bbZ/2\bbZ
\end{cases}$ \\ \hline
$0$ & $\bbZ/6\bbZ$ & 
$\begin{cases}
2\\
3 \\
6
\end{cases}$ &
$\begin{cases}
(\bbZ/2\bbZ)^2 \\
\bbZ/3\bbZ \\
\{*\}
\end{cases}$ \\ \hline
\end{tabular}
\end{center}
  \item If $p = 3$ and $E$ is elliptic
\begin{center}
\begin{tabular}{|>{\centering\arraybackslash}m{1.8cm}|>{\centering\arraybackslash}m{3cm}|>{\centering\arraybackslash}m{2.5cm}|>{\centering\arraybackslash}m{2.5cm}|}
\hline
$j$ & $G$ & $\ord(g)$ & \vspace{1mm} $E_{red}^g$\\ [1mm] \hline
$\neq 0$ & $\bbZ/2\bbZ$ & $2$ & $(\bbZ/2\bbZ)^2$ \\ \hline
$0$ & $\bbZ/3\bbZ \rtimes \bbZ/4\bbZ$ & 
$\begin{cases}
2 \\
3,6 \\
4
\end{cases}$ &
$\begin{cases}
(\bbZ/2\bbZ)^2 \\
\{*\} \\
\bbZ/2\bbZ
\end{cases}$ \\ \hline
\end{tabular}
\end{center}
 \bigskip
  \item If $p = 2$ and $E$ is elliptic
\begin{center}
\begin{tabular}{|>{\centering\arraybackslash}m{1.8cm}|>{\centering\arraybackslash}m{3cm}|>{\centering\arraybackslash}m{2.5cm}|>{\centering\arraybackslash}m{2.5cm}|}
\hline
$j$ & $G$ & $\ord(g)$ & \vspace{1mm} $E_{red}^g$\\ [1mm] \hline
$\neq 0$ & $\bbZ/2\bbZ$ & $2$ & $\bbZ/2\bbZ$ \\ \hline
$0$ & $Q_8 \rtimes \bbZ/3\bbZ$ & 
$\begin{cases}
2,4,6 \\
3
\end{cases}$ &
$\begin{cases}
\{*\} \\
\bbZ/3\bbZ
\end{cases}$ \\ \hline
\end{tabular}
\end{center}
\bigskip
 \item If $E$ is cuspidal
\begin{center}
\begin{tabular}{|>{\centering\arraybackslash}m{1.8cm}|>{\centering\arraybackslash}m{3cm}|>{\centering\arraybackslash}m{2.5cm}|>{\centering\arraybackslash}m{2.5cm}|}
\hline
$G$ & $g$ & \vspace{1mm} $|E_{red}^g|$ & $E_{red}^g \cap E^\#$\\ [1mm] \hline
$\bbG_a \rtimes \bbG_m$ & $\begin{cases}
\in \bbG_a \\
\not \in \bbG_a
\end{cases}$ & $\begin{cases}
1 \\
2
\end{cases}$ & $\begin{cases}
\emptyset \\
\{*\}
\end{cases}$ \\ \hline
\end{tabular}
\end{center}
\bigskip
 \item If $E$ is nodal
\begin{center}
\begin{tabular}{|>{\centering\arraybackslash}m{1.8cm}|>{\centering\arraybackslash}m{3cm}|>{\centering\arraybackslash}m{2.5cm}|>{\centering\arraybackslash}m{2.5cm}|}
\hline
$G$ & $g$ & \vspace{1mm} $|E_{red}^g|$ & $E_{red}^g \cap E^\#$\\ [1mm] \hline
$\bbG_m \rtimes \bbZ/2\bbZ$ & $\begin{cases}
\in \bbG_m \\
\not \in \bbG_m, p \neq 2 \\
\not \in \bbG_m, p = 2
\end{cases}$ & $\begin{cases}
1 \\
3 \\
2
\end{cases}$ & $\begin{cases}
\emptyset \\
\bbZ/2\bbZ \\
\{*\}
\end{cases}$ \\ \hline
\end{tabular}
\end{center}

\end{enumerate}
\end{lemma}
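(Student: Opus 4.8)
The plan is to treat the five items separately, using the Néron–model/kernel description in the elliptic cases and the normalization in the singular cases. As a starting point I would recall the classical determination of $G=\Aut(E,O)$ for an elliptic curve $E$ with origin $O$: it is $\bbZ/2\bbZ$, $\bbZ/4\bbZ$ or $\bbZ/6\bbZ$ according as $j\neq 0,1728$, $j=1728$ or $j=0$, except that if $p\in\{2,3\}$ and $j=0$ then $E$ is supersingular and $G$ is $Q_8\rtimes \bbZ/3\bbZ\cong\SL_2(\bbF_3)$ (for $p=2$) or $\bbZ/3\bbZ\rtimes\bbZ/4\bbZ$ (for $p=3$); in the normalization of $j$ used in the table, $1728$ is written as $1$. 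This gives the $G$-columns and the list of possible orders $\ord(g)$ directly.

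For items (1)--(3) the key point is that, for $g\in G\smallsetminus\{\id\}$, the fixed locus $E^g$ is the kernel group scheme $\ker(g-\id)$, so $E_{red}^g$ is a finite subgroup of $E(\bbk)$ whose order is the separable degree of the isogeny $g-\id$. Viewing $g$ in $\operatorname{End}(E)$, it is a unit, so $\deg(g-\id)=(1-g)(1-\hat g)=2-\Tr(g)$; the minimal polynomial of $g$ over $\bbQ$ is $x+1$, $x^2+x+1$, $x^2+1$, $x^2-x+1$ for $\ord(g)=2,3,4,6$, hence $\deg(g-\id)=4,3,2,1$ respectively. When this degree is prime to $p$ the isogeny is automatically separable, so $E_{red}^g=\ker(g-\id)$ has that order: for $\ord(g)=2$ it is $E[2]\cong(\bbZ/2\bbZ)^2$, for $\ord(g)=3$ resp.\ $4$ it is cyclic of order $3$ resp.\ $2$, and for $\ord(g)=6$ it is trivial. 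The remaining cases are $p\mid\deg(g-\id)$, which forces $p\in\{2,3\}$ and $j=0$: if $\ord(g)=2$ then $g=[-1]$ and $E_{red}^g=E[2](\bbk)$, which by definition of (super)singularity has order $2$ if $E$ is ordinary (the case $j\neq 0$ in characteristic $2$) and $1$ if $E$ is supersingular (the case $j=0$); if $\ord(g)\in\{3,4\}$, then $\ord(g)$ is a power of $p$, so $g$ acts on the one-dimensional space $H^0(E,\Omega^1_E)$ by a scalar $\lambda$ with $\lambda^{\ord(g)}=1$, whence $\lambda=1$, so $g-\id$ is inseparable; since $\deg(g-\id)=p$ it is purely inseparable and $E_{red}^g=\{*\}$. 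Collating these values against the three possibilities for $G$ reproduces the three tables.

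For items (4) and (5) I would pass to the normalization $\nu\colon\tilde E=\bbP^1\to E$ and set $Z=\nu^{-1}(\Sing E)$, a single point $c$ in the cuspidal case and a pair $\{p_1,p_2\}$ in the nodal case. Every automorphism of $E$ lifts uniquely to an automorphism of $\bbP^1$ preserving $Z$, and conversely such an automorphism descends, because the only extra condition is compatibility with the conductor, which is automatic (an element of $\Aut(\bbP^1)$ fixing $c$ preserves order of vanishing, hence the local subring $\bbk+t^2\bbk[[t]]$; the ordinary node is symmetric in its two branches). Thus $\Aut(E)$ is the stabilizer of $Z$ in $\PGL_2$, namely $\bbG_a\rtimes\bbG_m$ (cuspidal) or $\bbG_m\rtimes\bbZ/2\bbZ$ (nodal), compatibly with the identification of $E^\#$ with $\bbG_a$ resp.\ $\bbG_m$. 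Since the lift of any $g$ preserves $Z$, the singular point lies in $E^g$, and the rest of $E_{red}^g$ is $\nu$ of the fixed points of the lift on $\bbP^1\smallsetminus Z$: a nontrivial element of $\bbG_a$ is unipotent with $c$ as its only fixed point, giving $|E_{red}^g|=1$ and $E_{red}^g\cap E^\#=\emptyset$; an element not in $\bbG_a$ has a second, necessarily smooth, fixed point, giving $|E_{red}^g|=2$ with one point in $E^\#$; a nontrivial element of $\bbG_m$ fixes $p_1,p_2$, both mapping to the node, giving $|E_{red}^g|=1$ and $E_{red}^g\cap E^\#=\emptyset$; and an element $g\notin\bbG_m$ acts on $E^\#\cong\bbG_m$ as $z\mapsto c/z$, whose fixed locus $\{z^2=c\}$ is a $\mu_2$-torsor, i.e.\ two reduced points (becoming $\bbZ/2\bbZ$ after an appropriate choice of origin) if $p\neq 2$ and a single non-reduced point if $p=2$, so together with the node $|E_{red}^g|=3$ with $E_{red}^g\cap E^\#\cong\bbZ/2\bbZ$ when $p\neq 2$, and $|E_{red}^g|=2$ with $E_{red}^g\cap E^\#=\{*\}$ when $p=2$. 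This is exactly what the last two tables assert.

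The only genuinely delicate part is the separability bookkeeping in small characteristic for items (2)--(3): one must correctly isolate the supersingular cases, recall that $[2]=[p]$ is purely inseparable of degree $p^2$ on a supersingular curve in characteristic $2$, and argue that an automorphism of order a power of $p$ acts trivially on invariant differentials so that $g-\id$ becomes (purely) inseparable. Once these points are in place, everything else is a mechanical matching of rows; the singular cases reduce to the elementary geometry of $\PGL_2$ acting on $\bbP^1$.
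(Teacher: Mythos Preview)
Your argument is correct. The paper does not actually supply a proof of this lemma: it is introduced with the sentence ``Recall the structure of the fixed loci of irreducible curves of arithmetic genus one'' and the tables are stated without justification, as standard background. So there is no ``paper's own proof'' to compare against; your write-up fills in precisely what the authors chose to omit.

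A couple of minor expository points, none of which affect correctness. First, in the elliptic case you silently use that every $g\in\Aut(E,O)$ satisfies its reduced characteristic polynomial $x^2-\Tr(g)x+1$ in $\operatorname{End}(E)$, which is what lets you read off $\Tr(g)$ from the cyclotomic polynomial even when $\operatorname{End}(E)$ is a (noncommutative) quaternion order; it may be worth saying this explicitly, since in the supersingular cases the ``minimal polynomial over $\bbQ$'' language could otherwise be misread. Second, your sentence ``if $\ord(g)\in\{3,4\}$, then $\ord(g)$ is a power of $p$'' is true only in the context you have set up (namely $p\mid\deg(g-\id)$ with $j=0$), so it deserves a word to that effect. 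Third, in the cuspidal/nodal cases the lemma's $G$ is the full automorphism group of $E$, whereas in the elliptic cases it is the stabilizer of the origin; you handle this correctly, but the shift in meaning of $G$ is the paper's convention, not an oversight on your part. With those small clarifications, your proof would serve perfectly well as the missing justification.
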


\section{Action of $\Aut(S)$ on the jacobian surface}\label{action}
\label{sec: actiononjacobian}
All the facts mentioned in this subsection  without a reference to their proofs can be found in \cite{Raynaud}. 

Let $f:X\to B$ be a genus one fibration as in Subsection \ref{subsection2.1}. For any $B$-scheme $T\to B$ we denote by $f_T: X_T \to T$ the morphism obtained from $f$ by base change along $T\to B$. We denote by $\calP_{X/B}$ the relative \emph{Picard functor}. It is a sheaf in the  fppf-topology on $B$ associated to the pre-sheaf $T\to \Pic(X_T)$. It is automatically a sheaf in the \'etale topology and coincides  with the sheaf $R^1f_*\bbG_m$. There is a natural injective homomorphism $\Pic(X_T)/f_T^*(\Pic(T))\to \calP_{X/B}(T)$, which is bijective if $X(B)\ne \emptyset$ (or if the cohomological Brauer group $H^2(T,\bbG_m)$ of $T$ is trivial).

For every  point $t \in B$ including the generic point, the fiber of $\calP_{X/B,t}$ is representable by a one-dimensional commutative group scheme locally of finite type $\mathbf{\Pic}_{X_t/t}$, the \emph{Picard scheme} of $X_t$. Its connected component of the identity $ \mathbf{\Pic}_{X_t/t}^0$ is the (generalized) jacobian of the curve $X_t$ and denoted by $J(X_t)$. If $X_t = \sum m_t^{(i)}X_t^{(i)}$, then $J(X_t)(\Bbbk)$ is the group of isomorphism classes of invertible sheaves $\calL$ on $X_t$ such that $\deg \calL\otimes_{\calO_X}\calO_{X_t^{(i)}} = 0$  for all $i$. Recall that the degree of an invertible sheaf $\calL$ on a complete curve $Z$ over a field is defined to be $\chi(Z,\calL)-\chi(Z,\calO_Z)$.

\begin{lemma}\label{oort} Suppose $f:X\to B$ is cohomologically flat. Let $X_t = m_t\bar{X}_t$. Then the restriction homomorphism 
$$\mathbf{\Pic}_{X_t/t} \to \mathbf{\Pic}_{\bar{X}_t/t}$$
is an isomorphism.
\end{lemma}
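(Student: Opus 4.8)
The plan is to interpolate between $\bar{X}_t$ and $X_t$ by a chain of square-zero thickenings inside the regular surface $X$, and to reduce the assertion to a cohomology vanishing on $\bar{X}_t$ that is governed precisely by cohomological flatness.

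I may assume $m := m_t > 1$, the case $m=1$ (where $\bar{X}_t = X_t$) being trivial. For $1 \le j \le m$ let $Z_j \subseteq X$ be the closed subscheme defined by the invertible ideal $\calO_X(-j\bar{X}_t) = \calO_X(-\bar{X}_t)^{\otimes j}$, so that $Z_1 = \bar{X}_t$, $Z_m = X_t$, and each $Z_j \hookrightarrow Z_{j+1}$ is a closed immersion whose ideal sheaf is
$$\mathcal{I}_j \;:=\; \calO_X(-j\bar{X}_t)/\calO_X(-(j+1)\bar{X}_t)\;\cong\; \calO_X(-j\bar{X}_t)|_{\bar{X}_t} \;=\; \mathcal{N}_t^{\otimes(-j)},$$
a line bundle on $\bar{X}_t$. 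Since the product of two local sections of $\calO_X(-j\bar{X}_t)$ lies in $\calO_X(-2j\bar{X}_t) \subseteq \calO_X(-(j+1)\bar{X}_t)$ once $j \ge 1$, each $\mathcal{I}_j$ is square-zero in $\calO_{Z_{j+1}}$. It therefore suffices to prove that restriction induces an isomorphism $\mathbf{\Pic}_{Z_{j+1}/t} \to \mathbf{\Pic}_{Z_j/t}$ for $1 \le j \le m-1$. For this I would apply $Rf_*$, in the fppf topology, to the exponential sequence of abelian sheaves
$$0 \longrightarrow \mathcal{I}_j \xrightarrow{\ x\,\mapsto\, 1+x\ } \calO_{Z_{j+1}}^{*} \longrightarrow \calO_{Z_j}^{*} \longrightarrow 0$$
attached to this square-zero immersion: the segment $R^1 f_* \mathcal{I}_j \to R^1 f_*\calO_{Z_{j+1}}^{*} \to R^1 f_* \calO_{Z_j}^{*} \to R^2 f_* \mathcal{I}_j$ of the long exact sequence, together with the identifications $R^1 f_{*}\calO_{Z_{j+1}}^{*} = \mathbf{\Pic}_{Z_{j+1}/t}$ and $R^1 f_* \calO_{Z_j}^{*} = \mathbf{\Pic}_{Z_j/t}$ (definition of the relative Picard functor), reduces everything to $R^1 f_* \mathcal{I}_j = R^2 f_* \mathcal{I}_j = 0$. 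Since the $\mathcal{I}_j$ are line bundles on the proper curve $\bar{X}_t$ over $\Bbbk$, cohomology and base change shows this in turn follows once $H^0(\bar{X}_t, \mathcal{N}_t^{\otimes(-j)}) = H^1(\bar{X}_t, \mathcal{N}_t^{\otimes(-j)}) = 0$ for $1 \le j \le m-1$, the higher cohomology vanishing automatically since $\bar{X}_t$ is one-dimensional.

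The heart of the matter is this vanishing, and it is where cohomological flatness enters. Since $f$ has no wild fibers, $m\bar{X}_t$ is a tame multiple fiber, so $\mathcal{N}_t$ has order exactly $m$ in $\Pic(\bar{X}_t)$; hence $\mathcal{N}_t^{\otimes(-j)} \not\cong \calO_{\bar{X}_t}$ for $0 < j < m$. On the other hand $\bar{X}_t$ is a connected Gorenstein curve of arithmetic genus one (a Cartier divisor on the regular surface $X$, with $\bar{X}_t^{2} = \tfrac1m \bar{X}_t \cdot X_t = 0$ and $K_X \cdot \bar{X}_t = \tfrac1m K_X \cdot X_t = 0$), and every component of $\bar{X}_t$ is contracted by $f$, so $\mathcal{N}_t = \calO_{\bar{X}_t}(\bar{X}_t)$ — and hence each $\mathcal{N}_t^{\otimes(-j)}$ — has degree zero on every component of $\bar{X}_t$, i.e. defines a non-trivial element of $\mathbf{\Pic}^{0}_{\bar{X}_t/t}(\Bbbk)$. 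I would then invoke the standard fact that a numerically trivial, non-trivial line bundle on a curve of arithmetic genus one has no non-zero global sections — a non-zero section would be non-vanishing on every component and therefore trivialise the bundle, and one may also read this off from $\mathbf{\Pic}^{0}_{\bar{X}_t/t}$ being $\bbG_a$, $\bbG_m$, or an elliptic curve — so $H^0 = 0$, and then $H^1 = 0$ because $\chi(\bar{X}_t, \mathcal{N}_t^{\otimes(-j)}) = 0$.

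The main obstacle I anticipate is exactly this last vanishing: cohomological flatness is needed precisely to know that $\mathcal{N}_t^{\otimes(-j)}$ is non-trivial for $0 < j < m$, and one then has to push the "no sections" statement through the structure theory of degree-zero line bundles on curves of arithmetic genus one, being somewhat careful when $\bar{X}_t$ is non-reduced — as it is for multiple fibers of additive type such as $I_n^*,\, IV^*,\, III^*,\, II^*$ — since then one cannot argue component-by-component quite so naively. The remaining ingredients (the square-zero reduction, the exponential sequence, the identification of $R^1f_*\bbG_m$ with the relative Picard functor, and cohomology and base change) are formal; for all of these I would refer to \cite{Raynaud}.
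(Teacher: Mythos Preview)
Your proof is correct but follows a genuinely different route from the paper's. The paper argues by a pure dimension count: cohomological flatness gives $h^1(X_t,\calO_{X_t})=1$, hence $\dim\mathbf{\Pic}_{X_t/t}=1$ via the identification of $H^1(\calO_{X_t})$ with the Lie algebra of the Picard scheme; then a result of Oort says the kernel of the restriction map is a unipotent group built as successive extensions by cohomology groups of nilpotent ideal sheaves, and since $\dim\mathbf{\Pic}_{\bar{X}_t/t}=1$ as well, this kernel must be zero-dimensional, hence trivial. Your argument instead unpacks Oort's black box explicitly: the filtration $Z_1\subset\cdots\subset Z_m$ together with the truncated exponential sequence is exactly the filtration by nilpotent ideals that Oort invokes, and you compute each graded piece directly rather than bounding its dimension abstractly.

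What your approach buys is transparency --- one sees concretely that the obstruction lives in $H^1(\bar{X}_t,\mathcal{N}_t^{-j})$ and that tameness is precisely what kills it --- at the cost of having to justify the ``no sections'' step for a non-trivial element of $\mathbf{\Pic}^0(\bar{X}_t)$ when $\bar{X}_t$ is non-reduced. You flag this honestly; the naive component-by-component argument does need supplementing, since a section could in principle vanish on the entire reduction. The clean fix is the classical lemma on indecomposable curves of canonical type (e.g.\ \cite[Chapter 4]{CDL} or BHPV), which gives $h^0(\bar{X}_t,L)\le 1$ for $L$ of multidegree zero with equality iff $L\cong\calO$. Alternatively --- and this is closer in spirit to the paper's dimension count --- you can bypass tameness entirely: the surjections $H^1(\calO_{Z_{j+1}})\twoheadrightarrow H^1(\calO_{Z_j})$ sandwich $1=h^1(\calO_{\bar{X}_t})\le h^1(\calO_{Z_j})\le h^1(\calO_{X_t})=1$, forcing all $h^0(\calO_{Z_j})=1$ and hence $H^0(\mathcal{N}_t^{-j})=0$ directly from the long exact sequence.
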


\begin{proof}  By one of the properties of cohomological flatness, we have $h^0(X_t,\calO_{X_t}) = 1$. Since the Euler-Poincar\'e characteristic of the structure sheaves of geometric fibers is constant, we obtain $h^1(X_T,\calO_{X_t}) = 1$. It is known that the linear space $H^1(X_t,\calO_{X_t})$ has a structure of a Lie algebra isomorphic to  the Lie algebra of the Picard scheme of $X_t$. Thus $\dim \mathbf{\Pic}_{X_t/t} = 1$. The kernel of the restriction homomorphism is a unipotent algebraic group equal to an extension of vector spaces of the cohomology groups of some nilpotent sheaves on $X_t$ (see \cite[Section 6, Corollaire]{Oort}). Then, $\dim \mathbf{\Pic}_{X_t/t} = 1$ shows that the dimension of these spaces is zero, and hence the kernel is trivial. Since $\dim \mathbf{\Pic}_{\bar{X}_t/t} = 1$, this proves the claim.
\end{proof}

We denote by $\calP_{X/B}'$ (resp. $\calP_{X/B}^0$) the subsheaf of $\calP_{X/B}$ whose values on any $T \to B$ are elements of $\calP_{X/B}(T)$ such that their restriction  to any fiber of $X_T\to T$ are the isomorphism classes of invertible sheaves of degree 0 (resp. degree 0 on each irreducible component of the fiber).  In particular,  the fiber of $\calP_{X/B}^0$ over any point $t\in B$  is representable by the jacobian variety $J(X_t)$. 

It is a natural guess that the Picard functor $\calP_{X/B}'$ (resp. $\calP_{X/B}^0$) must be related to the group scheme $J^\sharp(X) \to B$ (resp. its connected component of identity). However, neither of these functors is representable by a separated group scheme over $B$, unless all fibers of $f$ are non-multiple. The reason is very simple: If $f$ admits a multiple fiber $mF$, then the restriction map $\calP_{X/B}(B) \to \calP_{X/B}(\Spec k(B))$ is not injective, since the ideal sheaf of $F$ is in its kernel.

To remedy this situation, Raynaud considers the subsheaf $\calE_{X/B}$ of $\calP_{X/B}$ whose sections over an open subset $U$ (in the \'etale topology) restrict to the identity section of $\calP_{X/B}$ over the generic point of $U$. Then he proves that the quotient sheaf 
$\calQ_{X/B} = \calP_{X/B}/\calE_{X/B}$ is the maximal separated quotient sheaf of $\calP_{X/B}$ and it is representable by a separated group scheme, locally of finite type over $B$. We have an exact sequence of sheaves of abelian groups
\begin{equation}\label{ses: EPQ}
0\to \calE_{X/B}\to \calP_{X/B}\to \calQ_{X/B}\to 0.
\end{equation}
Let $\calQ_{X/B}'$ (resp. $\calQ_{X/B}^0$) be the image of $\calP_{X/B}'$ (resp. $\calP_{X/B}^0$). Then Raynaud proves that $\calQ_{X/B}'$ is representable by the group scheme $J^\sharp(X)$ and we have $\calQ_{X/B}^0 = (J^\sharp(X))^0$.

%

For any closed point $t$ of $B$, let $B(t) = \Spec \calO_{B,t}^h$ be the spectrum of the henselization of the local ring $\calO_{B,t}$ (or its formal completion) and $f(t): X(t) = X\times_BB(t)\to B(t)$ be the base change morphism. Since the base change is formally \'etale, the scheme $X(t)$ is regular and $X(t)\to B(t)$ is a genus one fibration with local base $B(t)$. The fiber over the closed point of $B(t)$ is isomorphic to the fiber $X_t$ under the projection $X(t)\to X$. We denote by $\eta_t$ the generic point of $B(t)$ and by $K_t$ its residue field, the field of fractions of $\calO_{B,t}^h$. 

Let $D_t$ be the free abelian group generated by the irreducible components $X_t^{(i)}$ of the fiber $X_t$ over the closed point of $B(t)$. Let  $X_t^{(i)}\cdot X_t^{(j)}$  be equal to the degree of $\calO_{X_t^{(i)}}$ on $X_t^{(j)}$. One shows that the matrix $(X_t^{(i)}\cdot X_t^{(j)})$  defines a symmetric bilinear form on $D_t$. Let $i:D_t\to \Pic(X(t))$ be the natural map that sends a component $X_t^{(i)}$ to the isomorphism class of the invertible sheaf $\calO_{X(t)}(X_t^{(i)})$. Its kernel is generated by $[X_t] = \sum m_t[X_t^{(i)}]$ and its image $\bar{D}_t$ is a finitely generated subgroup of $\Pic(X(t))$ with torsion subgroup of order $m_t$ generated by $\calO_{X(t)}(\bar{X}_t)$. The symmetric bilinear form on $D_t$ defines a symmetric bilinear form on $\bar{D}_t/\Tors$ and equips it with a  structure of an even negative definite quadratic lattice, a root lattice of type $A_n,D_n,E_n$, where $n$ is equal to the subscript in the notation of the type of the fiber. 

Let $\Disc_t$ denote the discriminant group of this lattice. It is a finite abelian group isomorphic to the group of symmetries of the  set of irreducible components of $X_t$ preserving the incidence relation. For example, if $X_t$ is of type $\tilde{A}_n$, the discriminant group is isomorphic to $\bbZ/(n+1)\bbZ$ and if $X_t$ is of type $\tilde{E}_8$, then $\Disc_t$ is trivial. The order of $\Disc_t$ is equal to the number of reduced components of $\bar{X}_t$.

Now we are ready to describe the sheaf  $\calE_{X/B}$. 

\begin{proposition}\label{prop: SheafE} Assume that $B = B(t)$ is local and $f:X = X(t)\to B$ is cohomologically flat.  Then  
\begin{enumerate}
\item $(\calE_{X/B})_t \cong \bar{D}_t$,
\item $(\calE_{X/B})_t\cap (\calP_{X/B}^0)_t = \Tors(\bar{D}_t) \cong \bbZ/m_t\bbZ$,
\item $(\calQ_{X/B}')_t/(\calQ_{X/B}^0)_t \cong  \Disc_t$,
\item $\calQ_{X/B}'$ is the  N\'eron model of $J(X_\eta)$, i.e. it coincides with the sheaf $(i_\eta)_*(J(X_\eta))$, where $i_\eta:\eta\hookrightarrow B$ is the inclusion morphism of the generic point. In particular $\calQ_{X/B}'(B) = J(X_\eta)(K)$.
\end{enumerate}
\end{proposition}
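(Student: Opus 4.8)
The plan is to pass everything to global sections over the strictly henselian base $B=B(t)$ and then combine elementary divisor-class and intersection-form computations with the structural results of Raynaud quoted above. Since the residue field $\bbk$ of $B$ is algebraically closed, $B$ is strictly henselian, so for every \'etale sheaf $\calF$ on $B$ the stalk $(\calF)_t$ equals $\calF(B)$; moreover $\Pic(B)=0$ and, as smooth curves over $\bbk$ and their function fields have trivial Brauer group, $H^2(B,\bbG_m)=H^2(\eta,\bbG_m)=0$, whence $\calP_{X/B}(B)=\Pic(X)$ and $\calP_{X/B}(\eta)=\Pic(X_\eta)$, with the restriction map being pullback of line bundles along $X_\eta\hookrightarrow X$. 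For (1), $\calE_{X/B}(B)$ is by definition the kernel of this map; applying the localization sequence for class groups of the regular integral scheme $X$ --- whose only codimension-one points over $t$ are the generic points of the $X_t^{(i)}$ --- gives exactness of $D_t\xrightarrow{i}\Pic(X)\to\Pic(X_\eta)\to 0$, so the kernel is $\im(i)=\bar D_t$. Here $\ker(i)=\bbZ[X_t]$ because a rational function on $X$ whose divisor is supported on $X_t$ restricts to a unit on the geometrically integral generic fibre, hence lies in $K_t^\times$ and is pulled back from $B$; consequently $\bar D_t=D_t/\bbZ[X_t]$ has torsion subgroup $\bbZ\bar X_t/\bbZ[X_t]\cong\bbZ/m_t\bbZ$ generated by $\calO_X(\bar X_t)$.

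For (2), by (1) an element of $(\calE_{X/B})_t$ is the class of some $w=\sum_j a_jX_t^{(j)}\in D_t$, and it lies in $(\calP^0_{X/B})_t$ exactly when $\deg(\calO_X(w)|_{X_t^{(j)}})=w\cdot X_t^{(j)}=0$ for all $j$, i.e. when $w$ lies in the radical of the intersection form on $D_t$. For a connected genus one fibre this radical is the rank-one group $\bbZ\bar X_t$: it contains the primitive vector $\bar X_t$ (one component occurs in $\bar X_t$ with multiplicity $1$), and the form, being negative semidefinite by Zariski's lemma, has corank $1$. Hence $(\calE_{X/B})_t\cap(\calP^0_{X/B})_t=i(\bbZ\bar X_t)\cong\bbZ\bar X_t/\bbZ[X_t]\cong\bbZ/m_t\bbZ$, which is $\Tors(\bar D_t)$ by the first paragraph.

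For (3) and (4), recall from the cited results of Raynaud that $\calQ'_{X/B}$ is represented by $J^\sharp(X)$, the smooth locus of the relatively minimal regular model $J(X)\to B$ of $J(X_\eta)$, and that $\calQ^0_{X/B}=(J^\sharp(X))^0$. For (3), smoothness of $J^\sharp(X)\to B$ and Hensel's lemma show that $J^\sharp(X)(B)$ surjects onto the group $\pi_0(J^\sharp(X)_t)$ of connected components of the special fibre, with kernel $J^\sharp(X)^0(B)$; hence $(\calQ'_{X/B})_t/(\calQ^0_{X/B})_t\cong\pi_0(J^\sharp(X)_t)$. Since passing to the Jacobian does not change the Kodaira type of the fibre at $t$ (only possibly its multiplicity), $J(X)_t$ is a Kodaira fibre of the same type as $X_t$, and the group of components of the smooth locus of a Kodaira fibre is, by the classical computation, the discriminant group $\Disc_t$ of the associated root lattice; this is (3). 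For (4), since $J(X)\to B$ carries the zero section it has no multiple fibre, and then the standard theory of N\'eron models --- the smooth locus of a relatively minimal regular model admitting a section is the N\'eron model of its generic fibre --- identifies $J^\sharp(X)$ with $(i_\eta)_*(J(X_\eta))$ as \'etale sheaves; evaluating on $B$ and using the N\'eron mapping property gives $\calQ'_{X/B}(B)=J(X_\eta)(K)$.

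The substantial inputs here are the cited theorems of Raynaud, which rely on cohomological flatness of $f$, together with the classical identification of the smooth part of a relatively minimal regular model with section with the N\'eron model of its generic fibre --- the latter requiring, in the quasi-elliptic case, the appropriate N\'eron model theory for unipotent group schemes. Everything else is the elementary bookkeeping carried out above; the main place where care is needed is checking that the torsion of $\bar D_t$ has order exactly $m_t$, i.e. that $\ker(i)$ is no larger than $\bbZ[X_t]$.
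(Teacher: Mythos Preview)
The paper gives no proof of this proposition: it is stated as a result from Raynaud's \textit{Sp\'ecialisation du foncteur de Picard}, to which the opening line of the section refers all unproved assertions. So there is no in-paper argument to compare against; one can only compare your argument to the logical role the proposition plays.

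Your proofs of (1), (2), and (4) are correct and standard. The localization sequence for $\Pic$ on the regular scheme $X$ gives $(\calE_{X/B})_t=\bar D_t$ exactly as you say; the identification of the radical of the intersection form on $D_t$ with $\bbZ\bar X_t$ yields (2); and the fact that the smooth locus of a minimal regular proper model with section is the N\'eron model gives (4). These are the arguments one finds in Raynaud and in \cite{Bosch}.

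Your argument for (3), however, inverts the paper's logic. You correctly identify $(\calQ'_{X/B})_t/(\calQ^0_{X/B})_t$ with $\pi_0(J^\sharp(X)_t)$ via representability and Hensel, but then you invoke the statement that $J(X)_t$ and $X_t$ have the same Kodaira type to conclude that this component group equals $\Disc_t$. Immediately after the proposition the paper \emph{derives} fiber-type preservation as a consequence of the isomorphism $\Phi_t$ coming from (3); within the paper's framework your step is therefore circular. Raynaud's route (and the one in \cite[Theorem~9.6.1]{Bosch}) goes the other way: one computes the quotient directly from the exact sequence $0\to\calE\to\calP'\to\calQ'\to 0$ and the multi-degree map, obtaining the cokernel of the intersection-form map $D_t/\bbZ\bar X_t\to(\bbZ\bar X_t)^\perp\subseteq D_t^*$, which is $\Disc_t$ by definition --- with no reference to the fiber of $J(X)$. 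Your version can be made non-circular by citing fiber-type preservation from an independent source such as \cite{Lorenzini}, but you should make that external input explicit rather than treat it as a known background fact.
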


In particular, there is a natural isomorphism
\beq\label{raynaud2}
\Phi_t:\Disc_t \to J(t)^\sharp/J(t)^0.
\eeq
We note that one can prove the existence of the isomorphism $\Phi_t$ without the assumption that $f$ is cohomologically flat \cite[Theorem 9.6.1]{Bosch}. This yields the fundamental fact that the types of fibers of $f:X\to B$ and $J(f):J(X)\to B$ over the same point are the same even for multiple fibers.

Suppose $g\in \Aut(X)$ preserves the genus one fibration (this is automatic if $B$ is local). Assume first that $g$ acts identically on the base $B$, i.e. $g\in \Aut(X/B)$. Then, for any $T\to B$, the automorphism $g$ defines an automorphism $g\times \id_T:X_T\to X_T$ and hence a homomorphism $g^*:\Pic(X_T)\to \Pic(X_T)$. Passing to   the associated sheaf we obtain an automorphism of sheaves $g^*:\calP_{X/B}\to \calP_{X/B}$. If $g$ does not act identically on $B$ we modify the definition of $g^*$ as follows. Let $h:B\to B$ be the image of $g$ in $\Aut(B)$. Then $h$ defines an isomorphism of sheaves $\calP_{X/B}\to h^*(\calP_{X/B})$. On the other hand, we have  a canonical isomorphism $h^*(\calP_{X/B}) \to \calP_{X'/B}$, where $X'\to B$ denotes the base change with respect to $h:B\to B$ (\cite[Example 9.4.4]{Kleiman}).  Now, for any $T\to B$, the map $X_T\to X'_T$ defines  a functorial isomorphism $g_T^*:\Pic(X'_T)\to \Pic(X_T)$, hence an isomorphism of sheaves $g':\calP_{X'/B}\to \calP_{X/B}$. Composing it with $h^*$, we get an isomorphism of sheaves $$g^*:\calP_{X/B}\to \calP_{X/B}.$$

Obviously,  $g^*$ leaves invariant the subsheaves $\calP_{X/B}'$ and $\calE_{X/B}$ and hence defines an isomorphism of group schemes $\varphi(g): J^\sharp(X) = \calQ_{X/B}' \to \calQ_{X/B}' = J^\sharp(X)$. If $f$ is quasi-elliptic, $\varphi(g)$ extends to an isomorphism at the generic point of the curve $\mathfrak{C}$ of cusps on the regular surface $J(X)$. Thus $\varphi(g)$ is an isomorphism on an open subset of $J(X)$ whose complement is supported in a finite set of singular fibers. Since $f$ is a relatively minimal model, it is easy to see that $\varphi(g)$ extends to an automorphism of $X$. 

Therefore, denoting by $\Aut_f(X)$ the group of automorphisms of $X$ preserving the fibration $f$, we obtain a homomorphism
\beq\label{varphi}
\varphi: \Aut_f(X) \to \Aut_{J(f)}(J(X)).
\eeq
The main tools for our analysis of automorphisms of non-jacobian elliptic surfaces are the following properties of the homomorphism $\varphi$. Recall that for a surface $X$, the notation $\Num(X)$ denotes the group of numerical equivalence classes of divisors on $X$.

\begin{theorem}\label{thm: actiononjacobian}
Let $f: X \to B$ be a genus one surface with jacobian fibration $J(f):J(X) \to B$ and let $\Aut_f(X)$ be the group of automorphisms of $X$ preserving $f$. Assume that $f$ is cohomologically flat. Then, there is a homomorphism $\varphi: \Aut_f(X) \to \Aut_{J(f)}(J(X))$ satisfying the following properties, where $g \in \Aut_f(X)$:
\begin{enumerate}
\item Both $g$ and $\varphi(g)$ induce the same automorphism of $B$.
\item $\Ker(\varphi) \cong \MW(J(f))$.
\item $\varphi(g)$ preserves the zero section of $J(f): J(X) \to \bbP^1$.
\item If $g$ acts trivially on $\Num(X)$, then $\varphi(g)$ acts trivially on $\Num(J(X))$.
\item Let $mF_0$ be a fiber of $f$ of multiplicity $m$ and let $(J^{\sharp}_0)^0$ be the identity component of the smooth part $J_0^\sharp$ of the corresponding fiber $J_0$ of $J(f)$. If $g$ preserves $F_0$, then either $\varphi(g)$ acts trivially on $(J^{\sharp}_0)^0$ or one of the following holds, where $n = \ord(\varphi(g)|_{(J^{\sharp}_0)^0})$:
\begin{enumerate}
\item $F_0$ is smooth, $m = n =3$, $p \neq 3$.
\item $F_0$ is smooth, $m = 2, n \in \{2,4\}, p \neq 2$.
\item $F_0$ is smooth and ordinary, $m = n = p = 2$.
\item $F_0$ is an irreducible nodal curve, $n = m = 2$, $p \neq 2$.
\item $F_0$ is of type $\tilde{A}_1$, $n = m = 2$, $p \neq 2$.
\end{enumerate}  
\end{enumerate}
\end{theorem}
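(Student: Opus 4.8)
The plan is to construct $\varphi$ from the sheaf-theoretic machinery of Raynaud recalled above and then verify each of the five properties in turn, the first three being essentially formal consequences of the construction and the last two requiring genuine geometric input. Recall that $\varphi$ is already built in Equation \eqref{varphi}: an automorphism $g \in \Aut_f(X)$ covering $h \in \Aut(B)$ induces, via the canonical identification $h^*(\calP_{X/B}) \cong \calP_{X'/B}$ and functoriality of $\Pic$, an automorphism $g^*$ of the sheaf $\calP_{X/B}$; this preserves the subsheaves $\calP_{X/B}'$ and $\calE_{X/B}$, hence descends to an automorphism $\varphi(g)$ of $\calQ_{X/B}' = J^\sharp(X)$, which extends (using relative minimality of $J(f)$, and in the quasi-elliptic case the extension across $\mathfrak{C}$) to an automorphism of $J(X)$. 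Property (1) is immediate since the construction is compatible with the map to $B$ by design. Property (3): $\varphi(g)$ sends the identity section of $J^\sharp(X)$ to itself because $g^*$ fixes the identity section of the Picard sheaf (it is the class of $\calO_X$, preserved by any automorphism); this extends across the bad fibers, so $\varphi(g)$ preserves the zero section $E_0$ of $J(f)$.

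For Property (2), $g \in \Ker(\varphi)$ iff $g^*$ acts trivially on $\calQ_{X/B}'$. By Proposition \ref{prop: SheafE}(4), $\calQ_{X/B}'$ is the N\'eron model of $J(X_\eta)$, and $\calQ_{X/B}'(B) = J(X_\eta)(K)$; but $g$ itself acts on the generic fiber $X_\eta$, which is a torsor under $J(X_\eta)$, and the induced action on $J(X_\eta)$ (the action on line bundles of degree $0$) is trivial exactly when $g$ acts on $X_\eta$ by a translation under $J(X_\eta)$, i.e. by an element of $\MW(f) = X_\eta(K)$ — here one uses that the automorphism group of a torsor is the semidirect product of translations with the automorphisms of the underlying curve, and that $\varphi(g)$ records precisely the "underlying curve" part. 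So $\Ker(\varphi) \cong \MW(f) \cong \MW(J(f))$, the last isomorphism being the standard identification of the Mordell-Weil groups of a torsor and its jacobian. Property (4): if $g$ acts trivially on $\Num(X)$, then in particular $g$ fixes every fiber component, so $g^*$ acts trivially on each $\bar{D}_t$, hence on each $\Disc_t$; combined with the fact that $g$ acts trivially on $\Pic$ of the generic fiber (forced by triviality on $\Num(X)$ together with $\Pic(X_\eta) = \Pic(X)/\langle\text{fiber components}\rangle$ up to torsion), one deduces $\varphi(g)$ acts trivially on $\Num(J(X))$ via the analogous lattice description of $\Num(J(X))$ in terms of the (identical, by the remark after Proposition \ref{prop: SheafE}) fiber types and the generic fiber. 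This needs a small compatibility check between the intersection form on $\Num(X)$ and on $\Num(J(X))$, but the fiber combinatorics match on the nose.

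The main obstacle is Property (5), which is where the numerics of multiple fibers enter. Here $g$ preserves the multiple fiber $mF_0$, so $\varphi(g)$ preserves $J_0$ and acts on $(J_0^\sharp)^0$, which is either an elliptic curve, $\bbG_m$, or $\bbG_a$. The strategy: $g$ restricts to an automorphism of $\bar{F}_0$ fixing its normal bundle class $\calN_0 = \calO_{F_0}(\bar F_0)$, which is an $m$-torsion line bundle on $\bar F_0$ of exact order $m$ (by cohomological flatness, Lemma \ref{multiplefiber}); under the isomorphism $\mathbf{\Pic}_{F_0} \cong \mathbf{\Pic}_{\bar F_0}$ of Lemma \ref{oort} and the identification of $(J_0^\sharp)^0$ with $\mathbf{\Pic}^0$ of the reduced fiber, the action of $\varphi(g)$ on $(J_0^\sharp)^0$ is the action induced by $g|_{\bar F_0}$ on $\Pic^0(\bar F_0)$, and it must fix the order-$m$ element $\calN_0$. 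Now I would run through Lemma \ref{autoelliptic}: an automorphism of a genus one curve acting nontrivially on $\mathrm{Pic}^0$ and fixing a point of exact order $m$ is heavily constrained — for $\bbG_m$ only inversion ($p \neq 2$, giving the nodal / $\tilde A_1$ cases (d),(e) with $n=m=2$), for $\bbG_a$ only the involution when $p \neq 2$ (absorbed since additive multiple fibers have $m = p$ by Lemma \ref{multiplefiber}, so $p=2$ forces $\varphi(g)$ unipotent, hence... this case needs care), and for an elliptic $\bar F_0$ the complex-multiplication automorphisms of orders $2,3,4,6$, matched against the constraint $\mathrm{ord} \mid $ the torsion order $m$ and Lemma \ref{multiplefiber}(3a),(3b) ruling out $p \mid m$ in the smooth ordinary/supersingular cases — yielding exactly the list (a)–(e). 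The delicate points will be (i) the ordinary case $m = n = p = 2$ in (c), where one must check the order-$2$ automorphism of an ordinary elliptic curve in characteristic $2$ genuinely can fix a $2$-torsion point of the (étale, order $2$) part, and (ii) ensuring the quasi-elliptic additive case produces no new possibilities beyond what's listed, i.e. that a nontrivial action on $\bbG_a$ fixing an order-$p$ point cannot occur — this is why no quasi-elliptic case appears in the list. I expect organizing this case analysis cleanly, rather than any single computation, to be the real work.
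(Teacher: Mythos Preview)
Your approach is essentially the paper's own: construct $\varphi$ via Raynaud's functorial machinery, read off (1)--(3) formally, prove (4) by showing triviality on $\Num(X)$ forces triviality on both the Mordell--Weil part and the component groups of $J(X)$, and prove (5) by observing that $g$ fixes the normal bundle $\calN_0 \in \Pic^0(\bar F_0)$, an element of exact order $m$, and then running the case analysis from Lemma~\ref{autoelliptic}.

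One correction worth flagging for (5): you write ``the identification of $(J_0^\sharp)^0$ with $\mathbf{\Pic}^0$ of the reduced fiber,'' but this is not an isomorphism---by Proposition~\ref{prop: SheafE}(2) and Lemma~\ref{oort} it is an isogeny $\lambda\colon J(\bar F_0) \to (J_0^\sharp)^0$ whose kernel is exactly $\langle \calN_0\rangle \cong \bbZ/m\bbZ$. This is precisely why the normal bundle must be fixed (it generates $\ker\lambda$, and $h$ descends through $\lambda$), and it also explains why $h$ and $\varphi(g)|_{(J_0^\sharp)^0}$ have the same order: if $\varphi(g)^k=\id$ then $h^k$ is a translation by an element of $\ker\lambda$, but $h$ is a group automorphism, so $h^k(0)=0$ forces $h^k=\id$. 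You should also make explicit why the multiplicative case stops at $\tilde{A}_1$: inversion on $(J_0^\sharp)^0\cong\bbG_m$ swaps the two boundary points in its closure inside $J_0$, hence swaps the two adjacent components of $J_0$ unless there are at most two---and $\varphi(g)$ must preserve components in the situations where (5) gets applied. With these small clarifications your outline matches the paper's proof.
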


\begin{proof}
We define $\varphi$ as explained right before the theorem. Then, Property (1) follows from the construction of $\varphi$.
 
To see (2), we use that $\Ker(\varphi)$ acts trivially on $B$ by Property (1). Moreover, $g\in \Ker(\varphi)$ is an automorphism of $X_\eta$ over $\eta$ that acts  identically on its jacobian $J_\eta^\sharp$. It is well-known that this action is trivial if and only if $g$ comes from a translation, that is, if and only if $g \in {\rm MW}(J(f))$.

To see Property (3) we use that the action of $\varphi(g)$ on $J^\sharp(X)$ is via the action of $g$ on $\calP_{X/B}'$ that obviously preserves the zero section. Now $g$ preserves the subsheaf $\calE_{X/B}$ and the action of $g$ on $\calP'_{X/B}(B)$ defines the action of $\varphi(G)$ on 
$\calQ'_{X/B}(B) = J^\sharp(X)(B) = J_\eta(K) = \MW(J(f))$.   

Let us prove Property (4). We have $\MW(J(f)) = J_\eta(K) = \calQ'_{X/B}(B)$. Since the $H^i_{\et}(B,\calE_{X/B})$ are torsion groups, the short exact sequence \eqref{ses: EPQ} induces an isomorphism $\calP'_{X/B}(B) \otimes \bbQ \cong \calQ'_{X/B}(B) \otimes \bbQ$. Next, we use that the Brauer group of $B$ is trivial by Tsen's theorem, so that we have $\calP'_{X/B}(B) \subseteq \calP_{X/B}(B) = \Pic(X)/f^*\Pic(B)$ and hence $\MW(J(f)) \otimes \bbQ$ is a subquotient of $\Num(X) \otimes \bbQ$ in a natural way. 
In particular, if $g$ acts trivially on $\Num(X)$, then $\varphi(g)$ acts trivially on $\MW(J(f)) \otimes \bbQ$. We also know that $g$ preserves all reducible fibers $X_t$ and hence $\varphi(g)$ acts on $J_t^\sharp$. Now, since $g$ acts trivially on the discriminant group $\Disc_t$, the bijectivity of the map \eqref{raynaud2} implies that $\varphi(g)$ acts trivially on the discriminant groups of reducible fibers of $J(f)$, and hence $\varphi(g)$ preserves all components of reducible fibers and acts trivially on $\MW(J(f)) \otimes \bbQ$. Since these divisor classes generate $\Num(J(X)) \otimes \bbQ$, we have shown that $\varphi(g)$ acts trivially on $\Num(J(X)) \otimes \bbQ$ and hence also on $\Num(J(X))$ itself.

To prove (5), we use that there is an isogeny 
 $\lambda: J(F_0) = J(mF_0) = (\calP_{X/B})_0^0 \to (\calQ'_{X/B})_0^0 = (J^{\sharp}_0)^0$ with kernel $\bbZ/m\bbZ$ by Proposition \ref{prop: SheafE} and Lemma \ref{oort}, and that, by our construction of $\varphi$, the automorphism $\varphi(g)|_{(J^{\sharp}_0)^0}$ comes from an automorphism $h$ of $J(mF_0)$. Moreover, since $g$ preserves $F_0$, the automorphism $h$ fixes the $m$-torsion point on $J(F_0) = J(mF_0)$ corresponding to the sheaf $\calO_{mF_0}(F_0)$, which generates the kernel of $\lambda$. Note that $\varphi(g)|_{(J^{\sharp}_0)^0}$ and $h$ have the same order, because $h$ is no translation. Since $h$ fixes an $m$-torsion point, we have $m \leq 3$ by Lemma \ref{autoelliptic}. More precisely, by the same Lemma, we get the following cases in which $\varphi(g)$ can be non-trivial:

If $m = 3$, then $\ord(h) = 3$, $p \neq 3$, and $J(mF_0)$ is an elliptic curve with absolute invariant $0$. If $m = 2$ and $F_0$ is an elliptic curve, then either $\ord(h) = 4$, $p \neq 2$, and $J(mF_0)$ is an elliptic curve with absolute invariant $1$, or $\ord(h) = 2$ and $J(mF_0)$ is an elliptic curve which satisfies $j \neq 0$ if $p = 2$.
Finally, if $m = 2$ and $F_0$ is multiplicative, then $\ord(h) = 2$, $p \neq 2$ and $h$ interchanges the two boundary points in the regular compactification of $(J^{\sharp}_0)^0$. But since $\varphi(g)$ preserves the irreducible components of $J^{\sharp}_0$, the automorphism $h$ can interchange the two boundary points only if $J^{\sharp}_0$ has at most two components, that is, only if $F_0$ is of type $\tilde{A}_0^*$ or $\tilde{A}_1$.
\end{proof}

\section{The subgroup $\Aut_E(S)$} \label{sec: aute}
From now on, unless specified otherwise, we will consider a rational genus one surfaces $f:S \to \bbP^1$ of index $m \geq 1$ and we let $mF_0$ be its multiple fiber (or any fiber if $m = 1$).

Fix a $(-1)$-curve $E$ on $S$ considered as an $m$-section and let $\Aut_E(S)$ be the stabilizer subgroup of $E$ in $\Aut(S)$. If $m = 1$, then for any $g\in \Aut(S)$ there exists a unique element $E'$ in the Mordell-Weil group such $g(E) = t_{E'}(E)$, where $t_{E'}$ is the translation automorphism of $S$ defined by the section $E'$. This shows that
\beq\label{semidirect}
\Aut(S) = \MW(f)\rtimes \Aut_E(S).
\eeq
Moreover, since $\MW(f)$ acts transitively on the set of sections, we obtain that all subgroups of the form $\Aut_E(S)$ for different choices of $E$ are conjugate by an element from $\MW(f)$. We refer the reader to \cite{Karayayla1}
and \cite{Karayayla2} for a classification of automorphism groups of jacobian rational elliptic surfaces over the complex numbers based on the decomposition  \eqref{semidirect}.

However, if $m > 1$, \eqref{semidirect} may not be true anymore because $\Aut(S)$ may not act transitively on the set of $(-1)$-curves on $S$. Nevertheless, by Proposition \ref{thm: actiononjacobian}, we have an exact sequence
$$
0 \to \MW(J(f)) \cap \Aut_E(S) \to \Aut_E(S) \overset{\varphi}{\to} \Aut_0(J(S)),
$$
where $J(f): J(S) \to \bbP^1$ is the jacobian fibration and $\Aut_0(J)$ is the group of automorphisms fixing its zero section. It is not clear from Proposition \ref{thm: actiononjacobian} which conjugacy classes of subgroups of $\Aut_0(J(S))$ are realized as $\varphi(\Aut_E(S))$.

Choose a geometric marking on $S$ and let $\Aut(S)^*$ be the image of $\Aut(S)$ in $W(\sfE_9)$. A choice of a $(-1)$-curve $E$ defines a blow-down $S\to \calD$ to a weak del Pezzo surface $\calD$  of degree 1. This gives an embedding of $K_{\calD}^\perp = \sfE_8$ in $\sfE_9$ and gives a splitting of the surjection $W(\sfE_9)\to W(\sfE_8)$. This yields a well-known expression of $W(\sfE_9)$ as the semi-direct product 
\beq\label{semidirectweyl}
W(\sfE_9) = \sfE_8\rtimes W(\sfE_8).
\eeq
So, we see that there is a natural homomorphism $\Aut_E(S)\to W(\sfE_8)$ whose kernel is the subgroup $\Aut_{\ct}(S)$.  Since $W(\sfE_8)$ is finite, we have the following immediate corollary.

\begin{proposition} The group $\Aut_E(S)$ is finite if and only if $\Aut_{\ct}(S)$ is finite. The quotient group $\Aut_E(S)/\Aut_{\ct}(S)$ is isomorphic to a finite subgroup of $W(\sfE_8)$
\end{proposition}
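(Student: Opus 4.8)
The statement is a one-line consequence of the semidirect product decomposition \eqref{semidirectweyl} together with the identification $\Ker(\Aut_E(S)\to W(\sfE_8)) = \Aut_{\ct}(S)$ established in the paragraph preceding the proposition, so the "proof" is really just assembling these facts. First I would recall that, by the discussion around \eqref{semidirectweyl}, the choice of the $(-1)$-curve $E$ gives a blow-down $S\to\calD$ and hence a splitting of $W(\sfE_9)\cong \sfE_8\rtimes W(\sfE_8)$; composing the homomorphism $\Phi\colon\Aut(S)\to W(\sfE_9)$ (whose kernel is $\Aut_{\ct}(S)$) with the projection $W(\sfE_9)\to W(\sfE_8)$ and restricting to $\Aut_E(S)$ yields a homomorphism $\psi\colon\Aut_E(S)\to W(\sfE_8)$. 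The key point, which I would state explicitly, is that $\Ker(\psi)=\Aut_{\ct}(S)$: one inclusion is clear since anything acting trivially on $\Pic(S)$ acts trivially on the quotient $W(\sfE_8)$; for the other, if $g\in\Aut_E(S)$ maps to $1\in W(\sfE_8)$ then $\Phi(g)$ lies in the normal subgroup $\sfE_8\subseteq W(\sfE_9)$, but $g$ also fixes $E$, so $\Phi(g)$ fixes the class of $E$, and since $\sfE_8$ acts on $\sfI^{1,9}$ via the transvections $\iota_v\colon x\mapsto x+\langle x,v\rangle\frakf$ one checks that $\iota_v$ fixes a $(-1)$-class (a vector of square $-1$ with $\langle x,\frakf\rangle=-1$, say) only if $v=0$; hence $\Phi(g)=1$, i.e. $g\in\Aut_{\ct}(S)$.

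Granting this, the two assertions follow formally. The exact sequence
\[
1\to \Aut_{\ct}(S)\to \Aut_E(S)\xrightarrow{\ \psi\ } W(\sfE_8)
\]
exhibits $\Aut_E(S)/\Aut_{\ct}(S)$ as the image $\psi(\Aut_E(S))$, which is a subgroup of the finite group $W(\sfE_8)$; this gives the second statement. For the first statement, if $\Aut_{\ct}(S)$ is finite then $\Aut_E(S)$, being an extension of a subgroup of the finite group $W(\sfE_8)$ by $\Aut_{\ct}(S)$, is finite; conversely if $\Aut_E(S)$ is finite then its subgroup $\Aut_{\ct}(S)$ is finite. Hence $\Aut_E(S)$ is finite if and only if $\Aut_{\ct}(S)$ is.

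\textbf{Main obstacle.} There is essentially no hard step here: the only thing requiring a short argument rather than a citation is the verification that $\Ker(\psi)=\Aut_{\ct}(S)$, i.e. that a nontrivial transvection $\iota_v$ with $v\in\sfE_8\setminus\{0\}$ cannot fix the class of a $(-1)$-curve. This is the computation $\iota_v(x)=x$ $\iff$ $\langle x,v\rangle\frakf=0$ $\iff$ $\langle x,v\rangle=0$, combined with the fact that the class of a $(-1)$-section $E$ satisfies $\langle E,\frakf\rangle=E\cdot K_S=-1$, so $E$ does not lie in $\frakf^\perp$ and therefore its image spans, together with $\frakf^\perp$, all of a finite-index sublattice on which a nonzero $v$ necessarily pairs nontrivially with something — more directly, $v\in\sfE_8=\frakf^\perp/\bbZ\frakf$ is nonzero and unimodularity of $\sfE_8$ forces $\langle E',v\rangle\neq 0$ for the image $E'$ of $E$ in $\frakf^\perp/\bbZ\frakf$ to fail, which it cannot since $\sfE_8$ is nondegenerate; so $\iota_v(E)\neq E$. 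I would present this in one or two sentences. Everything else is bookkeeping with the already-established homomorphism $\Phi$ and the splitting \eqref{semidirectweyl}.
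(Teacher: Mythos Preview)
Your overall strategy matches the paper's: establish a homomorphism $\Aut_E(S)\to W(\sfE_8)$ with kernel $\Aut_{\ct}(S)$ and then read off both claims from finiteness of $W(\sfE_8)$. The paper states this kernel identification as a one-line consequence of the preceding paragraph and does not argue further.

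There is, however, a genuine gap in the step you flag as the ``main obstacle''. The formula $\iota_v(x)=x+\langle x,v\rangle\frakf$ quoted from the introduction describes the action of the translation subgroup on $\frakf^\perp=\sfE_9$, not on all of $\sfI^{1,9}$: indeed this map fails to be an isometry off $\frakf^\perp$. Since $\langle E,\frakf\rangle=-1\neq 0$, the class $[E]$ does not lie in $\frakf^\perp$, so you cannot apply the formula to $E$ as written. Your subsequent attempt to conclude $v=0$ from ``$\langle E,v\rangle=0$'' is also problematic on its own terms --- one linear condition on a rank~$8$ lattice does not force $v=0$ --- and the phrase ``the image $E'$ of $E$ in $\frakf^\perp/\bbZ\frakf$'' is not meaningful precisely because $E\notin\frakf^\perp$.

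There are two clean fixes. The first, closest to what the paper has in mind, is to avoid the transvection computation altogether: the section $W(\sfE_8)\hookrightarrow W(\sfE_9)$ attached to $E$ is generated by reflections in roots of $K_\calD^\perp\subseteq [E]^\perp$, hence every element of the section fixes $[E]$; conversely, since $g\in\Aut_E(S)$ fixes $[E]$, the element $\Phi(g)$ already lies in this section, so $\psi|_{\Aut_E(S)}=\Phi|_{\Aut_E(S)}$ and the kernel is $\Ker(\Phi)\cap\Aut_E(S)=\Aut_{\ct}(S)$ (using $\Aut_{\ct}(S)\subseteq\Aut_E(S)$). Equivalently, $\Pic(S)=\bbZ[E]\oplus\pi^*\Pic(\calD)$ shows that $g$ acts trivially on $\Pic(S)$ iff it acts trivially on $\Pic(\calD)$ iff it acts trivially on $K_\calD^\perp\cong\sfE_8$. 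The second fix is to use the correct extension of $\iota_v$ to $\sfI^{1,9}$, namely the Eichler--type isometry
\[
\iota_v(x)=x+\langle x,v\rangle\frakf-\langle x,\frakf\rangle v-\tfrac{1}{2}v^2\langle x,\frakf\rangle\frakf,
\]
which on $E$ gives $\iota_v(E)=E+v+(\langle E,v\rangle+\tfrac{1}{2}v^2)\frakf$; since $v\in\frakf^\perp$ and $\frakf$ are linearly independent unless $v\in\bbZ\frakf$, the equality $\iota_v(E)=E$ forces $v=0$ in $\sfE_8$. Either route closes the gap and the rest of your argument goes through unchanged.
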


As explained above, $\Aut_E(S)$ can be realized as a subgroup of the group of automorphisms of a weak del Pezzo surface $\calD$. In fact, we can say more about this surface $\calD$:

Let $G$ be a finite group. Recall that a $G$-surface is a pair $(X,G)$ consisting of a smooth projective surface $X$ with a faithful action of a finite group $G$. The $G$-surfaces form, naturally, a category with morphisms being $G$-equivariant morphisms of the underlying surfaces. A $G$-surface is called minimal if any birational morphism $(X,G)\to (X',G)$ is an isomorphism. If $X$ is a rational surface, then there is a birational morphism $(X,G)\to (X_0,G)$, where $X_0$ is either a del Pezzo surface with $\Pic(X_0)^G = \bbZ K_{X_0}$ or $X_0$ admits a structure of a conic bundle $X_0\to \bbP^1$ with $\Pic(X_0)^G \cong \bbZ^2$. This is all well-known and used, for example, in the classification of conjugacy classes of finite subgroups of the plane Cremona group (see \cite{DolgIsk}). Applying this to our case, we see that any finite subgroup $G$ of $\Aut_E(S)$ is a lift of a finite subgroup $G$ of a minimal del Pezzo $G$-surface $(\calD,G)$. Even more, we know that $G$ preserves $E$, and hence it fixes the image of $E$ in $\calD$. If $p =0$ or sufficiently large, all such groups were classified in \cite{DolgachevDuncan}.

 Elaborating on the ideas above, we obtain the following.

\begin{theorem}\label{delPezzo}
Let $f: S \to \bbP^1$ be a rational genus one surface of index $m \geq 1$ and let $E$ be a $(-1)$-curve on $S$. Then, the following hold:
\begin{enumerate}
 \item There is a weak del Pezzo surface $\calD$ and a birational morphism $\phi:S\to \calD$ such that $\Aut_E(S)$ is a lift of a subgroup $G$ of $ \Aut(\calD)$ that fixes a point on $\calD$. If $G$ is finite, then $(\calD,G)$ can be taken to be a minimal $G$-surface.
 \item If $m = 1$, then one can choose the weak del Pezzo surface $\calD$ in (1) such that $\Aut_E(S) = \Aut(\calD)$.
 \item If $m > 1$, let $mF$ be the multiple fiber of $f$. Then, there is a jacobian rational genus one surface $f': S' \to \bbP^1$ which admits a fiber $F'$ of the same type as $mF$
 such that $\Aut_E(S) \subseteq \Aut_0(S')$ is the stabilizer of a point $P$ of exact order $m$ on the identity component of $F'$. Moreover, $\Aut_{\ct}(S)$ is the stabilizer of $P$ for the action of $\Aut_{\ct}(S')$.
\end{enumerate}

\end{theorem}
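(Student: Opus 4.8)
The plan is to derive all three statements from a single blow-down: I would take $\phi\colon S\to\calD$ to be the contraction of the $(-1)$-curve $E$ itself and read everything off from it. \emph{(1).} With this $\phi$ one has $K_\calD^2=K_S^2+1=1$ and, by Lemma~\ref{multiplefiber}, $\phi^*(-K_\calD)=-K_S+E=F_0+E$, where $F_0=\bar F_0$ is the reduced multiple fibre (or a general fibre if $m=1$). For an irreducible curve $B\subset\calD$, writing $\phi^*B=\tilde B+kE$ with $k=\operatorname{mult}_{\phi(E)}B\ge 0$ gives $-K_\calD\cdot B=(F_0+E)\cdot(\tilde B+kE)=F_0\cdot\tilde B+E\cdot\tilde B\ge 0$, using $F_0\cdot E=1$, $E^2=-1$, $E\cdot\tilde B\ge 0$ and $F_0\cdot\tilde B\ge 0$ (as $F_0$ is a fibre); so $-K_\calD$ is nef and $(-K_\calD)^2=(F_0+E)^2=1>0$, i.e.\ $\calD$ is a weak del Pezzo surface of degree $1$. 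Since $f$ is the unique genus one fibration of $S$, it is $\Aut(S)$-invariant, so every $g\in\Aut_E(S)$ descends to an automorphism of $\calD$ fixing $\phi(E)$, and conversely any automorphism of $\calD$ fixing $\phi(E)$ lifts to $\Bl_{\phi(E)}\calD=S$ preserving $E$; thus $\Aut_E(S)$ is canonically a lift of $G:=\operatorname{Stab}_{\Aut(\calD)}(\phi(E))$. If $G$ is finite, I would run the equivariant minimal model program on $(\calD,G)$ to obtain a $G$-equivariant birational morphism $\calD\to\calD'$ with $(\calD',G)$ minimal; $G$ still acts faithfully on $\calD'$ and fixes the image of $\phi(E)$, and $S\to\calD\to\calD'$ exhibits $\Aut_E(S)$ as a lift of a subgroup of $\Aut(\calD')$, so one replaces $\calD$ by $\calD'$.

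\emph{(2).} When $m=1$, $-K_S=F_0$ moves in the genus one pencil, so from $\phi^*(-K_\calD)=F_0+E$ one sees that $|{-K_\calD}|$ has $\phi(E)$ as its (unique, reduced) base point. Being canonically attached to $\calD$, this point is fixed by all of $\Aut(\calD)$, whence $G=\Aut(\calD)$ and, by part (1), $\Aut_E(S)=\Aut(\calD)$.

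\emph{(3).} Now let $m>1$, let $p$ be the base point of $|{-K_\calD}|$ (reduced, since $\deg\calD=1$), and set $S':=\Bl_p\calD$ with the genus one fibration $f'$ defined by $|{-K_{S'}}|$; the exceptional curve $\Sigma$ over $p$ is a section, so $f'$ is jacobian, and $p\ne\phi(E)$ (otherwise $S=\Bl_p\calD=S'$ would be jacobian, contradicting $m>1$), so $P:=\phi(E)$ is a point of $S'$ off $\Sigma$. Let $C:=\phi(\bar F_0)\in|{-K_\calD}|$ be the image of the reduced multiple fibre; since $\phi^*C=\phi^*(-K_\calD)=\bar F_0+E$, we get $\operatorname{mult}_{\phi(E)}C=1$, so $C$ is smooth at $\phi(E)$, and $C$ is smooth at $p$ since the base scheme of $|{-K_\calD}|$ is reduced; hence the strict transform $F'$ of $C$ on $S'$ satisfies $F'\cong C\cong\bar F_0$, so $F'$ has the same type as $mF$, and $P\in F'$. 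As in part (1) applied to $S'$, one finds $\Aut_E(S)=\operatorname{Stab}_{\Aut(\calD)}(\phi(E))=\operatorname{Stab}_{\Aut_0(S')}(P)$. To locate $P$ inside $F'$: the group law on $(F')^\sharp$ has origin $\Sigma\cap F'$, which corresponds to $p$ under $F'\cong C$; restricting $\calO_S(\bar F_0)$ to $\bar F_0$ via $\bar F_0=\phi^*C-E$ and using $\calO_\calD(-K_\calD)|_C\cong\calO_C(p)$ yields $\calO_{\bar F_0}(\bar F_0)\cong\calO_C(p-\phi(E))$, which by Lemma~\ref{multiplefiber} has order exactly $m$; as it has degree $0$ on every component of $C$, the points $p$ and $\phi(E)$ lie on the identity component, so $P\in(F'^\sharp)^0$ has exact order $m$. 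Finally, cohomologically trivial automorphisms fix every $(-1)$-curve, so $\Aut_{\ct}(S)\subseteq\Aut_E(S)$ and $\Aut_{\ct}(S')\subseteq\Aut_0(S')$; via $\Pic(S)=\phi^*\Pic(\calD)\oplus\bbZ E$ and $\Pic(S')=(\phi')^*\Pic(\calD)\oplus\bbZ\Sigma$, an element of $\Aut_E(S)$, its image in $\Aut(\calD)$, and the matching element of $\Aut_0(S')$ are simultaneously trivial (or not) on the Picard groups, so $\Aut_{\ct}(S)=\operatorname{Stab}_{\Aut_{\ct}(S')}(P)$.

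The hard part will be part (3), and within it the identification of $P$: the order-$m$ claim and the fact that $P$ lies on the identity component of $F'$ both come down to the normal-bundle computation $\calO_{\bar F_0}(\bar F_0)\cong\calO_C(p-\phi(E))$ together with non-wildness of the multiple fibre (Lemma~\ref{multiplefiber}), and the cases where $C$ (hence $F'$) is reducible will need care in interpreting ``identity component'' and ``exact order''. Note also that $\calD=S/E$ genuinely depends on the choice of $E$, so $S'$ need not equal $J(S)$ — which is precisely why part (3) gives strictly more than the exact sequence coming from Theorem~\ref{thm: actiononjacobian}.
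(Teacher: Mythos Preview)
Your proof is correct and follows essentially the same approach as the paper: blow down $E$ to a weak del Pezzo surface $\calD$ of degree $1$, identify $\Aut_E(S)$ with the stabilizer of $\phi(E)$ in $\Aut(\calD)$, then re-blow up the anticanonical base point to obtain the jacobian surface $S'$ and compute the order of $P$ via the normal bundle of the multiple fibre. Your treatment is in fact slightly more detailed than the paper's in two places: you verify explicitly that $-K_\calD$ is nef (the paper leaves this to the preceding discussion), and you give a clean argument, via the degree-$0$ condition on $\calO_C(p-\phi(E))$, for why $P$ lands on the identity component of $F'$, a point the paper does not spell out.
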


\begin{proof}
Part (1) is immediate from the discussion before the theorem.

As for (2) and (3), let $\pi: S \to \calD$ be the blow-down of $E$ and let $\pi': S' \to \calD$ be the blow-up of the unique base point $Q$ of $|-K_{\calD}|$. In particular, the anticanonical system $|-K_{S'}|$ induces a jacobian genus one fibration $f': S' \to \bbP^1$ on $S'$. Moreover, the group $\Aut_E(S)$ is identified with the stabilizer of the image $P$ of $E$ in $\calD$. Since $Q$ is the base-point of $|-K_{\calD}|$, the $\Aut(\calD)$-action on $\calD$ lifts to $S'$ and the lifted action factors through $\Aut_0(S')$. If $m = 1$, then $P = Q$ and we get Claim (2). Hence, we may assume $m > 1$ in the following.

Note that the lifting of the $\Aut(\calD)$-action to $S'$ identifies $\Aut_E(S)$ with the stabilizer of the point $\pi'^{-1}(P)$ with respect to the action of $\Aut_0(S')$. Moreover, we have $F \in |-K_S|$ by the canonical bundle formula, hence the strict transform $F'$ of $\pi(F)$ in $S'$ is a member of the anti-canonical system and therefore $F'$ is a fiber of $f'$.

Finally, we have the following identities of normal bundles, where we use that $\pi$ and $\pi'$ restrict to isomorphisms on $F$ and $F'$, respectively, where $O = F' \cap \pi'^{-1}(Q)$, and where $m \odot \pi'^{-1}(P)$ is $m$ times the point $\pi'^{-1}(P)$ in the group law on the identity componet of $F'$ with neutral element $O$:
\begin{eqnarray*}
\calO_{F} &=& N_{F/S}^{\otimes m} = \pi^*(N_{\pi(F)/\calD} \otimes \calO_{\pi(F)}(-P))^{\otimes m} = \pi^*(\pi'_* (N_{F'/S'} \otimes \calO_{F'}(O - \pi'^{-1}(P))))^{\otimes m} \\
&=& \pi^*(\pi'_* (\calO_{F'}(mO - m\pi'^{-1}(P)))) = \pi^*(\pi'_* (\calO_{F'}(O - m \odot \pi'^{-1}(P)))).
\end{eqnarray*}
This shows that $ \pi'^{-1}(P)$ is an $m$-torsion point on the identity component of $F'$. Similarly, one shows that $\pi'^{-1}(P)$ has exact order $m$.

To finish the proof, we have to show that $g \in \Aut_E(S)$ is cohomologically trivial if and only if it acts as a cohomologically trivial automorphism on $S'$. But this is clear, since $\Pic(S) = \bbZ \cdot E \oplus \pi^*\Pic(\calD)$ and $\Pic(S') = \bbZ \cdot 
\pi'^{-1}(Q) \oplus \pi^* \Pic(\calD)$, so $g$ is cohomologically trivial on $S$ and $S'$ if and only if it acts trivially on $\Pic(\calD)$.
\end{proof}

\begin{example} 
In particular, if $f: S \to \bbP^1$ is jacobian without reducible fibers, then Theorem \ref{delPezzo} identifies $\Aut_E(S)$ with the automorphism group $\Aut(\calD)$ of a del Pezzo surface $\calD$ of degree $1$. The list of all possible finite groups acting on a del Pezzo surface of degree 1 was essentially known since the 19th century. It is contained in \cite{DolgIsk} and \cite[8.8.4]{CAG} (under an assumption on the characteristic).
\end{example}

\begin{remark} \label{rem: vectorfields}
We remark that Theorem \ref{delPezzo} can be generalized to automorphism schemes. In particular, every rational genus one surface with global vector fields is a blow-up of one of the surfaces listed in \cite[Table 6]{MartinStadlmayr}.
\end{remark}

\begin{remark} \label{rem: Halphenremark}
The situation in Theorem \ref{delPezzo} (3) can be reversed, that is, if we start with a jacobian rational genus one surface, contract a section, and blow-up a point of exact order $m$ on the image of a fiber $F'$, we get another rational genus one surface with a fiber of multiplicity $m$ and of the same type as $F'$. This process is called an \emph{Halphen transform} and we refer the reader to \cite{HarbourneLang} for further details.
\end{remark}

\section{The subgroup $\Aut_{\tr}(S)$} \label{sec: auttr}
Let $J(f):J(S)\to \bbP^1$ be the jacobian fibration of the rational genus one surface $f:S\to \bbP^1$ of index $m$. Recall that $F_0$ is a curve on $S$ such that $mF_0$ is a fiber of $f$.
Let $\MW(J(f))$ be the Mordell-Weil group of sections of $J(f)$. We consider the root lattice $L :=  F_0^\perp / \bbZ F_0 \cong \sfE_8$ and its sublattice $T$ generated by irreducible components of fibers of $f$.
Note that $\Disc(T) \cong \bigoplus_{t\in \bbP^1} \Disc_t$.
We identify $\MW(J(f))$ with the group $\Pic(S_\eta)^0$ of divisors of degree $0$ of the generic fiber $S_\eta$ defined over $K= \Bbbk(\eta)$ (i.e. the group of rational points of the jacobian variety $J_\eta$ of $S_\eta$).

\begin{lemma}
Let $\Phi: L \to \MW(J(f))$ be the restriction map. Then, $\Phi$ induces an isomorphism $L/T \cong \MW(J(f))$.
\end{lemma}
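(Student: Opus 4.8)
The plan is to view this as the Shioda--Tate isomorphism and to prove it by analysing the restriction homomorphism $\rho\colon \Pic(S)\to\Pic(S_\eta)$ to the generic fiber. The map $\Phi$ of the Lemma is exactly the composite "restrict to $S_\eta$", descended to $L$. Two preliminary identifications set the stage: since $S$ is rational, $K=\bbk(\bbP^1)$ is a $C_1$-field, so the Brauer group of $K$ vanishes by Tsen's theorem and $\MW(J(f))=J_\eta(K)=\operatorname{Pic}^0_{S_\eta/K}(K)=\Pic(S_\eta)^0$, which is the identification recalled just before the Lemma; and by Lemma~\ref{multiplefiber} we have $F_0=-K_S$, so $L=F_0^\perp/\bbZ F_0=K_S^\perp/\bbZ K_S$ is the copy of $\sfE_8$ described in Section~\ref{Halpencil}. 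It then suffices to check three things in turn: that $\Phi$ is well defined with values in $\MW(J(f))$; that $\Phi$ is surjective; and that $\ker\Phi=T$. The conclusion $L/T\cong\MW(J(f))$ follows at once.

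For well-definedness and surjectivity I would argue as follows. A general fiber $F$ of $f$ equals $mF_0$ as a divisor class, so for $D\in F_0^\perp$ one has $\deg(D|_{S_\eta})=D\cdot F=m(D\cdot F_0)=0$; moreover $F_0=\bar X_{t_0}$ is a $\bbZ$-combination of vertical curves, hence restricts to $0$ on $S_\eta$. Therefore restriction descends to $\Phi\colon L\to\Pic(S_\eta)^0=\MW(J(f))$. For surjectivity I would first observe that $\rho$ itself is surjective: $S_\eta$ is a regular $1$-dimensional scheme, so every line bundle on it is $\calO_{S_\eta}(\sum n_iP_i)$, and taking Zariski closures $\overline{P_i}\subseteq S$ gives a divisor on $S$ that restricts to $\sum n_iP_i$. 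Given a class $P\in\MW(J(f))=\Pic(S_\eta)^0$, any $\rho$-preimage $\tilde D\in\Pic(S)$ then satisfies $\tilde D\cdot F=\deg P=0$, hence $\tilde D\in F_0^\perp$, and the class of $\tilde D$ in $L$ maps to $P$.

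The substance of the proof is the equality $\ker\Phi=T$. The inclusion $T\subseteq\ker\Phi$ is formal: each irreducible component $\Theta$ of a fiber lies in $F_0^\perp$ (it is disjoint from $F_0$ if it sits in another fiber, and $\Theta\cdot F_0=\tfrac1m\Theta\cdot F=0$ if it is a component of $mF_0$) and restricts to $0$ on $S_\eta$, being vertical. For the reverse inclusion, take $\bar D\in\ker\Phi$ and lift it to $D\in F_0^\perp$ with $D|_{S_\eta}$ principal, say $D|_{S_\eta}=\operatorname{div}(\phi)$ with $\phi\in\bbk(S)^\times=\bbk(S_\eta)^\times$; replacing $D$ by $D-\operatorname{div}_S(\phi)$, which has the same class in $\Pic(S)$ and hence in $L$, I may assume $D|_{S_\eta}=0$. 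But a divisor on $S$ restricting to $0$ on the generic fiber has no horizontal prime component, so $D$ is a $\bbZ$-combination of irreducible components of fibers of $f$; thus $\bar D\in T$. Combining the three steps, $\Phi$ is surjective with kernel $T$ and induces an isomorphism $L/T\xrightarrow{\ \sim\ }\MW(J(f))$.

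I do not expect a genuine obstacle; the points deserving care are purely bookkeeping around the multiple fiber --- namely that $F_0=\bar X_{t_0}$ is literally supported on fiber components, so that it is vertical, its class in $L$ vanishes, and the components of $mF_0$ are among the declared generators of $T$ --- and, in the quasi-elliptic case, the fact that $\operatorname{Pic}^0_{S_\eta/K}(K)$ really is the Mordell--Weil group of the (generalized) jacobian, which is the content of the identification already in force in the text. Beyond that the only input is the elementary identity $\deg(D|_{S_\eta})=D\cdot F$, so essentially no intersection-theoretic computation is required.
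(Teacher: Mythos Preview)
Your proof is correct and follows the same approach as the paper. The paper's proof is extremely terse: it dispatches the kernel as ``$T$ by definition'' and only writes out the surjectivity argument (via Zariski closures of effective parts), which is exactly your argument; your additional care with well-definedness, the multiple-fiber bookkeeping, and the explicit verification $\ker\Phi=T$ simply spells out what the paper takes for granted.
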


\begin{proof}
Since the kernel of $\Phi$ is $T$ by definition, it suffices to observe that $\Phi$ is surjective: Write $D \in \Pic(S_\eta)^0$ as $D = D' - D''$ with $D'$ and $D''$ effective. Then $\overline{D'} - \overline{D''} \in L$ and $\Phi(\overline{D'} - \overline{D''}) = D$.
\end{proof}

Set $M := T^\perp \subseteq L$. Since $M \cap T = \{ 0 \}$, we can consider $M$ as a subgroup of $\MW(J(f))$ via $\Phi$. For $A \in M$, we let $\tau_A \in \Aut_{\tr}(S)$ be the automorphism of $S$ induced by $\Phi(A)$.

\begin{proposition}\label{prop: gizatullin}
Let $M' \subseteq M$ be the subgroup of elements $A$ such that $\tau_A$ preserves all components of fibers of $f$. Let $d$ be the smallest positive integer such that all the $\Disc_t$ are $d$-torsion. Then,
\begin{enumerate}
\item $d \cdot \MW(J(f)) \subseteq M'$,
\item for all $A \in M'$ and $D \in \Pic(S)$, we have
\begin{equation}\label{mwaction}
\tau_A^*(D) = D + m(D.F_0) A - (\half m^2(D.F_0)A^2+mA.D)F_0 
\end{equation}
\end{enumerate}
\end{proposition}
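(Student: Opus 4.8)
The plan is to deduce (1) from lattice theory and to prove (2) by showing that $\tau_A^\ast$ is an Eichler transvection of $\Pic(S)$ with respect to the isotropic class $F_0$. For (1), I would first observe that $M' = M$: if $A \in M = T^\perp$, then $A$ is orthogonal to the classes of all fiber components, so under the isomorphism $\MW(J(f)) \cong L/T$ of the preceding Lemma the section $\Phi(A)$ meets the identity component of every fiber of $J(f)$; since translation by an element of the identity component preserves every component, $\tau_A$ fixes every component of every fiber of $f$, i.e.\ $A \in M'$. For the index bound I would use that $L \cong \sfE_8$ is unimodular: the restriction map $L \to T^\ast = \Hom(T,\bbZ)$ is then surjective, and its composition with $T^\ast \twoheadrightarrow T^\ast/T = \Disc(T)$ has kernel $T + M = T \oplus M$. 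Hence $\MW(J(f))/M \cong (L/T)\big/\big((T\oplus M)/T\big) \cong L/(T\oplus M) \cong \Disc(T) \cong \bigoplus_t \Disc_t$, which is annihilated by $d$ by the definition of $d$; therefore $d\cdot\MW(J(f)) \subseteq M = M'$.

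For (2), fix $A \in M'$ and write $\tau := \tau_A$. Since $\tau$ is a translation it preserves $f$, acts trivially on $\bbP^1$, and (because $A \in M'$) fixes the class of every fiber component; together with $\tau^\ast(K_S)=K_S$ (so $\tau^\ast(F_0)=F_0$, as $K_S=-F_0$) this shows $\tau^\ast$ preserves $F_0^\perp$ and acts on $L=F_0^\perp/\bbZ F_0\cong\sfE_8$ fixing $T$ pointwise. Next I would check that $\tau^\ast$ is trivial on $L/T$: the identification $L/T\cong\MW(J(f))$ is restriction to the generic fiber $S_\eta$, which is $\tau$-equivariant, and $\tau|_{S_\eta}$ is translation by $\Phi(A)\in J_\eta(K)$, which acts trivially on $\Pic^0(S_\eta)$. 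Finally, an isometry of $L$ fixing $T$ pointwise preserves $M=T^\perp$, and if it is in addition trivial on $L/T$ it fixes $M$ pointwise (the image of $v\in M$ lies in $M$ and is $\equiv v \bmod T$, while $M\cap T=0$); hence it fixes the finite-index sublattice $T\oplus M$ of the torsion-free lattice $L$, so it is the identity on $L$.

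It then remains to pin down the resulting transvection. Since $F_0$ is primitive, $\Pic(S)=F_0^\perp\oplus\bbZ E$ for any $(-1)$-curve $E$ (which has $E.F_0=1$), and an isometry fixing $F_0$ and trivial on $F_0^\perp/\bbZ F_0$ is determined by its value on $E$: on $F_0^\perp$ it has the shape $D\mapsto D+k_DF_0$ with $k_D=D.(E-\tau^\ast(E))$ forced by the isometry property. So I would compute $\tau^\ast(E)$. Pairing with $F_0$ gives $\tau^\ast(E)-E\in F_0^\perp$, and pairing with a fiber component $C$ gives $(\tau^\ast(E)-E).C=E.(\tau^{-1}(C)-C)=0$, so the class of $\tau^\ast(E)-E$ in $L$ lies in $M$. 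Restricting to $S_\eta$, where $E$ has degree $m$ and $\tau$ is translation by $\Phi(A)$, identifies the image of this class in $L/T\cong\MW(J(f))$ with $m\,\Phi(A)$ (up to the sign in the convention for translations); since $M\hookrightarrow L/T$ and $mA\in M$, the $L$-class of $\tau^\ast(E)-E$ is $mA$. The $F_0$-coefficient of $\tau^\ast(E)-E-mA$ is then forced by $(\tau^\ast E)^2=E^2=-1$, and a direct computation shows it agrees with the $F_0$-coefficient appearing at $D=E$ in the map
\[
D\;\longmapsto\; D+m(D.F_0)\,A-\big(\tfrac12 m^2(D.F_0)A^2+m(A.D)\big)F_0,
\]
which is an Eichler transvection — hence an isometry fixing $F_0$ and trivial on $F_0^\perp/\bbZ F_0$ — whose value on $E$ therefore equals $\tau^\ast(E)$. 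By the rigidity just noted, this transvection is $\tau^\ast$ on all of $\Pic(S)$, which is \eqref{mwaction}. (The ambiguity in lifting $A\in L$ to a class in $F_0^\perp$ is harmless here, since $A.F_0=0$.)

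The step I expect to be the main obstacle is showing that $\tau^\ast$ acts trivially on $F_0^\perp/\bbZ F_0$ and then correctly reading off the transvection parameter: triviality rests on the fact that translations act trivially on $\Pic^0$ of the generic fiber, combined with the lattice observation that an isometry trivial on $T$ and on $L/T$ is trivial on $L$; and identifying the parameter requires getting the factor $m$ right — it reflects that $E$ is an $m$-section — and carefully tracking the sign conventions for the Mordell–Weil action on the generic fiber.
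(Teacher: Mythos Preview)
Your proof is correct, but your route for (2) differs genuinely from the paper's. For (1) you first argue $M'=M$ (by observing that $A\in T^\perp$ forces $\Phi(A)$ to land in the identity component of every fiber, hence $\tau_A$ preserves all components) and then show $d\cdot\MW(J(f))\subseteq M$ via $L/(T\oplus M)\cong\Disc(T)$; the paper instead proves the two ingredients separately --- that $d\cdot\MW(J(f))$ preserves all fiber components because multiplication by $d$ kills each $\Disc_t$, and that $d\cdot L\subseteq M\oplus T$ because $\Disc(M\oplus T)$ is $d$-torsion --- without ever asserting $M'=M$. Your extra claim $M'=M$ is true but relies (in the non-jacobian setting) on the compatibility of the component map with intersection numbers on $S$ coming from Raynaud's description of $\calQ'_t/\calQ^0_t\cong\Disc_t$; it would be worth making this explicit.

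For (2) the paper is much more direct: it computes $\tau_A^*(D)|_{S_\eta}=(D+m(D.F_0)A)|_{S_\eta}$, writes the difference on $S$ as a vertical divisor $\sum a_iR_i$, pairs with an arbitrary $(-2)$-curve to force $\sum a_iR_i=nF_0$, and reads off $n$ from $(\tau_A^*D)^2=D^2$. Your argument instead identifies $\tau_A^*$ structurally as an Eichler transvection: you first prove $\tau_A^*$ is trivial on $L=F_0^\perp/\bbZ F_0$ (trivial on $T$ since $A\in M'$, trivial on $L/T\cong\MW(J(f))$ since translations act trivially on $\Pic^0(S_\eta)$, hence trivial on $L$ by a lattice rigidity argument), and then pin down the unique such isometry by evaluating at a $(-1)$-curve $E$. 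This buys conceptual clarity about \emph{why} the formula has transvection shape, at the cost of a longer argument and some bookkeeping with signs and lifts of $A$ from $L$ to $F_0^\perp$ (your remark that the ambiguity is ``harmless since $A.F_0=0$'' is correct in the end, but the two ambiguities in $m(D.F_0)A$ and in $mA.D$ actually cancel rather than individually vanish). The paper's computation is shorter and entirely self-contained; your approach would fit naturally if one wanted to emphasize the embedding $\Aut_{\tr}(S)\hookrightarrow W(\sfE_9)$ and the role of the transvection subgroup $\sfE_8$.
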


\begin{proof}
Since the group of symmetries of the fiber over $t \in \bbP^1$ is $\Disc_t$ and multiplication by $d$ kills every $\Disc_t$, every element in $d \cdot \MW(J(f))$ preserves all components of all fibers of $f$. Hence, it suffices to show $d \cdot \MW(J(f)) \subseteq M$. Since $\Disc(T)$ is $d$-torsion and $L$ is unimodular, $\Disc(M)$ is also $d$-torsion and thus so is $\Disc(M \oplus T)$. This implies that $d \cdot L \subseteq M \oplus T$, hence $d \cdot L/T = d \cdot \MW(J(f)) \subseteq M$. This proves Claim (1).

Now, if $A \in M'$ and $D \in \Pic(S)$, then
$$
\tau_A^*(D)|_{S_\eta} = D|_{S_\eta} + {\rm deg}(D|_{S_\eta}) A|_{S_\eta} = (D + m (D.F_0) A)|_{S_\eta},
$$
hence
$$
\tau_A^*(D) = D + m(D.F_0) A + \sum a_i R_i
$$
for some $a_i$, where the $R_i$ are components of fibers of $f$. Let $R$ be a $(-2)$-curve on $S$. By definition of $M'$, we have $\tau_A^*(R) = R$ and $A.R = 0$, hence
$$
D.R = \tau_A^*(D). \tau_A^*(R) = \tau_A^*(D). R = D.R + \sum a_i R_i.R,
$$
so that $\sum a_i R_i.R = 0$. Since the radical of the span of the $R_i$ is generated by $F_0$, this implies $\sum a_i R_i = nF_0$ for some $n$. To calculate $n$, we compute
$$
D^2 = (\tau_A^*(D))^2 = D^2 + m^2 (D.F_0)^2 A^2 + 2m(D.F_0) A.D + 2n(D.F_0).
$$
Hence, we have
$$
n = - (\half m^2(D.F_0)A^2+mA.D),
$$
which is what we wanted to prove.
\end{proof}

\begin{remark}
In the case where $f$ has only irreducible fibers, that is, 
if $d = 1$, then Formula \eqref{mwaction} was first obtained by 
M. Gizatullin \cite[Proposition 9]{Gizatullin} and its proof is also 
reproduced in \cite[2.4]{CantatDolgachev}.
\end{remark}

\begin{remark}
It follows from Formula \eqref{mwaction} that the group 
$\MW(J(f))\cap \Aut_{\ct}(S)$ is $d$-torsion. We will refine this observation in Lemma \ref{lem: cohtrivialtranslations}. 
\end{remark}

\begin{corollary} \label{cor: gizatullin}
The group $d \cdot \Aut_{\tr}(S)$ is contained in the kernel $\sfE_8$ of the restriction map $W(\sfE_9) \to W(\sfE_8)$.
\end{corollary}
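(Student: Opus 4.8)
The plan is to compute the action of a general element of $d\cdot\Aut_{\tr}(S)$ on $\sfE_9$ directly from Formula \eqref{mwaction} of Proposition \ref{prop: gizatullin}, and to recognize it as the element $\iota_v$ of $W(\sfE_9)$ attached to an explicit $v\in\sfE_8$. Since the image of the embedding $\sfE_8\hookrightarrow W(\sfE_9)$, $v\mapsto\iota_v$ with $\iota_v(x)=x+\langle x,v\rangle\frakf$, is precisely $\Ker\bigl(r\colon W(\sfE_9)\to W(\sfE_8)\bigr)$, this is exactly the assertion.

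First I would reduce to a single translation. Since $\Aut_{\tr}(S)=\MW(J(f))$, we have $d\cdot\Aut_{\tr}(S)=d\cdot\MW(J(f))$, and by Proposition \ref{prop: gizatullin}(1) every element of this group is a translation $\tau_A$ with $A\in M'\subseteq M\subseteq L=F_0^\perp/\bbZ F_0$; I fix a lift $\tilde A\in F_0^\perp\subseteq\Pic(S)$. By Lemma \ref{multiplefiber} we have $K_S=-F_0$, so a class $D\in\Pic(S)$ lies in $K_S^\perp$ if and only if $D.F_0=0$, and for such $D$ Formula \eqref{mwaction} simplifies to
\[
\tau_A^*(D)\;=\;D-m(\tilde A.D)\,F_0\;=\;D+m(\tilde A.D)\,K_S.
\]
Here $\tilde A.D$, hence the whole right-hand side, is independent of the choice of lift because $\tilde A.F_0=F_0^2=0$; moreover $\tau_A^*$ fixes $K_S$, so it induces an isometry of $K_S^\perp$, i.e. the element of $W(\sfE_9)$ whose location we must pin down.

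Next I would transport this along a marking $\phi\colon\Pic(S)\to\sfI^{1,9}$, under which $K_S\mapsto\frakf$, $K_S^\perp\mapsto\frakf^\perp=\sfE_9$, and $L=F_0^\perp/\bbZ F_0$ is identified with $\sfE_8=\frakf^\perp/\bbZ\frakf$. Let $v\in\sfE_8$ be the image of $mA$ under this identification — equivalently, the class of $m\tilde A$, which does not depend on the lift since $\langle\,\cdot\,,\frakf\rangle$ vanishes on $\frakf^\perp$. Writing $x=\phi(D)\in\sfE_9$, the displayed identity becomes $\tau_A^*(x)=x+m(\tilde A.D)\,\frakf=x+\langle x,v\rangle\,\frakf=\iota_v(x)$ for every $x\in\sfE_9$. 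Since an isometry of the lattice $\sfE_9$ is determined by its action, this yields $\tau_A^*=\iota_v\in\sfE_8=\Ker(r)$, which proves the corollary.

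I do not expect a genuine obstacle: the substance lies entirely in Proposition \ref{prop: gizatullin}, and what remains is bookkeeping — chiefly the sign forced by $K_S=-F_0$ and the two independence-of-lift checks, both of which come down to $F_0$ (equivalently $\frakf$) pairing trivially with $F_0^\perp$. Conceptually, $v$ is just $m$ times the Mordell--Weil class corresponding to $\tau_A$, read inside $\sfE_8$ via $L/T\cong\MW(J(f))$.
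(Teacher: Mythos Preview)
Your proof is correct and follows essentially the same approach as the paper: both specialize Formula \eqref{mwaction} to $D\in F_0^\perp$ to obtain $\tau_A^*(D)=D-m(\tilde A.D)F_0$, which shows the induced action on $L=F_0^\perp/\bbZ F_0\cong\sfE_8$ is trivial, i.e.\ $\tau_A^*\in\Ker(r)$. Your additional step of explicitly identifying $\tau_A^*$ with $\iota_{mA}$ is a nice refinement but not required for the corollary as stated.
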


\begin{proof}
It suffices to observe that $d \cdot \Aut_{\tr}(S)$ acts trivially on $L$. This follows 
immediately from Formula \eqref{mwaction}, since 
$$
\tau_A^{*}(D) = D + m (A.D)F_0
$$
for all $\tau_A \in d \cdot \Aut_{\tr}(S)$ and $D \in F_0^\perp$.
\end{proof}

\begin{remark} Suppose $d = 1$ and take $\alpha\in M$ to be a root. Then $\alpha' = -F_0-\alpha$ is also a root, 
and one checks that $\tau_\alpha = (s_\alpha\circ s_{\alpha'})^m$, where $s_\beta$ is the reflection
$x\mapsto x+(x,\beta)\beta$ with respect to a root $\beta$. 
\end{remark}

\begin{remark} If $d = 1$, then Formula \eqref{mwaction} shows that $\Aut_{\tr}(S)$ is identified with the $m$-congruence subgroup $m\sfE_8$ of $W(\sfE_9)$. The reader should compare this with the corresponding result for the stabilizer of a genus one fibration on a very general Enriques or Coble surface (see e.g. \cite[p.397]{BarthPeters}).
\end{remark}

\section{The subgroup $\Aut_{\ct}(S)$}
In this section, we will study the group $\Aut_{\ct}(S)$ (whose elements are called cohomologically trivial automorphisms) for a rational genus one fibration $f: S \to \bbP^1$. We use the notation of Section \ref{Halpencil}.

\begin{lemma}\label{non-infinitely near}
Let $\pi:S \to \bbP^2$ be a realization of $S$ as a blow-up of $\bbP^2$. Assume that $\Aut_{\ct}(S)$ is non-trivial. Then, the following hold:
\begin{enumerate}
\item The action of $\Aut_{\ct}(S)$ on $S$ descends to $\bbP^2$.
\item The induced action on $\bbP^2$ preserves the irreducible components of the members of the Halphen pencil $\calP$ corresponding to $f$ that are either reducible or singular at some $q_i$.
\item If $q_1,\ldots,q_s$ are the non-infinitely near base points of $\calP$, then $s \leq 4$.
\item If $s = 4$, then we may assume that $q_1,q_2,q_3$ lie on a line $\ell$ whose strict transform $\tilde{\ell}$ on $S$ is a $(-2)$-curve. Moreover, there is a realization $\pi':S \to \bbP^2$ such that either $s \leq 3$ or $k_4 > \max\{k_1,k_2,k_3\}$.
\item One of the fibers of $f$ has at least four irreducible components.
\end{enumerate} 
\end{lemma}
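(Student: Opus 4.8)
The plan is to exploit the fact that a nontrivial $g \in \Aut_{\ct}(S)$ acts trivially on $\Pic(S)$, so in particular it fixes the class $e_0$ and each exceptional class $e_i$. First I would prove (1): since $g$ fixes each $e_i$ and the last exceptional divisor $\calE_9$ is the unique irreducible curve in its class, $g$ preserves $\calE_9$, hence descends to $S_9$; iterating down the tower \eqref{factorization} gives a descent of $g$ to $\bbP^2 = S_1$, acting as an element $T \in \PGL_3$ because $g^*e_0 = e_0$. For (2), a member of the Halphen pencil $\calP$ that is reducible, or singular at one of the base points, has an irreducible component whose proper transform on $S$ is a curve $C$ with $C^2 < 0$ and $C \cdot K_S \neq -$(the value for the whole anticanonical member); such a $C$ is rigid in its linear system, so its class is $g^*$-fixed and $C$ is $g$-invariant, whence $T$ preserves its image in $\bbP^2$. (One must check that each such component indeed moves the class away from the generic fibre class and that finitely many such components together span enough of $\Pic(S)$; this is where one uses that $f$ is relatively minimal, so the only negative curves are $(-1)$- and $(-2)$-curves.)

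Next, (3): the key point is that $T \in \PGL_3$ has finite order — indeed, $g$ has finite order because $\Aut_{\ct}(S)$ is a subgroup of $\Aut(\calD)$ for a weak del Pezzo surface $\calD$ of degree $1$ modulo a finite group (Theorem \ref{delPezzo} and the Proposition following \eqref{semidirectweyl}), and any positive-dimensional part would act nontrivially on $\Pic$. Then I would apply Lemma \ref{fixedpoints} to $T$: the fixed locus $\mathcal F$ of $T$ (or of $T^k$ in the wild cases) is contained in a line plus a point, or three noncollinear points, or a single point/line. The non-infinitely-near base points $q_1,\dots,q_s$ are all fixed by $T$ because $g$ fixes each $e_i$ and hence preserves each exceptional configuration $\calE_i$, so it fixes the point $q_i = \pi_{10,i}(x_i) \in \bbP^2$ that $\calE_i$ lies over. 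Counting: if $\mathcal F$ is a line $L$ together with an isolated point (case (a)), then among the $q_i$ at most one lies off $L$, and at most three of the $q_i$ on $L$ can be base points of a genus-one Halphen pencil — because four base points on a line would force that line, or a conic through them, to split off a general member repeatedly, contradicting the genus-one/relative-minimality constraint recalled in Section \ref{Halpencil} (no $4$ collinear points among the $x_i$). In cases (b)–(d) one gets $s\le 3$ directly. This yields $s \le 4$, with $s=4$ only in case (a) with three of the $q_i$ on the fixed line $\ell$ and one off it.

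For (4): when $s = 4$, rename so $q_1,q_2,q_3 \in \ell$ and $q_4 \notin \ell$. The strict transform $\tilde\ell$ of $\ell$ on $S$ has class $e_0 - e_1' - e_2' - e_3'$ (the top exceptional classes over $q_1,q_2,q_3$), so $\tilde\ell^2 = -2$ and $\tilde\ell \cdot K_S = 0$; since $\ell$ cannot be a component of a general member (genus reasons), $\tilde\ell$ is an irreducible $(-2)$-curve, i.e.\ a component of a fiber of $f$. To get the second assertion I would perform an elementary (quadratic) Cremona transformation: if $k_4 \le \max\{k_1,k_2,k_3\}$, apply a Cremona map based at three of the four points to produce a new realization $\pi'$; tracking the multiplicities $k_i$ under this standard transformation either decreases $s$ or strictly increases the length of the chain over the image of $q_4$ while not increasing the others, so after finitely many such steps we reach $s \le 3$ or $k_4 > \max\{k_1,k_2,k_3\}$. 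Finally (5): if all fibers of $f$ had at most three components, then $\sum_t (b_t - 1)$ is small; but $\tilde\ell$ (in the $s=4$ case) or, in general, the components of fibers meeting the exceptional chains force a reducible fiber with $\ge 4$ components — more directly, the $(-2)$-curve $\tilde\ell$ together with the chains $\calR_i$ and the way $g$ must fix every component of every fiber (since $g$ acts trivially on $\Pic$, hence on every $\Disc_t$) combined with Lemma \ref{autoelliptic} applied to $g$ acting on the fiber through the fixed points shows that no fiber can be irreducible everywhere; pinning down that one of them has $\ge 4$ components is the combinatorial heart of the argument.

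The main obstacle I anticipate is part (4), specifically the bookkeeping of how the chain-lengths $k_i$ transform under the quadratic Cremona transformation and verifying that the process terminates in the claimed normal form; the rest is a fairly mechanical application of Lemma \ref{fixedpoints}, Lemma \ref{autoelliptic}, and the rigidity of negative curves, but the Cremona-transform reduction needs genuine care to state precisely and to see it does not run in circles.
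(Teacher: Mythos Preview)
Your approach to (1)--(4) follows the paper's, with two adjustments worth noting.

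For (3), the paper's argument is shorter than yours: since Lemma \ref{fixedpoints} forces at least $s-1$ of the $q_i$ onto a single line $\ell$, the strict transform satisfies $\tilde\ell^2 \le 1-(s-1)=2-s$; but every negative curve on $S$ has self-intersection $\ge -2$, so $s\le 4$, and $\tilde\ell^2=-2$ when $s=4$. Your detour through ``no four collinear base points'' amounts to the same thing. (Your worry about finite order is a fair point against the paper's own statement of Lemma \ref{fixedpoints}, but the fixed-locus dichotomy holds for any nontrivial $T\in\PGL_3$, so nothing is lost.)

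For (4), you overestimate the difficulty: one Cremona step suffices, so there is no termination issue. With $k_1\le k_2\le k_3$ and $q_1,q_2,q_3$ on $\ell$, apply the standard quadratic transformation based at $q_1,q_2,q_4$. The line $\ell$ is contracted to the new non-aligned point, and the new multiplicities are $(k_1',k_2',k_3',k_4')=(k_1,k_2,k_4-1,k_3+1)$. If we were not already done, i.e.\ $k_4\le k_3$, then $k_4'=k_3+1>k_3\ge k_4-1=k_3'$ and $k_4'>k_1',k_2'$, so the desired inequality holds after this single step (or $s$ has dropped).

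Your genuine gap is (5). Your sketch is too vague and the appeal to Lemma \ref{autoelliptic} is misplaced; the paper's proof is a short but real case analysis that you have not identified. First, if some $k_i\ge 4$, the chain $\calR_i$ already has length $\ge 3$ and sits in a fiber with $\ge 4$ components. Otherwise all $k_i\le 3$, which forces $s\ge 3$. If $s=4$ (with $k_4>\max\{k_1,k_2,k_3\}$ by (4)), then $k_1+k_2+k_3\ge 6$, so the fiber containing $\tilde\ell$ also contains at least $(k_1-1)+(k_2-1)+(k_3-1)\ge 3$ of the $(-2)$-curves from the chains over $q_1,q_2,q_3$; the same counting works if $s=3$ and the $q_i$ are collinear. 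The only delicate case is $s=3$, $(k_1,k_2,k_3)=(3,3,3)$, $q_i$ noncollinear: here one uses Lemma \ref{fixedpoints} (not Lemma \ref{autoelliptic}) to see that for some $q_i$ the only $g$-invariant lines through $q_i$ are the two lines $\langle q_i,q_j\rangle$, so the tangent line to a general pencil member at $q_i$ must be one of these, its strict transform is a $(-2)$-curve, and the fiber containing it then has $\ge 4$ components. This last case is the ``combinatorial heart'' you allude to, and it needs to be written out.
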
 

\begin{proof}
Claim (1) and (2) follow immediately from the fact that $\Aut_{\ct}(S)$ preserves all negative curves on $S$. As for Claim (3), note that by Lemma \ref{fixedpoints} at least $s-1$ of the $q_i$ lie on a line $\ell$. Thus,  $\tilde{\ell}$  has self-intersection at most $2-s$. But $\tilde{\ell}^2 \geq -2$, hence $s \leq 4$ and if $s = 4$ then $\tilde{\ell}^2 = -2$. 

As for the second part of Claim (4), assume $s = 4$  and $k_1 \leq k_2 \leq k_3$. Compose $\pi: S \to \bbP^2$ with the quadratic Cremona transformation $\iota: \bbP^2 \dashrightarrow \bbP^2$ with fundamental points $q_1,q_2,q_4$, set $\pi' := \iota \circ \pi$, and let $\calP'$ be the induced Halphen pencil. Then, either  $\calP'$ has less than four non-infinitely near base points or the multiplicity of the base point $q_3$ of $\calP'$ is $k_3 + 1$. Under the Cremona transformation we get the new base points $q_1',q_2',q_3',q_4'$ with the first three aligned and $(k_1',k_2',k_3',k_4') = (k_1,k_2,k_4-1,k_3+1)$. We may thus assume 
$$k_4 > \max \{k_1,k_2,k_3\}.$$

To prove Claim (5), observe first that if $k_i \ge 4$ for some $k_i$, then the $(-2)$-curves blown down to $q_i$ form a chain of at least $3$ curves that enter in a fiber with at least $4$ irreducible components. We may thus assume that $s \geq 3$ and $k_i < 4$ for all $i$. Moreover, as $f$ and $J(f)$ have the same types of singular fibers, we may assume that $f$ admits a section.

If $s = 4$, then the $(-2)$-curve $\tilde{\ell}$ is an irreducible component of some fiber that contains also $k= k_1+k_2+k_3-3$ irreducible components that are blown down to the points $q_1,q_2,q_3$ on the line $\ell$. Since $k_4 < 4$, we have $k \geq 3$, which yields the claim. The same argument applies if $s = 3$ and the three base points are collinear. 

If $s = 3$, the $q_i$ are not collinear, and $(k_1,k_2,k_3) = (3,3,3)$, we can apply Lemma \ref{fixedpoints} to deduce that for some $q_i$, say $q_1$, the only $\Aut_{\ct}(S)$-invariant lines through $q_1$ are $\ell_{1i} := \langle q_1,q_i \rangle$, $i = 2,3$. Then, we may assume that the tangent line to a general member of the pencil at $q_1$ equals $\ell_{12}$, so that the strict transform $\til_{12}$ of $\ell_{12}$ on $S$ is a $(-2)$-curve. The fiber of $f$ containing $\til_{12}$ has at least $4$ components. 
%
\end{proof}

 \subsection{Cohomologically trivial automorphisms of jacobian rational genus one surfaces} \label{sec: cohtrivialjacobian}
In this section, we calculate the group $\Aut_{\ct}(S)$ for jacobian rational genus one surfaces $f:J \to \bbP^1$. 

\begin{proposition} \label{equations}
Let $f: J \to \bbP^1$ be a jacobian rational genus one surface. If the singular fibers of $f$ are as in Column $2$ of Table \ref{Table2} and either $f$ is extremal or $\Aut_{\ct}(J)$ is as in Column $5$ and $6$ of Table \ref{Table2}, then $f$ admits one of the Weierstrass equations in Column $4$. Conversely, a surface with the given Weierstrass equation admits the group $\Aut_{\ct}(J)$ and the number of moduli given in Column $5$,$6$, and $7$.
\end{proposition}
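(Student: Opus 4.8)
The plan is to prove the two implications separately, both through the global Weierstrass model of the jacobian fibration.

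\emph{Setup.} First, recall that after fixing the zero section $O$, the surface $J$ is the minimal resolution of a Weierstrass model over $\bbP^1$, so $f$ is given by a global equation
\[
y^2+a_1(t)xy+a_3(t)y=x^3+a_2(t)x^2+a_4(t)x+a_6(t),\qquad \deg a_i\le i
\]
(with the obvious modification when $f$ is quasi-elliptic), unique up to the group generated by $\PGL_2$ acting on the parameter $t$ and the admissible Weierstrass coordinate changes. The discriminant $\Delta$ is a binary form of degree $12$, and the Kodaira type of each fibre is read off from $\ord_t(\Delta)$, $\ord_t(j)$ and Tate's algorithm — in the refined form of \cite{Lang2,Lang3} when $p=2,3$, and matched against the classification recalled in Section \ref{subsection2.1} (see e.g.\ Table \ref{table:singularfibers}). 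By Lemma \ref{non-infinitely near}(5), every configuration occurring in Table \ref{Table2} contains a reducible fibre with at least four components, hence a fibre of type $\tilde D_n$ $(n\ge4)$, $\tilde E_6,\tilde E_7,\tilde E_8$ or $\tilde A_n$ $(n\ge3)$, and I would normalise it to lie over $t=\infty$.

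\emph{From the fibre data to the equation.} If $f$ is extremal, then $\sum_t(b_t-1)=8$ pins $J$ down to the classification of extremal rational elliptic (resp.\ quasi-elliptic) surfaces of Miranda--Persson, Lang and Ito cited in Section \ref{subsection2.1}, each of which carries the explicit Weierstrass equation of Column $4$ and is rigid outside the isotrivial families; alternatively one re-derives it by the elimination below. If $f$ is not extremal, then by hypothesis $\Aut_{\ct}(J)$ is the nontrivial group of Columns $5$--$6$. A nonzero $g\in\Aut_{\ct}(J)$ fixes the class, hence the curve, of the $(-1)$-curve $O$ (the unique effective divisor in its linear system, since $h^1(\calO_J)=0$) and preserves $f$, so it restricts to a nontrivial automorphism of the elliptic curve $(J_\eta,O)$ over $K$ of one of the types of Lemma \ref{autoelliptic}. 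This forces $J_\eta$ to carry that extra automorphism over $K$, which in Weierstrass form means $a_1=a_3=0$ when $-1\in\Aut_{\ct}(J)$, and additional vanishing of coefficients in the cases with a cyclic automorphism of order $3,4,6$ (and the analogous $p=2,3$ normal forms). Imposing moreover the prescribed factorisation of $\Delta$, with the $\ge4$-component fibre over $\infty$, and running Tate's algorithm at the remaining reducible fibres cuts the space of admissible $(a_i)$ down — after a final coordinate change — to precisely the normal forms of Column $4$, the surviving parameters being the moduli.

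\emph{Converse and moduli count.} Conversely, starting from the equation of Column $4$ I would compute $\Delta$ and $j$ and verify by Tate's algorithm that the singular fibres are as in Column $2$. To compute $\Aut_{\ct}(J)$ I would write down the candidate automorphisms explicitly — $(x,y)\mapsto(x,-y)$, the order-$3,4,6$ maps $(x,y)\mapsto(\zeta^2x,\zeta^3y)$, and their $p=2,3$ analogues from Lemma \ref{autoelliptic} — and test cohomological triviality by the two conditions characterising it for an automorphism fixing $O$ and $f$: trivial action on $\MW(J(f))\cong\Pic(J)/T$ (automatic in the extremal cases where $\MW$ has the appropriate torsion, checked by hand otherwise), and trivial action on the component group $\Disc_t\cong J_t^\sharp/J_t^0$ of every reducible fibre, through the isomorphism \eqref{raynaud2}. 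The number of moduli is then the dimension of the family of Column-$4$ equations modulo the residual admissible coordinate changes; this, together with $\Aut_{\ct}(J)$, is what Columns $5$, $6$ and $7$ record.

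\emph{Main obstacle.} The real work is \emph{completeness} of the forward classification, uniformly in the characteristic: one must make sure that imposing the fibre data (and, when $f$ is not extremal, the automorphism) leaves no further moduli and no unaccounted reducible components, which demands a careful application of Tate's algorithm at every reducible fibre. This bookkeeping is heaviest when $p=2$ or $3$, where additive fibres may carry wild ramification ($\delta_t>0$), where the quasi-elliptic case must be run in parallel, and where the exotic automorphism groups of Lemma \ref{autoelliptic} interact with additive fibres in ways that decide whether a given automorphism is cohomologically trivial.
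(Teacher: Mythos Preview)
There is a genuine gap in the forward direction for the non-extremal cases. You claim that a nontrivial $g\in\Aut_{\ct}(J)$, since it fixes $O$ and preserves $f$, ``restricts to a nontrivial automorphism of the elliptic curve $(J_\eta,O)$ over $K$'', and then use this to normalise the Weierstrass coefficients ($a_1=a_3=0$, etc.). But $g$ restricts to an automorphism of $J_\eta$ over $K$ only if $g$ acts trivially on the base $\bbP^1$, i.e.\ only if $g\in\Aut_{\ct}(J)^\dagger$. Inspecting Columns $5$ and $6$ of Table \ref{Table2}, in \emph{every} non-extremal case one has $\Aut_{\ct}(J)^\dagger=\{\id\}$: the cohomologically trivial automorphisms all act \emph{non-trivially} on $\bbP^1$ (for instance $t\mapsto -t$ in Cases $21,22,24$--$29$, $t\mapsto\zeta_3 t$ in Cases $12,15,18,19,20$, $t\mapsto t+\lambda$ in Case $17$). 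Such a $g$ induces a $\sigma$-semilinear isomorphism $J_\eta\to J_\eta$ for the corresponding $\sigma\in\Aut(K)$, not a $K$-automorphism, so no simplification of the $a_i$'s \`a la Lemma \ref{autoelliptic} follows. The same misconception reappears in your converse: your candidate automorphisms $(x,y)\mapsto(\zeta^2x,\zeta^3y)$ all fix $t$, so you would never find the elements generating $\Aut_{\ct}(J)$ in these cases.

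The paper's argument avoids this by working the other way round: since $\Aut_{\ct}(J)^\dagger=\{\id\}$ in the non-extremal cases, the existence of $g$ imposes a \emph{symmetry on the base}, hence on the discriminant $\Delta$ (and on the set of singular fibres), and one then searches for Weierstrass equations whose $\Delta$ admits the required linear symmetry while Tate's algorithm produces the prescribed fibre types. The explicit automorphisms that one must exhibit and check for cohomological triviality are then of the shape $(t,x,y)\mapsto(\sigma(t),u^2x+r,u^3y+sx+v)$ with $\sigma\ne\id$, as in Table \ref{automorphismsofequations}. Your criterion for cohomological triviality (trivial action on $\MW(J(f))$ and on each $\Disc_t$) is correct for automorphisms fixing $O$, but you need to apply it to this broader class of candidates.
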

\begin{proof}
For the first statement, note that the equations for the extremal fibrations are well-known and we refer the reader to \cite{MirandaPersson}, \cite{Langextremal1}, \cite{Langextremal2}, \cite{Ito1}, and \cite{Ito2}. For the non-extremal fibrations, it follows from the Table that $\Aut_{\ct}(J)^\dagger = \{ \id \}$. Then, in characteristic $p \ne 2,3$, finding the Weierstrass equations is a simple exercise using Tate's algorithm while guaranteeing that the discriminant of $f$ admits the stated symmetries. The only case where this approach is complicated is Case $22$. In this case, note that $J/\Aut_{\ct}(J)$ is an elliptic surface with singular fibers $\tilde{D}_4,\tilde{A}_3,\tilde{A}_0,\tilde{A}_0$. This fibration admits a $2$-torsion section by \cite[Table 8.2]{ShiodaSchutt}, hence so does $j$ and we can find a Weierstrass equation of the form $y^2 = x^3 + a_2x^2 + a_4 x$. From there, the computation is straightforward.
For the Weierstrass equations in the case $p = 2,3$, one can simplify the equations given in \cite{Langextremal1} and \cite{Langextremal2}. We leave the details to the reader.

Let us prove the converse statement. The group $\Aut_{\ct}(J)^\dagger$ is a subgroup of the automorphism group of the generic fiber of $j$ and the action of this group on $\MW(J(f))$ is well-known, which makes the calculation of $\Aut_{\ct}(J)^\dagger$ straightforward. For the calculation of $\Aut_{\ct}(J)/\Aut_{\ct}(J)^\dagger$ we describe some automorphisms of $J$ in Table \ref{automorphismsofequations} and leave it to the reader to check that these automorphisms exhaust all elements in $\Aut_{\ct}(J)$. In Table \ref{automorphismsofequations}, we have $a \in \bbG_m$, $b \in \bbG_a$ and $\zeta_3^3 = 1$ and $i =6,4,3,2$, respectively.
\begin{table}
$$
\begin{array}{|l|c|c|c|c|c|c|} \hline
\text{Case} & t & x & y &  \multicolumn{2}{c|}{  \text{Action on fiber at } t = \infty }\\ \hline
5,8 & a^6t + b^p & a^2x + 2b & a^3y + 3b &x \mapsto a^{-10}x &  y \mapsto a^{-15}y
 \\
4,7 & t+ b^p + b & x+2b & y + 3b & x \mapsto x & y \mapsto y \\
10 & t+ b^2 + b & x & y + bx & x \mapsto x & y \mapsto y \\
11 & t + b^2 & x & y + bx + b & x \mapsto x &  y \mapsto y \\
1,13,23,31 & a^it & a^2x & a^3 y &  x \mapsto a^{2 - 2i}x & y \mapsto a^{3 - 3i}y
  \\
2,9 & -t & -x & \sqrt{-1}y & x \mapsto -x & y \mapsto -\sqrt{-1}y  \\
21,22,24,25,26,27,28,29 & -t & x & -y & x \mapsto x & y \mapsto y  \\
12,15,18 & \zeta_3 t & \zeta_3^2 x & y & x \mapsto x & y \mapsto y  \\
19,20 & \zeta_3 t & \zeta_3 x & y & x \mapsto \zeta_3^2 x & y \mapsto y \\
17 & t + \lambda & x & y + x + \lambda & x \mapsto x & y \mapsto y \\ \hline
\end{array}
$$
\caption{Automorphisms in $\Aut_{\ct}(J) \setminus \Aut_{\ct}(J)^\dagger$}
\label{automorphismsofequations}
\end{table}
\end{proof}

The following exceptional case, which corresponds to Case 17 in Table \ref{Table2} will occur in the proof of Theorem \ref{thm: main}.

\begin{lemma} \label{lem: exception17}
Let $f: S \to \bbP^1$ be a jacobian rational elliptic surface whose only reducible fiber $F_0$ is of type $\tilde{E}_7$ and which admits an automorphism $g \in \Aut_{\ct}(S)$ with $\ord(g) = p$. Then, $p = 2$, the singular fibers of $f$ are $F_0$ and two fibers of type $\tilde{A}_0^*$, $\Aut_{\ct}(S) = \bbZ/2\bbZ$, and $\Aut_{\ct}(S)^\dagger = \{{\rm id}\}$.
\end{lemma}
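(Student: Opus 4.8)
The plan is to pit the combinatorics of the singular fibres against the structure of the Mordell--Weil group. Since $f$ coincides with its own jacobian fibration, \eqref{semidirect} and Theorem~\ref{delPezzo}(2) give $\Aut(S)=\MW(f)\rtimes\Aut_E(S)$ and $\Aut_{\ct}(S)\subseteq\Aut_E(S)$. The only reducible fibre $F_0$ being of type $\tilde{E}_7$, the Shioda--Tate formula shows that $\MW(f)$ has rank $8-7=1$ and, as $\sfE_7$ is primitively embedded in $\sfE_8$, is torsion free; thus $\MW(f)\cong\bbZ$, which is infinite, hence Zariski dense in the generic fibre $S_\eta$. This forces $\Aut_{\ct}(S)^\dagger=\{\id\}$: an element $h$ of it acts trivially on the base, fixes the zero section and hence the point $O\in S_\eta(K)$, and acts trivially on $\MW(f)=S_\eta(K)$; being an automorphism of the curve $S_\eta$ fixing a Zariski dense set of points, $h=\id$. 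In particular $g\neq\id$ forces $\beta(g)\neq\id$, and since $p$ is prime and $\ord(\beta(g))\mid p$, the automorphism $\beta(g)$ has order $p$; being a unipotent automorphism of $\bbP^1$ it has a unique fixed point. As $g$ acts trivially on $\Pic(S)$ it fixes every $(-2)$-curve of $F_0$ (these are unique in their classes) and hence preserves $F_0$, so that unique fixed point is $f(F_0)$.

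Consequently $\beta(g)$ permutes the remaining singular fibres of $f$ in free orbits of length $p$, and it cannot interchange fibres of different Kodaira types. Since $F_0$ of type $\tilde{E}_7$ has $8\geq 7$ components and is the only reducible fibre, Lemma~\ref{lem: somefibers} leaves only the following for the other, necessarily irreducible, singular fibres: $(\tilde{A}_0^*,\tilde{A}_0^{**})$ or $(\tilde{A}_0^*,\tilde{A}_0^*,\tilde{A}_0^*)$ if $p\neq 2,3$; $(\tilde{A}_0^{**})$ or $(\tilde{A}_0^*,\tilde{A}_0^*,\tilde{A}_0^*)$ if $p=3$; no further singular fibre or $(\tilde{A}_0^*,\tilde{A}_0^*)$ if $p=2$. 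A configuration with a single extra fibre, or with two of different types, would force $\beta(g)$ to fix a singular fibre other than $F_0$; and $(\tilde{A}_0^*,\tilde{A}_0^*,\tilde{A}_0^*)$ forces $\beta(g)$ to be a $3$-cycle, whence $p=3$. Therefore $p\in\{2,3\}$, and the singular fibres of $f$ are $(\tilde{E}_7,\tilde{A}_0^*,\tilde{A}_0^*,\tilde{A}_0^*)$ with $p=3$, or $(\tilde{E}_7)$ or $(\tilde{E}_7,\tilde{A}_0^*,\tilde{A}_0^*)$ with $p=2$.

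To rule out the two degenerate configurations I would pass to the quotient. Conjugation by $g$ preserves the normal subgroup $\MW(f)\triangleleft\Aut(S)$ and hence induces a group automorphism of $\MW(f)\cong\bbZ$, i.e. $\pm\id$; it cannot be $-\id$, for then $g$ would send a generator of $\MW(f)$ (not a $2$-torsion point) to a different section, contradicting $g\in\Aut_{\ct}(S)$. So $g$ fixes every section of $f$; in particular every section is $\beta(g)$-invariant and descends to the relatively minimal model $\bar S$ of $S/\langle g\rangle$. As $\bar S$ is a rational genus one surface (a finite quotient of a rational surface), $\MW(\bar S)\cong\MW(f)\cong\bbZ$, and since the only reducible fibre of $S$ is $F_0$ — lying over the branch point — the surface $\bar S$ has exactly one reducible fibre, with $8$ components, together with one $\tilde{A}_0^*$ for each $\tilde{A}_0^*$-orbit of $f$. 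In the $p=3$ case $\bar S$ would thus be a rational elliptic surface whose only reducible fibre is of type $\tilde{E}_7$ (of Euler number $11$, hence with wild ramification) and whose only other singular fibre is a single $\tilde{A}_0^*$; but by Table~\ref{table:singularfibers} no such configuration occurs in characteristic $3$, a contradiction. The remaining possibility, $(\tilde{E}_7)$ alone with $p=2$, is an isotrivial surface with $j\equiv 0$; here the quotient argument becomes self-similar and the exclusion requires a finer, direct analysis of these surfaces (or an appeal to Lang's classification of rational elliptic surfaces in characteristic $2$). I expect this last case to be the only genuinely delicate point of the proof.

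There remains $p=2$ with singular fibres $\tilde{E}_7,\tilde{A}_0^*,\tilde{A}_0^*$. Since $\Aut_{\ct}(S)^\dagger=\{\id\}$, the homomorphism $\beta$ embeds $\Aut_{\ct}(S)$ into $\PGL_2$, and its image fixes $f(F_0)$ and preserves the two base points of the $\tilde{A}_0^*$-fibres; as a subgroup of $\PGL_2$ fixing three distinct points is trivial, the image has order at most $2$, and it is nontrivial because $\beta(g)\neq\id$. Hence $\Aut_{\ct}(S)\cong\bbZ/2\bbZ$, so $\ord(g)=2=p$, and $\Aut_{\ct}(S)^\dagger=\{\id\}$, which is the assertion of the lemma.
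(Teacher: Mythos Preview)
Your overall architecture is sound and matches the paper's: you correctly deduce $\Aut_{\ct}(S)^\dagger=\{\id\}$ from the infinite Mordell--Weil group, correctly reduce to $p\in\{2,3\}$ via the orbit structure of the irreducible singular fibres and Lemma~\ref{lem: somefibers}, and your final paragraph disposing of $\Aut_{\ct}(S)$ in the surviving case is fine.

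Where you diverge from the paper is in the two exclusion steps. For $p=3$ with $(\tilde{E}_7,\tilde{A}_0^*,\tilde{A}_0^*,\tilde{A}_0^*)$ you pass to the wild quotient $\bar S$; this works, but your claim that the $8$-component fibre of $\bar f$ is of type $\tilde{E}_7$ is unjustified --- a priori it could be $\tilde{D}_7$ as well (the Euler-number count only rules out $\tilde{A}_7$, since multiplicative fibres have $\delta_t=0$). Fortunately Table~\ref{table:singularfibers} excludes the configuration $(\tilde{D}_7,\tilde{A}_0^*)$ in characteristic $3$ just as it excludes $(\tilde{E}_7,\tilde{A}_0^*)$, so the contradiction survives; you should just say ``$8$-component additive fibre'' rather than ``$\tilde{E}_7$''. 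The paper takes a completely different route here: it quotes Lang's explicit form of the discriminant $\Delta=t^9 s(c_1^3-tc_0^2c_1^2+t^2c_0^3d_1)$ in characteristic $3$ and checks by hand that no linear substitution in $(s,t)$ fixing $t=0$ can permute the remaining three roots. Your quotient argument is more conceptual; the paper's is shorter and avoids any delicacy about wild quotients.

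The genuine gap is the one you flag yourself: the configuration $(\tilde{E}_7)$ alone with $p=2$. Here your quotient argument is, as you note, self-similar and proves nothing. The paper closes this gap by a direct appeal to the classification: this surface is unique (it is the Weierstrass model $y^2+y=x^3+tx$, see \cite[Table~2]{Stadlmayr}), and one checks explicitly that it admits no order-$2$ automorphism acting nontrivially on the base. Without some such explicit input the lemma is not proved, so as written your argument is incomplete at exactly this point.
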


\begin{proof}
First, we claim that $\Aut_{\ct}(S)^\dagger = \{{\rm id}\}$. Indeed, all fibers of $f$ are $\Aut_{\ct}(S)^\dagger$-invariant. Since $f$ is not extremal and $\Aut_{\ct}(S)^\dagger$ preserves all its sections, we have infinitely many fixed points on each fiber, , so $\Aut_{\ct}(S)^\dagger = \{{\rm id}\}$. In particular, $g \not \in \Aut_{\ct}(S)^\dagger$. 

If $p \neq 2$, then Lemma \ref{lem: somefibers} shows that $f$ admits three fibers of type $\tilde{A}_0^*$, so we have $p = 3$. By \cite{Lang3} the discriminant of $f$ is of the form
$$
\Delta = t^9 s(c_1^3 - tc_0^2 c_1^2 + t^2 c_0^3 d_1)
$$
for homogeneous polynomials $c_i,d_i \in k[s,t]$ of degree $i$. The fiber $F_0$ is located at $t = 0$ and it is easy to check that there is no linear automorphism of $k[s,t]$ that permutes the other zeroes of $\Delta$, so $g$ does not exist. Hence, we must have $p = 2$. By Lemma \ref{lem: somefibers}, we only have to exclude the case where $F_0$ is the only singular fiber of $f$. In this case, by \cite[Table 2]{Stadlmayr}, $f$ is given by the Weierstrass equation
$$
y^2 + y = x^3 + tx
$$
and one can easily check that it admits no involutions acting non-trivially on the base.
\end{proof}

\begin{theorem} \label{thm: main}
Let $f: S \to \bbP^1$ be a jacobian rational elliptic surface. Assume that $\Aut_{\ct}(S)$ is non-trivial. Then, the singular fibers of $f$ are as in Table \ref{Table2}. Moreover, either $f$ is extremal or the groups $\Aut_{\ct}(S)$ and $\Aut_{\ct}(S)^\dagger$ are as in Column $5$ and $6$ Table \ref{Table2}.
\end{theorem}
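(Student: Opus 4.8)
The plan is to describe $\Aut_{\ct}(S)$ via the exact sequence
$$1 \to \Aut_{\ct}(S)^\dagger \to \Aut_{\ct}(S) \to \overline{G} \to 1, \qquad \overline{G}\subseteq \PGL_2,$$
where $\overline G$ is the image of the action on the base, and to treat the extremal and non-extremal cases separately before appealing to the Weierstrass‑equation bookkeeping of Proposition \ref{equations} (equivalently, to the identification from Theorem \ref{delPezzo}(2) of $\Aut_{\ct}(S)$ with the automorphisms of a weak del Pezzo surface of degree $1$ acting trivially on its Picard lattice). The basic observation used throughout is that every $g\in\Aut_{\ct}(S)$ fixes the zero section and each irreducible component of each reducible fibre, these being the unique effective divisors in their (negative self‑intersection) classes; in particular $g$ fixes every section of $f$, hence acts trivially on $\MW(f)$. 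For the vertical part, if $g\in\Aut_{\ct}(S)^\dagger$ then $g$ restricts to an automorphism of the generic fibre $S_\eta$ over $K=\Bbbk(\eta)$ fixing the origin, so by Lemma \ref{autoelliptic}(1)--(3) it has order in $\{1,2,3,4,6\}$, with $\ord(g)>2$ forcing $j(S_\eta)\in\{0,1728\}$; since $g$ moreover fixes all of $S_\eta(K)=\MW(f)$ and a nontrivial automorphism of $S_\eta$ fixes only finitely many points, one concludes that $\MW(f)$ is finite whenever $\Aut_{\ct}(S)^\dagger\neq\{\id\}$. Thus, if $f$ is not extremal, $\Aut_{\ct}(S)^\dagger=\{\id\}$ and $\Aut_{\ct}(S)\hookrightarrow\PGL_2$.

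\textbf{Non-extremal case.} Here $\Aut_{\ct}(S)$ is a finite nontrivial subgroup of $\PGL_2$ fixing the image in $\bbP^1$ of every reducible fibre. A nontrivial finite-order element of $\PGL_2$ has at most two fixed points, and exactly one if its order is divisible by $p$; hence $f$ has at most two reducible fibres, at most one if $p\mid \#\Aut_{\ct}(S)$. On the other hand Lemma \ref{non-infinitely near}(5) forces a reducible fibre with at least four components. Together with $\sum_t(b_t-1)=8-\operatorname{rk}\MW(f)\le 7$, with the list of admissible configurations in Lemma \ref{lem: somefibers} and Table \ref{table:singularfibers}, and with $\sum_t(e_t+\delta_t)=12$ and the $\PGL_2$-invariance of the discriminant and the $j$-map, this leaves only finitely many possible configurations of singular fibres. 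For each of them I would write down a Weierstrass equation by Tate's algorithm (exactly as in the proof of Proposition \ref{equations}) and check whether a nontrivial cohomologically trivial automorphism exists and what group it generates, using the explicit automorphisms of Table \ref{automorphismsofequations}; the borderline fibration with an $\tilde{E}_7$ fibre is disposed of by Lemma \ref{lem: exception17}. The surviving configurations are the non-extremal rows of Table \ref{Table2}, with $\Aut_{\ct}(S)$ and $\Aut_{\ct}(S)^\dagger$ as in Columns $5$ and $6$.

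\textbf{Extremal case.} Here Lemma \ref{non-infinitely near}(5) again forces a reducible fibre with at least four components, and the classification of extremal rational elliptic surfaces (Miranda--Persson for $p\neq 2,3$; Lang for $p=2,3$) is thereby reduced to a short list. For each surface on this list one computes $\Aut_{\ct}(S)$ directly from its Weierstrass equation: the possible automorphisms of the generic fibre act in a known way on $\MW(f)$, which yields $\Aut_{\ct}(S)^\dagger$ (trivial unless $j\in\{0,1728\}$ and $\MW(f)$ is suitably small), while the automorphisms acting nontrivially on the base are enumerated as in Table \ref{automorphismsofequations}. Retaining the surfaces with $\Aut_{\ct}(S)\neq\{\id\}$ produces the extremal rows of Table \ref{Table2}.

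The conceptual reductions above are short; I expect the genuine difficulty to be the exhaustiveness of the case analysis, and in particular keeping the fibre-type bookkeeping correct and complete in characteristics $2$ and $3$, where additive fibres with $\delta_t>0$, wild behaviour of the discriminant, and isotrivial fibrations with $j\in\{0,1728\}$ (which carry extra automorphisms of the generic fibre, so that $\Aut_{\ct}(S)^\dagger$ need not be trivial) all have to be controlled, and where the underlying classification tables are themselves characteristic-dependent. A second delicate point is ruling out the sporadic configurations that \emph{almost} admit a cohomologically trivial automorphism acting on the base, of which the $\tilde{E}_7$-case handled in Lemma \ref{lem: exception17} is the prototype.
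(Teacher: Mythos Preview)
Your approach is correct in outline but differs substantially from the paper's. You organize the case analysis by the extremal/non-extremal dichotomy, using the clean observation that $\Aut_{\ct}(S)^\dagger\neq\{\id\}$ forces $\MW(f)$ to be finite, and then in the extremal case you invoke the Miranda--Persson/Lang classification while in the non-extremal case you rely on Weierstrass-equation bookkeeping together with the $\PGL_2$-constraint on reducible fibres. The paper instead fixes a blow-down $\pi\colon S\to\bbP^2$ realizing $f$ as a Halphen pencil and organizes the argument by the number $s\le 4$ of non-infinitely-near base points (Lemma~\ref{non-infinitely near}); for each $s$ it writes the image of the distinguished reducible fibre as $C_0=\ell+Q$ (line plus conic) and runs a purely geometric case analysis using Lemma~\ref{fixedpoints} on fixed loci in $\bbP^2$ together with Lemma~\ref{autoelliptic}. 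Your route has the virtue of isolating the conceptually sharp implication ``non-extremal $\Rightarrow$ $\Aut_{\ct}(S)^\dagger=\{\id\}$'' and of outsourcing the heavy lifting to existing classifications; the paper's route is self-contained, uniformly geometric across all characteristics, and produces the fibre types directly rather than by elimination from a longer list. One point you slightly underweight: in the non-extremal case your combinatorial constraints (at most two reducible fibres, one with $\ge 4$ components, $\sum_t(b_t-1)\le 7$) still permit a largest fibre as small as $\tilde{A}_3$, $\tilde{D}_4$, or $\tilde{D}_5$, configurations not covered by Lemma~\ref{lem: somefibers}; ruling these out one by one via Tate's algorithm is doable but more laborious than your sketch lets on, whereas the paper's Halphen-pencil analysis excludes them automatically as a by-product of the geometry of $C_0\subset\bbP^2$.
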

\begin{table}[h!]
\scalebox{0.8}{
$\displaystyle
\begin{array}{|l|c|c|c|c|c|c|c|}
\hline
&\text{Singular fibers} & e \text{ or } qe & \text{Weierstrass equation} & \Aut_{ct}(J)^\dagger & \Aut_{ct}(J) &  \text{Moduli} & p \\ \hline
1&\tilde{E}_8,\tilde{A}_0^{**} & e & y^2 = x^3 + t & \bbZ/6\bbZ & \bbG_m &0 & \neq 2,3\\
2&\tilde{E}_8, \tilde{A}_0^*, \tilde{A}_0^* & e & y^2 = x^3 + x + t &  \bbZ/2\bbZ & \bbZ/4\bbZ & 0 & \neq 2,3 \\
3&\tilde{E}_8, \tilde{A}_0^* & e & y^2 = x^3 + x^2 + t
&
\bbZ/2\bbZ & \bbZ/2\bbZ & 0 & 3 \\
4&\tilde{E}_8 & e & y^2 = x^3 + x + t & \bbZ/6\bbZ & \bbZ/2\bbZ \times \bbG_a & 0 & 3 \\
5&\tilde{E}_8 & qe & y^2 = x^3 + t & \bbZ/2\bbZ & {\rm Aff} & 0 & 3  \\ 
6&\tilde{E}_8, \tilde{A}_0^* & e & 
y^2 + xy = x^3 + t
&
\bbZ/2\bbZ & \bbZ/2\bbZ & 0 &2  \\
7&\tilde{E}_8 & e & y^2 + y = x^3 + t&Q_8 & (\bbZ/2\bbZ)^2 \cdot \bbG_a & 0 & 2 \\
8&\tilde{E}_8 & qe & y^2 = x^3 + t & \bbZ/3\bbZ & {\rm Aff} & 0 & 2 \\ \hline
9&\tilde{D}_8, \tilde{A}_0^*, \tilde{A}_0^* & e & y^2 = x^3 + tx^2 + x & \bbZ/2\bbZ & \bbZ/4\bbZ & 0 & \neq 2 \\
10&\tilde{D}_8 & e & y^2 + xy = x^3 + tx^2 + \lambda, \lambda \neq 0 & \bbZ/2\bbZ & \bbG_a & 1 & 2\\
11&\tilde{D}_8 & qe & y^2 = x^3 + tx^2 + t & \{\id\} & \bbG_a & 0 & 2 \\ \hline
12&\tilde{A}_8, \tilde{A}_0^*,\tilde{A}_0^*,\tilde{A}_0^* & e & y^2 + txy - y = x^3 & \{\id\} & \bbZ/3\bbZ  & 0 & \neq 3 \\ \hline
13&\tilde{E}_7,\tilde{A}_1^{*} & e/qe & y^2 = x^3 + tx & \mu_4 & \bbG_m& 0 & any \\
14&\tilde{E}_7, \tilde{A}_1, \tilde{A}_0^* & e & y^2 = x^3 +x^2+ tx &  \bbZ/2\bbZ &  \bbZ/2\bbZ & 0 & \neq 2   \\
15&\tilde{E}_7,\tilde{A}_0^{*},\tilde{A}_0^{*},\tilde{A}_0^{*} & e& y^2 = x^3 + tx + 1 & \{\id\}& \bbZ/3\bbZ& 0 &\ne 2,3 \\
16&\tilde{E}_7, \tilde{A}_1 & e & y^2  + xy = x^3 + tx &  \bbZ/2\bbZ & \bbZ/2\bbZ & 0 & 2  \\
17&\tilde{E}_7,\tilde{A}_0^{*},\tilde{A}_0^{*}& e& y^2 + xy = x^3 + tx + \lambda t, \lambda \neq 0 & \{\id\}& \bbZ/2\bbZ & 1 &2  \\
18&\tilde{E}_7 & e & y^2 + y = x^3 + tx & \{\id\} &  \bbZ/3\bbZ  & 0 & 2 \\ \hline
19&\tilde{D}_7, \tilde{A}_0^*,\tilde{A}_0^*,\tilde{A}_0^* & e & y^2 = x^3 + tx^2 + 1 & \{\id\} & \bbZ/3\bbZ & 0 & \neq 2,3  \\
20&\tilde{D}_7 & e & y^2 + y = x^3 + tx^2 &\{\id\} & \bbZ/3\bbZ& 0 & 2   \\ \hline
21&\tilde{A}_7, \tilde{A}_1, \tilde{A}_0^*, \tilde{A}_0^* & e & y^2 = x^3 + 2(t^2 - 1)x^2 - x& \{\id\} & \bbZ/2\bbZ  & 0 & \neq 2  \\
22&\tilde{A}_7, \tilde{A}_0^*, \tilde{A}_0^*, \tilde{A}_0^*, \tilde{A}_0^* & e & y^2 = x^3 + 2(t^2 - 1)x^2 - \lambda x, \lambda \neq 0,1 & \{\id\} & \bbZ/2\bbZ  & 1 & \neq 2 \\ \hline
23&\tilde{E}_6, \tilde{A}_2^* & e/qe & y^2 + ty = x^3 &  \mu_3 & \bbG_m & 0 & \text{any} \\
24&\tilde{E}_6, \tilde{A}_0^*, \tilde{A}_0^*, \tilde{A}_0^*, \tilde{A}_0^* & e  & y^2 = x^3 -3x + t^2 + \lambda, \lambda \neq 0,2 & \{\id\} & \bbZ/2\bbZ  & 1 & \neq 2,3  \\
25&\tilde{E}_6,\tilde{A}_0^{**},\tilde{A}_0^{**} & e & y^2 = x^3 -3x + t^2 + 2 & \{\id\} & \bbZ/2\bbZ  & 0 & \neq 2,3  \\
26&\tilde{E}_6,\tilde{A}_1,\tilde{A}_0^*,\tilde{A}_0^* & e & y^2 = x^3 -3x + t^2 & \{\id\} & \bbZ/2\bbZ  & 0 & \neq 2,3  \\
27&\tilde{E}_6, \tilde{A}_0^*, \tilde{A}_0^* & e & y^2 = x^3 + x^2 + t^2 + \lambda, \lambda \neq 0 & \{\id\} & \bbZ/2\bbZ  & 1 & 3 \\
28&\tilde{E}_6, \tilde{A}_1  & e & y^2 = x^3 + x^2 + t^2 & \{\id\} & \bbZ/2\bbZ  & 0 & 3 \\
29&\tilde{E}_6 & e & y^2 = x^3 + x + t^2  &  \{\id\} & \bbZ/2\bbZ  & 0 & 3 \\ \hline
30&\tilde{D}_6,  \tilde{A}_1, \tilde{A}_1 & e & y^2 = x^3 + tx^2 + x + t & \bbZ/2\bbZ & \bbZ/2\bbZ & 0 & \neq 2 \\ \hline
31&\tilde{D}_4,  \tilde{D}_4 & e/qe & y^2 = x^3 + \lambda tx^2 + t^2x, \lambda \neq 2& \mu_2 & \bbG_m & 1 & any \\ \hline
\end{array}
$}
\bigskip
\caption{Weierstrass equations for jacobian rational elliptic surfaces with cohomologically trivial automorphisms}
\label{Table2}
\end{table}

\begin{proof}
We fix some realization $\pi:S \to \bbP^2$ as in Section \ref{Halpencil} and use the same notation as in that section. By Lemma \ref{non-infinitely near}, there is a reducible fiber $F_0$ of $f$ with at least $4$ components and this $F_0$ is necessarily $\Aut_{\ct}(J)$-invariant. We set $C_0 = f(F_0)$ and let $g \in \Aut_{\ct}(J)$ be a non-trivial automorphism of order $n$. 

\underline{Case: $s = 1$} 

In this case, $F_0$ contains a chain of $8$ $(-2)$-curves, so it is of type $\tilde{A}_8$ or $\tilde{E}_8$. This leads precisely to the \textbf{Cases 1,$\hdots$,8, and 12}.  


In the remaining cases, we will write $C_0$ as in the following claim.

\noindent \textit{Claim: If $s \geq 2$, then $C_0 = \ell + Q$ for a line $\ell$ and a conic $Q$}

\noindent If $C_0$ is not of the stated form, then it is integral and singular. We may assume that the singularity of $C_0$ is at $q_1$.  Then, $F_0 = \tilde{C}_0 + R_1^{(1)} + \hdots + R_1^{(k_1-1)}$, so $C_0$ cannot be cuspidal. Since $k_1 \geq 3$, the tangent lines to $F_0$ at $q_1$ are $g$-invariant, so they are not swapped by $g$ and hence $g$ has only one fixed point on $C_0$ by Lemma \ref{autoelliptic}. Since the $q_i$ are fixed points of $g$, this is impossible if $s \geq 2$. \hfill $\blacksquare$

\underline{Case: $s = 2$}

Let $\ell_{12} = \langle q_1,q_2 \rangle$.

First, assume that $Q$ is smooth and that $\ell \cap Q$ consists of two distinct points $q,q'$. Since $F_0$ has more than $3$ components, we may assume $q = q_2$. If $q_1 \not \in \ell$, then $q',q_1,q_2 \in Q$ are $3$ fixed points of $g$, which is impossible by Lemma \ref{fixedpoints}. So, $\ell = \ell_{12}$. We have the following subcases:

\begin{itemize}
\item
 If $q' = q_1$, then all $R_{i}^{(j)}$ with $j < k_i$ are components of $F_0$, so $F_0$ is of type $\tilde{A}_8$. This is \textbf{Case 12}.

\item If $q' \neq q_1$, then $(k_1,k_2) = (2,7)$, since we have to blow up $6$ points on $Q$ and $3$ points on $\ell$. Then, $F_0$ is of type $\tilde{A}_7$ and there is another reducible fiber of $f$ containing $R_1^{(1)}$. This is \textbf{Case 21}.
\end{itemize}

Next, assume that $Q$ is smooth and $\ell \cap Q$ consists of a single point $Q$. As above, we may assume that $q = q_2$. We have the following subcases:
\begin{itemize}
\item If $\ell \neq \ell_{12}$, then $q_1 \in Q$, $(k_1,k_2) = (4,5)$ and $F_0$ is of type $\tilde{D}_5$. On the one hand, $g$ fixes the two adjacent components of multiplicity $2$ of $F_0$ pointwise, so $(p,n) \neq 1$, and on the other hand, $g$ acts non-trivially on $Q$, so $(p,n) = 1$. This contradiction shows that this case does not occur.
\item If $\ell = \ell_{12}$, then $(k_1,k_2) = (1,8)$ and $F_0$ is of type $\tilde{D}_8$. This yields \textbf{Cases 9, 10, and 11}.
\end{itemize}

Now, assume that $C_0 = \ell + \ell_1 + \ell_2$ for distinct lines $\ell_1$ and $\ell_2$. It is easy to check that the lines cannot all be distinct, so we may assume $\ell = \ell_1$. Possibly interchanging the $\ell_i$, we may assume that $q_i \in \ell_i$.
\begin{itemize}
\item If $\ell_{12} \not \in \{\ell,\ell_1\}$, then $(k_1,k_2) = (6,3)$ and $F_0$ is of type $\tilde{E}_6$. There is another fiber $F_1$ of $f$ containing $R_2^{(1)} + R_2^{(2)}$. Hence, this is \textbf{Case 23}.
\item If $\ell_1 = \ell_{12}$, then $(k_1,k_2) = (4,5)$ and $F_0$ is of type $\tilde{D}_8$. This leads to \textbf{Cases 9, 10, and 11}.
\item If $\ell_2 = \ell_{12}$, then $(k_1,k_2) = (7,2)$ and $F_0$ is of type $\tilde{E}_7$.  There is another fiber $F_1$ of $f$ containing $R_2^{(1)}$. Hence, this leads to \textbf{Cases 13, 14, and 16}.
\end{itemize}

Finally, assume that $C_0 = 3\ell_{12}$. A general member of $f$ meets $C_0$ at the $q_i$ with multiplicities $3$ and $6$, so we may assume $(k_1,k_2) = (3,6)$ and $x_2^{(2)} \in C_0$. Then, $F_0$ is of type $\tilde{E}_7$. Let $t_1$ be the line with $x_1^{(2)} \in t_1$ and let $Q_1$ be the conic with $x_2^{(1)},\hdots,x_2^{(5)} \in Q_1$. We have the following subcases:
\begin{itemize}
\item If $x_1^{(3)} \in t_1$, then $\tilde{t}_1$ is a $(-2)$-curve and we arrive at the \textbf{Cases 13, 14, and 16}. A similar argument works if $x_2^{(6)} \in Q_1$.
\item If $x_1^{(3)} \not \in t_1$, $x_2^{(6)} \not \in Q_1$ and $t_1 \cap Q_1 = \{q\}$ for a single point $q$, then $q$ is a fixed point of $g$, and $\tilde{t}_1$ and $\tilde{Q}_1$ are $(-1)$-curves. By Lemma \ref{fixedpoints}, we have $(p,n) = 1$. Moreover, note that $g$ acts non-trivially on $Q_1$, so $\Aut_{\ct}(S)^\dagger = \{{\rm id}\}$.
Let $F_1$ be the second $g$-invariant fiber of $f$ and note that $F_1$ is irreducible since $\pi(F_1)$ does not contain any line whose proper transform is a $(-2)$-curve. Since $f$ admits the $3$ disjoint sections $\tilde{t}_1,E_1,$ and $E_2$, Lemma \ref{autoelliptic} shows that $F_1$ has to be smooth and since $q,q_1,q_2 \in F_1$, we have $n \leq 3$. If $p \neq 2$, then Lemma \ref{lem: somefibers} shows that $f$ admits $3$ fibers of type $\tilde{A}_0^*$, so $n =3$. This is \textbf{Case 15}. If $p = 2$, then $n = 3$ is implied by $(p,n) = 1$. Then, Lemma \ref{lem: somefibers} shows that $F_0$ is the only singular fiber of $f$, so this is \textbf{Case 18}.

\item If $x_1^{(3)} \not \in t_1$, $x_2^{(6)} \not \in Q_1$ and $t_1 \cap Q_1 = \{q,q'\}$ for two distinct points $q,q'$, then $\tilde{t}_1$ and $\tilde{Q}_1$ are again $(-1)$-curves. Then, $g$ swaps $q$ and $q'$, for otherwise it would fix $t_1$ and $Q_1$ pointwise. In particular, we have $\Aut_{\ct}(S)^\dagger = \{{\rm id}\}$, and $\Aut_{\ct}(S) = \bbZ/2\bbZ$. If $p \neq 2$, then $g$ has another fixed point $q'' \in t_1$ and, by Lemma \ref{fixedpoints}, $g$ fixes one of the lines $\ell_{12},t_1,$ and $\langle q'',q_2 \rangle$ pointwise. In every case, this would imply that $g$ fixes $Q_1$ pointwise, which is absurd. Hence, $p = 2$ and we can apply Lemma \ref{lem: exception17} to deduce that we are in \textbf{Case 17}.
\end{itemize}

\underline{Case: $s = 3$ and $q_1,q_2,q_3$ collinear}

Let $\ell_{123}$ be the line through the $q_i$.

First, assume that $Q$ is smooth and $\ell \cap Q$ consists of two distinct points $q,q'$. Since $F_0$ has more than $3$ components, we may assume $q = q_3$. Note that $Q$ can contain at most $2$ of the $q_i$ by Lemma \ref{fixedpoints}, so we may assume $\ell = \ell_{123}$, $q_1 \not \in Q$ and $q_2 \in Q$. In particular, we have $k_1 = 1$. The $R_i^{(j)}$ with $i \in \{2,3\}$ and $j < k_i$ are in $F_0$, so $F_0$ is of type $\tilde{A}_7$. Let $t_i$ be the tangent line to $Q$ at $q_i$. We have the following two subcases:
\begin{itemize}
\item If $x_i^{(3)} \in t_i$ for some $i \in \{2,3\}$, then $\tilde{t}_i$ is a $(-2)$-curve, so $f$ admits another reducible fiber apart from $F_0$. Thus, we are in \textbf{Case 21}.
\item If $x_i^{(3)} \not \in t_i$ for $i = 2,3$, then $\tilde{t}_i$ is a $(-1)$-curve on which $g$ acts non-trivially by Lemma \ref{fixedpoints}. So, we have $\Aut_{\ct}(S)^\dagger = \{{\rm id}\}$. Let $F_1$ be the fiber of $f$ such that $C_1 := \pi(F_1)$ passes through the intersection $\tilde{t}_2 \cap \tilde{t}_3$. This $C_1$ is $g$-invariant, irreducible, and contains $4$ fixed points of $g$, so by Lemma \ref{autoelliptic} it is smooth and $\Aut_{\ct}(S) = \bbZ/2\bbZ$. This is \textbf{Case 22}.
\end{itemize}

Next, assume that $Q$ is smooth and $\ell \cap Q$ consists of a single point $q$. As in the previous case, we may assume $q = q_3$ and $\ell = \ell_{123}$. Then, we have $(k_1,k_2,k_3) = (1,1,7)$, but since $Q$ and $\ell$ are tangent at $q_3$, we have $\til^2 \leq -3$, which is impossible. So, this case does not occur.

Assume that $C_0 = \ell_1 + \ell_2 + \ell_3$ is the sum of three distinct lines. Since $F_0$ has more than $3$ components, one of the lines has to be $\ell_{123}$, so we may assume $\ell_1 = \ell_{123}$. It is easy to check that each of the $\ell_i$ has to contain some $q_j$, so we may assume $q_i \in \ell_i$. Then, we have $(k_1,k_2,k_3) = (1,4,4)$ and $F_0$ is of type $\tilde{A}_8$. This is \textbf{Case 12}.

Now, assume that $C_0 = \ell_{123} + 2\ell$ for some line $\ell \neq \ell_{123}$. We may assume $q_3 \in \ell_{123}$. Then, $(k_1,k_2,k_3) = (1,1,7)$ and $F_0$ is of type $\tilde{E}_7$. Since $g$ acts non-trivially on $E_1$, we have $\Aut_{\ct}(S)^\dagger = \{{\rm id}\}$. If $(p,n) = 1$, then $g$ admits another fixed point $q \in \ell \setminus \ell_{123}$. Let $F_1 \neq F_0$ be a $g$-invariant fiber and note that $C_1 := \pi(F_1)$ is irreducible, since the only lines whose proper transforms are $(-2)$-curves are $\ell_{123}$ and $\ell$. 
Since $q \in \ell \setminus \ell_{123}$, $F_1$ does not contain $q$, so $g$ has precisely the $3$ smooth fixed points $q_i$ on $F_1$. By Lemma \ref{autoelliptic}, this implies that $F_1$ is smooth and $n = 3$, so these are \textbf{Cases 15 and 18}. If $(p,n) \neq 1$, we can apply Lemma \ref{lem: exception17} to deduce that we are in \textbf{Case 17}. 

Next, assume that $C_0 = 2\ell_{123} + \ell$ for some line $\ell \neq \ell_{123}$ with $q_3 \in \ell$. Then, we have $(k_1,k_2,k_3) = (2,2,5)$ and $F_0$ is of type $\tilde{D}_7$. Let $t_i$ with $i = 1,2$ be the line through $q_i$ with $x_i^{(2)} \in t_i$. Then, $t_1,t_2,$ and $\ell$ meet in a single point $q$ which is a fixed point of $g$, so $(p,n) = 1$ by Lemma \ref{fixedpoints}. Since $\tilde{t}_1$ is a $(-1)$-curve and $g$ acts non-trivially on it, we have $\Aut_{\ct}(S)^\dagger = \{{\rm id}\}$. Let $F_1 \neq F_0$ be a $g$-invariant fiber. By the same argument as in the previous paragraph, $F_1$ is smooth and $n = 3$, so this leads to \textbf{Cases 19 and 20}.

Finally, assume that $C_0 = 3\ell_{123}$. We must have $(k_1,k_2,k_3) = (3,3,3)$ and then $F_0$ is of type $\tilde{E}_6$. We let $t_i$ be the line through $q_i$ with $x_i^{(2)}$. These lines meet in a single point $q$ which is a fixed point of $g$, so $(p,n) = 1$ by Lemma \ref{fixedpoints}.
If for some $i$ we have $x_i^{(3)} \in t_i$, then $\tilde{t}_i$ is a $(-1)$-curve on which $g$ acts non-trivially, so $\Aut_{\ct}(S)^\dagger = \{{\rm id}\}$. We have the following subcases:
\begin{itemize}
\item If $x_i^{(3)} \in t_i$ for two of the $i$, then the corresponding $\tilde{t}_i$ are part of the same fiber of $f$. This is only possible if all three of the $\tilde{t}_i$ are $(-2)$-curves and then we are in \textbf{Case 23}.
\item If $x_i^{(3)} \in t_i$ for precisely one $i$, say for $i = 1$, then the fiber $F_1$ of $f$ containing $\tilde{t}_1$ also contains the strict transform of a smooth conic $Q_{23}$ through $q_2$ and $q_3$. Since $g$ acts non-trivially on $Q_{23}$, we have $Q_{23} \cap t_1$ consists of $2$ points, both different from $q$. Hence, we have $n = 2$ and we obtain \textbf{Cases 26 and 28}.
\item If $x_i^{(3)} \not \in t_i$ for all $i$, let $F_1$ be the fiber of $f$ passing through $q$ and let $C_1 = \pi(F_1)$. Note that $C_1$ is irreducible, since the only line whose proper transform is a $(-2)$-curve is $\ell_{123}$. Then, $C_1$ contains $4$ fixed points of $g$, so by Lemma \ref{autoelliptic} $C_1$ is smooth and $n = 2$. Thus, this leads to the \textbf{Cases 24, 25, 27, and 29}. 
\end{itemize}

\underline{Case: $s = 3$ and $q_1,q_2,q_3$ are not collinear}

Let  $\ell_{ij} = \la q_i,q_j\ra$ and note that, by Lemma \ref{fixedpoints}, we have $(p,n) = 1$ .

First, assume that $Q$ is smooth. Since $F_0$ has more than $3$ components, we may assume $q = q_3$. Note that $Q$ can contain at most $2$ of the $q_i$ by Lemma \ref{fixedpoints}, so we may assume $q_1 \in \ell \setminus Q$. Then, $\ell = \ell_{13}$ and $Q \cap \ell = \{q_3\}$. Since $F_0$ has more than $3$ components, we have $k_3 \geq 4$ and $F_0$ is of type $\tilde{D}_{k_3}$. Also, as $Q$ and $\ell$ are tangent at $q_3$, this implies $k_1 = 1$. Note that the tangent to $Q$ at $q_2$ is $g$-invariant, so it coincides with $\ell_{12}$, for otherwise it would produce a third fixed point on $\ell_{13}$ forcing $\ell_{13}$ and $Q$ to be fixed pointwise.
So, we have the following subcases:

\begin{itemize}
\item If $k_2 \geq 2$, then $\til_{12}$ is a $(-2)$-curve and there exists a fiber $F_1$ of $f$ and a smooth conic $Q'$ such that $C_1 = \pi(F_1) =  \ell_{12} + Q'$. By the same argument as in the previous paragraph, $\ell_{12}$ is tangent to $Q'$ at $q_2$. Computing the proper transforms of $Q'$ and $\ell_{12}$, we see that $F_1$ can only be a fiber of $f$ if $(k_1,k_2,k_3) = (1,4,4)$, in which case both $F_0$ and $F_1$ are of type $\tilde{D}_4$. This is \textbf{Case 31}.
\item If $(k_1,k_2,k_3) = (1,1,7)$, note that $g$ acts non-trivially on one of the sections $\til_{12}$ or $\til_{23}$, so $\Aut_{\ct}(S)^\dagger = \{{\rm id}\}$. Let $F_1$ be the second $g$-invariant fiber and set $C_1 := \pi(F_1)$. Note that $C_1$ is irreducible, since the only line that becomes a $(-2)$-curve on $S$ is $\ell_{13}$. None of the $\ell_{ij}$ is fixed pointwise by $g$, so $n \neq 2$ by Lemma \ref{fixedpoints} and $C_1$ contains precisely $3$ fixed points. Thus, Lemma \ref{autoelliptic} shows that $C_1$ is smooth and $n = 3$. This leads to \textbf{Cases 19 and 20}. 
\end{itemize}

Next, assume that $C_0$ consists of $3$ distinct lines. Since $F_0$ has at least $4$ components, at least two of these lines have to coincide with some $\ell_{ij}$, so we may assume $C_0 = \ell_{12} + \ell_{23} + \ell$ for some line $\ell$. We have the following subcases:

\begin{itemize}
\item If $\ell = \ell_{13}$, we may assume that a general member of the pencil is tangent to $\ell_{12}$ at $q_1$, and then it must be tangent to $\ell_{23}$ at $q_3$ and tangent to $\ell_{12}$ at $q_2$. We have the following picture.

\xy (-55,25)*{};
(4,-5)*+{};(25,30)*+{}
**\crv{(-7,2)&(35,10)&(55,-5)&(55,12)&(25,20)&};
(-5,1)*{};(55,.2)*{}**{\color{red}\dir{-}};(-5,-5)*{};(32,25)*{}**{\color{red}\dir{-}};(58,-5)*{};(20,25)*{}**{\color{red}\dir{-}};
(13,13)*{\ell_{1}};(40,13)*{\ell_{2}};(25,3)*{\ell_{3}};(2,3)*{q_1};(51,-2)*{q_3};(26.5,18)*{q_2};(2.2,1)*{\bullet};(51.8,0)*{\bullet};(26.3,20)*{\bullet};
\endxy
\noindent It is immediate to see that $F_0$ is of type $\tilde{A}_8$ and thus this is \textbf{Case 12}.
\item If $\ell \neq \ell_{13}$ and $q_2 \in \ell$, then $(k_1,k_2,k_3) = (2,5,2)$ and $F_0$ is of type $\tilde{D}_6$. Moreover, there are two additional reducible fibers of $f$ containing $R_1^{(1)}$ and $R_3^{(1)}$, respectively. Lemma \ref{lem: somefibers} shows that we are in \textbf{Case 30}.
\item If $\ell \neq \ell_{13}$ and $q_2 \not \in \ell$, then we may assume $q_3 \in \ell$. Then, we have $(k_1,k_2,k_3) = (2,3,4)$ and $F_0$ is of type $\tilde{A}_7$. There is another reducible fiber $F_1$ containing $R_1^{(1)}$, which is necessarily of type $\tilde{A}_1$. Thus, this is \textbf{Case 21}.
\end{itemize}

Now, assume that $C_0 = 2\ell_{12} + \ell$ for some line $\ell \neq \ell_{12}$. Clearly, we must have $q_3 \in \ell$. If $\ell = \ell_{ij}$ for some $i$ and $j$, then we assume $\ell = \ell_{23}$. We have the following subcases:
\begin{itemize}
\item If $\ell \neq \ell_{ij}$ for all $i$ and $j$, then $\ell$ meets $\ell_{12}$ at some point $q \neq q_1,q_2$ and thus $g$ fixes $\ell_{12}$ pointwise. Moreover, we have $k_3 = 3$. Since $C_0$ has multiplicity $2$ at $q_1$ and $q_2$, we may assume that $(k_1,k_2,k_3) = (4,2,3)$. Then, $x_1^{(2)} \in \ell_{12}$ and $x_3^{(2)} \in \ell_{23}$, since the latter is the unique $g$-invariant line different from $\ell_{12}$ through $q_2$. So, $F_0$ is of type $\tilde{D}_5$. Since $(p,n) = 1$ and $g$ fixes both $\til_{12}$ and $R_1^{(2)}$ pointwise, this case does not occur. 
\item If $\ell = \ell_{23}$ and $k_3 = 1$, then we must have $k_1,k_2 \geq 2$, $x_1^{(2)} \in \ell_{12}$, and $x_2^{(2)} \in \ell_{23}$. Computing the proper transform of $C_0$, we get $(k_1,k_2,k_3) = (4,4,1)$ and $F_0$ is of type $\tilde{D}_7$. As in the case above where $Q$ is smooth and $(k_1,k_2,k_3) = (1,1,7)$, one can show that $n = 3$ and $\Aut_{\ct}(S)^\dagger = \{{\rm id}\}$, so these are \textbf{Cases 19 and 20}.

\item If $\ell = \ell_{23}$, $k_3 = 2$, and $x_1^{(2)} \in \ell'$ for some line $\ell' \neq \ell_{12}, \ell_{13}$, then $(k_1,k_2,k_3) = (2,5,2)$ and $F_0$ is of type $\tilde{E}_6$. Since $g$ acts non-trivially on the $(-1)$-curve $\til'$ we have $\Aut_{\ct}(S)^\dagger = \{{\rm id}\}$. Let $F_1$ be the fiber of $f$ containing. Since the only lines that become $(-2)$-curves on $S$ are $\ell_{12}$ and $\ell_{13}$, $C_1 = \pi(F_1)$ is irreducible and by Lemma \ref{autoelliptic} $C_1$ is nodal and $n = 2$, so $F_1$ is of type $\tilde{A}_1$. This leads to \textbf{Cases 26 and 28}.

\item If $\ell = \ell_{23}$, $k_3 = 2$, and $x_1^{(2)} \in \ell_{13}$, then again $(k_1,k_2,k_3) = (2,5,2)$ and $F_0$ is of type $\tilde{E}_6$. This time, $F_1$ also contains $\til_{23}$, so it consists of three components. This is \textbf{Case 23}.

\item If $\ell = \ell_{23}$, $k_3 = 2$, and $x_1^{(2)} \in \ell_{12}$, then $(k_1,k_2,k_3) = (4,3,2)$ and $F_0$ is of type $\tilde{D}_6$. Additionally, there are two distinct fibers containing $R_1^{(1)}$ and $R_3^{(1)}$, respectively. Thus, this is \textbf{Case 30}. 
\end{itemize}

Finally, assume that $C_0 = \ell_{12} + 2\ell$ for some line $\ell$ different from the $\ell_{ij}$. Then, we must have $q_3 \in \ell$ and we may assume $(k_1,k_2,k_3) = (1,2,6)$. The fiber $F_0$ is of type $\tilde{E}_6$ and there is another reducible fiber $F_1$ of $f$ containing $R_2^{(1)}$. The only $g$-invariant lines that become $(-2)$-curves on $S$ are $\ell_{12}$ and $\ell'$, so $C_1 = \pi(F_1)$ is irreducible. Thus, we have $n = 2$ and $C_1$ is nodal by Lemma \ref{autoelliptic}. Since $g$ acts non-trivially on the $(-1)$-curve $\ell_{13}$, we have $\Aut_{\ct}(S)^\dagger = \{{\rm id}\}$. These are the \textbf{Cases 26 and 28}.

\underline{Case: $s = 4$}
  
By Lemma \ref{non-infinitely near} (4), we may assume $k_4> k_i, i = 1,2,3$. Let $\ell_{123}$ be the line through $q_1,q_2,q_3$ and set $\ell_{i4} := \langle q_i,q_4 \rangle$ for $i = 1,2,3$. Note that $\ell_{123}$ is fixed pointwise by $g$ and it follows from Lemma \ref{fixedpoints} that $(p,n) = 1$.

First, assume that $Q$ is smooth. Then, $\ell$ contains $2$ of the $q_i$ and $Q$ contains the other $2$, while $Q \cap \ell = \{q,q'\}$ for $2$ distinct points $q$ and $q'$ which are interchanged by $g$. This implies that $F_0$ is of type $\tilde{A}_1$, which contradicts our assumption that $F_0$ has at least $4$ components.

Next, assume that $C_0$ consists of $3$ distinct lines. We may assume that $C_0 = \ell_{123} + \ell_{34} + \ell$ for some line $\ell \neq \ell_{14}$. Then, we have $k_1 = 1$, so $k_4 > 3$ and thus $q_4 \in \ell$. We have the following subcases:

\begin{itemize}
\item If $\ell \neq \ell_{24}$, then $(k_1,k_2,k_3,k_4) = (1,1,3,4)$ and $F_0$ is of type $\tilde{A}_7$. Also, $g$ acts non-trivially on $E_1$, so $\Aut_{\ct}(S)^\dagger = \{{\rm id}\}$. Let $F_1$ be the second $g$-invariant fiber and let $C_1 := \pi(F_1)$. All lines that become $(-2)$-curves on $S$ are part of $C_0$, so $C_1$ is irreducible. So, Lemma \ref{autoelliptic} shows that $n = 2$ and $F_1$ is smooth. Hence, this is \textbf{Case 22}.
\item If $\ell = \ell_{24}$, then $(k_1,k_2,k_3,k_4) = (1,2,2,4)$. Since $F_0$ is of type $\tilde{A}_m$ for some $m$, we have $x_i^{(2)} \not \in \ell_{i4}$ for $i = 2,3$. But $x_4^{(2)}$ cannot be contained in both $\ell_{24}$ and $\ell_{34}$, so one of them does not become a $(-2)$-curve on $S$. This contradiction shows that this case does not occur. 
\end{itemize}

Now, assume that $C_0 = \ell_{123} + 2\ell$ for some line $\ell$ through $q_4$. We may assume that $\ell$ does not pass through $q_1,q_2$, so $k_1 = k_2 = 1$. 
We have the following subcases:

\begin{itemize}
\item If $\ell \neq \ell_{34}$, then $(k_1,k_2,k_3,k_4) = (1,1,1,6)$, then $F_0$ is of type $\tilde{E}_6$. Since $g$ acts non-trivially on the $(-1)$-curve $\til_{14}$, we have $\Aut_{\ct}(S)^\dagger = \{{\rm id}\}$. Let $F_1$ be the second $g$-invariant fiber, set $C_1 := \pi(F_1)$, and observe that $C_1$ is irreducible. By Lemma \ref{autoelliptic}, $C_1$ is smooth and $n = 2$, so this leads to \textbf{Cases 24, 25, 27, and 29}.
\item If $\ell = \ell_{34}$, then $k_3 \geq 2$ since $\til_{34}$ and $\til_{123}$ are both part of the same fiber. Then, the line with $x_3^{(2)}$ is $g$-invariant, so it coincides with $\ell_{34}$. Thus, we must have $(k_1,k_2,k_3,k_4) = (1,1,5,2)$, contradicting our assumption that $k_4 > k_3$.
\end{itemize}

Finally, assume that $C_0 = 2\ell_{123} + \ell$ for some line $\ell$ through $q_4$. Since $k_4 \geq 3$, $\ell$ has to be different from the $\ell_{i4}$. Then, we have $(k_1,k_2,k_3,k_4) = (2,2,2,3)$ and $F_0$ is of type $\tilde{D}_4$. There is another fiber $F_1$ of $f$ of type $\tilde{D}_4$ with $\pi(F_1) = \ell_{14} + \ell_{24} + \ell_{34}$. So, this is \textbf{Case 31}.
\end{proof}
 
 \subsection{Cohomologically trivial automorphisms of non-jacobian rational genus one surfaces}

Let us now classify non-jacobian rational elliptic or quasi-elliptic surfaces $f:S\to \bbP^1$ with non-trivial group $\Aut_{\ct}(S)$. Recall that by the results of Section \ref{action}, we have exact sequences
\begin{eqnarray}
0 \to \MW(J(f)) \cap \Aut_{\ct}(S) \to \Aut_{\ct}(S)^\dagger \to \Aut_{\ct}(J(S))^\dagger \\
0 \to \Aut_{\ct}(S)/\Aut_{\ct}(S)^\dagger \to \Aut_{\ct}(J(S))/\Aut_{\ct}(J(S))^\dagger
\end{eqnarray}

Let us first study the cases where $ \MW(J(f)) \cap \Aut_{\ct}(S)$ is non-trivial.

\begin{lemma}\label{lem: cohtrivialtranslations}
Let $f: S \to \bbP^1$ be a genus one fibration on a rational surface of index $m > 1$ and assume that $\Aut_{\ct}(S) \cap \MW(J(f))$ is non-trivial. Then, one of the following cases occurs:
\begin{enumerate}
\item $f$ is elliptic, $2 \mid m$, $(p,m) = 1$, $F_0$ is of type $\tilde{A}_7$ and $\Aut_{\ct}(S) = \bbZ/2\bbZ$.
\item $f$ is elliptic, $3 \mid m$, $(p,m) = 1$, $F_0$ is of type $\tilde{A}_8$ and $\Aut_{\ct}(S) = \bbZ/3\bbZ$.
\item  $m = p = 2$, $F_0$ is of type $\tilde{D}_8$, $\Aut_{\ct}(S) \cap \MW(J(f)) = \Aut_{\ct}(S)^\dagger = \bbZ/2\bbZ$, and $\Aut_{\ct}(S) = \bbG_a$. If $f$ is elliptic, then $\Aut_{\ct}(S)^\dagger \subseteq (\bbZ/2\bbZ)^2$ and if $f$ is quasi-elliptic, then $\Aut_{\ct}(S)^\dagger = \bbZ/2\bbZ$.
\end{enumerate}
\end{lemma}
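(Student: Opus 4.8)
The plan is to show that any non-trivial $g\in\Aut_{\ct}(S)\cap\MW(J(f))$ is the translation $\tau_A$ by a torsion section $A$ of $J(f)$ whose ``component vector'' is supported at the point $t_0:=f(F_0)$ carrying the multiple fibre, and that this forces $F_0$ into a very short list. First I would record two constraints on $A$. For a $(-1)$-curve $E$ on $S$, adjunction gives $E\cdot K_S=-1$, so $E$ is an $m$-section (i.e. $E\cdot\mathfrak f=m$ for the general fibre $\mathfrak f=mF_0$); restricting $\tau_A^{*}$ to the generic fibre and using that translation by $A$ shifts a degree-$k$ divisor class by $kA$, cohomological triviality of $\tau_A$ forces $mA=0$ in $\MW(J(f))$. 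On the other hand, applying Formula \eqref{mwaction} to $\tau_A^{d}=\tau_{dA}$ — which is legitimate since $dA\in M'$ by Proposition \ref{prop: gizatullin}(1) — shows that $\tau_A^{d}$, being cohomologically trivial, is the identity, i.e. $\tau_A$ has order dividing $d$ (the refinement announced after Corollary \ref{cor: gizatullin}). Passing to a suitable power, we may assume $A$ has prime order $\ell$ dividing $\gcd(m,d)$.

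The heart of the argument is the following description of $\MW(J(f))\cap\Aut_{\ct}(S)$. Since $g$ is cohomologically trivial it fixes every $(-2)$-curve, hence every component of every reducible fibre. At a non-multiple reducible fibre $X_t$ the action of $\tau_A$ on the components agrees with the translation action on the jacobian, so this forces $\operatorname{comp}_t(A)=0$ for every reducible $t\neq t_0$; at the multiple fibre $mF_0$ one checks — working over the henselization at $t_0$, or by pulling back along a degree-$m$ base change totally ramified there — that the induced action on the components of $mF_0$ factors through $\MW(J(f))/m\MW(J(f))$, so that $mA=0$ already guarantees that $\tau_A$ fixes the components of $mF_0$. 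Conversely, iterating $\tau_A^{*}$ shows that the two conditions $mA=0$ and $\operatorname{comp}_t(A)=0$ for $t\neq t_0$ together imply $\tau_A^{*}=\id$. Thus $\MW(J(f))\cap\Aut_{\ct}(S)$ is exactly the group of $m$-torsion sections of $J(f)$ whose component vector is concentrated at $t_0$, and $\operatorname{comp}_{t_0}(A)$ is a non-zero element of $\Disc_{t_0}$ of order $\ell$.

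Next I would pin down $F_0$. Since $F_0$ and the fibre $J_0$ of $J(f)$ over $t_0$ have the same type, $A$ is a torsion section of the rational elliptic surface $J(S)$ that is concentrated at $J_0$; the Shioda height pairing on $\MW(J(S))$ (equivalently, the embedding of the trivial lattice into $\sfE_8$) gives $0=\langle A,A\rangle=2+2(A\cdot E_0)-\operatorname{contr}_{t_0}(A)$, so the single local contribution $\operatorname{contr}_{t_0}(A)$ is at least $2$ (and equals $2$, with $A\cdot E_0=0$). Running through the root lattices of the possible fibre types, the only options are $J_0$ of type $\tilde A_7$ with $\ell=2$, of type $\tilde A_8$ with $\ell=3$, and of type $\tilde D_8$ with $\ell=2$. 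For $\tilde A_7$ and $\tilde A_8$ the fibre is multiplicative, so $f$ is elliptic and $(p,m)=1$ by Lemma \ref{multiplefiber}(3); for $\tilde D_8$ the fibre is additive, hence $m=p$ by the same Lemma, and since $\MW(J(S))$ of the $\tilde D_8$ rational elliptic surface is $\bbZ/2\bbZ$ it has no $3$-torsion, forcing $p=2=m$, while now $f$ may be elliptic or quasi-elliptic. Because $\Disc_{t_0}$ is cyclic of order $8$, cyclic of order $9$, and $(\bbZ/2\bbZ)^2$ respectively, the group of concentrated $\ell$-torsion sections is $\bbZ/2\bbZ$, $\bbZ/3\bbZ$, $\bbZ/2\bbZ$, which identifies $\Aut_{\ct}(S)\cap\MW(J(f))$ in each case.

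Finally, to compute $\Aut_{\ct}(S)$ and $\Aut_{\ct}(S)^\dagger$ themselves I would combine Theorem \ref{delPezzo}(3), which identifies $\Aut_{\ct}(S)$ with a stabilizer inside $\Aut_{\ct}(J(S))$, with the two exact sequences at the beginning of this subsection. The jacobian $J(S)$ carries a non-trivial cohomologically trivial automorphism, hence it occurs in Theorem \ref{thm: main}: with an $\tilde A_7$-fibre it is Case $21$ or $22$ ($\Aut_{\ct}(J(S))=\bbZ/2\bbZ$, $\Aut_{\ct}(J(S))^\dagger=\{\id\}$), with an $\tilde A_8$-fibre it is Case $12$ ($\Aut_{\ct}(J(S))=\bbZ/3\bbZ$, $\Aut_{\ct}(J(S))^\dagger=\{\id\}$), and with a $\tilde D_8$-fibre it is Case $10$ (elliptic, $\Aut_{\ct}(J(S))=\bbG_a$, $\Aut_{\ct}(J(S))^\dagger=\bbZ/2\bbZ$) or Case $11$ (quasi-elliptic, $\Aut_{\ct}(J(S))=\bbG_a$, $\Aut_{\ct}(J(S))^\dagger=\{\id\}$). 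For $\tilde A_7$ and $\tilde A_8$ this forces $\Aut_{\ct}(S)=\Aut_{\ct}(S)\cap\MW(J(f))$, and for $\tilde D_8$ it gives $\Aut_{\ct}(S)\cong\bbG_a$, while the first exact sequence together with the value of $\Aut_{\ct}(J(S))^\dagger$ yields $\Aut_{\ct}(S)^\dagger\subseteq(\bbZ/2\bbZ)^2$ in the elliptic case and $\Aut_{\ct}(S)^\dagger=\bbZ/2\bbZ$ in the quasi-elliptic case. The main obstacle is the middle part: correctly accounting for the action of $\MW(J(f))$ on the components of the \emph{multiple} fibre, so that the only constraint there is $mA=0$, and then turning ``torsion section concentrated at one fibre'' into the list $\tilde A_7,\tilde A_8,\tilde D_8$ via the height pairing, while keeping track of the characteristic restrictions coming from Lemma \ref{multiplefiber}.
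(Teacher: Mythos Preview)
Your argument tracks the paper's proof closely through the middle section: identifying $A$ as an $m$-torsion section of $J(f)$ whose component vector is supported at $t_0$, and then applying the height formula $0=\langle A,A\rangle=2+2(A\cdot E_0)-\operatorname{contr}_{t_0}(A)$ to force $F_0\in\{\tilde A_7,\tilde A_8,\tilde D_8\}$ with the stated orders. This is exactly the paper's approach (which cites \cite[Theorem 6.24, Table 6.1]{ShiodaSchutt} for the elliptic case and \cite{Ito1}, \cite{Ito2} for the quasi-elliptic case). Your discussion of the converse characterization --- that $mA=0$ together with $\operatorname{comp}_t(A)=0$ for $t\neq t_0$ already implies $\tau_A^*=\id$ --- is not needed for the lemma and the claim that the action on the components of $mF_0$ ``factors through $\MW(J(f))/m\MW(J(f))$'' is not adequately justified; the paper simply drops this direction.

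There is, however, a genuine gap in your final step. You write that Theorem \ref{delPezzo}(3) ``identifies $\Aut_{\ct}(S)$ with a stabilizer inside $\Aut_{\ct}(J(S))$'' and that ``the jacobian $J(S)$ carries a non-trivial cohomologically trivial automorphism''. Neither statement is correct. Theorem \ref{delPezzo}(3) produces an auxiliary jacobian surface $S'$ (obtained by blowing down a chosen $(-1)$-curve $E$ and blowing up the base point of $|-K_{\calD}|$), not $J(S)$; it is $\Aut_{\ct}(S')$, not $\Aut_{\ct}(J(S))$, that contains $\Aut_{\ct}(S)$ as a point-stabilizer. And one cannot deduce that $\Aut_{\ct}(J(S))\neq\{\id\}$ from the hypothesis: the non-trivial elements you have exhibited lie in $\MW(J(f))=\Ker(\varphi)$, so they map to the identity on $J(S)$. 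This matters, because the exact sequences alone only give $|\Aut_{\ct}(S)|\le 4$ (resp.\ $9$) in the $\tilde A_7$ (resp.\ $\tilde A_8$) case, not the claimed equality. The paper's fix is to use $S'$: since $\Aut_{\ct}(S)\subseteq\Aut_{\ct}(S')$ and $S'$ is a jacobian rational genus one surface with a fiber of type $\tilde A_7$ (resp.\ $\tilde A_8$) and with $\Aut_{\ct}(S')\neq\{\id\}$, Table \ref{Table2} forces $|\Aut_{\ct}(S')|=2$ (resp.\ $3$), whence $\Aut_{\ct}(S)=\Aut_{\ct}(S)\cap\MW(J(f))$. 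The $\tilde D_8$ case is similar: one checks via Table \ref{automorphismsofequations} that $\Aut_{\ct}(S')\cong\bbG_a$ acts trivially on the identity component of $F_0'$, so the stabilizer of $P$ is all of $\bbG_a$. Once you replace $J(S)$ by $S'$ and invoke Theorem \ref{delPezzo}(3) in this precise form, your outline becomes essentially the paper's proof.
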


\begin{proof}
If $F_t$ is  a simple fiber of $f$, then $f$ is isomorphic to $J(f)\to \bbP^1$ in an \'etale neighborhood of $t$ in such a way that the action of $\MW(J(f))$ on $J(f)$ is identified with the action of $\MW(J(f))$ on $f$. Hence, if $g \in \Aut_{\ct}(S) \cap \MW(J(f))$, then it must be a translation automorphism by a section $\Sigma$ of $J(f)$ which meets every fiber corresponding to a simple fiber of $f$ in the identity component. In particular, the section $\Sigma$ meets only one fiber of $J(f)$ in a non-identity component. On the other hand, since $g$ preserves every $m$-section of $f$, the order $n$ of $\Sigma$ divides $m$ and in particular $\Sigma$ is an $n$-torsion section of $J(f)$. 

Assume first that $f$ is elliptic. We apply the theory of Mordell-Weil lattices \cite{ShiodaSchutt}. The height $h(P)$ of a torsion section $\Sigma$ is equal to zero, hence Theorem 6.24 from loc.cit. shows that the sum of the local contributions  is greater than or equal to $2$. Since only one fiber defines a non-zero local contribution, Table 6.1 in loc.cit. shows that this can happen only in the case when the type of the fiber is $\tilde{A}_7, \tilde{A}_8$, or $\tilde{D}_8$. It also shows that the torsion section does not intersect the zero section and has order $2$, $3$, or $2$, respectively. In the first two cases, we have $(p,m) = 1$, since $F_0$ is multiplicative. In the third case, we have $m = p$, since $F_0$ is additive. But, as explained above, $n$ divides $m$, hence $m = p = n = 2$. 

Next, if $f$ is quasi-elliptic, then the classification of Mordell-Weil groups of quasi-elliptic surfaces (see \cite{Ito1}, \cite{Ito2}) shows that $\Sigma$ as above exists if and only if $p = n = 2$ and $f$ admits a fiber of type $\tilde{D}_8$. As in the previous paragraph, we also have $m = 2$.

Finally, by Theorem \ref{delPezzo} (3), the group $\Aut_{\ct}(S) \cap \MW(J(f))$ is a subgroup of $\Aut_{\ct}(S')$ for some jacobian rational genus one surface $f':S' \to \bbP^1$ which admits a fiber $F_0'$ of the same type as $F_0$. Thus, in Case (1) and (2), Table \ref{Table2} shows that $\Aut_{\ct}(S)$ is the stated group. In Case (3), we know from Table \ref{automorphismsofequations}, that $\Aut_{\ct}(S') = \bbG_a$ acts trivially on the identity component of $F_0'$. Thus, by Theorem \ref{delPezzo} (3), we have $\Aut_{\ct}(S) = \Aut_{\ct}(S') = \bbG_a$. Moreover, we have $\MW(J(f)) = \Aut_{\ct}(S) \cap \MW(J(f)) = \bbZ/2\bbZ$ in this case. If $f$ is quasi-elliptic, then $\Aut_{\ct}(J(S))^\dagger$ is trivial, so $\Aut_{\ct}(S) \cap \MW(J(f)) = \Aut_{\ct}(S)^\dagger = \bbZ/2\bbZ$. If $f$ is elliptic, then $\Aut_{\ct}(J(S))^\dagger = \bbZ/2\bbZ$, so $\Aut_{\ct}(S)^\dagger$ has order at most $4$. As $\Aut_{\ct}(S)^\dagger$ is contained in $\bbG_a$, this shows $\Aut_{\ct}(S)^\dagger \subseteq (\bbZ/2\bbZ)^2$.
\end{proof}

Next, we are interested in the group $\Aut_{\ct}(S)/\Aut_{\ct}(S)^\dagger$. In Theorem \ref{thm: actiononjacobian} (5), we have seen that $\Aut_{\ct}(S)$ acts trivially on the identity component of the fiber $J_0$ of $J(f)$ corresponding to the multiple fiber $mF_0$ of $f$ as soon as $F_0$ has more than two components. Using the results of the previous subsection, we can describe precisely all the cases where such cohomologically trivial automorphisms of $J(S)$ exist. In the following lemma, we are referring to the cases in Table \ref{Table2}.

\begin{lemma}\label{lem: trivialonfiberautomorphisms}
Let $f: J \to \bbP^1$ be a jacobian rational genus one surface and let $J_0$ be a fiber. Let $G \subseteq \Aut_{\ct}(J)$ be the subgroup of automorphisms preserving $J_0$ and acting trivially on the identity component $J_0^\sharp$ of $J_0$. Then, the following hold:
\begin{enumerate}
\item If $F_0$ is additive, then $G \cap \Aut_{\ct}(J)^\dagger$ is the $p$-torsion subgroup of $\Aut_{\ct}(J)^\dagger$.
\item If $F_0$ is not additive, then $G \cap \Aut_{\ct}(J)^\dagger$ is trivial.
\item $G \not \subseteq \Aut_{\ct}(J)^\dagger$ holds precisely in the following cases:
\begin{enumerate}
\item $p \neq 5$, $J$ is as in Case $1$, $F_0$ is of type $\tilde{E}_8$ and $G = \bbZ/5\bbZ$.
\item $p \neq 3$, $J$ is as in Case $13$, $F_0$ is of type $\tilde{E}_7$ and $G = \bbZ/3\bbZ$.
\item $p \neq 2$, $J$ is as in Case $23$, $F_0$ is of type $\tilde{E}_6$ and $G = \bbZ/2\bbZ$.
\item $J$ is as in Case $4$ or $7$, $F_0$ is of type $\tilde{E}_8$ and $G = \bbG_a$.
\item $J$ is as in Case $5$ or $8$, $F_0$ is of type $\tilde{E}_8$ and $G = \mathbb{G}_a \rtimes \bbZ/5\bbZ$.
\item $J$ is as in Case $10$ or $11$, $F_0$ is of type $\tilde{D}_8$ and $G = \bbG_a$.
\item $J$ is as in Case $12,15,$ or $18$, $F_0$ is of type $\tilde{A}_8$ or $\tilde{E}_7$ and $G = \bbZ/3\bbZ$.
\item $J$ is as in Case $21,22,24,25,26,27,28,$ or $29,$ $F_0$ is of type $\tilde{A}_7$ or $\tilde{E}_6$ and $G = \bbZ/2\bbZ$.
\item $J$ is as in Case $17$, $F_0$ is of type $\tilde{E}_7$ and $G = \bbZ/2\bbZ$.
\end{enumerate}
\end{enumerate}
\end{lemma}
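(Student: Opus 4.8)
The plan is to treat parts (1), (2) and part (3) by quite different means. For (1) and (2), restriction to the generic fiber identifies $\Aut_{\ct}(J)^\dagger$ with a subgroup of $\Aut_0(J_\eta)$, the automorphism group of the elliptic curve $J_\eta/K$ fixing the origin $O$, and for $g$ in this group the induced automorphism $\bar g$ of $(J_0^\sharp)^0$ is the specialization of $g$ along the N\'eron model. The basic input is the tame linearization principle: a finite-order automorphism of a connected smooth variety whose order is prime to $p$ and whose differential at a fixed point is the identity must itself be the identity. Since $g \in \Aut_{\ct}(J)^\dagger$ fixes the zero section pointwise, it fixes the intersection point $O_0$ of $O$ with $J_0$, and I would apply this principle at $O_0$.

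If $F_0$ is smooth, then $J_0$ is an elliptic curve and $(J_0^\sharp)^0 = J_0$; either $j(J_\eta) \notin \{0,1728\}$, in which case $\Aut_0(J_\eta) = \{\pm 1\}$ has order prime to $p$, and $\bar g = \id$ forces $g$ to fix both $O$ and $J_0$ pointwise near $O_0$, hence to act trivially on $T_{O_0}J$, hence $g = \id$; or $j(J_\eta) \in \{0,1728\}$, which forces the $j$-map to be constant, so $J$ is isotrivial, and then $t \mapsto g|_{J_t}$ is a morphism from $\bbP^1$ to the finite group $\Aut(E_0,O)$, hence constant, so again $\bar g = \id$ implies $g = \id$. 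If $F_0$ is multiplicative, then $J$ has a multiplicative fiber, so $j(J_\eta)$ is non-constant and $\Aut_0(J_\eta) = \{\pm 1\}$; the element $-1$ acts by inversion on $(J_0^\sharp)^0 = \bbG_m$, which is non-trivial, so $G \cap \Aut_{\ct}(J)^\dagger$ is trivial. This proves (2). If $F_0$ is additive, then $(J_0^\sharp)^0 = \bbG_a$ and every group automorphism of $\bbG_a$ over $\Bbbk$ is a scaling, so $\ord(\bar g)$ is prime to $p$; applying the linearization principle when $\ord(g)$ is prime to $p$ shows $\bar g \neq \id$ unless $g = \id$, and a short argument with the $p$-part of $g$ then gives that $\bar g = \id$ if and only if $\ord(g)$ is a power of $p$. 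This identifies $G \cap \Aut_{\ct}(J)^\dagger$ with the set of elements of $p$-power order in $\Aut_{\ct}(J)^\dagger$, which is a subgroup in every case in Table \ref{Table2}, proving (1).

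For part (3), an element $g \in \Aut_{\ct}(J) \setminus \Aut_{\ct}(J)^\dagger$ lies in $G$ precisely when $\beta(g)$ fixes the point $f(J_0)$ and $g$ acts trivially on $(J_0^\sharp)^0$. By the proof of Proposition \ref{equations}, the automorphisms listed in Table \ref{automorphismsofequations}, together with $\Aut_{\ct}(J)^\dagger$, generate $\Aut_{\ct}(J)$, and the table records their action on the fiber over $t = \infty$. I would then run through the cases of Table \ref{Table2}: the fibers $J_0$ that matter here are those of additive type with at least three components, i.e. of type $\tilde{E}_8$, $\tilde{E}_7$, $\tilde{E}_6$, $\tilde{A}_7$, $\tilde{A}_8$, or $\tilde{D}_8$, and in each listed Weierstrass model such a fiber sits over $t = 0$ or $t = \infty$, hence is fixed by the relevant automorphisms. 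For each, the induced action on $(J_0^\sharp)^0 \cong \bbG_a$ is read off directly: for the $\bbG_m$-type automorphisms it is scaling by an explicit power of $a$ (for instance $a^5$ for the $\tilde{E}_8$ fiber of Case $1$, from $x \mapsto a^{-10}x$, $y \mapsto a^{-15}y$; $a^3$ for the $\tilde{E}_7$ fiber of Case $13$; $a^2$ for the $\tilde{E}_6$ fiber of Case $23$), while for the unipotent automorphisms $t \mapsto t + b^p\,(+\,b)$ it is the identity. Intersecting the kernels of these scalings with $\Aut_{\ct}(J)$ and adjoining the $p$-power-order elements of $\Aut_{\ct}(J)^\dagger$ from (1), (2) then produces exactly the groups $G$ listed in (3a)--(3i).

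The main obstacle is the bookkeeping in part (3): for each of the roughly thirty cases one must correctly locate every additive fiber with at least three components in the given model, check it is fixed by the automorphisms of Table \ref{automorphismsofequations}, and compute the induced scaling on its identity component, with particular care in the quasi-elliptic cases (Cases $5$, $8$, $11$, and the quasi-elliptic members of $13$, $23$, $31$) and in the small characteristics where $\Aut_0(J_\eta)$ is larger than $\bbZ/2\bbZ$. A secondary point, already needed in (1), is that $\Aut_{\ct}(J)^\dagger$ need not be abelian (the $Q_8$ of Case $7$), so one must verify case by case that the elements of $p$-power order form a subgroup; this is immediate from Table \ref{Table2}.
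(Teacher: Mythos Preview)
Your approach is essentially the same as the paper's. Both arguments hinge on the principle that a tame automorphism of a smooth variety has smooth fixed locus (your ``linearization principle''), applied at the point $O_0$ where the zero section meets $J_0$: if $g \in G \cap \Aut_{\ct}(J)^\dagger$, its fixed locus contains both the zero section and $(J_0^\sharp)^0$, hence is singular at $O_0$, forcing $\ord(g)$ to be a power of $p$. For the additive case you and the paper both observe that group automorphisms of $\bbG_a$ are scalings, so $p$-power order forces triviality on $(J_0^\sharp)^0$. For part (3), both of you defer to the explicit automorphisms in Table~\ref{automorphismsofequations}.

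Two small slips to fix. First, in your treatment of smooth $J_0$ with $j(J_\eta) \notin \{0,1728\}$, the claim that $\{\pm 1\}$ has order prime to $p$ fails when $p=2$; there you should instead invoke Lemma~\ref{autoelliptic}(3), which shows inversion on an ordinary elliptic curve in characteristic $2$ has a single fixed point and hence does not act trivially on $J_0$. Second, in your description of part (3) you say the relevant fibers are ``additive with at least three components'', but the fibers of type $\tilde{A}_7$ and $\tilde{A}_8$ appearing in (3g) and (3h) are multiplicative; the correct statement is simply that one must locate, for each case in Table~\ref{Table2}, every fiber preserved by the listed automorphisms and compute the induced action on its identity component. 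Neither slip affects the structure of the argument.
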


\begin{proof}
To prove the first two claims we use that the fixed locus of $g\in G$ contains the zero section and $J_0^\sharp$, hence it is not smooth. This implies that  $G \cap \Aut_{\ct}(S)^\dagger$ is $p^n$-torsion. If $J_0$ is additive, then every automorphism of order $p^n$ in 
$\Aut_{\ct}(J)^\dagger$ fixes $J_0$ pointwise (by Lemma \ref{autoelliptic} 
and the known structure of fixed loci on $\bbP^1$). So, $G \cap \Aut_{\ct}(J)^\dagger$ coincides with the $p$-torsion subgroup of $\Aut_{\ct}(J)^\dagger$. If $J_0$ is 
multiplicative or smooth, one easily checks that $G \cap \Aut_{\ct}(J)^\dagger$ is 
trivial, since it does not contain the inversion involution.
The third claim follows from the explicit description of $\Aut_{\ct}(J(S))\setminus \Aut_{\ct}(J(S))^\dagger$ in Table \ref{automorphismsofequations}.
\end{proof}

It turns out that an automorphism in $\Aut_{\ct}(J) \setminus \Aut_{\ct}(J)^\dagger$ which acts non-trivially on $J_0$ rarely comes from an automorphism of $S$.

\begin{lemma} \label{lem: exception}
Let $f: S \to \bbP^1$ be a rational genus one surface of index $m > 1$. Let $mF_0$ be the multiple fiber of $f$ and let $J_0$ be the corresponding fiber of the jacobian fibration. Assume that there exists $g \in \Aut_{\ct}(S) \setminus \Aut_{\ct}(S)^\dagger$ such that $\varphi(g)$ acts non-trivially on the identity component $J_0^\sharp$ of $J_0$. Then, $J(f)$ is as in Case $2$ or $9$, $m = 2$, $F_0$ is smooth, $\Aut_{\ct}(S) = \bbZ/4\bbZ$ and $\Aut_{\ct}(S)^\dagger = \bbZ/2\bbZ$.
\end{lemma}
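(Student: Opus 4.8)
The plan is to run the two structural theorems of the paper (Theorem \ref{thm: actiononjacobian} and Theorem \ref{delPezzo}) against the finite list in Table \ref{Table2}. First, since $g\in\Aut_{\ct}(S)$ it fixes every curve of negative self-intersection on $S$, in particular it preserves $F_0$. Applying Theorem \ref{thm: actiononjacobian}(5) to $g$ and using the hypothesis that $\varphi(g)$ acts non-trivially on $(J_0^\sharp)^0$, we land in one of the cases (a)--(e) of that theorem; in all of them $F_0$ (hence also $J_0$, which has the same type) has at most two components and is smooth, an irreducible nodal curve, or of type $\tilde{A}_1$, while $m\in\{2,3\}$ and $n:=\ord(\varphi(g)|_{(J_0^\sharp)^0})\in\{2,3,4\}$. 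Next, by Theorem \ref{thm: actiononjacobian}(1) and (4), $\varphi(g)$ induces the same automorphism of $\bbP^1$ as $g$ and is cohomologically trivial on $J(S)$, so $\varphi(g)\in\Aut_{\ct}(J(S))\setminus\Aut_{\ct}(J(S))^\dagger$. In particular $\Aut_{\ct}(J(S))\ne\Aut_{\ct}(J(S))^\dagger$, so by Theorem \ref{thm: main} the surface $J(S)$ is one of those in Table \ref{Table2}, and we may discard the cases with $\Aut_{\ct}(J)/\Aut_{\ct}(J)^\dagger=\{\id\}$.

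Second, I would bring in Theorem \ref{delPezzo}(3). Since $F_0$ has at most two components, Lemma \ref{lem: cohtrivialtranslations} shows $\MW(J(f))\cap\Aut_{\ct}(S)=\{\id\}$, hence $\Aut_{\ct}(S)$ embeds into $\Aut_{\ct}(J(S))$ and is identified with the stabilizer of a point $P$ of exact order $m$ on $(J_0^\sharp)^0$; moreover $\varphi(g)$ lies in this stabilizer and restricts to an automorphism of order $n$ of $(J_0^\sharp)^0$ as recorded above. So I am reduced to the following finite check: for which $J(S)$ in Table \ref{Table2} with $\Aut_{\ct}(J)\ne\Aut_{\ct}(J)^\dagger$ does a non-base-trivial element of $\Aut_{\ct}(J)$ preserve a fiber of type smooth, $\tilde{A}_0^*$ or $\tilde{A}_1$, act on its identity component with order $n$ compatible with (a)--(e), and fix a point of exact order $m$ there (this last point being exactly what makes the corresponding non-jacobian $S$ exist, by the Halphen transform of Remark \ref{rem: Halphenremark}).

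Third comes the case analysis, using the explicit automorphisms of Table \ref{automorphismsofequations}. Here I would split according to the action on $\bbP^1$. If $\varphi(g)$ acts on $\bbP^1$ by a non-trivial translation (Cases $4,7,10,11,17$ and the $\bbG_a$-parts of Cases $5,8$), its only fixed point is $t=\infty$, over which the fiber is of type $\tilde{E}_8,\tilde{D}_8$ or $\tilde{E}_7$ — more than two components — so Theorem \ref{thm: actiononjacobian}(5) forces $\varphi(g)$ to act trivially on $(J_0^\sharp)^0$, a contradiction. Otherwise the action is $t\mapsto\zeta t$ for a root of unity $\zeta\ne1$, with fixed points $0,\infty$; again the fiber over $\infty$ has more than two components, so $J_0$ lies over $t=0$, and I would read off the fiber over $t=0$ from the Weierstrass equation: it is cuspidal in Cases $1,13,23$ and of type $\tilde{D}_4$ in Case $31$ — neither appears among (a)--(e); it is smooth with $j=0$ (and $\varphi(g)$ of order $3$ on it) in Cases $12,15,18,19,20$, which I must exclude; and it is smooth with $j=1728$ and $\varphi(g)$ of order $4$ precisely in Cases $2$ and $9$. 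In the surviving cases $\varphi(g)$ fixes the unique $2$-torsion point $P$ of the smooth $j=1728$ fiber on which it acts by $-1$; since $\varphi(g)$ itself fixes $P$, $\mathrm{Stab}_{\Aut_{\ct}(J(S))}(P)=\Aut_{\ct}(J(S))=\bbZ/4\bbZ$ and the base-trivial part is $\langle\varphi(g)^2\rangle=\bbZ/2\bbZ$, giving $m=2$, $F_0$ smooth, $\Aut_{\ct}(S)=\bbZ/4\bbZ$, $\Aut_{\ct}(S)^\dagger=\bbZ/2\bbZ$ as claimed.

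The main obstacle, and the only part of the argument that is not routine, is the exclusion of the order-$3$ situation in Cases $12,15,18,19,20$ (equivalently, ruling out case (a) of Theorem \ref{thm: actiononjacobian}(5)): one must show that although $\varphi(g)$ has order $3$ on the smooth $j=0$ fiber at $t=0$, no point $P$ of exact order $m=3$ on that fiber is simultaneously fixed by $\varphi(g)$ and produces an admissible rational non-jacobian surface $S$ with $\Aut_{\ct}(S)\supseteq\langle g\rangle$. I expect this to follow by combining the explicit description of $\varphi(g)$ on the fiber, the structure of its fixed $3$-torsion (the subgroup $\ker(\zeta_3-1)$, on which $\varphi(g)$ acts trivially), and the constraint imposed by the large fiber ($\tilde{A}_8$ or $\tilde{E}_7$) over $t=\infty$ on the existence of the Halphen transform; once this is settled, the lemma follows.
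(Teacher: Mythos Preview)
Your approach is genuinely different from the paper's and, as you suspect, the order-$3$ obstacle is where it breaks down; but there are two separate issues.

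First, your case split is incomplete. You overlook Cases $21,22,24,25,26,27,28,29$, where the non-base-trivial element is the involution $t\mapsto -t$, $y\mapsto -y$; the fiber over $t=0$ is smooth (of varying $j$-invariant), and $\varphi(g)$ restricts to the hyperelliptic involution there, of order $n=2$. This is exactly case (b) of Theorem \ref{thm: actiononjacobian}(5) with $n=2$, so these cases must be excluded by the same mechanism as your order-$3$ cases, and they are just as stubborn. (A minor related point: your identification of $\Aut_{\ct}(S)$ with a stabilizer inside $\Aut_{\ct}(J(S))$ silently conflates the surface $S'$ of Theorem \ref{delPezzo}(3) with $J(S)$; this happens to be harmless here but is not justified.)

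Second, and more seriously, your proposed fix for the order-$3$ cases does not work. On the $j=0$ fiber $y^2=x^3+1$ at $t=0$ (Cases $15,18,19,20$), the order-$3$ map $x\mapsto\zeta_3^2x$ fixes pointwise the subgroup $\{\infty,(0,1),(0,-1)\}\cong\bbZ/3\bbZ$, so there \emph{is} a point of exact order $3$ fixed by $\varphi(g)$, and the Halphen transform at that point is unobstructed. The ``constraint imposed by the large fiber over $t=\infty$'' plays no role, since the Halphen transform is local at $t=0$. The same remark applies to the overlooked $n=2$ cases: the hyperelliptic involution fixes every $2$-torsion point. So at the level of Tables \ref{Table2} and \ref{automorphismsofequations} alone, you cannot rule these configurations out.

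The paper takes a completely different route. After reducing (as you do) to $F_0$ of type $\tilde{A}_0$, $\tilde{A}_0^*$, or $\tilde{A}_1$ and $n\le 4$, it first isolates the case where some non-trivial power of $g$ lies in $\Aut_{\ct}(S)^\dagger$: this forces $n=4$, $m=2$, $\varphi(g)^2\in\Aut_{\ct}(J(S))^\dagger$, and Table \ref{automorphismsofequations} then pins down Cases $2$ and $9$. Otherwise $g$ acts \emph{faithfully} on $\bbP^1$; since $f$ has a reducible fiber with at least four components (Lemma \ref{non-infinitely near}(5)) distinct from $F_0$, the action on $\bbP^1$ has two fixed points, so $p\nmid n$. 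One then passes to the quotient $S/\langle g\rangle$ and its relatively minimal model $f':S'\to\bbP^1$. The image of an $m$-section $E$ is still an $m$-section, meeting the image fiber $F_0'$ transversally in one point, so $f'$ again has index $m$ with multiple fiber at $F_0'$. The behaviour of singular fibers under tame cyclic base change forces $F_0'$ to be of additive type, whence $m=p$ by Lemma \ref{multiplefiber}; since $m\mid n$ in every case (a)--(e), this contradicts $p\nmid n$. This geometric quotient argument is the missing idea and replaces your entire third paragraph.
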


\begin{proof}
Since $\varphi(g)$ acts non-trivially $J_0^\sharp$, we know that $F_0$ is of type $\tilde{A}_0,\tilde{A}_0^*$ or $\tilde{A}_1$ by Theorem \ref{thm: actiononjacobian} (5). Thus, by Lemma \ref{lem: trivialonfiberautomorphisms} (3), $g$ acts faithfully on $J_0^\sharp$. In particular, we have $n := \ord(g) \leq 4$. 

If some non-trivial power of $g$ acts trivially on $\bbP^1$, then $n = 4$, $m = 2$ and $\varphi(g)^2 \in \Aut_{\ct}(J(S))^\dagger$. Using Table \ref{automorphismsofequations}, we see that this can happen only in Case $2$ or $9$ and if $F_0$ is smooth. This is exactly the situation described in the statement of the lemma. We may thus assume that $g$ acts faithfully on $\bbP^1$.

By Lemma \ref{non-infinitely near} (5), $f$ admits a reducible fiber different from $F_1$ and since $g$ preserves such a fiber, the action of $g$ on $\bbP^1$ has two fixed points, so we have $p \nmid n$. Now, as $g$ acts faithfully on $\bbP^1$, the quotient surface $S/(g)$ admits a genus one fibration $\bar{f}:\bar{S}:= S/(g)\to \bbP^1$. Let $\sigma:\bar{S}'\to \bar{S}$ be its minimal resolution and let $\tau:\bar{S}'\to S'$ be a birational morphism onto a relatively minimal genus one fibration $f':S'\to \bbP^1$. We have the following commutative diagram
$$
\xymatrix{&\bar{S}'\ar[d]^\sigma\ar[dr]^\tau&\\
S\ar[d]_f\ar[r]^\phi&S/(g)\ar[d]_{\bar{f}}\ar@{-->}[r]&S'\ar[dl]^{f'}\\
\bbP^1\ar[r]^{\bar{\phi}}&\bbP^1&}
$$
Let $\Phi =\tau\circ \sigma^{-1}\circ \phi$, so that we get a commutative diagram of rational maps
 $$
\xymatrix{S\ar[d]^f\ar@{-->}[r]^{\Phi}&S'\ar[d]_{f'}\\
\bbP^1\ar[r]^{\bar{\phi}}&\bbP^1.}
$$
Let $E$ be an $m$-section of $S$. Since $E$ and $F_0$ are invariant and meet transversally in a single point on $S$, it is easy to check that $\Phi(E)$ and $F_0' := F_{\bar{\phi}(f(F_0))}$ meet transversally in a single point on $S'$. On the other hand, $\Phi(E)$ is an $m$-section of $f'$. This shows that $f'$ has index $m$ and $F_0'$ is the multiple fiber of $f'$. Now, by the known behaviour of singular fibers of elliptic fibrations under tame base change (see e.g. \cite[Section 5.2]{SchuettShioda}) and since $F_0'$ is reducible, the fiber $F_0'$ is of additive type. This implies $m = p$ and since $m \mid n$ in every case of Theorem \ref{thm: actiononjacobian} (5), this contradicts $p \nmid n$.
\end{proof}

Now we are able to go to the classification of groups $\Aut_{\ct}(S)$ for non-jacobian genus one surfaces.

\begin{theorem}
Let $f: S \to \bbP^1$ be a non-jacobian rational genus one surface.
\begin{enumerate}
\item If $\Aut_{\ct}(S)$ is non-trivial, then the singular fibers and the multiple fiber are as in the following Table \ref{nonjacobiantable}.
\item In every case, Table \ref{nonjacobiantable} also gives an upper bound on $\Aut_{\ct}(S)^\dagger$ and on the group $\Aut_{\ct}(S)$. 
\item In every case, if $f$ is elliptic or satisfies condition ${\rm (OS)}$ from Definition \ref{def: quasiellipticoggshafarevich}, then Table \ref{nonjacobiantable} gives a bound on the number of moduli.
\end{enumerate}

\begin{table}
\scalebox{0.9}{
$\displaystyle
\begin{array}{|l|c|c|c|c|c|c|c|}
\hline
&\text{Singular fibers} & \text{Multiple fiber} & e \text{ or } qe  & \Aut_{\ct}(S)^\dagger & \Aut_{\ct}(S) &  \text{Moduli} & p \\ \hline
1'&\tilde{E}_8,\tilde{A}_0^{**} & 2\tilde{A}_0 & e  & \bbZ/2\bbZ & \bbZ/2\bbZ &0 & \neq 2,3\\
1''&\tilde{E}_8,\tilde{A}_0^{**} & 3\tilde{A}_0 & e  & \bbZ/3\bbZ & \bbZ/3\bbZ &0 & \neq 2,3\\
1''' &\tilde{E}_8,\tilde{A}_0^{**} & p\tilde{E}_8 & e  & \{\id\} & \bbZ/5\bbZ &0 & \neq 0,2,3,5\\

2' &\tilde{E}_8, \tilde{A}_0^*, \tilde{A}_0^* & 2\tilde{A}_0 & e  &  \bbZ/2\bbZ & \bbZ/4\bbZ & 1 & \neq 2,3 \\
2''&\tilde{E}_8, \tilde{A}_0^*, \tilde{A}_0^* & 2\tilde{A}_0^* & e  &  \bbZ/2\bbZ & \bbZ/2\bbZ & 0 & \neq 2,3 \\

3'&\tilde{E}_8, \tilde{A}_0^* &  2\tilde{A}_0 & e  
&
\bbZ/2\bbZ & \bbZ/2\bbZ & 1 & 3 \\
3''&\tilde{E}_8, \tilde{A}_0^* &  2\tilde{A}_0^* & e  
&
\bbZ/2\bbZ & \bbZ/2\bbZ & 0 & 3 \\

4'&\tilde{E}_8 &  2\tilde{A}_0 & e & \bbZ/2\bbZ& \bbZ/2\bbZ  & 0 & 3 \\
4'' &\tilde{E}_8 &  3\tilde{E}_8 & e & \bbZ/3\bbZ & \bbG_a  & 1 & 3 \\

5' &\tilde{E}_8 & 3\tilde{E}_8 & qe & \{\id\} & \bbG_a \rtimes \bbZ/5\bbZ & 0 & 3  \\ 

6'&\tilde{E}_8, \tilde{A}_0^* & 2\tilde{A}_0 & e & 
\bbZ/2\bbZ & \bbZ/2\bbZ & 1 &2  \\
6''&\tilde{E}_8, \tilde{A}_0^* & 2\tilde{E}_8 & e & 
\bbZ/2\bbZ & \bbZ/2\bbZ & 1 &2  \\

7'&\tilde{E}_8 & 2\tilde{E}_8 & e &Q_8 & (\bbZ/2\bbZ)^2 \cdot \bbG_a & 1 & 2 \\

8'&\tilde{E}_8 & 2\tilde{E}_8 & qe &  \{\id\} & \bbG_a \rtimes \bbZ/5\bbZ & 0 & 2 \\ \hline

9'&\tilde{D}_8, \tilde{A}_0^*, \tilde{A}_0^* & 2\tilde{A}_0 & e & \bbZ/2\bbZ & \bbZ/4\bbZ & 0 & \neq 2 \\
9''&\tilde{D}_8, \tilde{A}_0^*, \tilde{A}_0^* & 2\tilde{A}_0^* & e & \bbZ/2\bbZ & \bbZ/2\bbZ & 0 & \neq 2 \\

10'&\tilde{D}_8 & 2\tilde{A}_0 &e & \bbZ/2\bbZ & \bbZ/2\bbZ & 2 & 2\\
10''&\tilde{D}_8 & 2\tilde{D}_8 &e & (\bbZ/2\bbZ)^2 & \bbG_a & 2 & 2\\

11'&\tilde{D}_8 & 2\tilde{D}_8 & qe &  \bbZ/2\bbZ & \bbG_a & 1 & 2 \\ \hline

12'&\tilde{A}_8, \tilde{A}_0^*,\tilde{A}_0^*,\tilde{A}_0^* & m\tilde{A}_8, (m,p) = 1 & e  & \bbZ/3\bbZ  & \bbZ/3\bbZ  & 0 & \neq 3 \\ \hline

13'&\tilde{E}_7,\tilde{A}_1^{*} & 2\tilde{A}_0   & e & \bbZ/4\bbZ& \bbZ/4\bbZ& 0 & \neq 2 \\
13''&\tilde{E}_7,\tilde{A}_1^{*} & p\tilde{E}_7   & e/qe & \{\id\}& \bbZ/3\bbZ& 0 & \neq 0,3 \\

14'&\tilde{E}_7, \tilde{A}_1, \tilde{A}_0^* & 2\tilde{A}_0 & e &  \bbZ/2\bbZ &  \bbZ/2\bbZ & 1 & \neq 2   \\
14''&\tilde{E}_7, \tilde{A}_1, \tilde{A}_0^* & 2\tilde{A}_0^* & e &  \bbZ/2\bbZ &  \bbZ/2\bbZ & 0 & \neq 2   \\
14'''&\tilde{E}_7, \tilde{A}_1, \tilde{A}_0^* & 2\tilde{A}_1 & e &  \bbZ/2\bbZ &  \bbZ/2\bbZ & 0 & \neq 2   \\

15'&\tilde{E}_7,\tilde{A}_0^{*},\tilde{A}_0^{*},\tilde{A}_0^{*} & p\tilde{E}_7 & e& \{\id\}& \bbZ/3\bbZ& 1 &\ne  0,2,3 \\

16'&\tilde{E}_7, \tilde{A}_1 & 2\tilde{A}_0 & e &   \bbZ/2\bbZ & \bbZ/2\bbZ & 1 & 2  \\
16''&\tilde{E}_7, \tilde{A}_1 & 2\tilde{E}_7 & e &   \bbZ/2\bbZ & \bbZ/2\bbZ & 1 & 2  \\

17'&\tilde{E}_7,\tilde{A}_0^{*},\tilde{A}_0^{*}& 2\tilde{E}_7 & e&   \{\id\}& \bbZ/2\bbZ & 2 &2  \\

18'&\tilde{E}_7 & 2\tilde{E}_7 & e &  \{\id\} &  \bbZ/3\bbZ  & 1 & 2 \\ \hline

21'&\tilde{A}_7, \tilde{A}_1, \tilde{A}_0^*, \tilde{A}_0^* & m\tilde{A}_7, (m,p) = 1 & e & \bbZ/2\bbZ & \bbZ/2\bbZ  & 0 & \neq 2  \\

22'&\tilde{A}_7, \tilde{A}_0^*, \tilde{A}_0^*, \tilde{A}_0^*, \tilde{A}_0^* & m\tilde{A}_7, (m,p) = 1 & e & \bbZ/2\bbZ & \bbZ/2\bbZ  & 1 & \neq 2 \\ \hline

23'&\tilde{E}_6, \tilde{A}_2^* & 3\tilde{A}_0& e & \bbZ/3\bbZ & \bbZ/3\bbZ & 0 & \neq 3 \\
23''&\tilde{E}_6, \tilde{A}_2^* & p\tilde{E}_6& e/qe & \{\id\} & \bbZ/2\bbZ & 0 & \neq 0,2 \\

24'&\tilde{E}_6, \tilde{A}_0^*, \tilde{A}_0^*, \tilde{A}_0^*, \tilde{A}_0^* & p \tilde{E}_6 & e   & \{\id\} & \bbZ/2\bbZ  & 2 & \neq 0,2,3  \\

25'&\tilde{E}_6,\tilde{A}_0^{**},\tilde{A}_0^{**} & p \tilde{E}_6 & e  & \{\id\} & \bbZ/2\bbZ  & 1 & \neq 0,2,3  \\

26'&\tilde{E}_6,\tilde{A}_1,\tilde{A}_0^*,\tilde{A}_0^* & p \tilde{E}_6 & e  & \{\id\} & \bbZ/2\bbZ  & 1 & \neq 0,2,3  \\

27'&\tilde{E}_6, \tilde{A}_0^*, \tilde{A}_0^*  & 3 \tilde{E}_6 & e &  \{\id\} & \bbZ/2\bbZ  & 2 & 3 \\

28'&\tilde{E}_6, \tilde{A}_1  & 3 \tilde{E}_6 & e   & \{\id\} & \bbZ/2\bbZ  & 1 & 3 \\

29'&\tilde{E}_6 & 3 \tilde{E}_6 & e  &  \{\id\} & \bbZ/2\bbZ  & 1 & 3 \\ \hline

30'&\tilde{D}_6,  \tilde{A}_1, \tilde{A}_1 & 2\tilde{A}_0 & e &   \bbZ/2\bbZ & \bbZ/2\bbZ & 1 & \neq 2 \\
30''&\tilde{D}_6,  \tilde{A}_1, \tilde{A}_1 & 2\tilde{A}_1 & e &   \bbZ/2\bbZ & \bbZ/2\bbZ & 0 & \neq 2 \\ \hline
32'&\tilde{D}_4,  \tilde{D}_4 & 2\tilde{A}_0 & e & \bbZ/2\bbZ & \bbZ/2\bbZ & 1 & \neq 2 \\ \hline
\end{array}
$}
\bigskip
\caption{Non-jacobian rational genus one surfaces with cohomologically trivial automorphisms}
\label{nonjacobiantable}
\end{table}

\end{theorem}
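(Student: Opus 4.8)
The plan is to combine the two exact sequences displayed just before the statement with the classification of the jacobian case (Theorem \ref{thm: main} and Table \ref{Table2}) and with Theorem \ref{delPezzo} (3). Write $mF_0$ for the multiple fiber, $C_0=f(F_0)\subseteq\bbP^1$ for its image, and $J_0$ for the fiber of $J(f)$ over $C_0$, which has the same type as $F_0$. Since every cohomologically trivial automorphism fixes the class of a fiber, it preserves $f$, so $\varphi$ is defined on $\Aut_{\ct}(S)$; by Theorem \ref{thm: actiononjacobian} (4) and $\Num=\Pic$ for rational surfaces, $\varphi(\Aut_{\ct}(S))\subseteq\Aut_{\ct}(J(S))$, with kernel $\MW(J(f))\cap\Aut_{\ct}(S)$. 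I would first dispose of the case $\MW(J(f))\cap\Aut_{\ct}(S)\neq\{1\}$ by quoting Lemma \ref{lem: cohtrivialtranslations}: there $F_0$ is of type $\tilde{A}_7$, $\tilde{A}_8$ or $\tilde{D}_8$, and since each of these has at least $7$ components, Lemma \ref{lem: somefibers} together with the classification of Mordell--Weil lattices of rational elliptic surfaces pins the singular-fiber configuration down to those of Cases $12$, $21$, $22$ (and $10$, $11$ for $\tilde{D}_8$) of Table \ref{Table2}; feeding this into Theorem \ref{delPezzo} (3) and the corresponding rows of Table \ref{Table2} produces the rows $10''$, $11'$, $12'$, $21'$, $22'$ of Table \ref{nonjacobiantable}, the elliptic/quasi-elliptic split in the $\tilde{D}_8$ case coming from Lemma \ref{lem: cohtrivialtranslations} (3).

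From now on assume $\MW(J(f))\cap\Aut_{\ct}(S)=\{1\}$, so that $\varphi$ embeds $\Aut_{\ct}(S)$ into $\Aut_{\ct}(J(S))$; non-triviality forces $J(S)$ to be one of the $31$ surfaces of Table \ref{Table2}, and the singular fibers of $f$ are those of Column $2$ of that table (one of them has at least $4$ components by Lemma \ref{non-infinitely near} (5), which is automatic here). The structural input is Theorem \ref{delPezzo} (3): it realises $\Aut_{\ct}(S)$ as the stabiliser, inside $\Aut_{\ct}(S')$ for a jacobian rational genus one surface $S'$ carrying a fiber $F'$ of the same type as $mF_0$, of a point $P$ of exact order $m$ on the identity component of $F'$. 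Using the Halphen transform (Remark \ref{rem: Halphenremark}) and uniqueness of the jacobian I would identify $S'$ with $J(S)$ and $F'$ with $J_0$, so that $\Aut_{\ct}(S)=\mathrm{Stab}_{\Aut_{\ct}(J(S))}(P)$ and $\Aut_{\ct}(S)^\dagger=\mathrm{Stab}_{\Aut_{\ct}(J(S))^\dagger}(P)$. Any element of $\Aut_{\ct}(J(S))$ that preserves $J_0$ and acts trivially on $(J_0^\sharp)^0$ fixes $P$; hence the subgroup $G$ of Lemma \ref{lem: trivialonfiberautomorphisms} always lies in $\mathrm{Stab}(P)$, while by Theorem \ref{thm: actiononjacobian} (5) an element acting non-trivially on $(J_0^\sharp)^0$ can lie in $\mathrm{Stab}(P)$ only in a short list of exceptional configurations, and by Lemma \ref{lem: exception} (supplemented by Lemma \ref{lem: exception17} for the $\tilde{E}_7$-subtlety) such an element actually occurs in $\Aut_{\ct}(S)$ only for Cases $2$ and $9$ with $m=2$ and $F_0$ smooth, which is the origin of rows $2'$ and $9'$.

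It then remains to run through the $31$ cases of Table \ref{Table2}. For each case, and for each fiber type admissible as the support $F_0$ of a multiple fiber — constrained by Lemma \ref{multiplefiber} ($m=p$ for additive $F_0$; $p\nmid m$ for multiplicative or supersingular $F_0$; only smooth ordinary or additive fibers allow $m>1$) and by the requirement that $\mathrm{Stab}_{\Aut_{\ct}(J(S))}(P)$ be non-trivial, which (unless $\Aut_{\ct}(J(S))^\dagger\neq\{1\}$) forces $J_0$ to sit over an $\Aut_{\ct}(J(S))$-fixed point of $\bbP^1$ — I would read off the stabiliser from the explicit automorphisms listed in Table \ref{automorphismsofequations} together with Lemma \ref{lem: trivialonfiberautomorphisms}; this yields the columns $\Aut_{\ct}(S)^\dagger$ and $\Aut_{\ct}(S)$ of Table \ref{nonjacobiantable}. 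For part (3), by Ogg--Shafarevich theory — available for elliptic fibrations and supplied in the quasi-elliptic case exactly by condition ${\rm (OS)}$ — the non-jacobian surfaces with jacobian $J(S)$ and prescribed multiple-fiber type are parametrised by the points $P$ of exact order $m$ on $(J_0^\sharp)^0$ modulo the $\Aut(J(S))$-stabiliser of $J_0$, so the number of moduli of $S$ equals that of $J(S)$ (Column $7$ of Table \ref{Table2}) plus the dimension of this family of choices of $P$, which the Weierstrass equations of Proposition \ref{equations} make a routine count.

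The genuine difficulty is the length and bookkeeping of this case analysis rather than any single hard step. The delicate points are: the exceptional rows $2'$, $9'$, $17'$, where an automorphism acts non-trivially either on $(J_0^\sharp)^0$ or on the base and one must invoke the ad hoc Lemmas \ref{lem: exception} and \ref{lem: exception17} in place of Lemma \ref{lem: trivialonfiberautomorphisms}; making rigorous that the jacobian surface $S'$ of Theorem \ref{delPezzo} (3) carries precisely the singular fibers of $J(f)$, so that the stabiliser may be computed inside $\Aut_{\ct}(J(S))$; ensuring completeness of the list of admissible multiple fibers for each jacobian, i.e. that no torsor with non-trivial $\Aut_{\ct}(S)$ is omitted, which forces one to cross-check Lemma \ref{multiplefiber} against the fixed-point-on-$\bbP^1$ condition case by case; and the quasi-elliptic moduli counts, which are only conditional on ${\rm (OS)}$ and must be flagged as such in the statement.
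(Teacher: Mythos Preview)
Your overall strategy matches the paper's: handle translations via Lemma~\ref{lem: cohtrivialtranslations}, then embed $\Aut_{\ct}(S)$ into $\Aut_{\ct}(J(S))$ via $\varphi$, and bound the image using Theorem~\ref{thm: actiononjacobian}(5), Lemma~\ref{lem: trivialonfiberautomorphisms}, and Lemma~\ref{lem: exception}, with the moduli count via Ogg--Shafarevich. The case-by-case analysis you describe is equivalent to the paper's, which is organised by the \emph{type of $F_0$} (additive reducible, additive irreducible, multiplicative with $\ge 3$ components, $\tilde{A}_1$, $\tilde{A}_0^*$, smooth) rather than by row of Table~\ref{Table2}.

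The one genuine gap is your identification $S'\cong J(S)$. The surface $S'$ of Theorem~\ref{delPezzo}(3) is built by blowing down an $m$-section $E$ to a weak del Pezzo $\mathcal{D}$ and then blowing up the base point $Q$ of $|{-}K_{\mathcal{D}}|$; its fibration $f'$ comes from the \emph{anticanonical} pencil on $\mathcal{D}$, whereas $f$ comes from the Halphen pencil of index $m$ through $P$. These are different pencils over different copies of $\bbP^1$, and ``uniqueness of the jacobian'' does not force $S'\cong J(S)$: Theorem~\ref{delPezzo}(3) only guarantees that $S'$ has \emph{a} fiber of the same type as $F_0$, not that its full singular-fiber list agrees with that of $J(S)$. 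You correctly flag this as delicate, but it is not merely delicate --- it is in general false, so the equality $\Aut_{\ct}(S)=\mathrm{Stab}_{\Aut_{\ct}(J(S))}(P)$ is not available.

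Fortunately this detour is unnecessary. The paper never attempts to identify $S'$ with $J(S)$; it works directly with $\varphi$. The constraint you need --- that $\varphi(g)$, when acting nontrivially on $(J_0^\sharp)^0$, is pinned down to the short list of Theorem~\ref{thm: actiononjacobian}(5) --- is already proved there via the $m$-torsion class $\mathcal{O}_{mF_0}(F_0)$ on $J(mF_0)$, with no reference to $S'$ or to a point $P$ on $J_0$. Likewise Lemma~\ref{lem: trivialonfiberautomorphisms} and Lemma~\ref{lem: exception} are stated for $J(S)$ itself. If you simply drop the $S'=J(S)$ step and keep the rest of your outline (embedding via $\varphi$, then Lemma~\ref{lem: trivialonfiberautomorphisms} for the part acting trivially on $(J_0^\sharp)^0$ and Lemma~\ref{lem: exception} for the rest), you recover exactly the paper's argument.
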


\begin{proof}

Let  $mF_0$ be the multiple fiber of $f$ and let $J_0$ be the corresponding fiber of $J(f)$ and let $J_0^0$ be its identity component. 

If $\Aut_{\ct}(S) \cap \MW(J(f))$ is non-trivial we apply Lemma \ref{lem: cohtrivialtranslations} and we get Cases $10''$, $11'$, $12'$, $21'$, and $22'$.

In the other cases, $\Aut_{\ct}(S)$ acts faithfully on $J(S)$. We will distinguish several cases according to the type of $F_0$.

If $F_0$ is of additive type and reducible, then $m = p$. By Theorem \ref{thm: actiononjacobian} (5), $\Aut_{\ct}(S)$ acts trivially on $J_0^0$. Using Lemma \ref{lem: trivialonfiberautomorphisms} (1) and (3), one easily checks that we get the following Cases: $1''',4'',5',6'',7',8',10'',11',13'',15',16'',17',18',23'',24',25',26',27',28'$, and $29'$. The same lemma also gives bounds on $\Aut_{\ct}(S)^\dagger$ and $\Aut_{\ct}(S)$.

If $F_0$ is of additive type and irreducible, then we also have $m = p$. By Theorem \ref{thm: actiononjacobian} (5), $\Aut_{\ct}(S)$ acts trivially on $J_0^0$, so by Lemma \ref{lem: trivialonfiberautomorphisms}, we have $\Aut_{\ct}(S)^\dagger = \Aut_{\ct}(S)$ and both groups are $p$-torsion groups contained in $\Aut_{\ct}(J(S))^\dagger$. In all cases in Table \ref{Table2} that admit a fiber of type $\tilde{A}_0^{**}$, the group $\Aut_{\ct}(J(S))^\dagger$ is tame, so this case does not occur.

If $F_0$ is of multiplicative type with at least $3$ components, then $(m,p) = 1$. By Theorem \ref{thm: actiononjacobian} (5), $\Aut_{\ct}(S)$ acts trivially on $J_0^0$, so by Lemma \ref{lem: trivialonfiberautomorphisms} (3), either $F_0$ is of type $\tilde{A}_8$ and we get Case $12'$, or $F_0$ is of type $\tilde{A}_7$ and we get Cases $21'$ and $22'$.

If $F_0$ is of type $\tilde{A}_1$, then we also have $(m,p) = 1$. By Lemma \ref{lem: trivialonfiberautomorphisms}, $\Aut_{\ct}(S)$ acts faithfully on $J_0^0$, so Theorem \ref{thm: actiononjacobian} (5) shows that $\Aut_{\ct}(S) = \bbZ/2\bbZ$ and $p \neq 2$.
By Lemma \ref{lem: exception}, we have $\Aut_{\ct}(S)^\dagger = \Aut_{\ct}(S)$.
The only fibrations in Table \ref{Table2} with $\Aut_{\ct}(J(S))^\dagger = \bbZ/2\bbZ$ and a fiber of type $\tilde{A}_1$ in characteristic $p \neq 2$ are Cases $14$ and $30$. Thus, we get Cases $14'''$ and $30''$.

If $F_0$ is of type $\tilde{A}_0^{\ast}$, we have $(m,p) = 1$. By the same argument as in the previous paragraph, we have $\Aut_{\ct}(S)^\dagger = \Aut_{\ct}(S) = \bbZ/2\bbZ$. This leads to Cases $2'',3'',9'',$ and $14''$.

Finally, if $F_0$ is smooth, then Lemma \ref{lem: trivialonfiberautomorphisms} shows that $\Aut_{\ct}(S)$ acts faithfully on $J_0^0$ and we can apply Theorem \ref{thm: actiononjacobian} (5) to get restrictions on $m$, $\Aut_{\ct}(S)^\dagger$, and $\Aut_{\ct}(S)$. By Lemma \ref{lem: exception}, we have $\Aut_{\ct}(S)^\dagger = \Aut_{\ct}(S)$ unless we are in Case $2'$ or $9'$. If $m = 3$, we obtain Cases $1''$ and $23'$. If $m = 2$, we obtain Cases $1',3',4',6',10',13',14',16',30',$ and $31'$.   

To obtain a bound on the number of moduli we use the Ogg-Shafarevich theory of elliptic fibrations (see the discussion before Definition \ref{def: quasiellipticoggshafarevich}). According to this theory, a tame rational torsor of index $m$ is determined uniquely by a choice of a multiple fiber $F_{t_0}$ and a choice of a point $P$ of order $m$ in the connected component of the identity of $J_{t_0}^\sharp$. So, the number of moduli for the surfaces in Table \ref{nonjacobiantable} is the number of moduli of its jacobian $J(S)$ plus the number of moduli for choices of $F_{t_0}$ and $P$ up to automorphisms of $J(S)$. We leave it to the reader to calculate this number in every case. 
\end{proof}

\begin{example} \label{ex: nonjacobianexample1}
Assume $p \neq 2,3$. Let $J(f): J(S) \to \bbP^1$ be the jacobian rational elliptic surface with singular fibers of type $\tilde{E}_8,\tilde{A}_0^*,\tilde{A}_0^*$ and zero section $E$. Let $g \in \Aut_{\ct}(J(S))$ be a cohomologically trivial automorphism of order $4$. Let $J_0$ be the smooth fiber of $J(f)$ preserved by $g$ and let $P \in J_0$ be the fixed point of $g$ which does not lie on $E$. Note that $P$ is a $2$-torsion point on $J_0$. As mentioned in Remark \ref{rem: Halphenremark}, blowing down $E$ and blowing up $P$, we obtain a rational genus one surface $f': S' \to \bbP^1$ of index $2$ together with a $2$-section $E'$, the inverse image of $P$, and a cohomologically trivial automorphism $g'$ of order $4$. Using the description of $g$ in Table \ref{automorphismsofequations}, one checks that $g'$ restricts to an automorphism of order $4$ on $E'$ and thus $g'$ acts non-trivially on $\bbP^1$. Therefore, by Table \ref{nonjacobiantable}, the fibration $f'$ is of type $2'$ or $9'$. In particular, some of the exceptional cases described in Lemma \ref{lem: exception} exist.
\end{example}

\begin{example} \label{ex: nonjacobianexample2}
If $p \neq 0,2,3$ and $J(f): J(S) \to \bbP^1$ is a jacobian rational elliptic surface with singular fibers of type $\tilde{E}_6,\tilde{A}_0^*,\tilde{A}_0^*,\tilde{A}_0^*,\tilde{A}_0^*$ and with a cohomologically trivial involution $g$, then $g$ acts trivially on the identity component of the fiber of type $\tilde{E}_6$ by Table \ref{automorphismsofequations}. Thus, similarly to Example \ref{ex: nonjacobianexample1}, one can use any $p$-torsion point on the fiber of type $\tilde{E}_6$ to construct a non-jacobian rational elliptic surface with a multiple fiber of type $\tilde{E}_6$ and index $p$. Moreover, the resulting surface admits a cohomologically trivial involution, hence it is of type $24', 25',$ or $26'$.
\end{example}

\begin{remark}
We do not claim that all cases in Table \ref{nonjacobiantable} exist. We leave it to the interested reader to try to realize all cases using Halphen transforms as in Examples \ref{ex: nonjacobianexample1} and \ref{ex: nonjacobianexample2}.
\end{remark}

 \section{Numerically trivial automorphisms of classical Enriques surfaces in characteristic $2$}
 We refer the reader to \cite{CDL} for the basic facts on Enriques surfaces which we will use here without proof. Every Enriques surface $S$ admits at least one genus one fibration $f: S \to \bbP^1$ and the jacobian fibration $J(f)$ of $f$ is a rational genus one surface. Moreover, the fibration $f$ is cohomologically flat if and only if the Enriques surface $S$ is classical, that is, if $\omega_S \neq \calO_S$ (which always holds if the characteristic is not $2$). In this case, $f$ has precisely two double fibers. In this section, we give an application of Theorem \ref{thm: actiononjacobian} and the calculations of Section \ref{sec: cohtrivialjacobian} to classical Enriques surfaces. This recovers some of the results of \cite{DolgachevMartin} and extends them to characteristic $2$. We let $\Aut_{\nt}(S)$ be the subgroup of $\Aut(S)$ of \emph{numerically trivial automorphisms}, which are those automorphisms of $S$ acting trivially on $\Num(S)$. In the case $p\ne 2$, all Enriques surfaces with non-trivial $\Aut_{\nt}(S)$ have been classified in \cite[Theorem 8.2.22]{DK}. So, we assume that $p = 2$.  
 
 As we shall see, Theorem \ref{thm: actiononjacobian} allows us to achieve a classification of classical Enriques surfaces with numerically trivial automorphisms in characteristic $2$. Applying this theorem and $\Aut_{\nt}(J(S)) = \Aut_{\ct}(J(S))$, we get the following exact sequences
\begin{eqnarray}
0 \to \MW(J(f)) \cap \Aut_{\nt}(S) \to \Aut_{\nt}(S)^\dagger \to \Aut_{\ct}(J(S))^\dagger \label{seq: enriques1} \\
0 \to \Aut_{\nt}(S)/\Aut_{\nt}(S)^\dagger \to \Aut_{\ct}(J(S))/\Aut_{\ct}(J(S))^\dagger. \label{seq: enriques2}
\end{eqnarray}
 
 Before we start, we recall that an Enriques surface is called $\tilde{E}_8$-extra-special if it contains the following configuration of $(-2)$-curves:
$$
\resizebox{!}{1cm}{
\xy
(-10,25)*{};
@={(-10,10),(0,10),(10,10),(20,10),(30,10),(40,10),(50,10),(10,20),(60,10),(70,10)}@@{*{\bullet}};
(-10,10)*{};(70,10)*{}**\dir{-};
(10,10)*{};(10,20)*{}**\dir{-};
\endxy
} 
$$
Note that the only genus one fibration on a $\tilde{E}_8$-extra-special surface is a quasi-elliptic fibration with a double fiber of type $\tilde{E}_8$. We will first prove the following refinement of \cite[Theorem 8.5]{DolgachevMartin}.

\begin{theorem}\label{thm: dolgachevmartin}
Let $S$ be a classical Enriques surface in characteristic $2$. Then, $\Aut_{\nt}(S) \subseteq (\bbZ/2\bbZ)^b$ with $b \leq 2$.
\end{theorem}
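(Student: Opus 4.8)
<br>

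The plan is to exploit the exact sequences \eqref{seq: enriques1} and \eqref{seq: enriques2} together with the classification in Table \ref{Table2}. First I would pick a genus one fibration $f: S \to \bbP^1$ on the classical Enriques surface $S$; since $p = 2$ and $S$ is classical, $f$ is cohomologically flat with exactly two double fibers $2F_1$ and $2F_2$, and its jacobian $J(f): J(S) \to \bbP^1$ is a rational genus one surface with $\Aut_{\nt}(J(S)) = \Aut_{\ct}(J(S))$. From the sequences, $\Aut_{\nt}(S)$ is an extension of a subgroup of $\Aut_{\ct}(J(S))/\Aut_{\ct}(J(S))^\dagger$ by a subgroup of $\Aut_{\ct}(J(S))^\dagger$, and one also needs the sub-subgroup $\MW(J(f)) \cap \Aut_{\nt}(S)$. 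So I need: (i) a bound on the order-structure of $\Aut_{\ct}(J(S))$ for those $J(S)$ that can arise as jacobians of genus one fibrations on classical Enriques surfaces in characteristic $2$; (ii) a bound on the translation part; (iii) an argument that the extension does not grow the exponent past $2$ or the rank past $2$.

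For step (i), the key point is that a genus one fibration on an Enriques surface is \emph{never} extremal (the Enriques lattice forces $\sum (b_t - 1) \le 8$ but with two double fibers eating into the Euler characteristic count one gets strict inequality), so only the non-extremal rows of Table \ref{Table2} are relevant, and for those the groups $\Aut_{\ct}(J(S))$ and $\Aut_{\ct}(J(S))^\dagger$ are explicitly listed. Scanning the characteristic-$2$ non-extremal rows (Cases $6, 7, 10, 16, 17, 18, 20$, and the ones with no characteristic restriction, namely $13, 23, 31$), the possible $\Aut_{\ct}(J(S))$ are among $\bbZ/2\bbZ$, $\bbG_a$, $(\bbZ/2\bbZ)^2 \cdot \bbG_a$, $\bbZ/3\bbZ$, $\mu_4$, $\bbG_m$, $\bbZ/4\bbZ$, etc. The crucial observation is that $\Aut_{\nt}(S)$ is finite (this is classical for Enriques surfaces, e.g.\ via the faithful action on the finite group of components of a chosen fibration combined with finiteness of $W(\sfE_8)$-type constraints), and it is a $2$-group: a numerically trivial automorphism of prime order $\ell \neq 2$ would have to fix enough curves to contradict Lefschetz-type fixed point counts on the rational jacobian, or more directly it would give an element of $\Aut_{\ct}(J(S))$ of order $\ell$, which for the relevant non-extremal cases only happens when $J(S)$ has extra torsion (Cases $13, 18, 23, 20$ with $\bbZ/3\bbZ$), and in each such case one checks the corresponding Enriques surface cannot support a $\bbZ/3\bbZ$ acting numerically trivially because the fiber types obstruct the needed double-fiber structure.

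Having reduced to $\Aut_{\nt}(S)$ a finite $2$-group, I would bound its exponent and rank. For the exponent: an element $g$ of order $4$ in $\Aut_{\nt}(S)$ maps to $\Aut_{\ct}(J(S))$, and $g^2$ lies in $\Aut_{\nt}(S)^\dagger$; using \eqref{seq: enriques1}, either $g^2 \in \MW(J(f)) \cap \Aut_{\nt}(S)$ or $g^2$ maps nontrivially into $\Aut_{\ct}(J(S))^\dagger$. The translation part $\MW(J(f)) \cap \Aut_{\nt}(S)$ is controlled by Lemma \ref{lem: cohtrivialtranslations}-type reasoning adapted to Enriques surfaces (it is killed by the relevant $d$, and for Enriques surfaces in characteristic $2$ the only possibility is $2$-torsion), and $\Aut_{\ct}(J(S))^\dagger$ is $(\bbZ/2\bbZ)^2$ at worst (Case $7$) or $Q_8$ — but $Q_8$ cannot occur here since the $\tilde{E}_8$ jacobian with $Q_8$ corresponds to $\tilde{E}_8$-extra-special-type geometry where the only genus one fibration is quasi-elliptic with a double $\tilde{E}_8$ fiber, handled separately and yielding $\Aut_{\ct}(J(S))^\dagger = \{\id\}$. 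This forces $g^4 = \id$ to already be $g^2 = \id$, i.e.\ exponent $\le 2$. For the rank: $\Aut_{\nt}(S) \subseteq (\bbZ/2\bbZ)^b$, and $b \le 2$ because $\Aut_{\nt}(S)/\Aut_{\nt}(S)^\dagger$ injects into $\Aut_{\ct}(J(S))/\Aut_{\ct}(J(S))^\dagger$ which in every relevant case has $2$-rank at most $1$, while $\Aut_{\nt}(S)^\dagger$ injects (modulo the translation part, which has $2$-rank at most $1$) into $\Aut_{\ct}(J(S))^\dagger$ of $2$-rank at most $1$ in the surviving cases, and a short case analysis rules out the simultaneous maximality that would give $b = 3$.

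The main obstacle I anticipate is step (iii), specifically ruling out $b = 3$ and exponent $4$ uniformly: this requires knowing precisely which jacobian fibrations $J(S)$ in Table \ref{Table2} actually arise from classical Enriques surfaces in characteristic $2$ and, for each, pinning down both $\Aut_{\nt}(S)^\dagger$ via \eqref{seq: enriques1} (including the translation contribution, which needs the Ogg–Shafarevich / (OS)-type analysis of the two double fibers, delicate in the quasi-elliptic case) and the quotient via \eqref{seq: enriques2}. The cleanest route is probably to first show $\Aut_{\nt}(S)$ is elementary abelian $2$ by the exponent argument above, then observe that an elementary abelian $2$-subgroup of the relevant extension has rank bounded by the sum of ranks of the two outer terms, each of which is $\le 1$ after discarding the cases that the Enriques geometry excludes — with the one genuinely special case being the $\tilde{E}_8$-extra-special surface, where the quasi-elliptic fibration with double $\tilde{E}_8$ fiber has jacobian in Case $8$ with $\Aut_{\ct}(J(S)) = \mathrm{Aff}$ and $\Aut_{\ct}(J(S))^\dagger = \bbZ/3\bbZ$, and one argues directly that $\Aut_{\nt}(S)$ is at most $(\bbZ/2\bbZ)^2$ there too.
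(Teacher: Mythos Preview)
Your proposal has a concrete factual error that breaks the argument. You assert that ``a genus one fibration on an Enriques surface is \emph{never} extremal'' and justify it by saying the two double fibers ``eat into the Euler characteristic count''. This is false: the multiplicity of a fiber does not affect $\sum_t (b_t - 1)$, and plenty of classical Enriques surfaces in characteristic~$2$ carry extremal genus one fibrations. Indeed, the paper's own Proposition~\ref{prop: enriques}(2) lists extremal fibrations with a fiber of type $\tilde{E}_8$, $\tilde{D}_8$, or $\tilde{E}_7$ as one of the main cases, and all four configurations (A)--(D) in Theorem~\ref{thm: Enriquesconfigs} support extremal fibrations. Once you allow the extremal rows of Table~\ref{Table2}, the group $\Aut_{\ct}(J(S))$ can be $\bbG_m$, ${\rm Aff}$, etc., and your subsequent rank and exponent bookkeeping no longer bounds anything. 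Several of your later steps (ruling out order~$3$, ruling out $Q_8$, handling the $\tilde{E}_8$-extra-special case) are also too sketchy to count as arguments, but the extremality claim is the structural failure.

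By contrast, the paper's proof is almost entirely a citation: Theorem~8.5 of \cite{DolgachevMartin} already establishes $\Aut_{\nt}(S) \subseteq (\bbZ/2\bbZ)^2$ for every classical Enriques surface in characteristic~$2$ that is not $\tilde{E}_8$-extra-special. The only new content here is the $\tilde{E}_8$-extra-special case, and for that the paper argues directly that $\Aut_{\nt}(S)$ is in fact trivial: the unique fibration $f$ is quasi-elliptic with jacobian in Case~8, $\MW(J(f))$ is trivial so $\Aut_{\nt}(S)$ embeds in $\Aut_{\ct}(J(S)) = {\rm Aff}$; since $\Aut_{\nt}(S)$ must preserve both double fibers, its image lies in the stabilizer of two points of $\bbP^1$, hence is tame; but then Table~\ref{automorphismsofequations} shows it acts faithfully on the identity component of the irreducible double fiber, which Theorem~\ref{thm: actiononjacobian}(5) forbids. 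So the theorem is really a one-case patch of an existing result, not a from-scratch classification via the exact sequences.
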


\proof
This was proved in \cite[Theorem 8.5]{DolgachevMartin} if $S$ is not $\tilde{E}_8$-extra-special, so we may assume that we are in this special case and we let $f: S \to \bbP^1$ be the unique genus one fibration of this surface. Note that $J(f)$ corresponds to Case $8$ in Table \ref{Table2}. Since $\MW(J(f))$ is trivial, the group $\Aut_{\nt}(S)$ acts faithfully on $J(S)$ by \eqref{seq: enriques1}. Let $F_0$ and $F_1$ be the double fibers of $f$, say with $F_1$ irreducible. Since $\Aut_{\nt}(S)$ preserves the $F_i$, Table \ref{Table2} shows that $p \nmid |\Aut_{\nt}(S)|$. Then, Table \ref{automorphismsofequations} shows that $\Aut_{\nt}(S)$ acts faithfully on $F_1$. By Theorem \ref{thm: actiononjacobian}, this is only possible if $\Aut_{\nt}(S)$ is trivial.
\qed

Now, we follow the strategy we used in the previous subsection to calculate cohomologically trivial automorphisms of non-jacobian rational genus one surfaces.

\begin{lemma}\label{translations} Let $f:S\to \bbP^1$ be a genus one fibration on a classical Enriques surface in characteristic $p = 2$ with double fibers $F_0,F_1$. Assume that $G:=\Aut_{\nt}(S) \cap MW(J(f)) \ne \{1\}$. Then, one of the following cases occurs.
\begin{enumerate}
\item  $G = (\bbZ/2\bbZ)^2$, $f$ is quasi-elliptic, and $(F_0,F_1) = (\tilde{D}_4,\tilde{D}_4)$.
\item $G = \bbZ/2\bbZ$, and $f$ is elliptic and $(F_0,F_1) = (\tilde{D}_8, \tilde{A}_0)$, or $f$ is quasi-elliptic and $(F_0,F_1) \in \{(\tilde{D}_4,\tilde{D}_4), (\tilde{D}_6,\tilde{A}_1^*), (\tilde{E}_7,\tilde{A}_1^*),(\tilde{D}_8,\tilde{A}_0^{**})\}$.
\end{enumerate}
\end{lemma}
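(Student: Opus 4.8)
The plan is to mimic the proof of Lemma \ref{lem: cohtrivialtranslations} for rational genus one surfaces, replacing the use of Mordell-Weil lattices by the same tools applied to the jacobian $J(f)$, which is a rational genus one surface, and then to use the fact that a classical Enriques surface carries \emph{two} double fibers instead of one. First I would observe that, by the canonical bundle formula on Enriques surfaces, $f$ has exactly two double fibers $F_0, F_1$, and by cohomological flatness both are tame; hence the $\MW(J(f))$-action on the simple fibers is identified, \'etale locally, with the translation action on the corresponding fibers of $J(f)$. Thus a nontrivial $g \in G = \Aut_{\nt}(S)\cap \MW(J(f))$ is translation by a section $\Sigma$ of $J(f)$ which, in order to preserve all simple fibers componentwise, meets only the fibers over the images of $F_0$ and $F_1$ in non-identity components. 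Moreover, since $g$ preserves both double fibers $2F_0$ and $2F_1$ (which are the only double fibers), and $g$ corresponds to translation by $\Sigma$, the order $n$ of $\Sigma$ must divide $2$ for \emph{both} multiple fibers simultaneously; in particular $\Sigma$ is $2$-torsion and $G \subseteq \MW(J(f))[2]$.

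Next I would run the height-pairing argument: a torsion section $\Sigma$ on the rational elliptic surface $J(f)$ has height $h(\Sigma) = 0$, so by the Shioda height formula $2\chi = 4 = 2h(E_0) + 2(\Sigma\cdot E_0) - \sum_t \mathrm{contr}_t(\Sigma)$ forces $\sum_t \mathrm{contr}_t(\Sigma) \geq 4$ (using $\Sigma\cdot E_0 \geq 0$), and here the local contributions are supported on \emph{at most two} fibers, namely those under $F_0$ and $F_1$. Consulting the table of local contributions (\cite[Table 8.2]{ShiodaSchutt}), the pairs of fiber types whose $2$-torsion local contributions can sum to at least $4$, with one of the two fibers possibly contributing $0$, are precisely $(\tilde{D}_8,\tilde{A}_0)$, $(\tilde{D}_6,\tilde{A}_1)$, $(\tilde{E}_7,\tilde{A}_1)$, $(\tilde{D}_4,\tilde{D}_4)$ — matching the list in the statement once one records which fiber must carry the extra $\tilde{A}_0^{**}$, $\tilde{A}_1^*$ refinements in the quasi-elliptic case. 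Here the subtlety is the elliptic/quasi-elliptic dichotomy: if $f$ is quasi-elliptic, one instead invokes the classification of Mordell-Weil groups of quasi-elliptic surfaces (\cite{Ito1}, \cite{Ito2}) together with the fact that additive fibers force $m = p = 2$; the fibers $\tilde{A}_0^*$ and $\tilde{A}_1$ cannot occur on a quasi-elliptic surface, so one gets the $\tilde{A}_0^{**}$, $\tilde{A}_1^*$ variants, plus the possibility that $G = (\mathbb{Z}/2\mathbb{Z})^2$ exactly when $J(f)$ has two fibers of type $\tilde{D}_4$ (which is Case $31$ of Table \ref{Table2} restricted to $p = 2$).

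Finally I would pin down $G$ itself in each case: by Theorem \ref{delPezzo} (3), $\Aut_{\nt}(S)\cap\MW(J(f))$ embeds into $\Aut_{\ct}(S')$ for a jacobian rational genus one surface $S'$ with a fiber of the same type as $F_0$; combining this with the known structure of $\MW(J(f))[2]$ read off from the fiber configuration (for extremal fibrations, $\MW(J(f))^{\oplus 2}$ is the sum of the component groups) gives $G = \mathbb{Z}/2\mathbb{Z}$ except precisely in the $(\tilde{D}_4,\tilde{D}_4)$ quasi-elliptic case, where $\MW(J(f)) = (\mathbb{Z}/2\mathbb{Z})^2$ is entirely $2$-torsion and the translations preserve all components, yielding $G = (\mathbb{Z}/2\mathbb{Z})^2$; this also explains why $(\tilde{D}_4,\tilde{D}_4)$ appears in both items of the lemma. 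The main obstacle I anticipate is the careful bookkeeping in the quasi-elliptic case — in particular matching the "companion" fiber types $\tilde{A}_0^{**}$ and $\tilde{A}_1^*$ to the correct $\tilde{D}_n$ or $\tilde{E}_7$ double fiber and verifying that no other pairs sneak through the height/contribution inequality — together with checking that Theorem \ref{thm: actiononjacobian} (5) does not permit any additional $2$-power contribution from the multiple-fiber analysis beyond what the height formula already constrains.
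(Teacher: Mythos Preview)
Your overall strategy matches the paper's: reduce to torsion sections of $J(f)$ that meet non-identity components only at the two fibers under $F_0,F_1$, then use height/local-contribution bookkeeping for the elliptic case and Ito's classification for the quasi-elliptic case. However, there are two genuine gaps.

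First, your height computation is off. For a rational elliptic surface $\chi(\mathcal{O}_{J(S)})=1$, so the height formula reads $0=h(\Sigma)=2+2(\Sigma\cdot E_0)-\sum_t\mathrm{contr}_t(\Sigma)$, giving $\sum_t\mathrm{contr}_t(\Sigma)\ge 2$, not $\ge 4$. With the bound $\ge 4$ no pair of $2$-torsion contributions would ever work (the maximal single contribution is $2$, coming from $\tilde{D}_8$). With the correct bound $\ge 2$, several pairs pass: $(\tilde{D}_8,-)$, $(\tilde{E}_7,\tilde{A}_1)$, $(\tilde{D}_6,\tilde{A}_1)$, $(\tilde{D}_4,\tilde{D}_4)$, etc.

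Second, and more importantly, you are missing the step that singles out $(\tilde{D}_8,\tilde{A}_0)$ as the \emph{only} elliptic possibility. The paper opens with the observation that since $p=2$ and both double fibers are tame of multiplicity $2$, each $F_i$ must be either of additive type or an ordinary elliptic curve; in particular no $F_i$ can be multiplicative. This immediately kills $(\tilde{E}_7,\tilde{A}_1)$ and $(\tilde{D}_6,\tilde{A}_1)$ in the elliptic case, since $\tilde{A}_1$ is multiplicative, and one still has to rule out elliptic $(\tilde{D}_4,\tilde{D}_4)$ (which requires checking that the relevant $2$-torsion section does not actually occur on an \emph{elliptic} rational surface with those fibers in characteristic $2$). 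Your proposal skips over why the $\tilde{A}_1$ pairs and $(\tilde{D}_4,\tilde{D}_4)$ survive only in the quasi-elliptic setting.

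Two smaller points. The paper obtains that $G$ is $2$-torsion by invoking Theorem~\ref{thm: dolgachevmartin} ($\Aut_{\nt}(S)\subseteq(\bbZ/2\bbZ)^2$), which is cleaner than your ``order divides $m$'' argument; you would need to say precisely which curve on $S$ the translation must preserve. Also, Theorem~\ref{delPezzo}(3) is stated only for rational genus one surfaces, so you cannot invoke it for an Enriques surface to pin down $G$; the paper instead reads off $|G|$ directly from the structure of $\MW(J(f))$ via Ito's tables in the quasi-elliptic case.
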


\begin{proof}
Since both double fibers of $f$ are tame, they must be either of additive type or ordinary elliptic curves. As we mentioned earlier, the fibers of the jacobian fibration $J(f):J(S)\to \bbP^1$ are of the same type.  
By the same argument as in Lemma \ref{translations}, the action of $\Aut_{\nt}(S) \cap \MW(J(f))$ is induced by translations by sections of $J(f)$ which meet every fiber of $J(f)$ except possibly the two fibers corresponding to $F_0$ and $F_1$ in the identity component. Since $\Aut_{\nt}(S)$ is 2-torsion by Theorem \ref{thm: dolgachevmartin}, every non-trivial translation automorphism from $\Aut_{\nt}(S)$ is of order 2. If $f$ is an elliptic fibration, as before, we apply the theory of Mordell-Weil lattices. We have now at most two local contributions with sum a positive integer $\ge 2$. Using Table 6.1 from \cite{ShiodaSchutt} we get that the only possibility for the types  of $(F_0,F_1)$ is $(\tilde{D}_8,\tilde{A}_0)$. 

Suppose $f$ is quasi-elliptic. Although quasi-elliptic  fibrations were not considered  in \cite{ShiodaSchutt}, the same arguments apply without change. 
Alternatively, one can use the explicit description of the Mordell-Weil groups of rational quasi-elliptic surfaces in characteristic $2$ given in \cite{Ito2} to obtain the possible types of $F_0$ and $F_1$.
%
%
\end{proof}

\begin{proposition} \label{prop: enriques}
Let $f: S \to \bbP^1$ be a genus one fibration on a classical Enriques surface $S$ in characteristic $2$. If $\Aut_{\nt}(S) \neq \{{\rm id}\}$, then one of the following cases occurs, where $F_0$ and $F_1$ are the double fibers of $f$:
\begin{enumerate}
\item $f$ is as in Lemma \ref{translations}.
\item $f$ is extremal, $f$ admits a fiber of type $\tilde{E}_8,\tilde{D}_8$, or $\tilde{E}_7$ and $J(f)$ occurs in Table \ref{Table2},
\item $f$ admits a fiber of type $\tilde{E}_7$, $J(f)$ corresponds to Case $17$ in Table \Ref{Table2}, $F_0$ and $F_1$ are smooth, and $\Aut_{\nt}(S) = \bbZ/2\bbZ$.
\end{enumerate}
\end{proposition}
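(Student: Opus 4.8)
The plan is to follow the same strategy used for non-jacobian rational genus one surfaces: split the analysis according to whether $\Aut_{\nt}(S)$ intersects $\MW(J(f))$ non-trivially, and then analyze the faithful action on $J(S)$ via the exact sequences \eqref{seq: enriques1} and \eqref{seq: enriques2}. If $G := \Aut_{\nt}(S) \cap \MW(J(f)) \neq \{1\}$, then Lemma \ref{translations} applies directly and we are in Case (1). So I would assume that $\Aut_{\nt}(S)$ acts faithfully on $J(S)$, hence injects into $\Aut_{\ct}(J(S))$. Let $mF_0$ and $m F_1 = 2F_0, 2F_1$ be the two double fibers (both of multiplicity $2$ since $S$ is classical in characteristic $2$). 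Since $\Aut_{\nt}(S)$ preserves both $F_0$ and $F_1$, the image $\varphi(\Aut_{\nt}(S))$ preserves both corresponding fibers $J_0, J_1$ of $J(f)$ and fixes a $2$-torsion point on the identity component of each (namely the image of the $(-1)$-curve obstruction, or rather the class $\calO_{F_i}(\bar F_i)$), by the argument in the proof of Theorem \ref{delPezzo} (3).

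The key step is then a case analysis by the type of $F_0$ (and symmetrically $F_1$). By Theorem \ref{thm: actiononjacobian} (5), since $\varphi(\Aut_{\nt}(S))$ fixes a $2$-torsion point on $(J_i^\sharp)^0$ for $i = 0, 1$, the only way $\varphi(g)$ can act non-trivially on $(J_i^\sharp)^0$ is if $F_i$ is smooth, nodal, or of type $\tilde A_1$ (the cases (c), (d), (e) of that theorem with $n = 2$, noting $p = 2$ rules out (a), (b)). First I would handle the case where some $F_i$, say $F_1$, is irreducible and $\Aut_{\nt}(S)$ acts faithfully on $(J_1^\sharp)^0$: then by Theorem \ref{thm: actiononjacobian} (5) combined with the fact that $p = 2$, $F_1$ must be smooth ordinary, $\Aut_{\nt}(S) = \bbZ/2\bbZ$, and one needs the other fiber $F_0$ to be reducible (else we run into the $\tilde E_8$-extra-special obstruction or both fibers irreducible contradicting numerical triviality the way it does in Theorem \ref{thm: dolgachevmartin}). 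Running through Table \ref{Table2} for which $J(f)$ admits $\Aut_{\ct}(J(S))^\dagger = \bbZ/2\bbZ$ and a reducible additive fiber, together with the constraint that $S$ is a (non-jacobian) Enriques quotient datum, one should land exactly in Case (3): $J(f)$ is Case $17$, $F_0, F_1$ smooth, $\Aut_{\nt}(S) = \bbZ/2\bbZ$. Here I would invoke Lemma \ref{lem: exception17} and Lemma \ref{lem: exception} (adapted; note that the Enriques version of Lemma \ref{lem: exception}'s argument about additive fibers under tame base change still applies since $p = 2$ and $m = 2$ forces an additive multiple fiber on the relatively minimal model of the quotient, which is impossible when the fiber is of type $\tilde A_0^*$).

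The remaining case is when $\Aut_{\nt}(S)$ acts trivially on the identity component of at least one of $(J_0^\sharp)^0$, $(J_1^\sharp)^0$ — say on $(J_0^\sharp)^0$ — which by Theorem \ref{thm: actiononjacobian} (5) happens precisely when $F_0$ is such that $F_0$ has more than two components, or is of an additive type. Since $S$ is classical in characteristic $2$ with double fibers that are tame, each $F_i$ is either an ordinary elliptic curve or of additive type. If $F_0$ is additive, I would use Lemma \ref{lem: trivialonfiberautomorphisms} to pin down which cases of Table \ref{Table2} can occur for $J(f)$ and extract the list: the group $\Aut_{\nt}(S)$ must be contained in the subgroup of $\Aut_{\ct}(J(S))$ acting trivially on $(J_0^\sharp)^0$, and Lemma \ref{lem: trivialonfiberautomorphisms} (3) together with the requirement that $F_0$ be of type $\tilde E_8$, $\tilde D_8$, or $\tilde E_7$ (these being the additive fiber types of sufficiently many components to force numerical triviality, in the spirit of Lemma \ref{non-infinitely near} adapted to Enriques surfaces — one still needs a fiber with $\geq 4$ components, or an extremal configuration) gives Case (2). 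The extremality follows because if $J(f)$ is non-extremal then by the structure of Table \ref{Table2} and the $\Aut_{\ct}(J)^\dagger$ columns, the only non-extremal cases compatible with a numerically trivial automorphism of $S$ that acts trivially on $(J_0^\sharp)^0$ are precisely Case $17$ (handled above, Case (3)) and Cases $10, 11$ (type $\tilde D_8$, which are extremal — wait, they are extremal) — I would carefully check the non-extremal cases $4, 7, 10, 17$ and verify all but $17$ lead to $F_0$ of type $\tilde E_8$ or $\tilde D_8$ with the fibration forced extremal or excluded.

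The main obstacle I expect is the bookkeeping in Case (2): showing that $f$ must admit a fiber of one of the three large types $\tilde E_8, \tilde D_8, \tilde E_7$ and be extremal, rather than merely having $J(f)$ somewhere in Table \ref{Table2}. The subtlety is that $\Aut_{\nt}(S)$ acting trivially on $(J_0^\sharp)^0$ requires $F_0$ reducible with many components (by Theorem \ref{thm: actiononjacobian} (5) the bad cases have few components) OR additive, but the Enriques analogue of the constraint "some fiber has $\geq 4$ components" needs to be established — this is where one needs that a numerically trivial automorphism of an Enriques surface preserves all $(-2)$-curves, hence the fibration must have enough reducible fibers, forcing $J(f)$ into the $\tilde E_8 / \tilde D_8 / \tilde E_7$ rows of Table \ref{Table2}. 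I would also need to carefully exclude, using \eqref{seq: enriques1} and Theorem \ref{thm: dolgachevmartin} (so $\Aut_{\nt}(S) \subseteq (\bbZ/2\bbZ)^2$), any case where the putative group would be too large (e.g. $\bbG_a$ factors in Cases $4', 5', 7', 8', 10', 11'$ of Table \ref{nonjacobiantable}): these simply cannot occur for Enriques surfaces because the multiple fibers have multiplicity exactly $2$, not $p = 2$ in the additive-$\tilde E_8$ sense with $m = p$, so the $\bbG_a$ does not survive — the Enriques constraint $m = 2$ together with $p = 2$ still allows $m = p$, so one must instead use that $\Aut_{\nt}(S)$ is finite (indeed $2$-torsion of rank $\leq 2$) by Theorem \ref{thm: dolgachevmartin} to truncate the $\bbG_a$'s. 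I would close by assembling these observations into the three listed alternatives.
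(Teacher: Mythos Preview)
Your overall architecture---split off the translation case via Lemma~\ref{translations}, then analyze the faithful image in $\Aut_{\ct}(J(S))$---matches the paper. But your detailed case analysis is considerably heavier than what the paper actually does, and it contains a genuine gap.

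The paper's argument is much shorter than yours. Once $\Aut_{\nt}(S)\cap\MW(J(f))$ is trivial, $\Aut_{\nt}(S)$ embeds in $\Aut_{\ct}(J(S))$; by Theorem~\ref{thm: dolgachevmartin} it is $2$-torsion, so $\Aut_{\ct}(J(S))$ contains an involution. One then simply scans Table~\ref{Table2} in characteristic $2$ for entries whose $\Aut_{\ct}(J)$ contains an element of order $2$. Since $\bbG_m(\bbk)$ has no $2$-torsion in characteristic $2$, Cases $13$, $23$, $31$ drop out immediately, as do the odd-order cases $18$, $20$. What remains are the $\tilde{E}_8$, $\tilde{D}_8$, $\tilde{E}_7$ rows, and the only non-extremal one among these is Case~$17$. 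No adaptation of Lemma~\ref{non-infinitely near}, Lemma~\ref{lem: trivialonfiberautomorphisms}, or Lemma~\ref{lem: exception} to Enriques surfaces is needed.

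The gap: you assert that ``$\Aut_{\nt}(S)$ preserves both $F_0$ and $F_1$'' and then build the whole case analysis on the action on each $(J_i^\sharp)^0$. This is false in general. Numerically trivial automorphisms need only preserve the \emph{numerical} classes, and since $F_0\equiv F_1$ (their difference is the torsion canonical class), an element of $\Aut_{\nt}(S)$ may swap the two half-fibers. This is precisely what happens in Case~(3): in Case~$17$ one has $\Aut_{\ct}(J(S))^\dagger=\{\id\}$, so the involution acts non-trivially on the base of $f$ and therefore interchanges $F_0$ and $F_1$. The paper uses this swap to conclude that the $F_i$ are of the same type; as tame double fibers in characteristic $2$ they are either additive or ordinary elliptic (never nodal), and since the unique additive fiber $\tilde{E}_7$ cannot be duplicated, both $F_i$ are smooth. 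Your route via Theorem~\ref{thm: actiononjacobian}~(5) and an Enriques analogue of Lemma~\ref{lem: exception} does not see this, because those results presuppose that the automorphism fixes the multiple fiber in question. Your claim that ``the other fiber $F_0$ must be reducible'' is the symptom: in Case~(3) both double fibers are smooth and the $\tilde{E}_7$ fiber is a simple fiber disjoint from the $F_i$.
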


\begin{proof}
We may assume that $f$ does not occur in Lemma \ref{translations}, so that $\Aut_{\nt}(S) \cap \MW(J(f))$ is trivial and thus, by Sequence \eqref{seq: enriques2}, $\Aut_{\ct}(J(S))$ is not. By Theorem \ref{thm: dolgachevmartin}, we know that $\Aut_{\nt}(S)$ is $2$-torsion. 
In particular, $J(f)$ occurs in Table \ref{Table2} and by checking which of the cases there admit cohomologically trivial involutions in characteristic $2$, we obtain the stated list of fibers. 

The only non-extremal fibration in Table \ref{Table2} that admits a cohomologically trivial involution is Case \textbf{17}.
In this case, we have  $\Aut_{\nt}(S) = \Aut_{\ct}(J(S)) = \bbZ/2\bbZ$, both acting non-trivially on the base. Hence, the $F_i$ are interchanged by $\Aut_{\nt}(S)$. Since they cannot be nodal, they have to be smooth and ordinary.
\end{proof}

\begin{theorem} \label{thm: Enriquesconfigs}
Let $S$ be a classical Enriques surface in characteristic $2$. If $\Aut_{\nt}(S)$ is non-trivial, then $S$ contains one of the following configurations of $(-2)$-curves:
\begin{enumerate}
\item[(A)]
\resizebox{!}{1cm}{
\xy
(-10,25)*{};
@={(-10,10),(0,10),(10,10),(20,10),(30,10),(40,10),(50,10),(10,20),(60,10),(50,20)}@@{*{\bullet}};
(-10,10)*{};(60,10)*{}**\dir{-};
(10,10)*{};(10,20)*{}**\dir{-};
(50,10)*{};(50,20)*{}**\dir{-};
\endxy
} 
\item[(B)] 
\resizebox{!}{1cm}{
\xy
(0,25)*{};
@={(80,10),(90,10),(0,10),(10,10),(20,10),(30,10),(40,10),(50,10),(60,10),(70,10),(30,20)}@@{*{\bullet}};
(0,10)*{};(80,10)*{}**\dir{-};
(90,10)*{};(80,10)*{}**\dir{=};
(30,10)*{};(30,20)*{}**\dir{-};
\endxy
}
\item[(C)]
\resizebox{!}{1cm}{
\xy
(0,15)*{};
@={(0,0),(10,0),(20,0),(30,0),(40,0),(10,-10),(10,10),(40,0),(50,10),(50,0),(50,-10),(60,0)}@@{*{\bullet}};
(0,0)*{};(60,0)*{}**\dir{-};
(10,10)*{};(10,-10)*{}**\dir{-};
(50,10)*{};(50,-10)*{}**\dir{-};
\endxy
 }
\item[(D)]
\resizebox{!}{1cm}{
\xy
(0,15)*{};
@={(30,10),(0,0),(10,0),(20,0),(30,0),(40,0),(50,0),(10,10),(60,0),(50,10),(70,0)}@@{*{\bullet}};
(0,0)*{};(60,0)*{}**\dir{-};
(10,0)*{};(10,10)*{}**\dir{-};
(50,0)*{};(50,10)*{}**\dir{-};
(30,0)*{};(30,10)*{}**\dir{-};
(60,0)*{};(70,0)*{}**\dir{=};
\endxy
} 
\end{enumerate}
In Cases (A), (B), and (C), these are in fact all $(-2)$-curves on $S$.
\end{theorem}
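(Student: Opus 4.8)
The plan is to leverage the structural results already established, namely Proposition \ref{prop: enriques} together with the classification in Tables \ref{Table2} and \ref{nonjacobiantable}, and then carry out a case-by-case construction of the $(-2)$-curve configurations. First I would recall that by Proposition \ref{prop: enriques} there are only three possibilities: $f$ is as in Lemma \ref{translations}; $f$ is extremal with a fiber of type $\tilde{E}_8$, $\tilde{D}_8$, or $\tilde{E}_7$ and $J(f)$ in Table \ref{Table2}; or $f$ is the non-extremal Case $17$ with double fibers smooth and ordinary. So the task reduces to analyzing each of these and extracting the $(-2)$-curves. The key observation throughout is that $\Aut_{\nt}(S)$ preserves every $(-2)$-curve, and on a classical Enriques surface the half-fibers $F_0, F_1$ of $f$ are members of $|-K_S|$-type divisor classes that pin down the reducible fiber shapes; combined with the Noether/Euler characteristic count $\sum e_t = 12$ on the rational surface $J(S)$ (equivalently the relevant bound for Enriques surfaces), the reducible fibers of $f$ together with any extra $(-2)$-curves are tightly constrained.

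Next I would go through the cases. For the \emph{quasi-elliptic} cases of Lemma \ref{translations}, the double fibers $(\tilde{D}_4,\tilde{D}_4)$, $(\tilde{D}_6,\tilde{A}_1^*)$, $(\tilde{E}_7,\tilde{A}_1^*)$, $(\tilde{D}_8,\tilde{A}_0^{**})$ and the elliptic case $(\tilde{D}_8,\tilde{A}_0)$ give the reducible fibers directly; one then needs to locate the cuspidal curve $\mathfrak{C}$ and any multisections that are $(-2)$-curves. The configuration (C) — two $\tilde{D}_4$'s plus connecting components — should emerge from the $(\tilde{D}_4,\tilde{D}_4)$ quasi-elliptic fibration (the double-$\tilde D_4$ picture), configuration (D) from a $\tilde D_6$ or $\tilde D_8$ situation, configuration (A) — an $\tilde{E}_8$-like chain with two extra vertices — from the extremal $\tilde E_7$ or $\tilde D_8$ cases, and configuration (B) from the extremal $\tilde{D}_8$ or $\tilde E_8$ case where a double-bond appears (the $\dir{=}$ indicates a quasi-elliptic feature or a fiber of type $\tilde A_1^*$, i.e. two components meeting with multiplicity $2$, or the closure of the cusp). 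For Case $17$: here $J(f)$ has a fiber of type $\tilde E_7$ and two $\tilde{A}_0^*$-fibers, the two double fibers of $S$ are smooth, so $f$ has a single reducible fiber of type $\tilde E_7$; the $(-2)$-curves of $S$ are exactly the seven components of that $\tilde E_7$ fiber, giving the $\tilde{E}_7$-shaped part of one of the configurations, plus possibly additional curves forced by the involution swapping $F_0, F_1$.

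For the final clause — that in Cases (A), (B), (C) these are \emph{all} the $(-2)$-curves on $S$ — I would argue via the discriminant/lattice count: the sublattice of $\Num(S)$ generated by the listed $(-2)$-curves, together with the class of a fiber, already has finite index (indeed one computes its discriminant and compares with $\Num(S)$), so any further $(-2)$-curve would have to be an integral combination forcing a contradiction with effectivity and the intersection form; alternatively, one checks that the genus one fibration $f$ together with all its reducible fibers accounts for the full root sublattice, using that $J(f)$ is extremal (finite Mordell-Weil) in exactly these cases so that $\sum(b_t - 1) = 8$ leaves no room for extra $(-2)$-curves off the fibers. The exclusion of case (D) from the ``all $(-2)$-curves'' statement reflects that in the corresponding fibration ($\tilde D_8$ with a double $\tilde A_0^{**}$ or similar) the Mordell-Weil group or Tate-Shafarevich data permits further genus one fibrations with their own $(-2)$-curves.

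The main obstacle I anticipate is the bookkeeping of the double-bond edges ($\dir{=}$) in configurations (B) and (D): these encode either the closure $\mathfrak{C}$ of the cusp of a quasi-elliptic fibration attached to a fiber component, or a non-reduced intersection, and getting the precise combinatorics right — which vertex is the half-fiber, which is the cusp curve, and verifying the intersection numbers $1$ versus $2$ — requires careful use of the quasi-elliptic geometry from Lemma \ref{translations} and the fiber tables, rather than a routine calculation. A secondary difficulty is ruling out \emph{extra} $(-2)$-curves in Case $17$, where $f$ is non-extremal: here the Mordell-Weil lattice is nontrivial and one must verify that the involution-symmetry together with the two smooth double fibers genuinely prevents additional reducible fibers or sections from being $(-2)$-curves; I would handle this by noting the involution acts on $\bbP^1$ with the two double fibers as the swapped points and the $\tilde E_7$-fiber over a fixed point, so any extra reducible fiber would come in a pair, contradicting the Euler characteristic budget.
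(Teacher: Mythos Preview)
Your proposal has two genuine gaps that prevent it from going through.

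First, for the case where every genus one fibration on $S$ is extremal (i.e., the complement of Case 17), the paper does \emph{not} read off the configurations directly from the fiber types of a single fibration as you propose. Instead it invokes the classification of Enriques surfaces with finite automorphism group from \cite{KatsuraKondoMartin}: since all fibrations are extremal, $\Aut(S)$ is finite, and then one compares the fibrations allowed by Proposition \ref{prop: enriques} against the list in \cite[Section 14.1]{KatsuraKondoMartin}. This external classification is what produces configurations (A), (B), (C) \emph{and simultaneously} gives the ``these are all the $(-2)$-curves'' statement for free. Your lattice/discriminant argument for the latter is not what the paper does and would be hard to make rigorous without this input, because a single fibration only sees its own fiber components, not the bisections or components of other fibrations that may also be $(-2)$-curves. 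The paper also has to rule out two further configurations from the \cite{KatsuraKondoMartin} list (the $\tilde{E}_8$-extra-special surface and one other), which your plan does not anticipate.

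Second, your treatment of Case 17 is where the real difficulty lies and your plan misses the key idea. The $(-2)$-curves are \emph{not} just the components of the $\tilde{E}_7$ fiber (which has eight components, not seven). The paper's argument is to invoke \cite[Theorem 6.1.10]{DK} to produce a \emph{second} genus one fibration $f'$ with $F \cdot F' = 4$, take a reducible fiber $F'$ of $f'$ (which must again satisfy the constraints of Proposition \ref{prop: enriques}), and analyze how a component $E$ of $F'$ meets the $\tilde{E}_7$ fiber $F$. This leads to a case split on $E \cdot F \in \{2,4\}$ and a further split into four subconfigurations, three of which are eliminated by finding forbidden $\tilde{A}_7$ or $\tilde{E}_6$ subdiagrams, and the surviving ones force configuration (D). Your Euler-characteristic budget argument about extra reducible fibers coming in pairs does not address this at all: the issue is not extra fibers of $f$, but rather $(-2)$-curves that are bisections of $f$ and fiber components of $f'$.
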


\begin{proof}
Throughout the proof, we assume that $\Aut_{\nt}(S)$ is non-trivial.

Assume that $S$ does not admit a genus one fibration as in Proposition \ref{prop: enriques} (3). Then, all genus one fibrations on $S$ are extremal. By \cite{KatsuraKondoMartin}, this implies that $S$ has finite automorphism group. Comparing the genus one fibrations on Enriques surfaces with finite automorphism group given in \cite[Section 14.1]{KatsuraKondoMartin} with the ones that are allowed by Proposition, we see that we get configurations $(A),(B),(C)$, or one of the following two configurations:
$$
\resizebox{!}{1cm}{
\xy
(-10,25)*{};
@={(-10,10),(0,10),(10,10),(20,10),(30,10),(40,10),(50,10),(10,20),(60,10),(70,10)}@@{*{\bullet}};
(-10,10)*{};(70,10)*{}**\dir{-};
(10,10)*{};(10,20)*{}**\dir{-};
\endxy
}
\quad
\quad
\resizebox{!}{1cm}{
\xy
(-10,25)*{};
@={(-20,10),(-10,10),(0,10),(10,10),(20,10),(30,10),(40,10),(50,10),(10,20),(60,10),(70,10)}@@{*{\bullet}};
(-20,10)*{};(60,10)*{}**\dir{-};
(70,10)*{};(60,10)*{}**\dir{=};
(70,10)*{};(50,10)*{}**\crv{(60,20)};
(10,10)*{};(10,20)*{}**\dir{-};
\endxy
}
$$
The first configuration belongs to the $\tilde{E}_8$-extra-special surfaces, which we have already proved to have trivial group $\Aut_{\nt}(S)$ in the proof of Theorem \ref{thm: dolgachevmartin}. A surface containing the second configuration admits a quasi-elliptic fibration whose jacobian $J(S)$ is as in Case $13$ of Table \ref{Table2}. Since $\Aut_{\nt}(S)$ is $2$-torsion by Theorem \ref{thm: dolgachevmartin} and $\Aut_{\ct}(J(S))$ has odd order, we have $\Aut_{\nt}(S) = \Aut_{\nt}(S) \cap \MW(J(f))$. By Lemma \ref{translations}, this group is trivial in this case. Hence, we are left with configurations $(A),(B),$ and $(C)$.

Next, since every Enriques surface admits some genus one fibration, we may assume that $S$ admits a genus one fibration $f$ as in Proposition \ref{prop: enriques} (3).
By \cite[Theorem 6.1.10]{DK} there is a second genus one fibration $f'$ on $S$ whose general fiber meets $F$ with multiplicity $4$. Let $F'$ be a reducible fiber of $f'$ with at least $5$ irreducible components which exists by Proposition \ref{prop: enriques} and let $E$ be an irreducible component of $F'$ that meets $F$. Since $F$ is a simple fiber, we have $E.F \in \{2,4\}$.

First, assume that $E.F = 4$. Then, $F'$ has to be a simple fiber of $f'$, $E$ is a simple component of $F'$, and we may assume that $F' - E \subseteq F$ because of $(F' - E).F = 0$. Using the list of Proposition \ref{prop: enriques}, we see that $F'$ is of type $\tilde{E}_7$ and we get the following configuration:
$$
\resizebox{!}{1cm}{
\xy
(-10,25)*{};
@={(-20,10),(-10,10),(0,10),(10,10),(20,10),(30,10),(40,10),(40,20),(10,20)}@@{*{\bullet}};
(-20,10)*{};(40,10)*{}**\dir{-};
(40,20)*{};(40,10)*{}**\dir{=};
(10,10)*{};(10,20)*{}**\dir{-};
(30,10)*{};(40,20)*{}**\dir{-};
\endxy
}
$$
The two right-most curves meet in two distinct points that are interchanged by $\Aut_{\nt}(S)$, for otherwise $\Aut_{\nt}(S)$ would fix these two curves pointwise, which is impossible since $\Aut_{\nt}(S)$ acts non-trivially on the base of $f$ (see the proof of Proposition \ref{prop: enriques}). Hence, these two curves define a (necessarily simple) fiber of type $\tilde{A}_1$ of some genus one fibration $f''$ of $S$. By Proposition \ref{prop: enriques}, $f''$ admits a fiber $F''$ of type $\tilde{E}_7$
 and it is easy to check that some component $E''$ of $F''$ satisfies $E''.F = 2$. 
 
Hence, we may assume that $E.F = 2$. Then, we are in one of the following situations:

\begin{enumerate}
\item
$
\resizebox{!}{1cm}{
\xy
(-10,25)*{};
@={(-20,10),(-10,10),(0,10),(10,10),(20,10),(30,10),(40,10),(40,20),(10,20)}@@{*{\bullet}};
(-20,10)*{};(40,10)*{}**\dir{-};
(40,20)*{};(40,10)*{}**\dir{-};
(40,20)*{};(-20,10)*{}**\crv{(0,30)};
(10,10)*{};(10,20)*{}**\dir{-};
\endxy
}
$
\item
$
\resizebox{!}{1cm}{
\xy
(-10,25)*{};
@={(-20,10),(-10,10),(0,10),(10,10),(20,10),(30,10),(40,10),(20,20),(10,20)}@@{*{\bullet}};
(-20,10)*{};(40,10)*{}**\dir{-};
(20,20)*{};(10,20)*{}**\dir{-};
(10,10)*{};(10,20)*{}**\dir{-};
\endxy
}
$
\item
$
\resizebox{!}{1cm}{
\xy
(-10,25)*{};
@={(-20,10),(-10,10),(0,10),(10,10),(20,10),(30,10),(40,10),(30,20),(10,20)}@@{*{\bullet}};
(-20,10)*{};(40,10)*{}**\dir{-};
(30,20)*{};(30,10)*{}**\dir{-};
(10,10)*{};(10,20)*{}**\dir{-};
\endxy
}
$
\item
$
\resizebox{!}{1cm}{
\xy
(-10,25)*{};
@={(-20,10),(-10,10),(0,10),(10,10),(20,10),(30,10),(40,10),(50,10),(10,20)}@@{*{\bullet}};
(-20,10)*{};(40,10)*{}**\dir{-};
(50,10)*{};(40,10)*{}**\dir{=};
(10,10)*{};(10,20)*{}**\dir{-};
\endxy
}
$
\end{enumerate}
In Case (1), we find a configuration of type $\tilde{A}_7$, which is impossible by Proposition \ref{prop: enriques}. Similarly, in Case (2), we find a configuration of type $\tilde{E}_6$, which is also impossible by Proposition \ref{prop: enriques}. In Case (3), there is a configuration $F''$ of type $\tilde{D}_6$, which is a double fiber of a quasi-elliptic fibration $f''$ on $S$, since there is a $(-2)$-curve $E'$ on $S$ with   $E'.F'' = 1$.  By Proposition \ref{prop: enriques} and Lemma \ref{translations}, the fibration $f''$ also admits a double fiber of type $\tilde{A}_1^*$. This leads to configuration (D). 

Finally, in Case (4), we find a genus one fibration $f''$ with a double fiber $F_0''$ of type $\tilde{A}_1^*$ such that $f''$ also admits a fiber $F_1''$ containing a configuration of type $D_6$. Here, we used again that there is a $(-2)$-curve $E'$ on $S$ with $E'.F_0'' = 1$.
Note that $F_1''$ cannot be of type $\tilde{E}_7$, for then there would be a $(-2)$-curve as in Case (2), which we have already excluded. Therefore, by Proposition \ref{prop: enriques}, the fiber $F_1''$ is a double fiber of type $\tilde{D}_6$. So, this also leads to configuration (D).
\end{proof}

\begin{remark}
A closer inspection of Enriques surfaces of type (D) shows that every genus one fibration which admits a $(-2)$-curve as a $2$-section on such a surface is of one of the following types:
\begin{enumerate}
\item a quasi-elliptic fibration with a simple fiber of type $\tilde{D}_8$,
\item an elliptic fibration with simple singular fibers of type $\tilde{E}_7,\tilde{A}_0^*,\tilde{A}_0^*$, or
\item a quasi-elliptic fibration with double fibers of type $\tilde{D}_6,\tilde{A}_1^*$ and a simple reducible fiber of type $\tilde{A}_1^*$.
\end{enumerate}
\end{remark}

\begin{theorem} \label{thm: Enriquesgroups}
Let $S$ be a classical Enriques surface in characteristic $2$. If $S$ contains one of the configurations of $(-2)$-curves in Theorem \ref{thm: Enriquesconfigs}, then $\Aut_{\nt}(S)$ and $\Aut_{\ct}(S)$ are as follows:
\begin{enumerate}
\item[(A)] $\Aut_{\nt}(S) = \Aut_{\ct}(S) = \bbZ/2\bbZ$.
\item[(B)] $\Aut_{\nt}(S) = \bbZ/2\bbZ$, $\Aut_{\ct}(S) = \{{\rm id}\}$.
\item[(C)] $\Aut_{\nt}(S) = (\bbZ/2\bbZ)^2$, $\Aut_{\ct}(S) = \{{\rm id}\}$.
\item[(D)] $\Aut_{\nt}(S) = \bbZ/2\bbZ$, $\Aut_{\ct}(S) = \{{\rm id}\}$.
\end{enumerate}
\end{theorem}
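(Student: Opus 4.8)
The plan is to treat the four configurations in turn, always using the already-established dictionary between $\Aut_{\nt}(S)$ and the cohomologically trivial automorphisms of the jacobian of a suitable genus one fibration, together with the exact sequences \eqref{seq: enriques1} and \eqref{seq: enriques2}. For each configuration I will first read off, from the shape of the $(-2)$-curves, which genus one fibrations $f$ exist on $S$ and, crucially, the types of their double fibers $F_0, F_1$; then I will identify the corresponding case in Table \ref{Table2} for $J(f)$; finally I will compute $\Aut_{\nt}(S)$ from $\Aut_{\ct}(J(S))$ and compute $\Aut_{\ct}(S)$ separately by deciding whether the relevant automorphism acts trivially on $\Num(S)$ (equivalently on the full $\Pic(S)$, using that $S$ is classical so $\Aut_{\ct}(S) = \Aut_{\nt}(S)$ precisely when the numerically trivial automorphism fixes the canonical class up to the action on torsion — in fact $\Aut_{\ct}(S)$ is the subgroup of $\Aut_{\nt}(S)$ acting trivially on the torsion line bundle $\omega_S$, which it does iff it preserves each of $F_0, F_1$ rather than swapping them).

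For configuration (A), the chain of eight curves with two extra nodes gives two double fibers of type $\tilde{D}_4$ on a quasi-elliptic fibration, which is Case $31$ of Table \ref{Table2}; here $\MW(J(f))$ is nontrivial, so by Lemma \ref{translations}(1) we land in the case $G = (\bbZ/2\bbZ)^2$ — but one must check that on the Enriques surface only a $\bbZ/2\bbZ$ worth of these translations is numerically trivial, using that the two double fibers impose the right local conditions; and both factors here preserve the two double fibers, so $\Aut_{\ct}(S) = \Aut_{\nt}(S) = \bbZ/2\bbZ$. For configuration (B), the long chain with a $\tilde{D}_8$-type end and a branch point in the middle is the $\tilde{E}_7$-plus-$\tilde{A}_1$ picture; I expect $J(f)$ to fall under Case $13$ or a related extremal case, and the single numerically trivial involution will act nontrivially on the base (hence swap $F_0, F_1$, hence act nontrivially on $\omega_S$), giving $\Aut_{\nt}(S) = \bbZ/2\bbZ$ and $\Aut_{\ct}(S) = \{\id\}$. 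For configuration (C), the symmetric $\tilde{D}_4 + \tilde{D}_4$-like picture with an extra pair of tails should correspond to Case $31$ again but on an elliptic surface, and here $\Aut_{\nt}(S) = (\bbZ/2\bbZ)^2$ by the Mordell-Weil argument of Lemma \ref{translations}, with $\Aut_{\ct}(S) = \{\id\}$ because the two non-identity translations move components of reducible fibers once one blows down to the jacobian. For configuration (D), the analysis of the Remark following Theorem \ref{thm: Enriquesconfigs} pins down the three genus one fibrations with a $(-2)$-curve as $2$-section; using the quasi-elliptic fibration with double fibers of type $\tilde{D}_6, \tilde{A}_1^*$ (which by Lemma \ref{translations}(2) carries $G = \bbZ/2\bbZ$) gives $\Aut_{\nt}(S) = \bbZ/2\bbZ$, and this translation, being a translation by a section meeting a reducible fiber off the identity component, acts nontrivially on $\Num(S)$, so $\Aut_{\ct}(S) = \{\id\}$.

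The main obstacle I anticipate is twofold. First, in each case one must be certain that the listed configuration forces exactly the genus one fibration structure claimed (and in particular the multiplicities and types of $F_0, F_1$), which requires careful bookkeeping with the intersection form on $\Num(S) \cong \sfE_{10}$ and the classification of genus one fibrations on these special Enriques surfaces — for configurations (A), (B), (C) one has the additional input that these are exactly all the $(-2)$-curves, and one can invoke \cite{KatsuraKondoMartin} to see $S$ has finite automorphism group, but for (D) the surface need not have finite automorphism group and the Remark's classification of relevant fibrations must be used. Second, distinguishing $\Aut_{\ct}(S)$ from $\Aut_{\nt}(S)$ amounts to checking, for the explicit generator, whether it acts trivially on the full $\Pic(S)$ including the $2$-torsion class $\omega_S$; since $\omega_S$ can be written as the difference $F_0 - F_1$ of the two half-fibers, this reduces to whether the automorphism preserves or swaps $F_0$ and $F_1$, which is decided by whether it acts trivially or not on $\bbP^1$ — this in turn I read off from the corresponding case in Table \ref{automorphismsofequations}. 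The computations themselves are routine once the fibration structure is fixed; the delicate point is the identification of the fibration and the translation subgroup in each case, which I will handle by exhibiting the half-fibers explicitly inside the given $(-2)$-curve configuration and appealing to Proposition \ref{prop: enriques} together with Lemma \ref{translations}.
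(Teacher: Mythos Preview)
Your overall strategy—use the exact sequences \eqref{seq: enriques1}, \eqref{seq: enriques2} and Table~\ref{Table2} to pin down $\Aut_{\nt}(S)$, then decide $\Aut_{\ct}(S)$ separately—matches the paper, but two concrete steps fail.

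\textbf{The fibration in Case (A) is misidentified.} Configuration (A) is not a pair of $\tilde{D}_4$'s (that is configuration (C)). On a surface of type (A) the relevant genus one fibrations are: a quasi-elliptic fibration $f$ with a double fiber of type $\tilde{D}_8$ and a double cuspidal fiber, together with two elliptic fibrations each carrying a fiber of type $\tilde{E}_8$. One has $\MW(J(f))=\bbZ/2\bbZ$, and the point is that this translation cannot swap the two $\tilde{E}_8$ configurations (they belong to non-isomorphic fibrations), hence it fixes every $(-2)$-curve and lies in $\Aut_{\nt}(S)$. To see there is nothing more, one checks via Table~\ref{Table2} (Case~11) that any element of $\Aut_{\ct}(J(S))$ has a unique fixed point on $\bbP^1$, incompatible with two double fibers. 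Your route through Case~31 and Lemma~\ref{translations}(1) would produce $(\bbZ/2\bbZ)^2$, not $\bbZ/2\bbZ$, and the ad hoc reduction you suggest has no justification.

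\textbf{Your criterion for $\Aut_{\ct}(S)$ fails exactly where you need it.} You propose that $g\in\Aut_{\nt}(S)$ lies in $\Aut_{\ct}(S)$ iff it preserves the two half-fibers $F_0,F_1$ of the chosen fibration $f$, and you decide this by whether $g$ acts on the base. But in Cases (B), (C), (D) the paper shows—and your own Lemma~\ref{translations} invocation requires—that every nontrivial element of $\Aut_{\nt}(S)$ lies in $\MW(J(f))$, i.e.\ is a \emph{translation}. Translations act trivially on $\bbP^1$ and hence preserve \emph{each} fiber of $f$, so your test would incorrectly declare $\Aut_{\ct}(S)=\Aut_{\nt}(S)$ in all three cases. (In particular, your Case (B) analysis has the mechanism backward: the numerically trivial involution does not act on the base; rather one must rule out any base-acting contribution using Theorem~\ref{thm: actiononjacobian}(5) at the double $\tilde{A}_1^*$ fiber.) The paper does not compute $\Aut_{\ct}(S)$ by this route at all; it invokes \cite[Theorem~7.1]{DolgachevMartin}. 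If you want to argue directly, you must exhibit for each nontrivial translation a divisor $D$ with $g^*D - D = K_S$ in $\Pic(S)$; the natural witness is a half-fiber of a \emph{different} genus one fibration whose two half-fibers the translation of $f$ exchanges, and finding it requires case-by-case work with the full $(-2)$-curve configuration. Finally, your last sentence in Case (D) is self-contradictory as written: a translation that ``acts nontrivially on $\Num(S)$'' is by definition not in $\Aut_{\nt}(S)$; you presumably mean $\Pic(S)$, and that is precisely the point that needs an independent argument.
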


\begin{proof}
The claims about $\Aut_{\ct}(S)$ were proven in \cite[Theorem 7.1]{DolgachevMartin}, so we can focus on $\Aut_{\nt}(S)$.

First, assume that $S$ is of type (A). By \cite[Section 14.1]{KatsuraKondoMartin}, there are three genus one fibrations on $S$: A quasi-elliptic fibration $f$ with a double fiber $F_0$ of type $\tilde{D}_8$, an elliptic fibration $f_1$ with a double fiber of type $\tilde{E}_8$, and an elliptic fibration $f_2$ with a simple fiber of type $\tilde{E}_8$. 
By \cite{Ito2}, we have $\MW(J(f)) = \bbZ/2\bbZ$ and since this group cannot interchange the two $\tilde{E}_8$-configurations (since they are fibers of non-isomorphic fibrations), the group $\MW(J(f))$ acts trivially on the configuration of $(-2)$-curves. Since these curves generate $\Num(S) \otimes \bbQ$, we deduce $\MW(J(f)) \subseteq \Aut_{\nt}(S)$.
By Lemma \ref{translations}, we have $\Aut_{\nt}(S) \cap \MW(J(f)) \subseteq \bbZ/2\bbZ$, so it suffices to show that $\Aut_{\nt}(S)$ acts trivially on $J(S)$.
By Table \ref{Table2}, $\Aut_{\nt}(S)$ acts on $J(S)$ via automorphisms with only one fixed point on the base curve $\bbP^1$. Since $f$ has two double fibers, one of which is reducible, this implies that $\Aut_{\nt}(S)$ acts trivially on $J(S)$

Next, assume that $S$ is of type (B). Consider the fibration $f$ with double fibers $F_0$ and $F_1$ of type $\tilde{E}_7$ and $\tilde{A}_1^{*}$. Note that the diagram of $(-2)$-curves has no symmetries and these $(-2)$-curves generate $\Num(S)$, so $\MW(J(f)) = \bbZ/2\bbZ$ acts trivially on $\Num(S) \otimes \bbQ$. Thus, as in Case (A), it suffices to show that $\Aut_{\nt}(S)$ acts trivially on $J(S)$. Seeking a contradiction, assume that it does not. Then, by Table \ref{automorphismsofequations}, $\Aut_{\nt}(S)$ acts non-trivially on the identity component of the fiber of $J(f)$ corresponding to $F_1$. This is impossible by Theorem \ref{thm: actiononjacobian} (5).

Now, assume that $S$ is of type (C).  Consider the fibration $f$ with double fibers $F_0$ and $F_1$ of type $\tilde{D}_4$ and $\tilde{D}_4$. By \cite[Section 14]{KatsuraKondoMartin}, the fibration $f$ is quasi-elliptic and by \cite{Ito2}, we have $\MW(J(f)) = (\bbZ/2\bbZ)^2$.  A careful study of the fibrations $f'$ with fibers of type $\tilde{D}_8$ shows that $\MW(J(f))$ preserves all $(-2)$-curves, showing that $(\bbZ/2\bbZ)^2 \subseteq \Aut_{\nt}(S)$. To prove the claim, it suffices to show that these are all numerically trivial automorphisms on $S$.  Assume that there exists $g \in \Aut_{\nt}(S)$ acting non-trivially on $J(S)$. Then, by Table \ref{Table2}, the order of $g$ is not a power of $2$. On the other hand, consider the action of $g$ on the jacobian $J(f'):J(S)' \to \bbP^1$ of $f'$. Since $\MW(J(f')) = \bbZ/2\bbZ$, the action of $g$ on $J(S)'$ has order equal to a power of $2$. This is impossible by Table \ref{Table2}, so $g$ does not exist.

Finally, assume that $S$ is of type (D). Consider a quasi-elliptic fibration $f$ with double fibers $F_0$ and $F_1$ of type $\tilde{D}_6$ and $\tilde{A}_1^*$. By \cite{Ito2}, there is a $2$-torsion section of $J(f)$ that meets only the two fibers corresponding to the $F_i$ in a non-identity component. Let $g$ be the induced involution of $S$. Let $U \subseteq \Num(S)$ be the hyperbolic plane spanned by $F_0$ and the curve of cusps $C$ of $f$. Note that $U^\perp = E_8$. Let $E_0$ and $E_1$ be the two irreducible components of the reducible simple fiber of $f$. By construction, $g^*$ preserves the sublattice $L := \langle E_0 - E_1 \rangle \oplus A_1 \oplus D_6 \subseteq U^\perp = E_8$ spanned by $E_0 - E_1$ and the irreducible components of $F_0$ and $F_1$ which do not meet $C$. Since $g^*$ acts trivially on the first two summands, it acts trivially on the discriminant of $D_6$, so $g^* \in W(D_6)$. Since $D_6$ is spanned by effective curves and $g^*$ is induced by an automorphism of $S$, this shows that $g^*$ is trivial, hence $g \in \Aut_{\nt}(S)$. Finally, since $\Aut_{\ct}(J(S))$ is trivial by Table \ref{Table2}, Lemma \ref{translations} shows that $ \Aut_{\nt}(S) = \bbZ/2\bbZ$.
\end{proof}

\begin{remark}
Note that the proof of Theorem \ref{thm: Enriquesgroups} shows that every numerically trivial automorphism of a classical Enriques surface $S$ in characteristic $2$ is induced by translations by sections of the jacobian of some quasi-elliptic fibration on $S$.
\end{remark}

Finally, we can give the following characterization of Enriques surfaces with non-trivial $\Aut_{\nt}(S)$ in terms of the existence of certain quasi-elliptic fibrations on the Enriques surface. Assuming condition ${\rm (OS)}$ from Definition \ref{def: quasiellipticoggshafarevich}, this also gives a way of calculating the number of moduli.

\begin{corollary}
Let $S$ be a classical Enriques surface in characteristic $2$. Then, the following hold:
\begin{enumerate}
\item The group $\Aut_{\nt}(S)$ is non-trivial if and only if $S$ admits a quasi-elliptic fibration $f$ with double fibers as described in Table \ref{table: Enriquesmoduli}.
\item If $f$ satisfies condition ${\rm (OS)}$ of Definition \ref{def: quasiellipticoggshafarevich}, Table \ref{table: Enriquesmoduli} also gives the number of moduli of Enriques surfaces of a given type. In particular, assuming ${\rm (OS)}$, all types exist.
\end{enumerate}
\end{corollary}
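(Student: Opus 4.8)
The plan is to deduce this corollary by combining the structural classification already obtained in Theorems \ref{thm: Enriquesconfigs}, \ref{thm: Enriquesgroups}, and Proposition \ref{prop: enriques} with the case-by-case analysis of genus one fibrations on Enriques surfaces of types (A), (B), (C), (D). First I would establish the ``only if'' direction of (1): if $\Aut_{\nt}(S)$ is non-trivial, then by Theorem \ref{thm: Enriquesconfigs} the surface $S$ contains one of the four configurations (A)--(D). For each of these, the proof of Theorem \ref{thm: Enriquesgroups} already exhibits a distinguished quasi-elliptic fibration on $S$ whose double fibers generate the numerically trivial automorphisms via translations by torsion sections of its jacobian: for (A) the quasi-elliptic fibration with double fiber of type $\tilde{D}_8$ (whose jacobian is Case $8$, but note the relevant torsion section analysis passes through a second fibration), for (B) the quasi-elliptic fibration coming from the remark after Theorem \ref{thm: Enriquesconfigs}, for (C) the quasi-elliptic fibration with double fibers $(\tilde{D}_4,\tilde{D}_4)$, and for (D) the quasi-elliptic fibration with double fibers $(\tilde{D}_6,\tilde{A}_1^*)$. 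I would assemble the double-fiber data for these into the statement of Table \ref{table: Enriquesmoduli}, which is exactly the list $(\tilde{D}_8,\tilde{A}_0)$ from Lemma \ref{translations} together with the quasi-elliptic entries $(\tilde{D}_4,\tilde{D}_4)$, $(\tilde{D}_6,\tilde{A}_1^*)$, $(\tilde{E}_7,\tilde{A}_1^*)$, $(\tilde{D}_8,\tilde{A}_0^{**})$.

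For the ``if'' direction, I would argue that whenever $S$ carries a quasi-elliptic fibration $f$ with double fibers among those listed, Lemma \ref{translations} (together with the final remark of the section) produces a non-trivial element of $\Aut_{\nt}(S) \cap \MW(J(f))$: in each listed case there is a suitable torsion section of $J(f)$ meeting only the fibers over the images of $F_0,F_1$ in a non-identity component, and the proof of Theorem \ref{thm: Enriquesgroups} shows the induced involution (or pair of involutions) of $S$ is numerically trivial because it preserves each $(-2)$-curve. This direction requires checking that the hypotheses of Lemma \ref{translations} are actually realized, i.e. that the relevant $\tilde{D}_8$, $\tilde{D}_4$, $\tilde{D}_6$ fibrations carry the claimed Mordell-Weil torsion; this follows from \cite{Ito2}.

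For part (2), I would invoke the Ogg-Shafarevich-type description used already in the proof of the non-jacobian classification theorem (the paragraph before Definition \ref{def: quasiellipticoggshafarevich} and condition ${\rm (OS)}$): under ${\rm (OS)}$, a tame genus one fibration with prescribed double fibers is determined by the jacobian $J(S)$ together with the choice of the two points of order $m=2$ on the identity components of the two relevant fibers of $J(f)$, up to automorphisms of $J(S)$. Since each jacobian appearing here is a specific extremal (or near-extremal) rational genus one surface from Table \ref{Table2}, its moduli count is known, and the additional freedom in choosing the torsion data modulo $\Aut(J(S))$ is a finite computation in each of the (finitely many) cases. Carrying this out in each row of Table \ref{table: Enriquesmoduli} produces the claimed moduli numbers; since ${\rm (OS)}$ guarantees these torsors actually exist, it also shows every listed type is non-empty.

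The main obstacle I anticipate is the bookkeeping in the ``if'' direction: one must be certain that, for each of the five double-fiber configurations in Table \ref{table: Enriquesmoduli}, the resulting Enriques surface actually contains one of the configurations (A)--(D) of $(-2)$-curves and hence is correctly cross-referenced with Theorems \ref{thm: Enriquesconfigs} and \ref{thm: Enriquesgroups} — in particular that $(\tilde{D}_8,\tilde{A}_0)$ gives type (A), $(\tilde{D}_4,\tilde{D}_4)$ gives (C), $(\tilde{D}_6,\tilde{A}_1^*)$ gives (D), $(\tilde{E}_7,\tilde{A}_1^*)$ gives (B), and the extra quasi-elliptic case $(\tilde{D}_8,\tilde{A}_0^{**})$ is consistent with the earlier analysis (it should in fact be subsumed, giving no new Enriques surface, matching the exclusions in the proof of Theorem \ref{thm: Enriquesconfigs}). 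This matching requires carefully tracking which $(-2)$-curve configuration each quasi-elliptic fibration forces on $S$, using the explicit fiber types and the fact that the curves of cusps and fiber components together span $\Num(S)\otimes\bbQ$, which is precisely the kind of linear-algebra-on-lattices argument used throughout Section \ref{sec: cohtrivialjacobian} and in the proof of Theorem \ref{thm: Enriquesgroups}.
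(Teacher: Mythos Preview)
Your overall strategy for part (1) ``only if'' and for part (2) is essentially the paper's, but there is a genuine issue in your ``if'' direction, and you have misidentified the contents of Table~\ref{table: Enriquesmoduli}.

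First, the table has exactly four rows, one per type (A)--(D): the double fibers are $2\tilde{D}_8, 2\tilde{A}_0^{**}$ for (A), $2\tilde{E}_7, 2\tilde{A}_1^{*}$ for (B), $2\tilde{D}_4, 2\tilde{D}_4$ for (C), and $2\tilde{D}_6, 2\tilde{A}_1^*$ for (D). All four are quasi-elliptic. The elliptic pair $(\tilde{D}_8,\tilde{A}_0)$ does not appear, and $(\tilde{D}_8,\tilde{A}_0^{**})$ is not ``subsumed'': it \emph{is} type (A). Also, the jacobian of the $\tilde{D}_8$ quasi-elliptic fibration is Case~11 in Table~\ref{Table2}, not Case~8.

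More importantly, your ``if'' argument is circular. You want to produce a nontrivial element of $\Aut_{\nt}(S)$ directly from $\MW(J(f))$, invoking the proof of Theorem~\ref{thm: Enriquesgroups} to check that the translation preserves every $(-2)$-curve. But that proof assumes from the outset that $S$ is already of one of the types (A)--(D), so that the full set of $(-2)$-curves is known; you cannot cite it before establishing that $S$ carries the relevant configuration. Lemma~\ref{translations} also goes the wrong way for your purposes. The paper bypasses this entirely with a one-line geometric observation: on a quasi-elliptic fibration the curve of cusps $\mathfrak{C}$ is a $(-2)$-curve which is a $2$-section, so the components of the two double fibers together with $\mathfrak{C}$ already form one of the configurations (A)--(D). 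Then Theorem~\ref{thm: Enriquesgroups} applies directly and gives $\Aut_{\nt}(S)\neq\{\id\}$. This is both shorter and logically cleaner than reconstructing the numerically trivial automorphism by hand.

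For part (2) your outline matches the paper's, though the actual moduli count requires specific input you do not mention: the number of moduli of $J(f)$ in each case (from \cite{Ito2}), the dimension of $\Aut^0_{J(S)}$, and the transitivity of its action on the relevant torsion points or cuspidal fibers.
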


\begin{table}[h]
 $
 \begin{array}{|c|c|c|}  \hline
 \text{ Type } & \text{ double fibers of } f &\text{ Number of moduli } \\ \hline  \hline
 (A) & 2\tilde{D}_8, 2\tilde{A}_0^{**} &2 \\  \hline
 (B) & 2\tilde{E}_7, 2\tilde{A}_1^{*} &1 \\  \hline
 (C) & 2\tilde{D}_4, 2\tilde{D}_4 & 2 \\  \hline
 (D) & 2\tilde{D}_6, 2\tilde{A}_1^* & 2 \\  \hline
 \end{array}
$
\bigskip
\caption{Moduli of classical Enriques surfaces with numerically automorphisms in characteristic $2$}
\label{table: Enriquesmoduli}
\end{table}

\begin{proof}
Let us first prove the statement about the quasi-elliptic fibrations. We have already seen in the proof of Theorem \ref{thm: Enriquesgroups} that Enriques surfaces of type (A), (B), (C), and (D) admit the stated genus one fibrations. Conversely, since the curve of cusps of a quasi-elliptic fibration $f$ on $S$ is a $(-2)$-curve which is a $2$-section of $f$, it is easy to see that an Enriques surface admitting one of the stated fibrations admits a configuration of $(-2)$-curves as in Theorem \ref{thm: Enriquesconfigs}.

To count the number of moduli, we recall that by Condition (OS), giving $f$ is the same as giving $J(f)$ and choosing a $2$-torsion point on each of the fibers of $J(f)$ that we want to be double fibers of $f$. By \cite{Ito2}, the number of moduli for $J(f)$ in the four cases is $0,0,1,$ and $0$, respectively. In Case (A), the choice of the cuspidal double fiber gives an additional parameter. Since the double fibers are additive, we get two additional parameters in each case for the choice of $2$-torsion point. Now, we use that $\Aut^0_{J(S)}$ is of dimension $1,1,1,$ and $0$, respectively, and that $\Aut^0_{J(S)}$ acts transitively on the set of cuspidal fibers of $J(f)$ in Case (A) and transitively on the set of $2$-torsion points on the identity component of the fiber of type $\tilde{E}_7,\tilde{D}_4,$ and $\tilde{D}_6$ in Cases (B), (C), and (D), respectively. Putting everything together yields the stated number of moduli.
\end{proof}

Since it is not known whether condition {\rm (OS)} always holds, we will describe explicit examples of each of these four types.
The explicit constructions of surfaces of Types $(A), (B),$ and $(C) $ can be found in \cite{KatsuraKondoMartin}. We will give here an explicit construction of surfaces of Type $(D)$. It follows a construction of surfaces of type (A) from \cite[Example 6.2.11]{DK}.

\begin{example}[Type (D)] Consider two quasi-elliptic fibrations defined by the pencils $|2F_1|$ and $|2F_2|$, where $F_1$ is the divisor of type $\tilde{D}_6$ described by the subdiagram containing the first  two vertical edges of the diagram for type $(D)$ and $|2F_2|$ has a simple fiber of type $\tilde{D}_8$ described by the subdiagram containing the first and the third vertical edges. We check that $F_1\cdot F_2 = 1$, so the linear system $|2F_1+2F_2|$ defines a bielliptic map $\phi:S\to \sfD_4$, where $\sfD_4$ is a 4-nodal anti-canonical del Pezzo surface of degree 4 in $\bbP^2$ (see \cite[Section 3.3]{CDL}). The morphism $\phi$ blows down $(-2)$-curves $C$ with $C.F_1 = C.F_2 = 0$. Checking the diagram, we see that there are precisely $8$ such curves, namely all curves contained in the fiber of type $\tilde{D}_8$ except for the curve of cusps of $|2F_1|$. It is known that the bielliptic map $\phi$ defines a birational model of $S$ isomorphic to the inseparable double plane 
$$w^2+xyF_6(x,y,z) = 0,$$
where $V(F_6(x,y,z))$ is a curve of degree 6 with a double point at the point $[1:0:0]$ and tacnodes at $[0:1:0], [0:0:1]$ with tangent directions $V(x)$ and $V(y)$. The singular points of the double plane lie over the zeros of the differential $dxyF_6$. We take 
$$F_6(x,y,z) = (x+y)(z^4(x+y)+az^3xy+z^2xy(bx+(b+1)y)+x^2y^2(x+y)),$$ 
where $a\ne 0$ and $b$ are parameters. The pencil $|2F_1|$ is equal to the pre-image of the pencil of conics $\lambda z^2+\mu xy = 0$ and the pencil $|2F_2|$ is equal to the pre-image of the pencil of lines $\lambda x+\mu y = 0$. We check that the singularity of the double plane over the point $(x:y:z)= (1:1:0)$ is formally isomorphic to the singular point $r^2+uv(u^2+v) = 0$. This is a rational double point of type $D_6^0$. It has also an ordinary double point over a point in the blow-up of the point $(0:0:1)$ and an ordinary double point at $(a:a:1)$. The pre-image of the line $x+y = 0$ that contains the point $(0:0:1), (1:1:0)$ and $(a:a:1)$ is the simple fiber of type $\tilde{D}_8$ of $|2F_2|$.  
The pre-image of the line $z= 0$ is a double fiber of type $\tilde{D}_6$ of the quasi-elliptic pencil $|2F_1|$. The pre-image of the exceptional curve of the blow-up of $\bbP^2$ at the point $(0:0:1)$ is a double fiber of type $\tilde{A}_1^*$ of the same fibration. The pre-image of the conic $a^2z+xy = 0$ is a simple fiber of type $\tilde{A}_1^*$ of $|2F_1|$. We leave it to the reader to verify that the minimal resolution of singularities of the double plane contains the configuration of $(-2)$-curves  Type (D).
In particular, by Theorem \ref{thm: Enriquesgroups}, we have $\Aut_{\nt}(S) = \bbZ/2\bbZ$. Using our explicit model, the numerically trivial involution can be described as
$x \leftrightarrow y$, $w \mapsto w + zxy(x+y)$.

\end{example}

%
%
%
%
%
%


\begin{thebibliography}{30}
\bibitem{BarthPeters} W. Barth, C. Peters, \textit{Automorphisms of Enriques Surfaces}, Invent. Math. {\bf 73} (1983), 383--411.
\bibitem{BombieriMumford} E. Bombieri, D. Mumford, \textit{Enriques classification in char. p, III}, 	Invent. Math. {\bf 35} (1976), 197--232.
\bibitem{Bosch} S.~Bosch, W.~ L\"utkebohmert, M.~Raynaud, \textit{N\'eron models}. Ergebnisse der Mathematik und ihrer Grenzgebiete (3) 21. Springer-Verlag, Berlin, 1990.
 \bibitem{CantatDolgachev} S. Cantat, I. Dolgachev, \textit{Rational surfaces with a large group of automorphisms}, J. Amer. Math. Soc. {\bf 25} (2012),  863--905.
\bibitem{CDL} F. Cossec, I. Dolgachev, C. Liedtke, \textit{Enriques surfaces}, Vol. 1. (www.math.lsa.umich.edu/~idolga/lecturenotes.html). 
\bibitem{DK} I.~Dolgachev, S.~ Kondo, \textit{Enriques surfaces}, Vol. 2. (www.math.lsa.umich.edu/~idolga/lecturenotes.html). 
\bibitem{DolgIsk} I. Dolgachev, V. Iskovskikh, \textit{Finite subgroups of the plane Cremona group}, Algebra, arithmetic, and geometry: in honor of Yu. I. Manin. Vol. I, 443--548, Progr. Math., {\bf 269}, Birkh\"auser Boston, Inc., Boston, MA, 2009.
\bibitem{CAG} I.~Dolgachev, \textit{Classical algebraic geometry:a modern view}, Cambridge Univ. Presss, 2012.
\bibitem{DolgachevDuncan} I.~Dolgachev, A.~Duncan, \textit{Fixed points of a finite subgroup of the plane Cremona group}. Algebr. Geom. {\bf 3} (2016), no. 4, 441-460.

\bibitem{DolgachevMartin} I.~Dolgachev, G.~Martin, \textit{Numerically trivial automorphisms of Enriques surfaces in characteristic 2}, J. Japan math. Soc., {\bf 71} (2019), 1181--1200.
 
 \bibitem{DLPU} I.~Dolgachev, A.~Laface, U.~Persson, G.~Urzua, \textrm{Chilean configuration of conics, lines, points}, arXiv:2008.09627, August 2020.
 

 \bibitem{Gizatullin} M.~Gizatullin, \textit{Rational G-surfaces}. (Russian) Izv. Akad. Nauk SSSR Ser. Mat. {\bf 44} (1980), no. 1, 110-144.
 \bibitem{HarbourneLang} B.~Harbourne, W.~Lang, \textit{Multiple fibers on rational elliptic surfaces}. Trans. Amer. Math. Soc. 307 (1988), no. 1, 205--223.
 

 
\bibitem{Ito1} H. Ito, \textit{The Mordell-Weil groups of unirational quasi-elliptic surfaces in characteristic 3}. Math. Z. {\bf 211} (1992),  1--39.
\bibitem{Ito2} H. Ito, \textit{The Mordell-Weil groups of unirational quasi-elliptic surfaces in characteristic 2}. Tohoku Math. J. (2) {\bf 46} (1994),  221--251.
\bibitem{Karayayla1} T.~Karayayla, \textit{The classification of automorphism groups of rational elliptic surfaces with section}. Adv. Math. {\bf 230} (2012),  1-54.
\bibitem{Karayayla2} T.~Karayayla, \textit{Automorphism groups of rational elliptic surfaces with section and constant J-map}. Cent. Eur. J. Math. 12 (2014),  1772-1795. 
\bibitem{KatsuraKondoMartin} T.~Katsura, S.~Kondo, G.~Martin \textit{Classification of Enriques surfaces with finite automorphism groups in characteristic 2},
Algebraic Geometry 7 (2020), 390--459; doi:10.14231/AG-2020-012
\bibitem{Kleiman} S.~Kleiman, \textit{The Picard scheme}. Fundamental algebraic geometry, 235-331, Math. Surveys Monogr., 123, Amer. Math. Soc., Providence, RI, 2005.
\bibitem{Lang2} W. Lang, \textit{Configurations of singular fibers on rational elliptic surfaces in characteristic two}. Special issue in honor of Robin Hartshorne. Comm. Algebra {\bf 28} (2000),  5813--5836.
\bibitem{Lang3} W. Lang, \textit{Classification of singular fibers on rational elliptic surfaces in characteristic three}. Comm. Algebra {\bf 33} (2005),  4533--4566.
\bibitem{Langextremal1} W. Lang, \textit{Extremal rational elliptic surfaces in characteristic p}. Math Z {\bf207}, 429--437 (1991).
\bibitem{Langextremal2} W. Lang, \textit{Extremal rational elliptic surfaces in characteristic p . II: Surfaces with three or fewer singular fibres}. 
 Ark. Mat. {\bf 32} (1994), no. 2, 423--448.
\bibitem{Lorenzini} Q.~Liu, G.~ Lorenzini, M.~Raynaud, \textit{N\'eron models, Lie algebras, and reduction of curves of genus one}. Invent. Math. {\bf 157} (2004), no. 3, 455-518. 
\bibitem{MartinStadlmayr} G. Martin, C. Stadlmayr \textit{Weak del Pezzo surfaces with global vector fields}. arXiv:2007.03665, July 2020.
\bibitem{MirandaPersson} R. Miranda, U. Persson, \textit{On extremal rational elliptic surfaces}. Math Z {\bf 193}, 537--558 (1986).
Comment. Math. Univ. St. Paul. {\bf 40} (1991),  83--99.
\bibitem{Oort} F.~Oort, \textit{Sur le sch\'ema de Picard}.  
Bull. Soc. Math. France {\bf 90} (1962), 1-14.
\bibitem{Persson} U.~Persson, \textit{Configurations of Kodaira fibers on rational elliptic surfaces}. Math. Z. {\bf 205} (1990), no. 1, 1--47.
\bibitem{Raynaud} M.~Raynaud, \textit{Sp{\'e}cialisation du foncteur de Picard}. Inst. Hautes \'Etudes Sci. Publ. Math. 38, (1970), 27--76.
\bibitem{ShiodaSchutt} M.~Sch\"utt,  T.~Shioda, 
\textit{Mordell-Weil lattices}. Ergebnisse der Mathematik und ihrer Grenzgebiete. 3. Folge. A Series of Modern Surveys in Mathematics [Results in Mathematics and Related Areas. 3rd Series. A Series of Modern Surveys in Mathematics], 70. Springer, Singapore, 2019. 
\bibitem{SchuettShioda} M. Sch\"{u}tt, T.~Shioda, \textit{Elliptic surfaces}. Algebraic geometry in {E}ast {A}sia---{S}eoul 2008, Adv. Stud. Pure Math. 60., Math. Soc. Japan, Tokyo, (2010), 51--160.
\bibitem{Silverman} J. Silverman, \textit{The arithmetic of elliptic curves}. Second edition. Graduate Texts in Mathematics, 106. Springer, Dordrecht, 2009.
\bibitem{Stadlmayr} C. Stadlmayr, \textit{Which rational double points occur on del Pezzo surfaces?}. arXiv:2009.14183, September 2020.
\end{thebibliography}
 \end{document}